\theoremstyle{plain}
	\newtheorem{theorem}{Theorem}[section]
	\newtheorem{lemma}[theorem]{Lemma}
	\newtheorem{proposition}[theorem]{Proposition}
	\newtheorem{corollary}[theorem]{Corollary}
	\newtheorem*{theorem*}{Theorem}
\theoremstyle{definition}
	\newtheorem{definition}{Definition}[section]
	\newtheorem{remark}{Remark}[section]
\theoremstyle{remark}
\numberwithin{equation}{section}
\newcommand{\even}{\mathrm{even}}
\newcommand{\rad}{\mathrm{rad}}
\DeclareMathOperator{\supp}{supp}
\DeclareMathOperator{\spn}{span}
\DeclareMathOperator{\re}{Re}
\DeclareMathOperator{\im}{Im}
\begin{document}

\title[Threshold even sol.s to ($\delta$NLS) at low freq.]{Scattering and Blow-up for threshold even solutions to the nonlinear Schr\"{o}dinger equation with repulsive delta potential at low frequencies}

\author[S. Gustafson]{Stephen Gustafson}
\address[S. Gustafson]{University of British Columbia, 1984 Mathematics Rd., Vancouver, Canada V6T1Z2.}
\email{gustaf@math.ubc.ca}

\author[T. Inui]{Takahisa Inui}
\address[T. Inui]{Department of Mathematics, Graduate School of Science, Osaka University, Toyonaka, Osaka, Japan 560-0043.}
\email{inui@math.sci.osaka-u.ac.jp}

\date{\today}

\keywords{Nonlinear Schr\"{o}dinger equation, Repulsive Dirac delta potential, Even solutions, Scattering, Blow-up}
\subjclass[2020]{35Q55,35B40, 35P25, 35B44, etc.}

\maketitle

\begin{abstract}
We consider the $L^2$-supercritical nonlinear Schr\"{o}dinger equation with a repulsive Dirac delta potential in one dimensional space. 
In a previous work, we clarified the global dynamics of even solutions with the same action as the high-frequency ground state standing wave solutions. In that case, there are obvious non-scattering global solutions, i.e., the standing waves. In this paper, we show a scattering and blow-up dichotomy for threshold even solutions in the low-frequency case. We emphasize that this dichotomy still holds at the critical frequency between high and low. 
\end{abstract}

\tableofcontents

\section{Introduction}

\subsection{Motivation}

We consider the Schr\"{o}dinger equation with a repulsive Dirac delta potential and a focusing power type nonlinearity in one dimension: 
\begin{align}
\label{deltaNLS}
\tag{$\delta$NLS}
	\begin{cases}
	i\partial_t u +\partial_x^2 u +\gamma \delta u +|u|^{p-1}u=0, &(t,x) \in I \times \mathbb{R},
	\\
	u(0,x)=u_0(x) & x \in \mathbb{R},
	\end{cases}
\end{align}
where $\gamma<0$, $ \delta$ is the Dirac delta with the mass at the origin, and $p>5$. Though the equation is not scaling invariant, the condition $p>5$ means that the nonlinearity is $L^2$-supercritical. The Schr\"{o}dinger operator $-\Delta_\gamma := -\partial_x^2 -\gamma \delta_0$ is defined by
\begin{align*}
	-\Delta_\gamma f &:= -  f'' \text{ for } f\in \mathcal{D}(-\Delta_\gamma),
	\\
	 \mathcal{D}(-\Delta_\gamma) &:= \{f \in H^1(\mathbb{R}) \cap H^2(\mathbb{R}\setminus \{0\}): f'(0+) - f'(0-) = -\gamma f(0)\},
\end{align*}
where $f'$ denotes the spatial derivative of $f$. 
The condition $\gamma<0$ means that the potential is repulsive. In the repulsive case, it is known that the operator $-\Delta_\gamma$ is non-negative and self-adjoint on $L^2(\mathbb{R})$ (see \cite[Chapter I.3]{AGHKH05}) and thus the linear Schr\"{o}dinger propagator $e^{it\Delta_\gamma}$ is unitary on $L^2(\mathbb{R})$ by the Stone theorem. We may also define the operator $-\Delta_\gamma$ through the quadratic form 
\begin{align*}
	q( f , g) := \int_{\mathbb{R}} f'(x)\overline{g'(x)}dx -\gamma f(0)\overline{g(0)}
\end{align*}
for $f,g \in H^1(\mathbb{R})$ (see \cite{Kat95} and \cite[Example 10.7]{Sch12}). It is well-known that the equation \eqref{deltaNLS} is locally well-posed in $H^1(\mathbb{R})$, and the mass and the energy, defined respectively by 
\begin{align*}
	M(f) &:= \|f\|_{L^2}^2,
	\\
	E_\gamma (f) &:= \frac{1}{2}\|f'\|_{L^2}^2 -\frac{\gamma}{2}|f(0)|^2 - \frac{1}{p+1}\|f\|_{L^{p+1}}^{p+1},
\end{align*}
are conserved.
See \cite[Section 3]{GHW04}, \cite[Theorem 3.7.1]{Caz03}, and \cite[Proposition 1]{FOO08} for the local well-posedness. 

Global dynamics of nonlinear Schr\"{o}dinger equations has attracted a lot of interest after the pioneering work by Kenig and Merle \cite{KeMe06}. Global dynamics below the ground state for the Schr\"{o}dinger equation with mass supercritical and energy subcritical nonlinearity, but without potential, are well-studied (see \cite{HoRo08,DHR08,HoRo10,FXC11,AkNa13,Gue14,Guo16}), as are the global dynamics at the ground state level (see \cite{DuRo10,CFR22}). See also \cite{NaSc12} for the global dynamics above the ground state. 
More recently, dynamics of nonlinear Schr\"{o}dinger equations with potential have been considered: see \cite{HaIk20,Din21,Hon16} for general potentials and \cite{ZhZh14,KMVZ17,LMM18} for the inverse square potential. 
The authors have considered the global dynamics of the equation \eqref{deltaNLS}. To explain the known results, we first give some notations and definitions. 
For a frequency $\omega>0$, we define the action $S_{\omega,\gamma}$ by
\begin{align*}
	S_{\omega,\gamma} (f):=E_\gamma (f) + \frac{\omega}{2} M(f)
\end{align*}
and the virial functional $K_\gamma$ by 
\begin{align*}
	K_\gamma(f):=  \|f'\|_{L^2}^2 - \frac{\gamma}{2} |f(0)|^2 - \frac{p-1}{2(p+1)} \|f\|_{L^{p+1}}^{p+1}.
\end{align*}
The scaling critical regularity index is 
\[
  s_c := \frac{1}{2} - \frac{2}{p-1}.
\]
\begin{definition}[scattering]
We say that a solution $u$ scatters in the positive (resp. negative) time direction if the solution exists at least on $[0,\infty)$ (resp. $(-\infty,0]$) and there exists $u_{+} \in H^{1}(\mathbb{R})$  (resp. $u_{-} \in H^{1}(\mathbb{R})$) such that 
\begin{align*}
	\|u(t) - e^{it\Delta_{\gamma}} u_{+(-)}\|_{H^{1}} \to 0 \text{ as } t \to \infty \text{ (resp. $-\infty$)}.
\end{align*}
We only say that a solution $u$ scatters if the solution scatters in both time directions. 
\end{definition}
\begin{remark}
\begin{enumerate}[(1)]
\item 
Mizutani \cite{Miz20} shows that, if a solution scatters in the positive time direction, then there exists $\widetilde{u}_{+} \in H^1(\mathbb{R})$ such that 
\begin{align*}
	\|u(t) - e^{it\partial_{x}^{2}} \widetilde{u}_{+}\|_{H^{1}} \to 0 \text{ as } t \to \infty.
\end{align*}
This means the solution of the nonlinear equation behaves freely at time infinity. 
\item For the defocusing nonlinear Schr\"{o}dinger equation with repulsive Dirac equation, Banica and Visciglia \cite{BaVi16} show that all solutions scatter. 
\end{enumerate}
\end{remark}
\begin{definition}[blow-up, grow-up]
\begin{enumerate}[(1)]
\item We say that a solution $u$ blows up in positive time if the solution exists until finite $T>0$ and does not extend beyond $T$. By the blow-up alternative, if $u$ blows up in positive time, then $\|u'(t)\|_{L^2} \to \infty$ as $t$ goes to maximal positive existence time. 
\item We say that a solution $u$ grows up in positive time if it exists on $[0,\infty)$ and 
$\limsup_{t\to \infty}\|u'(t)\|_{L^{2}}=\infty$. 
\end{enumerate}
We define blow-up (resp. grow-up) in negative time in a similar way. 
We say that a solution $u$ blows up if it blows up in both time directions. 
We say a solution blows up or grows up if the solution blows or grows up in positive time and blows or grows up in negative time.
\end{definition}

To study the global dynamics, minimization problems related to the ground state play an important role. We consider the following two:
\begin{align*}
	n_{\omega,\gamma}&:=\inf\{ S_{\omega,\gamma}(f): f \in H^{1}(\mathbb{R})\setminus\{0\}, K_{\gamma}(f)=0\},
	\\
	r_{\omega,\gamma}&:=\inf\{ S_{\omega,\gamma}(f): f \in H_{\even}^{1}(\mathbb{R})\setminus\{0\}, K_{\gamma}(f)=0\}, 
\end{align*}
where $H_{\even}^{1}(\mathbb{R})$ denotes the even functions in $H^{1}(\mathbb{R})$. We have: 

\begin{proposition}[\cite{FuJe08,IkIn17}]
\label{prop1.1}
Let $\gamma<0$.
\begin{enumerate}
\item For all $\omega>0$, $n_{\omega,\gamma}=n_{\omega,0}$ and $n_{\omega,\gamma}$ is not attained.
\item $n_{\omega,\gamma} < r_{\omega,\gamma}$ and 
\begin{align*}
	r_{\omega,\gamma}
	\begin{cases}
	= 2 n_{\omega,0} & \text{ if } 0< \omega \leq \frac{\gamma^{2}}{4},
	\\
	< 2 n_{\omega,0} & \text{ if } \omega > \frac{\gamma^{2}}{4}.
	\end{cases}
\end{align*}
\item If $0 < \omega \leq \gamma^{2}/4$, then $r_{\omega,\gamma}$ is not attained. 
If $\omega> \gamma^{2}/4$, then $r_{\omega,\gamma}$ is attained by 
\begin{align*}
	Q_{\omega,\gamma}(x):= c_p \omega^{\frac{1}{p-1}} \left[ 2 \cosh\left\{ \frac{p-1}{2}\sqrt{\omega}|x| + \tanh^{-1}\left( \frac{\gamma}{2\sqrt{\omega}} \right) \right\} \right]^{-\frac{2}{p-1}},
\end{align*}
where $c_p:=\left\{ 2(p+1)\right\}^{\frac{1}{p-1}}$. 
The function $Q_{\omega,\gamma}$ is the unique (up to a complex phase) solution of 
\begin{align}
\label{eleq}
	-Q_{\omega,\gamma}'' -\gamma \delta Q_{\omega,\gamma} + \omega Q_{\omega,\gamma} =|Q_{\omega,\gamma}|^{p-1}Q_{\omega,\gamma}.
\end{align}
\end{enumerate}
\end{proposition}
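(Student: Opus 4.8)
The overall strategy is to transfer everything to the potential-free problem $(\gamma=0)$, whose solution is the classical soliton $Q_{\omega,0}$, by exploiting that $\gamma<0$ makes the delta contribution $-\tfrac{\gamma}{2}|f(0)|^2$ nonnegative; thus $S_{\omega,\gamma}(f)\ge S_{\omega,0}(f)$ and $K_\gamma(f)\ge K_0(f)$ for every $f\in H^1(\mathbb{R})$, with equality iff $f(0)=0$. For part (1), I would obtain $n_{\omega,\gamma}\ge n_{\omega,0}$ by rewriting each constrained infimum as a Nehari (equivalently Gagliardo--Nirenberg) quotient: after the scaling $f\mapsto\mu f$, the infimum becomes a constant times $\inf_f P_{\omega,\gamma}(f)^{(p+1)/(p-1)}/\|f\|_{p+1}^{2(p+1)/(p-1)}$, where $P_{\omega,\gamma}(f)=\|f'\|_{L^2}^2-\gamma|f(0)|^2+\omega\|f\|_{L^2}^2$ is the full quadratic form; since $-\gamma|f(0)|^2\ge0$, this quotient dominates its $\gamma=0$ counterpart, giving the desired inequality. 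The reverse inequality $n_{\omega,\gamma}\le n_{\omega,0}$ comes from the trial functions $Q_{\omega,0}(\cdot-y)$ with $y\to\infty$: exponential decay of the soliton sends the delta term and the small rescaling needed to restore $K_\gamma=0$ to their trivial limits, so the action converges to $S_{\omega,0}(Q_{\omega,0})=n_{\omega,0}$.

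Non-attainment of $n_{\omega,\gamma}$ I would argue structurally: a minimizer would, via its Lagrange multiplier, be a positive bound state solving \eqref{eleq}, but the repulsive delta admits no positive bound state of action as low as $n_{\omega,0}$ --- indeed the only positive bound states are the even $Q_{\omega,\gamma}$ produced in part (3), whose action $r_{\omega,\gamma}$ is strictly larger. Equivalently, the value $n_{\omega,0}$ is only reached in the limit of solitons escaping the repulsive bump, never by a genuine profile.

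For parts (2) and (3) I would pass to the half-line: an even $H^1$ function lies in the form domain exactly when it obeys the Robin condition $f'(0+)=-\tfrac{\gamma}{2}f(0)$, so the Euler--Lagrange equation \eqref{eleq} reduces on $\{x>0\}$ to the soliton ODE $-Q''+\omega Q=Q^p$ with this boundary condition. Its decaying positive solutions are translated solitons, and imposing the Robin condition forces the translation parameter to satisfy $\tanh(\cdot)=\gamma/(2\sqrt{\omega})$; this is solvable precisely when $|\gamma/(2\sqrt{\omega})|<1$, i.e.\ $\omega>\gamma^2/4$, yielding the explicit $Q_{\omega,\gamma}$ and, by ODE uniqueness, the characterization in (3). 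The universal upper bound $r_{\omega,\gamma}\le2n_{\omega,0}$ follows from the even trial functions $Q_{\omega,0}(\cdot-y)+Q_{\omega,0}(\cdot+y)$ as $y\to\infty$.

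The crux is the matching lower bound and the resulting dichotomy, which I would treat by concentration--compactness applied to an even minimizing sequence (normalized onto $\{K_\gamma=0\}$ and bounded by the coercivity of $S_{\omega,\gamma}$ there). Vanishing is excluded by the positivity of the constrained functional, leaving either (a) convergence to an even bound state, or (b) splitting into a symmetric pair of profiles escaping to $\pm\infty$, each solving the free equation in the limit and hence contributing at least $n_{\omega,0}$, so that $r_{\omega,\gamma}\ge2n_{\omega,0}$. When $\omega\le\gamma^2/4$ alternative (a) is impossible, since no even bound state exists, forcing $r_{\omega,\gamma}=2n_{\omega,0}$ and non-attainment; when $\omega>\gamma^2/4$, the explicit $Q_{\omega,\gamma}$ realizes (a), and the strict inequality $r_{\omega,\gamma}<2n_{\omega,0}$ follows from a direct evaluation of $S_{\omega,\gamma}(Q_{\omega,\gamma})$ against $2n_{\omega,0}$ using the closed forms of both profiles. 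Finally $n_{\omega,\gamma}<r_{\omega,\gamma}$ is immediate in both regimes: either $r_{\omega,\gamma}=2n_{\omega,0}>n_{\omega,0}=n_{\omega,\gamma}$, or $r_{\omega,\gamma}$ is attained while $n_{\omega,\gamma}$ is not, so the two cannot coincide. I expect the main difficulty to be the compactness analysis --- excluding vanishing and, in the threshold regime $\omega\le\gamma^2/4$, ruling out concentration at the origin --- together with the explicit energy comparison giving the strict inequality.
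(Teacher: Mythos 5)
The paper does not actually prove Proposition \ref{prop1.1}: it is imported from \cite{FuJe08,IkIn17}, so there is no internal argument to compare against, and your sketch has to be measured against those references. It follows the same standard route: monotonicity in $\gamma$ of the action and of the constraint functional, escaping translated solitons (resp.\ symmetric pairs) as test functions for the upper bounds, the half-line ODE with the Robin condition $f'(0+)=-\tfrac{\gamma}{2}f(0)$ to produce and classify $Q_{\omega,\gamma}$ exactly for $\omega>\gamma^{2}/4$, symmetric splitting of even minimizing sequences for $r_{\omega,\gamma}\ge 2n_{\omega,0}$, and an explicit comparison of $\|Q_{\omega,\gamma}\|_{L^{p+1}}^{p+1}$ with $2\|Q_{\omega,0}\|_{L^{p+1}}^{p+1}$ for the strict inequality. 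The architecture is sound, and your closing argument for $n_{\omega,\gamma}<r_{\omega,\gamma}$ (either $r=2n_{\omega,0}>n_{\omega,0}$, or $r$ is attained while $n$ is not, and $r\ge n$ always) is clean.

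The one step that does not work as literally written is the reduction of $n_{\omega,\gamma}$ to the quotient $\bigl(\|f'\|_{L^2}^2-\gamma|f(0)|^2+\omega\|f\|_{L^2}^2\bigr)^{\frac{p+1}{p-1}}/\|f\|_{L^{p+1}}^{\frac{2(p+1)}{p-1}}$ via $f\mapsto\mu f$: that identity normalizes the \emph{Nehari} constraint $I_{\omega,\gamma}(f)=0$, whereas $n_{\omega,\gamma}$ and $r_{\omega,\gamma}$ are defined with the \emph{virial} constraint $K_{\gamma}(f)=0$, which contains neither the $\omega\|f\|_{L^2}^2$ term nor the same coefficient on $|f(0)|^2$. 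In the free case the two constrained problems are classically interchangeable, but with the delta term this equivalence is precisely the nontrivial $(\alpha,\beta)$-independence of Lemma \ref{lem2.3} (\cite[Lemmas 2.7--2.10]{IkIn17}); the obstruction is the scaling mismatch between $\|f'\|_{L^2}^2$ and $|f(0)|^2$ that the paper itself singles out as its ``second difficulty''. You must either invoke that equivalence or argue directly on $\{K_\gamma=0\}$, which is not hard for $p>5$: $S_{\omega,\gamma}-\tfrac{2}{p-1}K_{\gamma}$ is a nonnegative quadratic form, $K_{\gamma}(f)=0$ forces $K_{0}(f)\le 0$, and a rescaling with parameter in $(0,1]$ restores $K_{0}=0$ while only decreasing that quadratic form, giving $n_{\omega,0}\le S_{\omega,\gamma}(f)$. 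Tracking equality in this chain also yields non-attainment of $n_{\omega,\gamma}$ (a minimizer would need $f(0)=0$ and would be a translate of the everywhere-positive $Q_{\omega,0}$) more directly than your route through the Lagrange multiplier and the classification of positive bound states, which silently uses two extra facts (elimination of the multiplier, positivity of a putative minimizer). The same remark applies to non-attainment of $r_{\omega,\gamma}$ for $\omega\le\gamma^{2}/4$; your symmetric-splitting dichotomy should also allow for a nonzero profile remaining at the origin alongside escaping ones, which superadditivity of the action over profiles handles.
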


\begin{remark}
\label{rmk1.2}
Let
\begin{align*}
	Q(x):= c_p\left\{ 2 \cosh\left( \frac{p-1}{2}x\right) \right\}^{-\frac{2}{p-1}}, 
\end{align*} 
known as the ground state of the equation 
\begin{align}
\label{ellip}
	-Q'' + Q =Q^p.
\end{align}
Then $Q_{\omega,\gamma}$ is given by a translation, reflection, and scaling as follows: 
\begin{align*}
	Q_{\omega,\gamma}(x):= \omega^{\frac{1}{p-1}} Q\left(\sqrt{\omega}|x| + \frac{2}{p-1} \tanh^{-1}\left( \frac{\gamma}{2\sqrt{\omega}} \right) \right)
\end{align*}
for $\omega>\gamma^2/4$. 
\end{remark}

\begin{remark}
Fukuizumi and Jeanjean \cite{FuJe08} show the instability of the ground state standing waves if $p\geq 5$. Le Coz et.al \cite{LCFFKS08} obtained the strong instability of the standing waves. 
In their papers \cite{FuJe08} and  \cite{LCFFKS08}, the stability of the ground states in the case $1<p<5$ is also investigated (see also \cite{Oht11} for the degenerate case). For the attractive Dirac delta potential ($\gamma>0$), the orbital stability of the standing waves is studied by Fukuizumi, Ohta, and Ozawa \cite{FOO08} (see also \cite{GHW04}) in the non-degenerate case and Ohta \cite{Oht11} in the degenerate case. See Ohta and Yamaguchi \cite{OhYa16} and Fukaya and Ohta \cite{FuOh19} for the strong instability (see also \cite{Oht19} for a review). Recently, Masaki, Murphy, and Segata \cite{MMS23} proved the asymptotic stability of small solitary waves for $p > 5$ under a suitable spectral condition. See also \cite{KaOh09,OhYa16}. 
\end{remark}

The function $e^{i\omega t}Q_{\omega,\gamma}$ is a non-scattering global solution to \eqref{deltaNLS} if $\omega>\gamma^{2}/4$. In the non-even case, $n_{\omega,\gamma}=n_{\omega,0} = S_{\omega,0}(Q_{\omega,0})$ gives a threshold for a dichotomy result even though $e^{i\omega t}Q_{\omega,0}$ is not a solution to \eqref{deltaNLS}:

\begin{theorem}[Global dynamics below $Q_{\omega,0}$ (Ikeda--Inui \cite{IkIn17})]
For $\omega>0$, we define the potential well set $PW_{\omega,\gamma}:= \{f\in H^1(\mathbb{R}):S_{\omega,\gamma}(f) < n_{\omega,\gamma} \}$. Assume that $u_0 \in \cup_{\omega>0}PW_{\omega,\gamma}$. 
Then the following hold for \eqref{deltaNLS}:
\begin{enumerate}
\item If $K_\gamma(u_{0})\geq0$, the solution scatters.
\item If $K_\gamma(u_{0})<0$, the solution blows up or grows up.
\end{enumerate}
\label{belowresult}
\end{theorem}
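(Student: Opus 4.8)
The plan is to follow the concentration--compactness and rigidity framework of Kenig and Merle \cite{KeMe06} for statement (1), and a (localized) virial/convexity argument for statement (2); both rest on the same variational input, which I would establish first. Fix $u_0$ with $S_{\omega,\gamma}(u_0)<n_{\omega,\gamma}$ for some $\omega>0$. Since $n_{\omega,\gamma}$ is the least action on the constraint $\{K_\gamma=0\}$ and $S_{\omega,\gamma}=E_\gamma+\tfrac{\omega}{2}M$ is conserved, a standard continuity argument shows that $\{S_{\omega,\gamma}<n_{\omega,\gamma},\ K_\gamma>0\}$ and $\{S_{\omega,\gamma}<n_{\omega,\gamma},\ K_\gamma<0\}$ are each invariant under the flow, so the sign of $K_\gamma(u(t))$ is preserved. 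On the first set one obtains coercivity: rewriting $S_{\omega,\gamma}=\tfrac{p-1}{2(p+1)}\|f\|_{L^{p+1}}^{p+1}+\tfrac12 K_\gamma+\tfrac{\omega}{2}M$ and using the variational estimate at the threshold, $K_\gamma(f)$ is comparable to the kinetic-plus-potential part of the action, giving a uniform $H^1$ bound and a lower bound $K_\gamma(u(t))\ge\delta\,\min\{1,\|u'(t)\|_{L^2}^2\}$. On the second set one gets $K_\gamma(u(t))\le-\delta<0$ uniformly.

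For statement (2), I would feed this uniform negativity into a virial identity. The repulsive sign $\gamma<0$ makes the delta contribution $-\tfrac{\gamma}{2}|u(0)|^2$ to $K_\gamma$ nonnegative, so the potential only helps. If $u_0$ had finite variance, differentiating $\int_{\mathbb{R}}x^2|u|^2\,dx$ twice and using $K_\gamma(u(t))\le-\delta$ produces a quantity that is eventually negative, forcing finite-time blow-up. For general $H^1(\mathbb{R})$ data I would instead use a truncated weight $\varphi_R$ with $\varphi_R(x)\approx|x|^2$ for $|x|\le R$; the cutoff generates error terms controlled by the kinetic density outside $|x|\le R$, and the now-standard argument of Ogawa--Tsutsumi and Du--Wu--Zhang yields exactly the stated dichotomy: either the variance-type functional drives finite-time blow-up, or $\|u'(t)\|_{L^2}$ must be unbounded in infinite time, i.e.\ grow-up.

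For statement (1), the Kenig--Merle scheme needs: a small-data and stability theory, built on Strichartz estimates for $e^{it\Delta_\gamma}$ (available because $-\Delta_\gamma$ has the free dispersive decay in the repulsive case; cf.\ \cite{Miz20}); a linear profile decomposition adapted to $e^{it\Delta_\gamma}$; and extraction of a critical element. Arguing by contradiction, I would let $S_c$ be the supremum of levels below which every $K_\gamma\ge0$ datum scatters, suppose $S_c<n_{\omega,\gamma}$, and extract a minimal nonscattering solution $u_c$ whose orbit is precompact in $H^1(\mathbb{R})$ modulo translations. \emph{The main obstacle is the profile decomposition in the presence of the potential, and this is precisely where the value $n_{\omega,\gamma}=n_{\omega,0}$ of Proposition \ref{prop1.1}(1) enters.} Linear profiles split according to their translation parameters $x_n$: profiles with $|x_n|\to\infty$ escape the support of $\delta$, so their nonlinear evolution is approximated by the free NLS flow, whose below-ground-state scattering threshold is $n_{\omega,0}$; profiles with $x_n$ bounded feel the repulsion, which raises the effective threshold. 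Since $n_{\omega,\gamma}=n_{\omega,0}$, in either case a single profile carrying the full critical action would scatter, contradicting minimality unless there is exactly one profile and no residual free evolution. The technical crux is making the escaping-bubble approximation rigorous (solving the free NLS profile problem and gluing via the perturbation lemma) and handling the non-scaling-invariance of \eqref{deltaNLS} by working with the conserved pair $(M,E_\gamma)$ and optimizing over $\omega$ through the union $\cup_{\omega>0}PW_{\omega,\gamma}$.

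Finally, rigidity eliminates $u_c$. The critical element lies in $\{S_{\omega,\gamma}<n_{\omega,\gamma},\ K_\gamma>0\}$, so by the coercivity above $K_\gamma(u_c(t))\ge\delta>0$ uniformly. Applying the localized virial functional $V_R(t)$ to the precompact orbit, precompactness confines the mass so that the cutoff errors are negligible for $R$ large, giving $V_R'(t)\gtrsim\int K_\gamma(u_c)\gtrsim\delta\,t$; but $|V_R(t)|\lesssim R$ uniformly in $t$ by the uniform $H^1$ bound and precompactness, a contradiction for $t$ large unless $u_c\equiv0$. Hence no critical element exists, $S_c\ge n_{\omega,\gamma}$, and every $K_\gamma\ge0$ datum below the threshold scatters, completing (1).
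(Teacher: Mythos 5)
This theorem is quoted from \cite{IkIn17} and not reproved in the present paper; your outline follows essentially the same route as that reference, namely the Kenig--Merle concentration--compactness and rigidity scheme built on the Banica--Visciglia profile decomposition for $e^{it\Delta_\gamma}$ (with the identity $n_{\omega,\gamma}=n_{\omega,0}$ handling profiles that escape the potential) for scattering, and an Ogawa--Tsutsumi/Du--Wu--Zhang-type localized virial argument for the blow-up or grow-up alternative. The sketch is correct in its essentials and consistent with the original proof.
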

\begin{remark}
\begin{enumerate}[(1)]
\item 
Here (and in Theorem \ref{thm1.5} below) $K_\gamma(u_{0})=0$ implies $u_0=0$.
\item
By scaling, we can re-express Theorem~\ref{belowresult} by replacing the assumption $u_0 \in \cup_{\omega>0}PW_{\omega,\gamma}$ 
with the mass-energy condition $E_\gamma(u_0)M(u_0)^{\frac{1-s_c}{s_c}} < E(Q) M(Q)^{\frac{1-s_c}{s_c}}$. 
\end{enumerate}
\end{remark}
We also have a dichotomy result at the threshold:
%
%

\begin{theorem}[Global dynamics on the threshold (Ardila--Inui \cite{ArIn22}, Inui \cite{Inu23p})]
Assume that $u_0 \in H^1(\mathbb{R})$ satisfies $E_\gamma(u_0)M(u_0)^{\frac{1-s_c}{s_c}} = E(Q) M(Q)^{\frac{1-s_c}{s_c}}$. Then the following hold for \eqref{deltaNLS}.
\begin{enumerate}
\item If $K_\gamma(u_{0})>0$, the solution scatters.
\item If $K_\gamma(u_{0})<0$ and $xu_{0}(x) \in L^2(\mathbb{R})$, the solution blows up. 
\end{enumerate}
\label{onresult}
\end{theorem}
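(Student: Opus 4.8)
The plan is to run the Kenig--Merle concentration--compactness/rigidity scheme, grafted onto the threshold analysis in the spirit of Duyckaerts--Roudenko \cite{DuRo10}, and to exploit two structural features of the repulsive delta: the non-attainment of $n_{\omega,\gamma}=n_{\omega,0}$ recorded in Proposition \ref{prop1.1}, and the fact that the potential enters the finite-variance virial identity only through $K_\gamma$ itself. \textbf{Variational preliminaries.} After rescaling so that the hypothesis reads $E_\gamma(u_0)M(u_0)^{(1-s_c)/s_c}=E(Q)M(Q)^{(1-s_c)/s_c}$, I would match a frequency $\omega$ so that the mass-energy equals $n_{\omega,0}=n_{\omega,\gamma}$. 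Using the scale-based sharp Gagliardo--Nirenberg inequality (governed by $Q$ and blind to the delta) together with conservation of $M$ and $E_\gamma$, I would show the sign of $K_\gamma(u(t))$ is preserved along the flow: the sets $\{K_\gamma>0\}$ and $\{K_\gamma<0\}$ on the threshold mass-energy surface are each invariant. Crossing $K_\gamma=0$ is impossible, since a function of threshold mass-energy with $K_\gamma=0$ would be a minimizer for $n_{\omega,\gamma}$, contradicting the non-attainment in Proposition \ref{prop1.1}(1),(3). In the case $K_\gamma(u_0)>0$ this yields a uniform $H^1$ bound and global existence; in the case $K_\gamma(u_0)<0$ it yields $K_\gamma(u(t))<0$ throughout the lifespan.

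\textbf{Part (1): scattering.} Assuming scattering fails at the threshold, a linear profile decomposition for $e^{it\Delta_\gamma}$ together with the perturbation/stability theory for \eqref{deltaNLS} produces a critical element $u_c$ of threshold mass-energy with $K_\gamma(u_c)>0$ and precompact trajectory. The decisive point is a spatial dichotomy for the profiles: a profile whose center escapes to $\pm\infty$ does not feel the delta and is governed by the free equation \eqref{ellip}, hence scatters by the corresponding free threshold result, while profiles strictly below the threshold mass-energy scatter by Theorem \ref{belowresult}. Consequently $u_c$ is built from a single profile that stays near the origin, and its trajectory is precompact in $H^1(\mathbb{R})$ with no translation modulation (translation symmetry being broken by the potential). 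I would then close with a truncated momentum virial $z_R(t):=\int \chi(x/R)\,x\,\Im(\overline{u_c}\,\partial_x u_c)\,dx$: for each fixed large $R$, precompactness makes $z_R$ bounded in $t$, while $z_R'(t)$ equals a fixed positive multiple of $K_\gamma(u_c(t))$ up to $o_R(1)$ tail errors, so $z_R'(t)\geq c_0\delta-o_R(1)>0$ where $\delta:=\inf_t K_\gamma(u_c(t))>0$ (the compact, nonzero trajectory cannot approach the unattained surface $K_\gamma=0$). Integrating as $t\to\infty$ contradicts boundedness unless $u_c\equiv0$.

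\textbf{Part (2): blow-up.} With $xu_0\in L^2$, set $V(t):=\int x^2|u(t,x)|^2\,dx$. The virial identity for \eqref{deltaNLS} reads
\begin{align*}
V''(t)=8K_\gamma(u(t)),
\end{align*}
the delta contributing to $V''$ exactly the boundary term already present in $K_\gamma$ (the weight $x^2$ annihilates the delta in $V'$). By the preserved sign, $K_\gamma(u(t))<0$, so $V$ is nonnegative and concave; if in addition $K_\gamma$ stays bounded away from $0$, then $V''\leq -c<0$, so $V$ reaches $0$ in finite time and $\|\partial_x u(t)\|_{L^2}\to\infty$ by the classical Glassey argument. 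The whole difficulty is therefore the threshold degeneracy: excluding a global solution along which $K_\gamma(u(t_n))\to0^-$. I would treat this by a modulation/concentration analysis showing such a solution approaches the threshold profile set; since $n_{\omega,\gamma}=n_{\omega,0}$ is not attained, that set consists only of free ground states whose centers escape to spatial infinity, and I would combine this with the finite-variance hypothesis to reach a contradiction with global existence while $V$ remains concave and $K_\gamma<0$.

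\textbf{Main obstacle.} I expect the crux of Part (1) to be organizing the profile decomposition so that every profile escaping to spatial infinity is cleanly handed to the free threshold theory and shown to scatter, thereby ruling out a ``free soliton at infinity'' that would otherwise be a non-scattering threshold object --- precisely the configuration forbidden by the non-attainment of $n_{\omega,\gamma}$. In Part (2) the analogous and genuinely delicate point is making the exclusion of the degenerate regime $K_\gamma\to0^-$ quantitative, where the non-attainment and the finite-variance bound must be combined to prevent mass from slipping to spatial infinity while $V$ stays concave; this is where the argument departs most from the free-potential threshold theory and where the repulsive delta is essential.
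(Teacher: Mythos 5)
First, note that the paper does not prove Theorem \ref{onresult} at all: it is quoted from Ardila--Inui \cite{ArIn22} and Inui \cite{Inu23p}. The relevant comparison is therefore with the scheme those works use, which is also the scheme described in the paper's ``Idea of proof'' and implemented in Sections 3--6 for the even analogue. Your overall architecture (sign-invariance of $K_\gamma$ via non-attainment, Kenig--Merle critical element, virial rigidity, Glassey for blow-up) matches that scheme, but two of your central steps do not hold as stated, and they are precisely the steps that constitute the actual content of the threshold theorem.

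The first gap is in your treatment of escaping profiles: you assert that a profile whose center goes to $\pm\infty$ ``scatters by the corresponding free threshold result.'' The free threshold result (\cite{DuRo10,CFR22}) does \emph{not} say that; at the free threshold with positive virial there are non-scattering solutions ($e^{it}Q$ and the special solution $Q^-$), so the dichotomy there is ``scatter \emph{or} equal $Q^-$ up to symmetry.'' An escaping profile at threshold mass-energy is exactly the dangerous configuration, and it cannot be dismissed by citing the free theory. The second, closely related gap is your claim that $\delta:=\inf_t K_\gamma(u_c(t))>0$ because ``the compact, nonzero trajectory cannot approach the unattained surface $K_\gamma=0$.'' Non-attainment of $n_{\omega,\gamma}$ gives no such quantitative lower bound: the trajectory is precompact only modulo a translation parameter $x(t)$ (contrary to your assertion that the potential removes translation modulation --- see the compactness statement in Section 5, which involves $x(t)$), and $K_\gamma(u(t_n))\to 0$ can occur with the profile escaping to spatial infinity, where it asymptotically resembles a translated free ground state. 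Excluding this degenerate regime is the heart of \cite{ArIn22} and of the present paper: one performs a modulation analysis near $e^{i\theta}Q(\cdot-y)$ and extracts the repulsive effect of the delta in the form $e^{-2y}\lesssim|\mu_\gamma(u)|$ (cf.\ \eqref{repulse}), which forces $y(t)\to\infty$ when $\mu_\gamma\to 0$, while the modulation equations give $|\dot y|\lesssim|\mu_\gamma(u)|$, which forces $y$ bounded --- a contradiction. Your Part (2) correctly identifies the analogous degenerate regime $K_\gamma\to 0^-$ as the difficulty but likewise leaves it at the level of a plan; the actual argument needs the Cauchy--Schwarz-type virial bound (Lemma \ref{lem6.1}), the resulting exponential decay $\int_t^\infty|\mu_\gamma|\,ds\lesssim e^{-ct}$, and again the pair of estimates on $y$ to close. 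Without these quantitative modulation inputs the proof does not go through.
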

\begin{remark}
\begin{enumerate}[(1)]
\item $K_\gamma(u_{0})=0$ does not occur since there is no ground state on the threshold. 
\item The dichotomy results Theorems~\ref{belowresult} and~\ref{onresult} are optimal in this sense: for any $\epsilon > 0$, there exists a non-scattering, global moving one-soliton solution $u$ with $E_\gamma(u)M(u)^{\frac{1-s_c}{s_c}} < E(Q) M(Q)^{\frac{1-s_c}{s_c}} + \epsilon$ (see \cite{GIS23p}). 
\item In Theorems~\ref{belowresult} and~\ref{onresult}, the 
condition on the sign of $K_\gamma(u_0)$ at the initial time can be replaced by the sign of $K_0(u_0)$, or $\|Q_{1,0}\|_{L^2}^{1-s_c}\|Q_{1,0}\|_{\dot{H}^1}^{s_c}-\|u_0\|_{L^2}^{1-s_c}\|\sqrt{-\Delta_\gamma}u_0\|_{L^2}^{s_c}$,  or $\|Q_{1,0}\|_{L^2}^{1-s_c}\|Q_{1,0}\|_{\dot{H}^1}^{s_c}-\|u_0\|_{L^2}^{1-s_c}\|u_0\|_{\dot{H}^1}^{s_c}$. That is the solution scatters (e.g.) if one of them (therefore all of them) is  positive. See e.g. \cite{HIIS22p}. It is notable that functionals not involving the potential term also determine the global behavior. 
\end{enumerate}
\end{remark}

For even solutions, we have the following global dynamics result below $r_{\omega,\gamma}$, which is strictly larger than $n_{\omega,\gamma}$ by Proposition \ref{prop1.1}:
\begin{theorem}[Global dynamics of even solutions below $r_{\omega,\gamma}$ (Ikeda--Inui \cite{IkIn17})]
\label{thm1.5}
For $\omega>0$, we define the potential well set $PW_{\omega,\gamma}^\even:= \{f\in H_\even^1(\mathbb{R}):S_{\omega,\gamma}(f) < r_{\omega,\gamma} \}$. Assume that $u_0 \in \cup_{\omega>0}PW_{\omega,\gamma}^\even$. 
\begin{enumerate}
\item If $K_\gamma(u_{0})\geq0$, the solution scatters.
\item If $K_\gamma(u_{0})<0$, the solution blows up or grows up. 
\end{enumerate}
\label{belowresulteven}
\end{theorem}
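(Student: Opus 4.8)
The plan is to establish (1) by a concentration--compactness and rigidity argument of Kenig--Merle type and (2) by a virial (convexity) argument, with the delta potential and the even constraint handled through a linear profile decomposition for $e^{it\Delta_\gamma}$. I would first record the variational preliminaries common to both branches. The flow of \eqref{deltaNLS} preserves $H^1_\even(\mathbb{R})$, since $\gamma\delta$, the nonlinearity, and $e^{it\Delta_\gamma}$ all commute with $x\mapsto -x$, so even data give even solutions. As $M$ and $E_\gamma$ (hence $S_{\omega,\gamma}$) are conserved, the set $\{S_{\omega,\gamma}<r_{\omega,\gamma}\}\cap H^1_\even(\mathbb{R})$ is invariant. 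Because any element of this set with $K_\gamma=0$ must vanish---this is exactly the definition of $r_{\omega,\gamma}$ as the infimum of $S_{\omega,\gamma}$ over nonzero even functions on $\{K_\gamma=0\}$---a continuity argument along the flow shows the sign of $K_\gamma(u(t))$ is preserved. On $\{K_\gamma\ge0\}$ the constraint $S_{\omega,\gamma}<r_{\omega,\gamma}$ yields a coercive bound controlling $\|u(t)\|_{H^1}$ uniformly, hence global existence; on $\{K_\gamma<0\}$ it yields $K_\gamma(u(t))\le -c<0$ uniformly.

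For (1) I would fix the even Strichartz and small-data/local scattering theory for $e^{it\Delta_\gamma}$ (which obeys the same Strichartz estimates as $e^{it\partial_x^2}$), and argue by contradiction: if scattering fails somewhere on $\{K_\gamma\ge0,\ S_{\omega,\gamma}<r_{\omega,\gamma}\}$, there is a critical action level $\le r_{\omega,\gamma}$ below which all even solutions scatter but at which scattering just fails. The crux is a linear profile decomposition for even $H^1$-bounded sequences adapted to $e^{it\Delta_\gamma}$: each profile either concentrates at the origin---where it feels the potential and is governed by $-\Delta_\gamma$---or has translation parameters escaping to infinity, where the potential is asymptotically invisible and the profile obeys the free equation. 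Here evenness is decisive: an escaping profile appears together with its mirror image, so under the budget $S_{\omega,\gamma}<r_{\omega,\gamma}\le 2n_{\omega,0}$ (Proposition~\ref{prop1.1}) its two symmetric bumps each carry free action below $n_{\omega,0}$ and hence scatter; thus no escaping profile can obstruct scattering, and the single non-scattering profile must concentrate at the origin. Combined with the Pythagorean expansion of $S_{\omega,\gamma}$ and the long-time perturbation lemma, the critical element therefore cannot be decomposed further and is a global even solution $u_c$ with precompact orbit in $H^1$ (pinned at the origin by evenness, so needing no translation parameter). To exclude $u_c$, note that if $u_c\not\equiv0$ then precompactness gives $\inf_t\|u_c(t)\|_{H^1}>0$, so the coercivity below threshold gives $\inf_t K_\gamma(u_c(t))=:c_0>0$, while precompactness also makes $V(t):=\int_{\mathbb{R}} x^2|u_c(t,x)|^2\,dx$ finite and uniformly bounded; but the virial identity $\frac{d^2}{dt^2}V(t)=8K_\gamma(u_c(t))\ge 8c_0>0$ forces $V(t)\to\infty$, a contradiction, so $u_c\equiv0$.

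For (2), on $\{K_\gamma<0,\ S_{\omega,\gamma}<r_{\omega,\gamma}\}$ the same virial identity $\frac{d^2}{dt^2}\int_{\mathbb{R}} x^2|u|^2\,dx=8K_\gamma(u(t))\le -8c<0$ applies when $xu_0\in L^2$, forcing the (nonnegative) variance to reach $0$ in finite time, which is impossible; hence finite-time blow-up in each direction. When $xu_0\notin L^2$ the exact variance is unavailable, and I would instead use a truncated virial with a cutoff weight agreeing with $x^2$ on $|x|\le R$: arguing by contradiction that the solution is global with $\sup_t\|\partial_x u(t)\|_{L^2}<\infty$, the error terms are $O(R^{-2})$-controlled, so the truncated quantity still has second derivative $\le -4c<0$ for $R$ large, again impossible; this yields $\limsup_t\|\partial_x u(t)\|_{L^2}=\infty$, i.e.\ grow-up, which is exactly why the statement reads ``blows up or grows up''.

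The main obstacle is the profile-decomposition step: establishing the decomposition adapted to $e^{it\Delta_\gamma}$, proving that the nonlinear profiles attached to the origin-concentrating and to the escaping components either scatter or add their actions, and---crucially---using evenness together with $r_{\omega,\gamma}\le 2n_{\omega,0}$ to rule out escaping profiles under the threshold, so that the critical element is pinned at the origin where the virial rigidity applies. The virial computations and the variational preliminaries are comparatively routine by contrast.
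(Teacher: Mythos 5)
The paper does not prove this theorem; it quotes it from Ikeda--Inui \cite{IkIn17}, whose proof has exactly the architecture you describe: Kenig--Merle concentration compactness with a profile decomposition adapted to even sequences and to $e^{it\Delta_\gamma}$, the observation that an escaping profile must appear together with its mirror image so that the budget $S_{\omega,\gamma}<r_{\omega,\gamma}\le 2n_{\omega,0}$ forces each bump strictly below the free threshold $n_{\omega,0}$, and a virial/truncated-virial dichotomy for part (2). So your overall route is the intended one, and your use of evenness is essentially right, with one caveat of emphasis: the pinning of the critical element at the origin comes from the action budget, not from evenness alone. The paper stresses in its introduction that evenness does \emph{not} remove the translation parameter in one dimension; it is only the strict inequality $S_{\omega,\gamma}<r_{\omega,\gamma}$ that kills escaping pairs, and indeed at the threshold the compact object travels along $\pm x(t)$. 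Your argument does derive the pinning from the budget, so this is fine, but the parenthetical ``pinned at the origin by evenness'' is not a usable principle.

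There is, however, one step that fails as written. In the rigidity argument you claim that precompactness of the orbit of $u_c$ in $H^1$ makes $V(t)=\int_{\mathbb{R}}x^2|u_c(t,x)|^2\,dx$ finite and uniformly bounded. It does not: precompactness gives uniform smallness of the $H^1$ tails, $\sup_t\int_{|x|>R}(|u_c'|^2+|u_c|^2)\,dx<\varepsilon$, but an $L^2$ function with uniformly small tails can still have infinite variance (take $|f(x)|^2\sim x^{-2}(\log x)^{-2}$ for large $x$). The rigidity must be run through the localized functional $J_R$ set up in the paper: $\frac{d^2}{dt^2}J_R(u_c)=8K_\gamma(u_c)+A_R(u_c)$, with $A_R$ made small by the compactness and absorbed into the uniform lower bound $K_\gamma(u_c(t))\ge c_0>0$, contradicting the a priori bound $\bigl|\frac{d}{dt}J_R\bigr|\lesssim R\|u_c\|_{H^1}^2$. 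Relatedly, in the grow-up branch of part (2) the claim that the truncated-virial errors are ``$O(R^{-2})$-controlled'' is too optimistic: the exterior potential-energy term $\int_{|x|>R}|u|^{p+1}\,dx$ is not controlled by $R$ alone, since mass can migrate outward; one needs the almost-conservation of exterior mass (a flux estimate of size $tR^{-1}\|u'\|_{L^2}\|u\|_{L^2}$) to keep it small on time intervals of length comparable to $R$, which is precisely where the contradiction hypothesis $\sup_t\|u'(t)\|_{L^2}<\infty$ enters. Both repairs are standard and are exactly what \cite{IkIn17} does, but as written both of your virial steps have holes.
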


If $\omega>\gamma^{2}/4$, the ground state $Q_{\omega,\gamma}$ is a non-scattering, global, even solution on the
threshold. We also have solutions converging to the ground state as follows:
\begin{theorem}[Existence of special solutions (Gustafson--Inui \cite{GuIn22p})]
Let $\omega>\gamma^{2}/4$ be fixed. There exist two even solutions $U^{\pm}$ to \eqref{deltaNLS} such that 
\begin{itemize}
\item $M(U^{\pm})=M(Q_{\omega,\gamma})$, $E_{\gamma}(U^{\pm})=E_{\gamma}(Q_{\omega,\gamma})$, $U^{\pm}$ exist at least on $[0,\infty)$ and there exists $c>0$ such that 
\begin{align*}
	\|U^{\pm}(t)-e^{i\omega t}Q_{\omega,\gamma}\|_{H^{1}} \lesssim e^{-ct} \text{ for } t\geq 0.
\end{align*}
\item $K_{\gamma}(U^{+}(0))<0$ and $U^{+}$ blows up in finite negative time. 
\item $K_{\gamma}(U^{-}(0))>0$ and $U^{-}$ scatters backward in time.  
\end{itemize}
\end{theorem}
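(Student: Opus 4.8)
The plan is to follow the Duyckaerts--Merle construction of threshold solutions, adapted to the even subspace and to the delta interaction. First I would linearize \eqref{deltaNLS} around the standing wave by writing $u(t)=e^{i\omega t}(Q_{\omega,\gamma}+v(t))$, which turns the problem into $\partial_t v=\mathcal{L}_\omega v+N(v)$, where, after splitting $v=v_1+iv_2$, the linearized generator takes the Hamiltonian form $\mathcal{L}_\omega=\begin{pmatrix}0 & L_-\\ -L_+ & 0\end{pmatrix}$ with $L_+=-\partial_x^2-\gamma\delta+\omega-pQ_{\omega,\gamma}^{p-1}$ and $L_-=-\partial_x^2-\gamma\delta+\omega-Q_{\omega,\gamma}^{p-1}$, and $N(v)$ collecting the superlinear remainder. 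The crucial spectral input is that, because the ground state is linearly unstable for $p>5$, the operator $\mathcal{L}_\omega$ restricted to even functions possesses a pair of simple real eigenvalues $\pm e_0$ with $e_0>0$ and smooth, exponentially decaying eigenfunctions $\mathcal{Y}_\pm$, while the remainder of its spectrum lies on the imaginary axis; differentiating \eqref{eleq} in $\omega$ shows the generalized kernel is the two-dimensional span of $iQ_{\omega,\gamma}$ and $\partial_\omega Q_{\omega,\gamma}$ (a single Jordan block) coming from the phase and frequency invariances. Working in the even subspace is precisely what removes the translation direction and keeps this kernel two-dimensional.

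Second, I would build approximate solutions to any prescribed order. Using the decaying eigendirection as the seed, I set $v_k^\pm(t)=\sum_{j=1}^k e^{-je_0 t}\Phi_j^\pm$ with $\Phi_1^\pm=\pm\mathcal{Y}_-$, and solve the triangular system $(\mathcal{L}_\omega+je_0)\Phi_j^\pm=F_j^\pm$ recursively, where $F_j^\pm$ depends only on $\Phi_1^\pm,\dots,\Phi_{j-1}^\pm$. This is solvable for every $j\ge 2$ precisely because $-je_0\notin\spec(\mathcal{L}_\omega)$, the only real eigenvalues being $\pm e_0$; it yields an approximate solution whose error is $O(e^{-(k+1)e_0 t})$ in $H^1$.

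Third, to upgrade to an exact solution I would run a fixed-point argument on $[T,\infty)$ for $T$ large, modulating the phase by an orthogonality condition such as $\langle v(t),iQ_{\omega,\gamma}\rangle=0$ and treating the single unstable direction $\mathcal{Y}_+$ by a topological (shooting) argument so that its contribution is annihilated; letting $T\to\infty$ and invoking the conservation laws together with local well-posedness produces global-forward even solutions $U^\pm$ with $\|U^\pm(t)-e^{i\omega t}Q_{\omega,\gamma}\|_{H^1}\lesssim e^{-e_0 t}$, i.e.\ $c=e_0$. Since $v(t)\to 0$ in $H^1$ and both $M$ and $E_\gamma$ are conserved, passing to the limit $t\to\infty$ immediately gives $M(U^\pm)=M(Q_{\omega,\gamma})$ and $E_\gamma(U^\pm)=E_\gamma(Q_{\omega,\gamma})$.

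Finally I would read off the dichotomy from the sign of the seed. Expanding $K_\gamma$ around $Q_{\omega,\gamma}$ and using that the leading perturbation is $\pm e^{-e_0 t}\mathcal{Y}_-$, the two sign choices give $K_\gamma(U^+(0))<0$ and $K_\gamma(U^-(0))>0$. For $U^+$, the exponential decay of $Q_{\omega,\gamma}$ gives $xU^+(0)\in L^2$, and once one shows $K_\gamma(U^+(t))$ stays negative and bounded away from $0$ for $t\le 0$, the virial identity $\frac{d^2}{dt^2}\int x^2|U^+|^2\,dx=8K_\gamma(U^+(t))$ (the repulsive delta contributing $-4\gamma|U^+(0)|^2$, consistent with the $x^2$ weight) forces the nonnegative variance to vanish in finite negative time, hence finite-time blow-up backward. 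For $U^-$, the opposite sign places the solution in the region $K_\gamma>0$ backward in time, from which backward scattering follows from the even threshold scattering analysis. I expect the spectral step to be the main obstacle: one must show that the delta interaction --- which contributes only a jump condition at the origin and no smooth potential --- still produces exactly one simple real eigenvalue pair, no spectrum off the imaginary axis, and the coercivity of the linearized action transverse to the unstable and kernel directions. Controlling the unstable mode in the fixed-point step, and proving that the sign of $K_\gamma$ persists backward in time, are the other delicate points.
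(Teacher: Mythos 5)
The paper itself gives no proof of this theorem: it is imported verbatim from the companion paper \cite{GuIn22p}, whose argument is precisely the Duyckaerts--Merle/Duyckaerts--Roudenko scheme you describe --- linearization of \eqref{deltaNLS} around $e^{i\omega t}Q_{\omega,\gamma}$, a simple real eigenvalue pair $\pm e_{0}$ of the matrix operator built from $L_{\pm}$ on the even subspace (which removes the translation mode), approximate solutions of the form $\sum_{j} e^{-je_{0}t}\Phi_{j}^{\pm}$, a fixed-point argument in exponentially weighted spaces to produce $U^{\pm}$, and the backward-in-time dichotomy read off from the sign of $K_{\gamma}$ via the virial identity and the threshold scattering analysis. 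Your outline therefore follows essentially the same route as the cited source, and you correctly single out the genuinely delicate points (the spectral and coercivity analysis in the presence of the delta interaction, the exponential spatial decay needed for $xU^{+}(0)\in L^{2}$, and the persistence of the sign of $K_{\gamma}$).
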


Moreover, we have a global dynamics result in the high frequency case:
\begin{theorem}[Global dynamics on the threshold (Gustafson--Inui \cite{GuIn22p})]
Let $\omega>\gamma^{2}/4$. Assume that $u_{0} \in H^1(\mathbb{R})$ is even and satisfies the mass-energy condition
\begin{align*}
	M(u_{0})=M(Q_{\omega,\gamma}) \text{ and } E_{\gamma}(u_{0})=E_{\gamma}(Q_{\omega,\gamma}).
\end{align*}
Then the following are true for \eqref{deltaNLS}. 
\begin{enumerate}
\item If $K_{\gamma}(u_{0})>0$, the solution $u$ scatters, or else $u=U^{-}$ up to symmetry.
\item If $K_{\gamma}(u_{0})=0$, then $u=e^{i\omega t}Q_{\omega,\gamma}$ up to symmetry.
\item If $K_{\gamma}(u_{0})<0$ and $\int_{\mathbb{R}}|xu_{0}(x)|^{2}dx<\infty$, the solution $u$ blows up, or else $u=U^{+}$ up to symmetry. 
\end{enumerate}
\label{highresult}
\end{theorem}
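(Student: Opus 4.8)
The plan is to follow the threshold-dynamics paradigm of Duyckaerts--Merle and Duyckaerts--Roudenko, adapted to the even sector where the only remaining symmetry is the phase rotation (scaling is broken by the potential, and translation leaves the even class). Write $Q := Q_{\omega,\gamma}$. Since $M(u_0)=M(Q)$ and $E_\gamma(u_0)=E_\gamma(Q)$, the action $S_{\omega,\gamma}(u(t))=E_\gamma(u(t))+\tfrac{\omega}{2}M(u(t))\equiv r_{\omega,\gamma}$ is conserved at the threshold level. The case $K_\gamma(u_0)=0$ is then immediate: $u_0$ is an even function with $S_{\omega,\gamma}(u_0)=r_{\omega,\gamma}$ and $K_\gamma(u_0)=0$, i.e.\ a minimizer for $r_{\omega,\gamma}$, so Proposition~\ref{prop1.1}(3) gives $u_0=e^{i\theta}Q$ and hence $u(t)=e^{i\omega t}Q$ up to phase. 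For the other two cases, the first observation is that the sign of $K_\gamma(u(t))$ is preserved: were $K_\gamma(u(t_0))=0$ at some time, the same minimization argument would force $u$ to be the standing wave, so for any non-standing-wave threshold solution $K_\gamma(u(t))$ never vanishes and keeps a constant sign by continuity. This justifies splitting by the sign of $K_\gamma(u_0)$.

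Next I would assemble the variational and spectral tools. On $H_\even^1(\mathbb{R})$ set $d(f):=\inf_\theta\|f-e^{i\theta}Q\|_{H^1}$. A threshold coercivity estimate asserts that, for $f$ satisfying the mass-energy constraints with $d(f)$ small, $|K_\gamma(f)|\simeq d(f)$, the sign of $K_\gamma(f)$ recording on which side of the orbit $f$ lies; this follows by expanding the identity $S_{\omega,\gamma}(f)-r_{\omega,\gamma}=0$ to second order and using positivity of the Hessian off its finitely many bad directions. The spectral input comes from linearizing \eqref{deltaNLS} about $e^{i\omega t}Q$: writing $u=e^{i\omega t}(Q+v)$ and splitting into real and imaginary parts,
\[
\partial_t v = \mathcal{L} v, \qquad \mathcal{L}=\begin{pmatrix} 0 & L_- \\ -L_+ & 0 \end{pmatrix},
\]
with $L_+=-\partial_x^2-\gamma\delta_0+\omega-pQ^{p-1}$ and $L_-=-\partial_x^2-\gamma\delta_0+\omega-Q^{p-1}$. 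In the even sector one checks, by explicit Sturm--Liouville/ODE analysis since $Q$ is known in closed form, that $L_-\geq 0$ with $\ker L_-=\spn\{Q\}$, and that $L_+$ has exactly one simple negative eigenvalue and otherwise positive spectrum bounded away from $0$. Consequently $\mathcal{L}$ possesses a single pair of real eigenvalues $\pm e_0$, $e_0>0$, with even eigenfunctions $\mathcal{Y}_\pm$, the rest of its spectrum being purely imaginary with a gap. This exponential mode is the analytic source of the solutions $U^\pm$.

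The heart of the proof is a rigidity argument, which I run in each time direction and then combine; I describe $K_\gamma(u_0)>0$, the case $K_\gamma(u_0)<0$ being symmetric. First, a \emph{modulation and ejection} lemma: whenever $d(u(t))\leq\delta_*$ one writes $u=e^{i\theta(t)}(Q+h)$ with $h$ suitably orthogonal, expands $h=a_+\mathcal{Y}_++a_-\mathcal{Y}_-+g$, and finds $\dot a_\pm=\pm e_0 a_\pm+O(d^2)$ while $g$ and the phase modulation are controlled by $|K_\gamma|\simeq d$; hence if the solution does not converge to the orbit, $d(u(t))$ is driven back up to $\delta_*$ in finite time. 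Second, a \emph{concentration--compactness/rigidity} argument shows that if $u$ does not scatter forward then there is a sequence $t_n\to\sup I$ with $d(u(t_n))\to 0$: assuming non-scattering, the sub-threshold scattering theory (the $K_\gamma>0$ half of Theorem~\ref{belowresulteven}), together with a linear profile decomposition adapted to the $\delta$-potential, produces a minimal non-scattering threshold element with precompact (modulo phase) trajectory, which at the threshold must lie on the ground-state orbit. Combining the two: if such a sequence exists but $u$ does not converge to the orbit, ejection sends $d$ back to $\delta_*$ with $K_\gamma>0$ preserved, and there the variational gap places $u$ in the sub-threshold scattering regime, forcing $u$ to scatter --- a contradiction. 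Hence $d(u(t))\to 0$ as $t\to\sup I$; the spectral gap upgrades this to exponential convergence, and a Gronwall/contraction analysis of the modulation system shows the set of such solutions is, up to phase and time translation (and, combining the two time directions, time reversal), exactly the one-dimensional stable manifold occupied by $U^-$ on the $K_\gamma>0$ side. Thus $u$ scatters or $u=U^-$ up to symmetry, and uniqueness of the special solutions follows from the same manifold analysis.

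For $K_\gamma(u_0)<0$ I would run the time-reversed argument, with ``scatters'' replaced by ``blows up.'' Here the hypothesis $xu_0\in L^2(\mathbb{R})$ enters through the virial identity for $V(t):=\int_\mathbb{R} x^2|u(t,x)|^2\,dx$, namely $V''(t)=8K_\gamma(u(t))$ --- indeed this is precisely why $K_\gamma$ is the virial functional, the $\delta_0$-contribution matching the boundary term $-\tfrac{\gamma}{2}|f(0)|^2$ in $K_\gamma$ for any $\gamma$. Once ejection guarantees $K_\gamma(u(t))\leq -c<0$ throughout the forward maximal interval, $V''\leq -8c$ is incompatible with $V\geq 0$, forcing finite-time blow-up, while the exceptional solution on the unstable manifold is $U^+$. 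The role of the repulsive sign $\gamma<0$ lies in the variational theory underlying Proposition~\ref{prop1.1} and in the coercivity and spectral gap above, rather than in the virial identity itself. The main obstacle, in my view, is the concentration--compactness/rigidity step at the threshold in the presence of the $\delta$-potential: one must set up a profile decomposition correctly distinguishing profiles concentrating at the origin (which feel the potential) from those escaping to spatial infinity (which are governed by the free flow, cf.\ Mizutani's asymptotic completeness), and then prove the resulting minimal element is trapped on the ground-state orbit. The spectral bookkeeping for $L_\pm$ with the $\delta_0$ term is secondary but must still be carried out with care.
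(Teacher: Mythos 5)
First, note that Theorem~\ref{highresult} is not proved in this paper at all: it is quoted from the companion work \cite{GuIn22p}, and the present paper only proves the low-frequency analogue (Theorem~\ref{thm}). That said, your overall architecture --- identifying the $K_\gamma=0$ case with the ground state via the attainment of $r_{\omega,\gamma}$ in Proposition~\ref{prop1.1}, sign preservation of $K_\gamma$, concentration compactness for a non-scattering threshold solution, modulation near the orbit of $Q_{\omega,\gamma}$, the real eigenmode of the linearization as the source of $U^{\pm}$, and uniqueness of solutions converging exponentially to the orbit --- is the correct paradigm and matches the route of \cite{GuIn22p} (mirrored here in Sections~\ref{sec3}--6 for the low-frequency case).

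However, two steps as you state them would fail. (i) In the scattering case you claim that after an ejection ``the variational gap places $u$ in the sub-threshold scattering regime.'' It cannot: $S_{\omega,\gamma}(u(t))=E_\gamma(u(t))+\frac{\omega}{2}M(u(t))=r_{\omega,\gamma}$ is conserved, so the solution sits exactly at the threshold for all time no matter how far it is from the orbit, and Theorem~\ref{belowresulteven} never applies to it. The contradiction must instead be extracted from the localized virial identity combined with compactness and the modulation bounds (e.g.\ boundedness of $y(t)$ from $|\dot y|\lesssim|\mu_\gamma|$ played against the behaviour forced near the orbit, or a no-return argument), which is the actual content of the rigidity step. (ii) In the blow-up case, ``ejection guarantees $K_\gamma(u(t))\le -c<0$ throughout the forward maximal interval'' is unjustified: the exceptional solution $U^{+}$ is forward global with $K_\gamma(U^{+}(t))\to 0$, and even a putative non-exceptional solution could a priori make repeated excursions toward the orbit where $K_\gamma$ is small, so ejection alone yields no uniform bound and no convexity contradiction. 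The argument of Duyckaerts--Roudenko (reproduced in Section~6 here) runs the other way: assume $u$ is forward global, use the virial identity together with the sharp Gagliardo--Nirenberg/Cauchy--Schwarz inequality (Lemma~\ref{lem6.1}) to deduce $\int_t^\infty|\mu_\gamma(s)|\,ds\lesssim e^{-ct}$, conclude convergence to the orbit and hence $u=U^{+}$; finite-time blow-up is the alternative. A smaller but real omission: your coercivity claim $|K_\gamma(f)|\simeq d(f)$ silently identifies the virial functional with the Nehari-type functional $\mu_\gamma$, which for the delta potential scale differently; justifying that identification is precisely the role of the generalized functionals $K_{\omega,\gamma}^{\alpha,\beta}$ and Proposition~\ref{prop2.7}.
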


Our aim in the present paper is to classify the threshold dynamics of
even solutions in the low frequency case $0<\omega \leq \gamma^2/4$. Since there is no ground state, one can expect a scattering and blow-up dichotomy. 

\subsection{Main result}

We show the following scattering and blow-up dichotomy result at the threshold in the low frequency case, for even solutions: 
\begin{theorem}
\label{thm}
Let $0<\omega \leq \gamma^2/4$. Assume that $u_0 \in H_\even^1(\mathbb{R})$ satisfies $E_{\gamma}(u_0) = 2 E(Q_{\omega,0})$ and $M(u_0)=2M(Q_{\omega,0})$. Then we have the following:
\begin{enumerate}[(1)]
\item If $K_\gamma(u_0)>0$, the solution $u$ scatters.
\item Assume in addition that $xu_0 \in L^2(\mathbb{R})$. If  $K_\gamma(u_0)<0$, the solution $u$ blows up. 
\end{enumerate}
\end{theorem}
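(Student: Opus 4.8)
The plan is to carry out the concentration--compactness/rigidity scheme at the threshold action level $S_{\omega,\gamma}=r_{\omega,\gamma}=2n_{\omega,0}$, exactly as in the proof of Theorem~\ref{highresult}, the decisive new feature being that in the low-frequency regime this level is \emph{not} attained (Proposition~\ref{prop1.1}(3)). I first set up the variational framework. By conservation of mass and energy the hypotheses give $S_{\omega,\gamma}(u(t))\equiv r_{\omega,\gamma}$, and $M(u(t))\equiv 2M(Q_{\omega,0})>0$ forces $u(t)\neq 0$ for all $t$. If $K_\gamma(u(t_*))=0$ for some $t_*$, then $u(t_*)$ would be a nonzero even function with $S_{\omega,\gamma}=r_{\omega,\gamma}$ and $K_\gamma=0$, i.e.\ a minimizer for $r_{\omega,\gamma}$ -- impossible by Proposition~\ref{prop1.1}(3). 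Hence $t\mapsto K_\gamma(u(t))$ never vanishes and its sign is conserved. The second ingredient is the coercivity identity
\[
S_{\omega,\gamma}(f)-\frac{2}{p-1}K_\gamma(f)
=\frac{p-5}{2(p-1)}\|f'\|_{L^2}^2-\frac{\gamma(p-3)}{2(p-1)}|f(0)|^2+\frac{\omega}{2}\|f\|_{L^2}^2,
\]
in which every coefficient on the right is positive (recall $\gamma<0$, $p>5$); thus on the threshold $K_\gamma(u(t))\ge 0$ yields the uniform bound $\|u(t)\|_{H^1}^2\lesssim r_{\omega,\gamma}$, so in case~(1) the solution is automatically global and bounded in $H^1$. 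Finally I record the virial identity
\[
\frac{d^2}{dt^2}\|xu(t)\|_{L^2}^2=8K_\gamma(u(t)),
\]
valid whenever $xu_0\in L^2$: the $x^2\delta$ and $x\delta$ contributions vanish pointwise, and the point interaction enters only through the boundary term generated by the derivative-jump condition, which reproduces the $-\tfrac{\gamma}{2}|u(0)|^2$ in $K_\gamma$.

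For the scattering statement~(1) I argue by contradiction using a profile decomposition adapted to $-\Delta_\gamma$ and the even symmetry. Since all solutions strictly below the threshold scatter (Theorem~\ref{belowresulteven}(1)), the minimal non-scattering level is $r_{\omega,\gamma}$; and since any family of $\ge 2$ nontrivial profiles splits the action strictly below $r_{\omega,\gamma}$, each piece then scatters (profiles escaping to $\pm\infty$ decouple into \emph{free} solutions and are handled by the free subcritical theory together with Mizutani's comparison \cite{Miz20}), while the borderline symmetric split into two pieces of action $n_{\omega,0}$ each is excluded precisely by the non-attainment of $r_{\omega,\gamma}$. Hence a single profile carrying the full action survives, producing a minimal non-scattering even solution $u_c$ at level $r_{\omega,\gamma}$ with $K_\gamma(u_c)>0$ and orbit precompact modulo phase. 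The coercivity identity gives $K_\gamma(u_c(t))\ge\delta>0$, so a localized virial argument forces $\|xu_c(t)\|_{L^2}$ to grow, contradicting the uniform spatial localization of a precompact orbit. Therefore $u_c=0$ and every solution in case~(1) scatters.

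For the blow-up statement~(2), sign conservation gives $K_\gamma(u(t))<0$ for all $t$, so by the virial identity $t\mapsto\|xu(t)\|_{L^2}^2$ is strictly concave and nonnegative. If $\sup_t K_\gamma(u(t))<0$ then the second derivative is bounded above by a negative constant and the usual convexity argument yields finite-time blow-up in both directions. The only way to avoid this is $K_\gamma(u(t_n))\to 0$ along some sequence; then $\{u(t_n)\}$ is a minimizing sequence for $r_{\omega,\gamma}$, and by non-attainment the concentration--compactness analysis forces it to split into two free ground-state bubbles receding to $\pm\infty$, whence $\|xu(t_n)\|_{L^2}^2\to\infty$. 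I rule this out by combining the at-threshold action budget -- which leaves no room for excess kinetic energy, so the receding bubbles must be asymptotically at rest -- with the attractive character of the focusing interaction (the repulsive delta sits exponentially far from both bubbles and cannot drive them apart). This is incompatible with a sustained escape, so the grow-up alternative does not occur and the solution blows up in finite time.

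The main obstacle is the threshold degeneracy $K_\gamma\to 0$, equivalently the appearance of the escaping two-bubble profile at exactly the level $r_{\omega,\gamma}$. Everywhere it arises -- in excluding a compact standing-wave critical element for scattering and in excluding the grow-up attractor for blow-up -- the essential input is the non-attainment in Proposition~\ref{prop1.1}(3). The hardest point is to turn the soft concentration--compactness description of the two-bubble escape into a quantitative contradiction with the concavity and nonnegativity of the variance; this requires controlling the sign and size of the (exponentially small) bubble interaction at the critical energy budget, and is the step I expect to demand the most care.
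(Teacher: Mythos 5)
Your scattering argument has a genuine gap at the rigidity step. You assert that ``the coercivity identity gives $K_\gamma(u_c(t))\ge\delta>0$,'' but no such uniform lower bound holds at the threshold: the identity you wrote only converts $K_\gamma\ge 0$ into an $H^1$ bound, while at the level $S_{\omega,\gamma}=r_{\omega,\gamma}$ the quantity $K_\gamma(u(t))$ (equivalently the Nehari-type functional $\mu_\gamma$) may tend to $0$ along a sequence of times, the solution approaching the escaping two-bubble configuration $e^{i\theta}Q_{\omega}(|x|-y)$ with $y\to\infty$. This is exactly what distinguishes the threshold case from Theorem~\ref{belowresulteven}, and non-attainment of $r_{\omega,\gamma}$ forbids only an exact minimizer, not an asymptotically minimizing orbit. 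The paper closes this scenario by a modulation analysis around $Q(|x|-y)$ (Section~\ref{sec3} and the following one): after reducing by scaling to $\omega=1$, $\gamma\le -2$, one proves the two quantitative estimates $e^{-2y}\lesssim|\mu_\gamma(u)|$ --- coming from the expansion of $\mathcal{Q}(y)$, whose leading coefficient $1-2/|\gamma|$ encodes the repulsivity of the delta, and which degenerates at $\gamma=-2$ so that one must compute the next order $e^{-(p+1)y}$ --- and $|\dot y|\lesssim|\mu_\gamma(u)|$. The first forces $y\to\infty$ when $\mu_\gamma\to 0$, the second forces $y$ to remain bounded, and that is the contradiction. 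Your proposal contains none of this; in particular it never addresses the mismatch between $K_\gamma$ and $\mu_\gamma$ caused by the different scalings of $\|f'\|_{L^2}^2$ and $|f(0)|^2$ (resolved in Proposition~\ref{prop2.7} via the generalized functionals $K_{\omega,\gamma}^{\alpha,\beta}$), nor the degenerate endpoint $\omega=\gamma^2/4$, which is one of the stated novelties of the theorem.

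The blow-up argument has the same gap in sharper form. The dichotomy ``either $\sup_t K_\gamma<0$ or $K_\gamma(u(t_n))\to 0$'' is a reasonable starting point, but your resolution of the second alternative --- that the receding bubbles must be asymptotically at rest and the delta ``cannot drive them apart'' --- is a restatement of the difficulty rather than a proof, and it points in the wrong direction: in the paper's mechanism the repulsive potential \emph{forces} the bubble centers far out ($e_\gamma(y)\lesssim|\mu_\gamma|^2$, Corollary~\ref{cor4.15}), while the dynamical bound $|\dot y|\lesssim|\mu_\gamma|$ combined with the exponential integrability $\int_t^\infty|\mu_\gamma(s)|\,ds\lesssim e^{-ct}$ (obtained from the virial identity and the Cauchy--Schwarz-type inequality of Lemma~\ref{lem6.1}, itself a consequence of the sharp even Gagliardo--Nirenberg inequality with delta potential, Lemma~\ref{lem2.4}) forces $y(t)$ to converge to a finite limit. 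You would need to supply these quantitative modulation estimates to close either half of the proof.
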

\begin{remark}
\begin{enumerate}[(1)]
\item The mass-energy conditions $E_{\gamma}(u_0) = 2 E(Q_{\omega,0})$ and $M(u_0)=2M(Q_{\omega,0})$ come from the fact that $r_{\omega,\gamma}=2 n_{\omega,0}=2S_{\omega,0}(Q_{\omega,0})$ (see Proposition \ref{prop1.1}) if $0<\omega \leq \gamma^2/4$. 
By using the scaling property of $Q_{\omega,0}$, we can re-express Theorem~\ref{thm} by replacing these mass-energy conditions with the 
$\omega$-independent conditions $E_\gamma(u_0)M(u_0)^{\frac{1-s_c}{s_c}} = 2^{\frac{1}{s_c}}E(Q) M(Q)^{\frac{1-s_c}{s_c}}$ and $M(u_0) \geq 2M(Q_{\gamma^2/4,0})$, where
the additional mass condition ensures the low frequency. 
\item We note that $K_\gamma(u_{0})=0$ does not occur on the threshold in the low frequency case since there is no ground state.
\item The dichotomy results
Theorems~\ref{belowresulteven} and~\ref{thm} are optimal in this sense: for any $\epsilon > 0$, there exists a non-scattering, global, even, moving two-soliton solution $u$ with 
$E_\gamma(u_0)M(u_0)^{\frac{1-s_c}{s_c}} < 2^{\frac{1}{s_c}}E(Q) M(Q)^{\frac{1-s_c}{s_c}} + \epsilon$ (see~\cite{GIS23p}). 
\begin{comment}
Let $0<\omega \leq \gamma^2/4$ and $v>0$. Then setting 
\begin{align*}
	\mathcal{R}_\pm:= Q_{\omega,\gamma}(x\pm vt)e^{i\left( \pm\frac{1}{2}vx-\frac{1}{4}v^2t +\omega t \right)},
	\text{ and } 
	\mathcal{R}:= \mathcal{R}_+ + \mathcal{R}_-,
\end{align*}
then there exists a solution $u$ and $T\in \mathbb{R}$, $c,C>0$ such that
\begin{align*}
	\|u(t) -\mathcal{R}(t)\|_{H^1} \leq Ce^{-ct}
\end{align*}
for any $t>T$. 
This solution satisfies $E_\gamma(u_0)M(u_0)^{\frac{1-s_c}{s_c}} > 2^{\frac{1}{s_c}}E(Q) M(Q)^{\frac{1-s_c}{s_c}}$ and it is a non-scattering global solution.
\end{comment}
\item The condition on the sign of $K_\gamma(u_0)$ may be replaced by the sign of $\sqrt{2}\|Q_{1,0}\|_{L^2}^{1-s_c}\|Q_{1,0}\|_{\dot{H}^1}^{s_c}-\|u_0\|_{L^2}^{1-s_c}\|u_0\|_{\dot{H}_\gamma^1}^{s_c}$ in Theorem \ref{thm1.5} (in the low frequency case $\omega \leq \gamma^2/4$) and Theorem \ref{thm}. The latter condition is related to the Nehari functional as seen in Section \ref{sec2.4}.  See Appendix \ref{secB} for the proof. 
\item
In~\cite{GuIn23p}, even, logarithmic two-solitons with action $2n_{\omega,0}$ are constructed for higher frequencies.
Theorem~\ref{thm} shows that no such
solutions can exist at lower frequency.
\end{enumerate}
\end{remark}

We give a summary of global dynamics results for~\eqref{deltaNLS} in Section~\ref{secA}. 

\subsection{Idea of proof}

The basic idea to show the scattering result relies on the work by Duyckaerts, Landoulsi, and Roudenko \cite{DLR22} (see also \cite{MMZ21,ArIn22,GuIn22}). 
Their argument is by contradiction, based on concentration compactness and modulation. First of all, 
it is enough to consider the case $\omega=1$ by a scaling argument. 
We suppose there exists a threshold, even, non-scattering solution to \eqref{deltaNLS} satisfying the mass and energy condition $M(u)=2M(Q)$ and $E_\gamma(u)=2E(Q)$ and $K_\gamma(u)>0$. 
This solution has a compactness property:
there exists $x:[0,\infty) \to [0,\infty)$ such that for any $\varepsilon>0$ there exists $R=R_\varepsilon>0$ such that 
\begin{align*}
	\int_{\{|x-x(t)|>R\} \cap \{|x+x(t)|>R\}} |u'(t)|^2 +|u(t)|^2 dx <\varepsilon
\end{align*}
for any $t>0$. If $\mu_\gamma(u):=2\|Q'\|_{L^2}^2-\|u\|_{\dot{H}_\gamma^1}^2 > c$ for some $c>0$, then we get a contradiction in a similar way to the case below the ground state. Indeed, when we calculate the localized virial identity, the error term can be estimated using the compactness, and can be absorbed by the virial functional $K_\gamma$, which does not go to zero by $\mu_\gamma(u)>c$.
On the other hand, if $\mu_\gamma(u) \to 0$,
we need a more careful modulation argument. In particular, we need to exploit the effect of the repulsive potential. Indeed, if  $\gamma=0$, we have a non-scattering solution converging to the ground state (see \cite{CFR22}). The repulsive effect appears through an estimate like
\begin{equation} \label{repulse}
	0 \leq \left(1-\frac{2}{|\gamma|}\right)e^{-2y(t)} \lesssim |\mu_\gamma(u)|,
\end{equation}
where $y(t)$ is the translation parameter of the modulation (for $\gamma < -2$; see below for $\gamma=-2$). This shows that if $\mu_\gamma(u(t))\to 0$, then $y(t) \to \infty$. On the other hand, we also have 
\begin{align*}
	|\dot{y}(t)| \lesssim |\mu_\gamma(u)|
\end{align*}
by the modulation analysis
(this estimate holds in general for NLS and is not influenced by the repulsive potential), implying that $y$ is bounded, giving a contradiction. 

One difficulty of our problem is that we cannot remove the translation parameter by even symmetry. In the higher dimensional case, radial symmetry removes the translation parameter since the radial Sobolev embedding $H_\rad^1(\mathbb{R}^d) \subset L^q(\mathbb{R}^d)$, where $2<q<2d/(d-2)$ and $d\geq 3$, is compact. In one dimension,
however, the embedding is not compact.

A second difficulty is that the scaling of $\dot{H}^1$ and of the delta interaction are different. Indeed, letting $f_\lambda(x)=\lambda^\alpha f(\lambda^\beta x)$, we have 
\begin{align*}
	\|f_\lambda'\|_{L^2}^2 = \lambda^{2\alpha+\beta}\|f'\|_{L^2}^2,
	~|f_\lambda(0)|^2 = \lambda^{2\alpha} |f(0)|^2.
\end{align*}
This means that the virial functional $K_\gamma$ differs from the  functional $\mu_\gamma$, which corresponds to the Nehari functional under the mass-energy condition
(while they are equivalent for NLS without potential).
However, we can show that there exists $c>0$ such that $|\mu_\gamma(u)| \leq c |K_\gamma(u)|$ under the assumption, by using a variational argument based on more general functionals than the Nehari and virial ones. 

A third difficulty is to obtain the repulsive effect from the delta interaction. In the odd case (see~\cite{GuIn22}), we use the odd function $Q(x-y)-Q(-x-y)$ as the imaginary ground state and we could show an estimate like $e^{-2y}\lesssim |\mu(u)|$. In our problem, the key point to use $Q(|x|-y)$. While it is not smooth, it is more natural than $Q(x-y)+Q(-x-y)$, since the ground state of \eqref{deltaNLS} is given by  $Q(|x|-\xi)$ for some $\xi$ when $\omega>\gamma^2/4$ (see Remark \ref{rmk1.2}). 
In this way, we obtain an estimate like~\eqref{repulse}. The $e^{-2y}$ here comes from the decay order $e^{-x}$ of the ground state $Q(x)$. When $\gamma=-2$, 
however, the coefficient of $e^{-2y}$ vanishes. In this degenerate case, we need a more careful calculation, using the next order in the decay of $Q$, which is $e^{-px}$. In this way, we get
\begin{align*}
	e^{-(p+1)y} \lesssim |\mu_\gamma(u)|,
\end{align*}
which allows for a contradiction in the same way as for $\gamma<-2$. 

The proof of the blow-up result is based on 
the contradiction argument of Duyckaerts and Roudenko \cite{DuRo10}. Suppose that $u$ is global in the positive time direction. Then, by the virial identity and a Cauchy--Schwarz type inequality, which can be shown by using a Gagliardo--Nirenberg type inequality related to the delta potential, we know that $u$ blows up in finite negative time and satisfies $\int_{t}^{\infty} |\mu_\gamma(u(s))|ds \leq Ce^{-ct}$. The estimate implies that $y(t)$ converges to a positive constant as $t \to \infty$. This contradicts the divergence of $y$, which follows from $e^{-2y} \lesssim |\mu_\gamma(u)|$ (or $e^{-(p+1)y} \lesssim |\mu_\gamma(u)|$).

%
%
%
%
%
%
%
%
%

\subsection{Notation}
For a function $f$, we denote the spatial derivative by $f'$ and the time derivative by $\dot{f}$. 

We set norms
\begin{align*}
	\|f\|_{H_{\omega,\gamma}^1}^2:=\|f'\|_{L^2}^2 +|\gamma||f(0)|^2+\omega \|f\|_{L^2}^2
\end{align*}
and 
\begin{align*}
	\|f\|_{\dot{H}_{\gamma}^1}^2:= \|\sqrt{-\Delta_\gamma}f\|_{L^2}^2= \|f'\|_{L^2}^2 +|\gamma||f(0)|^2.
\end{align*}
In what follows, we denote $Q_{\omega,0}$ by $Q_{\omega}$ and $Q_{1,0}$ by $Q$ for simplicity. 
Let $\chi \in C^\infty(\mathbb{R})$ be a cut-off function satisfying
\begin{align*}
	\chi(x) := 
	\begin{cases}
	1 & (|x|>1),
	\\
	0 & (|x|<1/2).
	\end{cases}
\end{align*}
For $R>0$, we set
\begin{align*}
	\chi_R(x):=\chi\left( \frac{x}{R}\right).
\end{align*}
We also use the following notations: 
\begin{align*}
	\chi_R^c(x):=1-\chi_R(x),~ 
	\chi_R^+ (x) := \mathbbm{1}_{(0,\infty)} \chi_R, \text{ and }
	\chi_R^- (x) := \mathbbm{1}_{(-\infty,0)} \chi_R.
\end{align*}
We define 
\begin{align*}
	\mu_\gamma(f)= 2\|Q'\|_{L^2}^2 - \|f\|_{\dot{H}_\gamma^1}^2. 
\end{align*}
For a function $f$ and $y>0$, we set $\mathcal{T}_y f(x):=f(x-y)$ and 
\begin{align*}
	\mathcal{G}_{R,y}f(x):=\chi_{R}^{+}(x)\mathcal{T}_{y}f(x) + \chi_{R}^{-}(x)\mathcal{T}_{-y}f(x).
\end{align*}.


\section{Preliminaries}


\subsection{Variational structure}

We revisit the variational argument not only for the 
virial and Nehari functionals but also for more general functionals $K_{\omega,\gamma}^{\alpha,\beta}$, defined by
\begin{align*}
	K_{\omega,\gamma}^{\alpha,\beta}(f)
	&:=\partial_{\lambda}S_{\omega,\gamma}(e^{\alpha \lambda}f(e^{\beta\lambda}\cdot))|_{\lambda=0}
	\\
	&=\frac{2\alpha+\beta}{2}\|\partial_{x}f\|_{L^{2}}^{2}
	-\alpha\gamma|f(0)|^{2}
	+\omega \frac{2\alpha-\beta}{2}\|f\|_{L^{2}}^{2}
	-\frac{(p+1)\alpha-\beta}{p+1}\|f\|_{L^{p+1}}^{p+1}
\end{align*}
for $\alpha,\beta \in \mathbb{R}$. 
By definition, $(\alpha,\beta)=(1/2,1)$ gives the virial functional, i.e., $K_{\gamma}(f)=K_{\omega,\gamma}^{1/2,1}(f)$, and $(\alpha,\beta)=(1,0)$ gives the Nehari functional
\begin{align*}
	I_{\omega,\gamma}(f):=K_{\omega,\gamma}^{1,0}(f)
	=\|\partial_{x}f\|_{L^{2}}^{2} -\gamma|f(0)|^{2} +\omega\|f\|_{L^{2}}^{2} -\|f\|_{L^{p+1}}^{p+1}. 
\end{align*}

If  $(\alpha,\beta)$ satisfies
\begin{align}
\label{eq2.1}
	\alpha>0,~2\alpha-\beta\geq 0,~2\alpha+\beta\geq0,
\end{align}
we obtain the following lemma by \cite[Lemmas 2.7--2.10]{IkIn17}. 

\begin{lemma}
\label{lem2.3}

Let $0 < \omega\leq \gamma^2/4$ and $(\alpha,\beta)$ satisfy \eqref{eq2.1}. 
We have
\begin{align*}
	2S_{\omega,0}(Q_{\omega,0})
	=\inf \{S_{\omega,\gamma}(f):f\in H_{\even}^{1}(\mathbb{R})\setminus \{0\}, K_{\omega,\gamma}^{\alpha,\beta}(f)=0\}.
\end{align*}
\end{lemma}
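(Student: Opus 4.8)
The plan is to prove the identity by two matching bounds, after recasting the constrained minimization as a mountain‑pass value along the scaling orbit generated by $(\alpha,\beta)$. I would first record two structural facts. On the constraint set the nonlinear term can be eliminated: since the coefficient of $\|f\|_{L^{p+1}}^{p+1}$ in $S_{\omega,\gamma}$ is $-1/(p+1)$ and in $K_{\omega,\gamma}^{\alpha,\beta}$ is $-((p+1)\alpha-\beta)/(p+1)$, one has
\begin{align*}
	S_{\omega,\gamma}(f)=S_{\omega,\gamma}(f)-\frac{1}{(p+1)\alpha-\beta}K_{\omega,\gamma}^{\alpha,\beta}(f)\qquad\text{whenever }K_{\omega,\gamma}^{\alpha,\beta}(f)=0,
\end{align*}
and the right-hand side is a quadratic form in $\|f'\|_{L^2}^2$, $|f(0)|^2$, $\|f\|_{L^2}^2$ whose coefficients are, up to a common positive factor, proportional to $(p-1)\alpha-2\beta$, $|\gamma|((p-1)\alpha-\beta)$ and $(p-1)\alpha\,\omega$; condition \eqref{eq2.1} together with $p>5$ and $\gamma<0$ makes all three strictly positive, so $S_{\omega,\gamma}$ is coercive on the constraint set. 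Second, for even $g\neq0$ set $g_\lambda:=e^{\alpha\lambda}g(e^{\beta\lambda}\cdot)$; then $\tfrac{d}{d\lambda}S_{\omega,\gamma}(g_\lambda)=K_{\omega,\gamma}^{\alpha,\beta}(g_\lambda)$, and expanding $S_{\omega,\gamma}(g_\lambda)$ as a sum of exponentials shows that the nonlinear exponent $(p+1)\alpha-\beta$ strictly exceeds each of $2\alpha\pm\beta$ and $2\alpha$ (again using $p>5$ and $|\beta|\le 2\alpha$). Hence $\lambda\mapsto S_{\omega,\gamma}(g_\lambda)$ rises from $0^+$ to a unique positive maximum and then decreases to $-\infty$, giving a unique $\lambda^*(g)$ with $K_{\omega,\gamma}^{\alpha,\beta}(g_{\lambda^*})=0$ and $S_{\omega,\gamma}(g_{\lambda^*})=\max_\lambda S_{\omega,\gamma}(g_\lambda)$. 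Thus the right-hand infimum equals the mountain-pass value $\inf_{g\in H_{\even}^1\setminus\{0\}}\max_\lambda S_{\omega,\gamma}(g_\lambda)$.

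For the upper bound I would use two separated bumps. Put $\phi_y:=\mathcal{T}_y Q_\omega+\mathcal{T}_{-y}Q_\omega$, which is even. As $y\to\infty$ the two translates decouple, so $\|\phi_y'\|_{L^2}^2$, $\|\phi_y\|_{L^2}^2$, $\|\phi_y\|_{L^{p+1}}^{p+1}$ converge to twice the corresponding quantities for $Q_\omega$, while $|\phi_y(0)|^2=4Q_\omega(y)^2\to0$ by the exponential decay of $Q_\omega$. Since $Q_\omega$ is a critical point of $S_{\omega,0}$ we have $K_{\omega,0}^{\alpha,\beta}(Q_\omega)=0$, whence $K_{\omega,\gamma}^{\alpha,\beta}(\phi_y)\to0$ and $S_{\omega,\gamma}(\phi_y)\to 2S_{\omega,0}(Q_\omega)$. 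Transversality of the zero of $\lambda\mapsto K_{\omega,\gamma}^{\alpha,\beta}((\phi_y)_\lambda)$ forces $\lambda^*(\phi_y)\to0$, so $\max_\lambda S_{\omega,\gamma}((\phi_y)_\lambda)\to 2S_{\omega,0}(Q_\omega)$ and the infimum is $\le 2S_{\omega,0}(Q_\omega)$.

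The main work is the lower bound, and here the repulsivity and the low-frequency hypothesis must be used. From $\gamma<0$ and $\alpha>0$ one gets $K_{\omega,0}^{\alpha,\beta}(f)=K_{\omega,\gamma}^{\alpha,\beta}(f)-\alpha|\gamma||f(0)|^2\le0$ and $S_{\omega,\gamma}(f)=S_{\omega,0}(f)+\tfrac{|\gamma|}{2}|f(0)|^2\ge S_{\omega,0}(f)$; but merely discarding the potential yields only the factor-one bound $S_{\omega,\gamma}(f)\ge n_{\omega,0}$, so the delta term cannot be thrown away. I would instead restrict to the half line: writing $h:=f|_{(0,\infty)}$, evenness gives $S_{\omega,\gamma}(f)=2S^{\mathrm{half}}_{\omega,0}(h)+\tfrac{|\gamma|}{2}h(0)^2$ and $0=K_{\omega,\gamma}^{\alpha,\beta}(f)=2K^{\mathrm{half},\alpha,\beta}_{\omega,0}(h)+\alpha|\gamma|h(0)^2$, with $S^{\mathrm{half}},K^{\mathrm{half}}$ the half-line counterparts, so the task is to show the half-line minimum is $n_{\omega,0}$. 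When $h(0)=0$ the odd extension $\tilde F$ of $h$ across the origin satisfies $S_{\omega,0}(\tilde F)=S_{\omega,0}(f)$ and $K_{\omega,0}^{\alpha,\beta}(\tilde F)=0$, reducing matters to the odd, potential-free minimization, whose value is $2n_{\omega,0}$. When $h(0)\neq0$ the boundary value must be kept: the domain condition fixes $f'(0+)=\tfrac{|\gamma|}{2}f(0)$, and the hypothesis $\omega\le\gamma^2/4$, i.e.\ $\sqrt\omega\le|\gamma|/2$, is exactly the threshold at which a single even bump can no longer meet this slope condition—equivalently the regime where $Q_{\omega,\gamma}$ ceases to exist (Remark \ref{rmk1.2})—so any admissible configuration must effectively carry two lumps, each costing $n_{\omega,0}$. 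This splitting step, carried out for the virial functional in Proposition \ref{prop1.1}(2) and \cite[Lemmas 2.7--2.10]{IkIn17}, goes through for general $(\alpha,\beta)$ because the quadratic reduction above and the associated sharp Gagliardo--Nirenberg inequality have the same structure for every admissible pair; since the generalized minimizers are the critical configurations of $S_{\omega,0}$ (two copies of $Q_\omega$ sent to infinity), at which all scaling derivatives $K_{\omega,0}^{\alpha,\beta}$ vanish simultaneously, the value is independent of $(\alpha,\beta)$ and equals the virial value $2S_{\omega,0}(Q_\omega)$.

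I expect the delicate point to be the $h(0)\neq0$ case of the lower bound: because the infimum is not attained there is no Euler--Lagrange equation to exploit, and one must rule out partial concentration of mass near the origin using only the borderline inequality $\sqrt\omega\le|\gamma|/2$, which is precisely where the low-frequency hypothesis is consumed.
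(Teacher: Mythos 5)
The paper does not prove this lemma internally---it cites \cite[Lemmas 2.7--2.10]{IkIn17}---so there is no in-paper argument to match against; judged on its own, your framework is the right one (the mountain-pass reformulation along the $(\alpha,\beta)$-orbit is correct since $(p+1)\alpha-\beta>2\alpha\pm\beta,\,2\alpha$ under \eqref{eq2.1}, and the two-bump family $\mathcal{T}_yQ_\omega+\mathcal{T}_{-y}Q_\omega$ gives the upper bound cleanly), but the lower bound in the case $f(0)\neq 0$, which is the entire content of the lemma, is asserted rather than proved. Two specific problems. First, the sentence ``the domain condition fixes $f'(0+)=\tfrac{|\gamma|}{2}f(0)$'' is a non sequitur: competitors in the minimization range over all of $H^1_{\even}$, not over $\mathcal{D}(-\Delta_\gamma)$, and since (as you note) the infimum is not attained there is no Euler--Lagrange equation to transfer such a condition to minimizing sequences. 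The subsequent claim that ``any admissible configuration must effectively carry two lumps'' is a restatement of the conclusion, and the hypothesis $\sqrt{\omega}\le|\gamma|/2$ is never actually used in an inequality. The missing mechanism is quantitative: writing $h=f|_{(0,\infty)}$, one extends $h$ to the whole line by the tail $h(0)e^{\sqrt{\omega}x}$ on $(-\infty,0)$; this extension costs exactly $\sqrt{\omega}\,|h(0)|^2\le\tfrac{|\gamma|}{2}|h(0)|^2$ in the $H^1_{\omega,0}$-energy while only increasing the $L^{p+1}$-norm, so the whole-line sharp Gagliardo--Nirenberg inequality applied to the extension yields $\|f\|_{L^{p+1}}^2\le 2^{-\frac{p-1}{p+1}}C_{\omega,0}\|f\|_{H^1_{\omega,\gamma}}^2$ (Lemma \ref{lem2.4}), and it is this doubled constant that produces the factor $2$ in the minimal action. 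This is precisely where, and only where, $0<\omega\le\gamma^2/4$ enters.

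Second, your reduction from general $(\alpha,\beta)$ to the Nehari case is circular: you argue the value is $(\alpha,\beta)$-independent ``since the generalized minimizers are two copies of $Q_\omega$ sent to infinity,'' but identifying the minimizing sequences of the $(\alpha,\beta)$-problem presupposes the lemma. The standard non-circular route is through the functional $J^{\alpha,\beta}:=S_{\omega,\gamma}-\tfrac{1}{(p+1)\alpha-\beta}K^{\alpha,\beta}_{\omega,\gamma}$, which under \eqref{eq2.1} is a nonnegative combination of $\|f'\|_{L^2}^2$, $|f(0)|^2$, $\|f\|_{L^2}^2$ that is nondecreasing along the orbit $g_\lambda$; one shows the constrained infimum equals $\inf\{J^{\alpha,\beta}(f):K^{\alpha,\beta}_{\omega,\gamma}(f)\le 0,\ f\neq 0\}$ and then compares different pairs (this is the content of the cited \cite[Lemmas 2.7--2.10]{IkIn17} and of Lemma \ref{lem2.2.0}). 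A minor point: for $\beta=\pm2\alpha$ the map $\lambda\mapsto S_{\omega,\gamma}(g_\lambda)$ tends to a positive constant, not to $0^+$, as $\lambda\to-\infty$; this does not affect uniqueness of the maximum but the phrasing should be corrected.
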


For $0 < \omega\leq \gamma^2/4$ and $(\alpha,\beta)$ satisfying \eqref{eq2.1}, we define 
\begin{align*}
	\mathcal{K}_{\omega,\gamma}^{\alpha,\beta,+}
	&:=\{f\in H_{\even}^{1}(\mathbb{R}): M(f)=2M(Q_{\omega,0}), E_{\gamma}(f)=2E(Q_{\omega,0}), K_{\omega,\gamma}^{\alpha,\beta}(f)>0\},
	\\
	\mathcal{K}_{\omega,\gamma}^{\alpha,\beta,-}
	&:=\{f\in H_{\even}^{1}(\mathbb{R}): M(f)=2M(Q_{\omega,0}), E_{\gamma}(f)=2E(Q_{\omega,0}), K_{\omega,\gamma}^{\alpha,\beta}(f)<0\}.
\end{align*}

We note that there is no function satisfying $M(f)=2M(Q_{\omega,0}), E_{\gamma}(f)=2E(Q_{\omega,0})$, and $ K_{\omega,\gamma}^{\alpha,\beta}(f)=0$. 

\begin{lemma}
\label{lem2.2.0}
$\mathcal{K}_{\omega,\gamma}^{\alpha,\beta,+}$ is independent of $(\alpha,\beta)$; that is, $\mathcal{K}_{\omega,\gamma}^{\alpha,\beta,+}=\mathcal{K}_{\omega,\gamma}^{\alpha',\beta',+}$ for $(\alpha,\beta)$ and $(\alpha',\beta')$ satisfying \eqref{eq2.1}. A similar statement holds for $\mathcal{K}_{\omega,\gamma}^{\alpha,\beta,-}$. 
\end{lemma}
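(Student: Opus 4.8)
The plan is to notice that the sets $\mathcal{K}_{\omega,\gamma}^{\alpha,\beta,\pm}$ differ from one another only through the sign condition on $K_{\omega,\gamma}^{\alpha,\beta}$, while the two defining constraints cut out the \emph{same} underlying set
\[
	\mathcal{M}:=\{f\in H_{\even}^{1}(\mathbb{R}): M(f)=2M(Q_{\omega,0}),\ E_{\gamma}(f)=2E(Q_{\omega,0})\}
\]
for every admissible $(\alpha,\beta)$. Thus $\mathcal{K}_{\omega,\gamma}^{\alpha,\beta,+}$ (resp. $\mathcal{K}_{\omega,\gamma}^{\alpha,\beta,-}$) is precisely the subset of $\mathcal{M}$ on which $K_{\omega,\gamma}^{\alpha,\beta}$ is positive (resp. negative), and it suffices to prove that, for each \emph{fixed} $f\in\mathcal{M}$, the sign of $K_{\omega,\gamma}^{\alpha,\beta}(f)$ does not depend on the choice of $(\alpha,\beta)$ satisfying \eqref{eq2.1}.

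First I would record two elementary structural facts. For fixed $f$, the map $(\alpha,\beta)\mapsto K_{\omega,\gamma}^{\alpha,\beta}(f)$ is \emph{linear}, hence continuous, since each of the four terms in its defining formula is linear in $(\alpha,\beta)$. Second, the parameter region \eqref{eq2.1}, namely $\{\alpha>0,\ 2\alpha-\beta\geq0,\ 2\alpha+\beta\geq0\}$, is an intersection of (half-)planes and is therefore convex, in particular path-connected; given two admissible pairs $(\alpha,\beta)$ and $(\alpha',\beta')$, the straight segment $(\alpha_s,\beta_s):=(1-s)(\alpha,\beta)+s(\alpha',\beta')$, $s\in[0,1]$, stays inside the region (each component $(1-s)\alpha+s\alpha'$ remains positive, and the two closed inequalities are preserved under convex combination).

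The key input is the non-vanishing recorded in the note preceding the statement: for every $f\in\mathcal{M}$ and every $(\alpha,\beta)$ satisfying \eqref{eq2.1} one has $K_{\omega,\gamma}^{\alpha,\beta}(f)\neq0$. This is the one genuinely variational point, rooted in Lemma~\ref{lem2.3} together with the non-attainment of $r_{\omega,\gamma}=2S_{\omega,0}(Q_{\omega,0})$ for $0<\omega\leq\gamma^{2}/4$ (Proposition~\ref{prop1.1}(3)): for $f\in\mathcal{M}$ one computes $S_{\omega,\gamma}(f)=E_\gamma(f)+\tfrac{\omega}{2}M(f)=2S_{\omega,0}(Q_{\omega,0})$, so a zero of $K_{\omega,\gamma}^{\alpha,\beta}$ lying in $\mathcal{M}$ would realize the threshold infimum, which cannot occur. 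Granting this, I would fix $f\in\mathcal{M}$ and apply the intermediate value theorem to the continuous, nowhere-vanishing function $s\mapsto K_{\omega,\gamma}^{\alpha_s,\beta_s}(f)$ on $[0,1]$: its sign is constant, so $K_{\omega,\gamma}^{\alpha,\beta}(f)$ and $K_{\omega,\gamma}^{\alpha',\beta'}(f)$ agree in sign. Consequently $f\in\mathcal{K}_{\omega,\gamma}^{\alpha,\beta,+}\iff f\in\mathcal{K}_{\omega,\gamma}^{\alpha',\beta',+}$, and identically for the minus sets, which is exactly the assertion.

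The only place where any real analysis is used is the uniform non-vanishing of $K_{\omega,\gamma}^{\alpha,\beta}$ along the \emph{entire} segment, not merely at its two endpoints; this is precisely what the preceding note supplies, so I expect no further obstacle: once that input is in hand, the remainder is a soft continuity-and-connectedness argument. The mild point to check carefully is just that the convex region \eqref{eq2.1} indeed contains the connecting segment (so that the note applies at every intermediate parameter), which the convexity observation above settles.
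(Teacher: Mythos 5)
Your argument is correct and is essentially the standard one that the paper invokes (by reference to \cite[Lemma 2.2]{GuIn22p}): for fixed $f$ in the mass--energy shell, $K_{\omega,\gamma}^{\alpha,\beta}(f)$ is linear, hence continuous, in $(\alpha,\beta)$, the admissible region \eqref{eq2.1} is convex, and the non-vanishing of $K_{\omega,\gamma}^{\alpha,\beta}$ on the shell (the note preceding the lemma, which rests on Lemma~\ref{lem2.3} and the non-attainment in Proposition~\ref{prop1.1}) forces the sign to be constant along the connecting segment. No gaps.
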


\begin{proof}
This can be proved in a similar way to \cite[Lemma 2.2]{GuIn22p}. We omit the proof. 
\end{proof}

This means that we may omit $(\alpha,\beta)$ from $\mathcal{K}_{\omega,\gamma}^{\alpha,\beta,\pm}$. This variational structure gives us  the invariant sets under the flow.

\begin{lemma}
\label{lem2.5}
Let $0 < \omega \leq \gamma^2/4$. 
Assume that $u_{0} \in H_\even^1(\mathbb{R})$ satisfies $E_\gamma(u_0)=2E(Q_{\omega,0})$ and $M(u_0)=2M(Q_{\omega,0})$. 
Let $u$ denote the solution to \eqref{deltaNLS} with $u(0)=u_{0}$.
\begin{enumerate}
\item If $K_{\gamma}(u_{0})>0$, then $u(t)\in \mathcal{K}_{\omega,\gamma}^{+}$ as long as the solution exists. 
This gives a uniform $H^1$ bound of the solution and thus the solution is global. 
\item If $K_{\gamma}(u_{0})<0$, then $u(t)\in \mathcal{K}_{\omega,\gamma}^{-}$ as long as the solution exists. 
\end{enumerate}
\end{lemma}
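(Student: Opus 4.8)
The plan is to combine the two conservation laws with the variational structure from Lemmas~\ref{lem2.3} and~\ref{lem2.2.0}, and then to propagate the sign of $K_\gamma$ along the flow by a soft continuity argument. First I would record that the flow preserves even symmetry together with the mass–energy constraints. Since $-\partial_x^2$, the point interaction at the origin, and the nonlinearity $|u|^{p-1}u$ are all invariant under $x\mapsto -x$, the function $u(t,-x)$ solves \eqref{deltaNLS} with the same (even) data; by uniqueness of the $H^1$ solution, $u(t)$ remains even on the maximal existence interval $I$. Conservation of mass and energy then gives $M(u(t))=2M(Q_{\omega,0})$ and $E_\gamma(u(t))=2E(Q_{\omega,0})$ for all $t\in I$. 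Hence $u(t)$ already satisfies every defining condition of $\mathcal{K}_{\omega,\gamma}^{\pm}$ except possibly the sign of $K_\gamma$.

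Next I would promote the sign condition from $t=0$ to all of $I$. The map $t\mapsto K_\gamma(u(t))$ is continuous because $u\in C(I;H^1(\mathbb{R}))$ and $K_\gamma$ is continuous on $H^1(\mathbb{R})$. The key input is the non-existence statement recorded just before Lemma~\ref{lem2.2.0}: there is no function with $M=2M(Q_{\omega,0})$, $E_\gamma=2E(Q_{\omega,0})$, and $K_{\omega,\gamma}^{\alpha,\beta}=0$; applied with $(\alpha,\beta)=(1/2,1)$ this shows $K_\gamma(u(t))\neq 0$ for every $t\in I$. A continuous, nowhere-vanishing function keeps its sign, so $\operatorname{sign} K_\gamma(u(t))=\operatorname{sign} K_\gamma(u_0)$ throughout $I$. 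This yields $u(t)\in \mathcal{K}_{\omega,\gamma}^{+}$ in case~(1) and $u(t)\in\mathcal{K}_{\omega,\gamma}^{-}$ in case~(2).

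Finally, in case~(1) I would extract the uniform $H^1$ bound from $K_\gamma(u(t))>0$ and the fixed value of the energy. Eliminating the $L^{p+1}$ term from the combination $(p-1)E_\gamma-2K_\gamma$ leaves
\begin{equation*}
	(p-1)E_\gamma(u) - 2K_\gamma(u) = \frac{p-5}{2}\|u'\|_{L^2}^2 + \frac{|\gamma|(p-3)}{2}|u(0)|^2,
\end{equation*}
in which both coefficients are strictly positive since $p>5$. Discarding the positive term $2K_\gamma(u)$ gives $\tfrac{p-5}{2}\|u'(t)\|_{L^2}^2 \leq (p-1)E_\gamma(u)=2(p-1)E(Q_{\omega,0})$, a bound uniform in $t$. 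Together with mass conservation this bounds $\|u(t)\|_{H^1}$ uniformly on $I$, and the blow-up alternative then forces $I=\mathbb{R}$, so $u$ is global.

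I expect the only delicate point to be this $H^1$ bound: because the $\dot H^1$ norm and the delta interaction scale differently, $K_\gamma$ is not a multiple of $\|u\|_{\dot H^1_\gamma}^2$, so the bound cannot be read off from a single functional. The particular combination $(p-1)E_\gamma-2K_\gamma$ works precisely because it annihilates the nonlinear term while leaving both quadratic coefficients positive, which is exactly where the supercriticality $p>5$ enters. The sign-propagation step is routine once the non-vanishing of $K_\gamma$ on the mass–energy constraint set is invoked.
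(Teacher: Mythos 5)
Your proposal is correct and is exactly the standard potential-well argument that the paper invokes by citation (to \cite[Lemma 2.17]{IkIn17}) without writing it out: conservation laws plus non-existence of functions with $K_{\omega,\gamma}^{1/2,1}=0$ on the mass--energy constraint set give sign propagation by continuity, and the combination $(p-1)E_\gamma-2K_\gamma$, which you compute correctly, yields the uniform $\dot H^1$ bound. No gaps.
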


\begin{proof}
This follows from the standard argument. See e.g. \cite[Lemma 2.17]{IkIn17}. 
\end{proof}

\subsection{The Gagliardo--Nirenberg inequality}

By the characterization of the ground state $Q_{\omega,0}$, we have the following Gagliardo--Nirenberg type inequality: 
\begin{align*}
	\|f\|_{L^{p+1}}^{2} \leq C_{\omega,0} \|f\|_{H_{\omega,0}^1}^2
\end{align*}
for $H^1(\mathbb{R})$, where 
\begin{align*}
	C_{\omega,0}^{-1}=\frac{\|Q_{\omega,0}\|_{H_{\omega,0}^1}^2}{\|Q_{\omega,0}\|_{L^{p+1}}^2}
	=\left\{ \frac{2(p+1)}{p-1}S_{\omega,0}(Q_{\omega,0})\right\}^{\frac{p-1}{p+1}}
	=\|Q_{\omega,0}\|_{L^{p+1}}^{p-1}
\end{align*}
is the best constant and it is attained by $Q_{\omega,0}$. 
For the repulsive Dirac delta potential, we have the following inequality. 

\begin{lemma}[The Gagliardo--Nirenberg type inequality w.r.t. delta potential]
\label{lem2.4}
Let $0<\omega \leq \gamma^2/4$. 
For any $f \in H_{\even}^1(\mathbb{R})$, the following estimate holds:
\begin{align*}
	\|f\|_{L^{p+1}}^{2} \leq 2^{-\frac{p-1}{p+1}}C_{\omega,0} \|f\|_{H_{\omega,\gamma}^1}^2.
\end{align*}
The constant is optimal, but it is not attained. 
\end{lemma}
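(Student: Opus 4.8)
The plan is to identify the optimal constant in the stated inequality with a constrained minimization that has already been evaluated in Lemma~\ref{lem2.3}. Write $C^{-1}$ for the best constant in the even Gagliardo--Nirenberg inequality with the delta norm, i.e.
\[
C^{-1} := \inf_{f \in H_{\even}^1(\mathbb{R})\setminus\{0\}} J(f), \qquad J(f) := \frac{\|f\|_{H_{\omega,\gamma}^1}^2}{\|f\|_{L^{p+1}}^2}.
\]
Discarding the nonnegative term $|\gamma||f(0)|^2$ and using the free Gagliardo--Nirenberg inequality shows this infimum is finite and positive, and the assertion of the lemma is precisely that $C = 2^{-\frac{p-1}{p+1}} C_{\omega,0}$. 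So the whole task reduces to computing $C^{-1}$.

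First I would relate $J$ to the Nehari functional $I_{\omega,\gamma} = K_{\omega,\gamma}^{1,0}$. Given any even $f \neq 0$, there is a unique $\lambda > 0$ with $I_{\omega,\gamma}(\lambda f) = 0$, namely $\lambda^{p-1} = \|f\|_{H_{\omega,\gamma}^1}^2 / \|f\|_{L^{p+1}}^{p+1}$, and every point of the even Nehari manifold arises this way. On that manifold $\|g\|_{H_{\omega,\gamma}^1}^2 = \|g\|_{L^{p+1}}^{p+1}$, so
\[
S_{\omega,\gamma}(g) = \tfrac12 \|g\|_{H_{\omega,\gamma}^1}^2 - \tfrac{1}{p+1}\|g\|_{L^{p+1}}^{p+1} = \frac{p-1}{2(p+1)} \|g\|_{H_{\omega,\gamma}^1}^2,
\]
and substituting $g = \lambda f$ and simplifying the powers yields the scale-invariant identity $S_{\omega,\gamma}(\lambda f) = \frac{p-1}{2(p+1)} J(f)^{\frac{p+1}{p-1}}$. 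Since $t \mapsto t^{(p+1)/(p-1)}$ is increasing, minimizing $S_{\omega,\gamma}$ over the even Nehari manifold is the same as minimizing $J$:
\[
\inf\{ S_{\omega,\gamma}(g) : g \in H_{\even}^1(\mathbb{R})\setminus\{0\},\ I_{\omega,\gamma}(g)=0 \} = \frac{p-1}{2(p+1)} (C^{-1})^{\frac{p+1}{p-1}}.
\]

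Next I would invoke Lemma~\ref{lem2.3} with $(\alpha,\beta)=(1,0)$, which satisfies \eqref{eq2.1}, to evaluate the left-hand side as $2 S_{\omega,0}(Q_{\omega,0})$. Combined with the displayed formula $C_{\omega,0}^{-1} = \left( \frac{2(p+1)}{p-1} S_{\omega,0}(Q_{\omega,0}) \right)^{\frac{p-1}{p+1}}$, solving for $C^{-1}$ gives $(C^{-1})^{\frac{p+1}{p-1}} = 2 \cdot \frac{2(p+1)}{p-1} S_{\omega,0}(Q_{\omega,0}) = 2\,(C_{\omega,0}^{-1})^{\frac{p+1}{p-1}}$, hence $C^{-1} = 2^{\frac{p-1}{p+1}} C_{\omega,0}^{-1}$, i.e. $C = 2^{-\frac{p-1}{p+1}} C_{\omega,0}$, which is the claim.

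Finally, for non-attainment: an extremizer of $J$ would, after the above rescaling, be a minimizer of $S_{\omega,\gamma}$ over the even Nehari manifold realizing $r_{\omega,\gamma}=2S_{\omega,0}(Q_{\omega,0})$, and as a constrained critical point it would solve the Euler--Lagrange equation \eqref{eleq}, i.e. be an even ground state at frequency $\omega$. By Proposition~\ref{prop1.1}(3) no such ground state exists for $0 < \omega \le \gamma^2/4$, a contradiction. The main obstacle I anticipate is not the constant computation, which is essentially bookkeeping once the scaling identity is in place, but the clean passage from an $L^{p+1}$-extremizer of $J$ to a genuine critical point solving \eqref{eleq} (the Lagrange-multiplier and regularity step on the Nehari manifold); alternatively this non-attainment can be routed directly through the non-attainment of $r_{\omega,\gamma}$ supplied by Proposition~\ref{prop1.1} together with Lemma~\ref{lem2.3}.
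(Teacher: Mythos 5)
Your argument is correct and follows essentially the same route as the paper's: it identifies the best constant with the even Nehari-constrained minimization $\inf\{S_{\omega,\gamma}(f): f\in H^1_{\even}\setminus\{0\},\ I_{\omega,\gamma}(f)=0\}=2S_{\omega,0}(Q_{\omega,0})$ (Lemma~\ref{lem2.3} with $(\alpha,\beta)=(1,0)$) via the scaling identity $S_{\omega,\gamma}(\lambda f)=\frac{p-1}{2(p+1)}J(f)^{\frac{p+1}{p-1}}$, and derives non-attainment from the non-existence of an even ground state at low frequency. The constant bookkeeping checks out, so no gaps.
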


\begin{proof}
This follows from the minimizing problem:
\begin{align*}
	2S_{\omega,0}(Q_{\omega,0})
	=\inf\{ S_{\omega,\gamma}(f): f\in H_\even^{1}(\mathbb{R})\setminus\{0\}, I_{\omega,\gamma}(f)=0\}.
\end{align*}
We omit the proof. 

\end{proof}

\begin{remark}
When $\omega > \gamma^2/4$, we have a different best constant. See \cite{GuIn23}. 
\end{remark}


\subsection{Reduction by scaling}

By scaling, we can reduce our main theorem to the following for $\omega=1$: 
\begin{theorem}
\label{thm2.5}
Let $\gamma\leq -2$. 
Assume that $u_0 \in H_\even^1(\mathbb{R})$ satisfies the mass-energy condition
\begin{align}
\label{ME}
\tag{ME}
	M(u_0)=2M(Q) \text{ and } E_{\gamma}(u_0) =2E(Q).
\end{align}
Then we have
\begin{enumerate}[(1)]
\item If $K_\gamma(u_0)>0$, then the solution scatters in both time directions. 
\item Assume in additiona that $xu_0\in L^2(\mathbb{R})$. If $K_\gamma(u_0)<0$, then the solution blows up in finite time.
\end{enumerate}
\end{theorem}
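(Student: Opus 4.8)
The plan is to prove Theorem~\ref{thm2.5} by the concentration–compactness–rigidity scheme, reducing the general frequency case to $\omega = 1$ (equivalently $\gamma \le -2$) as stated. The first step is to record the variational tools that are already in place. By Lemma~\ref{lem2.5}, the mass–energy condition \eqref{ME} together with the sign of $K_\gamma(u_0)$ forces the solution into an invariant set $\mathcal{K}_{1,\gamma}^{\pm}$; in the positive case this yields a uniform $H^1$ bound and hence a global solution. A crucial preliminary, which I would establish using Lemma~\ref{lem2.3} and the freedom in $(\alpha,\beta)$ granted by Lemma~\ref{lem2.2.0}, is the coercivity-type comparison $|\mu_\gamma(u)| \le c\,|K_\gamma(u)|$ valid on the threshold. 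This reconciles the mismatch between the virial functional $K_\gamma$ and the functional $\mu_\gamma = 2\|Q'\|_{L^2}^2 - \|\cdot\|_{\dot H^1_\gamma}^2$ that drives the dynamics, and it is needed precisely because $\dot H^1$ and the delta interaction scale differently.

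For part (1), I would argue by contradiction. Assume a threshold even solution in $\mathcal{K}_{1,\gamma}^{+}$ that does not scatter. Standard concentration–compactness (following \cite{DLR22}) produces a critical element enjoying the compactness property described in the Idea of Proof: there is a translation parameter $x(t)$ such that the solution's mass and energy stay concentrated near $\pm x(t)$ uniformly in $t$. I would then split into two regimes according to the size of $\mu_\gamma(u)$. If $\mu_\gamma(u) \ge c > 0$ along a sequence, the localized virial (Morawetz) identity, with the error terms controlled by compactness and absorbed into $K_\gamma$ (which stays bounded away from zero by the coercivity comparison above), yields a monotone quantity that cannot exist on an infinite time interval, giving the contradiction exactly as in the below-threshold case. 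The delicate regime is $\mu_\gamma(u) \to 0$, where the solution tries to resemble a rescaled ground-state profile.

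The main obstacle is this degenerate regime, and here the repulsive potential must be exploited. I would set up a modulation analysis around the profile $\mathcal{G}_{R,y}Q = \chi_R^+ \mathcal{T}_y Q + \chi_R^- \mathcal{T}_{-y} Q$, i.e. two bumps built from $Q(|x| - y)$, choosing $Q(|x|-y)$ rather than $Q(x-y)+Q(-x-y)$ because it matches the true ground state $Q(|x|-\xi)$ of \eqref{deltaNLS} when $\omega > \gamma^2/4$ (Remark~\ref{rmk1.2}). The modulation yields two competing estimates: a lower bound reflecting the repulsive delta interaction,
\begin{align*}
	0 \le \Bigl(1 - \tfrac{2}{|\gamma|}\Bigr) e^{-2y(t)} \lesssim |\mu_\gamma(u)|,
\end{align*}
which forces $y(t) \to \infty$ when $\mu_\gamma(u) \to 0$, and the transport bound $|\dot y(t)| \lesssim |\mu_\gamma(u)|$ coming from the equation itself, which forces $y$ to remain bounded. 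These contradict each other and close part (1). The truly hard subcase is the degenerate coupling $\gamma = -2$, where the coefficient $1 - 2/|\gamma|$ vanishes; there I would extract the next order in the decay of $Q$, namely the $e^{-px}$ correction to the leading $e^{-x}$, to upgrade the lower bound to $e^{-(p+1)y} \lesssim |\mu_\gamma(u)|$, which still diverges and yields the same contradiction.

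For part (2), assuming $xu_0 \in L^2$ and $K_\gamma(u_0) < 0$, I would follow the Duyckaerts–Roudenko contradiction scheme \cite{DuRo10}. Supposing the solution is global in positive time, the virial identity for the variance $\int |x|^2 |u|^2\,dx$ together with a Cauchy–Schwarz–type inequality, itself a consequence of the Gagliardo–Nirenberg inequality of Lemma~\ref{lem2.4} adapted to the delta potential, shows that $u$ must blow up in finite negative time and satisfies the decay $\int_t^\infty |\mu_\gamma(u(s))|\,ds \le C e^{-ct}$. Integrating the transport bound $|\dot y| \lesssim |\mu_\gamma(u)|$ then shows $y(t)$ converges to a finite positive limit as $t \to \infty$. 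But the repulsive lower bound $e^{-2y} \lesssim |\mu_\gamma(u)|$ (or $e^{-(p+1)y} \lesssim |\mu_\gamma(u)|$ when $\gamma = -2$) combined with $\mu_\gamma(u) \to 0$ forces $y(t) \to \infty$, a contradiction; hence $u$ cannot be global in positive time, and by time reversal it blows up in finite time. Throughout, the recurring technical burden is the careful modulation bookkeeping needed to make both the repulsive lower bound and the transport estimate rigorous simultaneously, with the $\gamma = -2$ degeneracy demanding the refined second-order decay expansion of $Q$.
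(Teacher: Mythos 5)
Your proposal follows essentially the same route as the paper: reduction via the invariant sets and the comparison $|\mu_\gamma|\lesssim|K_\gamma|$ (Proposition \ref{prop2.7}), a critical element with two-bump compactness, modulation around $Q(|\cdot|-y)$ yielding the repulsive lower bound $e_\gamma(y)\lesssim|\mu_\gamma(u)|$ (Corollary \ref{cor4.15}) against the transport bound $|\dot y|\lesssim|\mu_\gamma(u)|$ (Lemma \ref{lem4.16}), with the $e^{-px}$ correction handling the degenerate case $\gamma=-2$, and the Duyckaerts--Roudenko scheme for blow-up. The only difference is one of bookkeeping: the paper runs the boundedness-versus-divergence contradiction through the spliced parameter $X(t)$ and time-averaged integrals of $\mu_\gamma$ rather than a clean split into the regimes $\mu_\gamma\geq c$ and $\mu_\gamma\to 0$, but this is a presentational refinement of the same argument.
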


Indeed, Theorem \ref{thm} can be shown by assuming Theorem \ref{thm2.5}. 

\begin{proof}[Proof of Theorem \ref{thm} from Theorem \ref{thm2.5}]
Let $u_0 \in H_{\even}^{1}(\mathbb{R})$ satisfy 
$E_\gamma(u_0)=2E(Q_{\omega,0})$, $M(u_0)=2M(Q_{\omega,0})$, and $K_\gamma(u_0)> 0$. 
By the scaling structure of $Q_{\omega,0}$, we obtain
\begin{align*}
	\omega^{-\frac{p+3}{2(p-1)}}E_\gamma(u_0)=2E(Q)
	\text{ and }
	\omega^{\frac{p-5}{2(p-1)}}M(u_0)=2M(Q).
\end{align*}
Let $u_{0,\omega^{-1}}(x):=\omega^{-1/(p-1)}u_{0}(\omega^{-1/2}x)$. Then we get
\begin{align*}
	E_{\omega^{-\frac{1}{2}}\gamma}(u_{0,\omega^{-1}})=2E(Q)
	\text{ and }
	M(u_{0,\omega^{-1}})=2M(Q).
\end{align*}
We also have
\begin{align*}
	K_{\omega^{-\frac{1}{2}}\gamma}(u_{0,\omega^{-1}})=\omega^{\frac{p+3}{2(p-1)}}K_\gamma(u_0)> 0
\end{align*}
By Theorem \ref{thm2.5}, if $\omega^{-1/2}\gamma\leq -2$, which is equivalent to $0<\omega \leq \gamma^2/4$, then we find that the solution $u_{\omega^{-1}}$ with the initial data $u_{0,\omega^{-1}}$ scatters. 
By rescaling, the behavior of the solution $u$ with $u(0)=u_0$  is same as that of  $u_{\omega^{-1}}$. Thus $u$ scatters. This argument also works for the case that $K_\gamma$ is  negative. 
\end{proof}

Thus, in what follows, it is enough to consider $\omega=1$ under the assumption $\gamma\leq -2$. 

\subsection{Relation between virial functional and $\mu_\gamma$}
\label{sec2.4}

We define 
\begin{align*}
	\mu_\gamma (f):=2\|Q'\|_{L^2}^2 - \|f\|_{\dot{H}_\gamma^1}^2. 
\end{align*}
This is nothing but the Nehari functional under the mass-energy condition \eqref{ME}, that is, $I_{1,\gamma}(u(t))= \frac{p-1}{2} \mu_\gamma(u(t))$. Thus this sign is invariant under the flow by Lemma \ref{lem2.5}.  Since the scaling ratio between $\dot{H}^1$-norm and $\delta$-interaction is different, there is a difference between the virial functional $K_\gamma$ and $\mu_\gamma$. However, we have Proposition \ref{prop2.7} below. The proof is similar to that in \cite{GuIn22p}. We set $K_{\gamma}^{\alpha,\beta}:=K_{1,\gamma}^{\alpha,\beta}$

\begin{lemma}
\label{lem2.6}
Let $u$ be the solution to \eqref{deltaNLS} with $u(0)=u_{0}$ satisfying \eqref{ME}. Then
\begin{align*}
	K_{\gamma}^{\alpha,\beta}(u(t)) 
	&= \frac{(p-1)\alpha-2\beta}{2} 2\|Q'\|_{L^{2}}^{2} 
	\\
	&\quad -\left(  \frac{(p-1)\alpha-2\beta}{2}\|\partial_x u(t)\|_{L^{2}}^{2} +\frac{(p-1)\alpha-\beta}{2}|\gamma||u(t,0)|^{2}\right)
\end{align*}
for any $t$. 
\end{lemma}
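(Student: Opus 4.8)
The plan is to compute $K_\gamma^{\alpha,\beta}(u(t))$ directly from its definition and eliminate the $L^2$ and $L^{p+1}$ norms using the conserved quantities. Since the mass and energy are conserved, the constraint \eqref{ME} propagates to every time $t$, so $\|u(t)\|_{L^2}^2 = 2M(Q) = 2\|Q\|_{L^2}^2$, while from $E_\gamma(u(t)) = \tfrac12 \|u(t)\|_{\dot H_\gamma^1}^2 - \tfrac{1}{p+1}\|u(t)\|_{L^{p+1}}^{p+1} = 2E(Q)$ we may solve
\begin{align*}
    \|u(t)\|_{L^{p+1}}^{p+1} = \tfrac{p+1}{2}\|u(t)\|_{\dot H_\gamma^1}^2 - 2(p+1)E(Q).
\end{align*}
I would first rewrite the gradient/delta part of $K_\gamma^{\alpha,\beta}$ as $\alpha\|u\|_{\dot H_\gamma^1}^2 + \tfrac{\beta}{2}\|\partial_x u\|_{L^2}^2$, using $\tfrac{2\alpha+\beta}{2}\|\partial_x u\|_{L^2}^2 - \alpha\gamma|u(0)|^2 = \alpha\|u\|_{\dot H_\gamma^1}^2 + \tfrac{\beta}{2}\|\partial_x u\|_{L^2}^2$ (recall $-\gamma = |\gamma|$ and $\|u\|_{\dot H_\gamma^1}^2 = \|\partial_x u\|_{L^2}^2 + |\gamma||u(0)|^2$).

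Substituting the two conservation relations into the definition, the $\|u\|_{L^2}^2$ term becomes the constant $(2\alpha-\beta)\|Q\|_{L^2}^2$, and the $\|u\|_{L^{p+1}}^{p+1}$ term splits into a multiple of $\|u\|_{\dot H_\gamma^1}^2$ plus the constant $2[(p+1)\alpha-\beta]E(Q)$. Collecting the coefficient of $\|u\|_{\dot H_\gamma^1}^2$ gives $\alpha - \tfrac{(p+1)\alpha-\beta}{2} = -\tfrac{(p-1)\alpha-\beta}{2}$, and combining this, after expanding $\|u\|_{\dot H_\gamma^1}^2$, with the leftover $\tfrac{\beta}{2}\|\partial_x u\|_{L^2}^2$ produces exactly the asserted variable part $-\tfrac{(p-1)\alpha-2\beta}{2}\|\partial_x u\|_{L^2}^2 - \tfrac{(p-1)\alpha-\beta}{2}|\gamma||u(0)|^2$.

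It then remains to identify the constant $(2\alpha-\beta)\|Q\|_{L^2}^2 + 2[(p+1)\alpha-\beta]E(Q)$ with $\tfrac{(p-1)\alpha-2\beta}{2}\cdot 2\|Q'\|_{L^2}^2$. This is the one step requiring genuine input about the profile, so I would invoke the Nehari and Pohozaev identities for equation \eqref{ellip}, obtained by pairing $-Q''+Q=Q^p$ with $Q$ and with $xQ'$ and integrating; these yield $\|Q\|_{L^{p+1}}^{p+1} = \tfrac{2(p+1)}{p-1}\|Q'\|_{L^2}^2$, $\|Q\|_{L^2}^2 = \tfrac{p+3}{p-1}\|Q'\|_{L^2}^2$, and $E(Q) = \tfrac{p-5}{2(p-1)}\|Q'\|_{L^2}^2$. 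After substituting and factoring, the bracket $(2\alpha-\beta)(p+3) + [(p+1)\alpha-\beta](p-5)$ collapses to $(p-1)[(p-1)\alpha-2\beta]$, giving the claimed constant. The only real care needed is this algebraic bookkeeping; conceptually there is no obstacle beyond correctly invoking the conservation laws together with the scaling identities for $Q$.
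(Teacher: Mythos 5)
Your computation is correct: the conservation laws propagate \eqref{ME} to all times, the substitutions and the coefficient bookkeeping all check out, and the identification of the constant with $\frac{(p-1)\alpha-2\beta}{2}\cdot 2\|Q'\|_{L^2}^2$ via the Nehari and Pohozaev identities for $Q$ is exactly what the paper's invocation of $K_0^{\alpha,\beta}(Q)=0$ encodes. This is essentially the same direct calculation the paper intends.
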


\begin{proof}
This follows from a direct calculation using \eqref{ME} and $K_0^{\alpha,\beta}(Q)=0$.
\end{proof}

\begin{lemma}
\label{lem2.8}
We have $K_{\gamma}(f) - c \mu_\gamma(f)=K_{\gamma}^{\frac{1}{2}-\frac{2c}{p-1},1}(f)$ for $c\in \mathbb{R}$ and $f \in H^1$ satisfying \eqref{ME} . 
\end{lemma}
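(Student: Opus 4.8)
The plan is to exploit the fact that, with the second exponent frozen at $\beta=1$, the functional $K_\gamma^{\alpha,1}:=K_{1,\gamma}^{\alpha,1}$ depends \emph{affinely} on $\alpha$. Indeed, reading off the defining formula for $K_{\omega,\gamma}^{\alpha,\beta}$ with $\omega=1$, $\beta=1$, every coefficient (namely $\tfrac{2\alpha+1}{2}$, $-\alpha\gamma$, $\tfrac{2\alpha-1}{2}$, and $-\tfrac{(p+1)\alpha-1}{p+1}$) is an affine function of $\alpha$. Hence I would expand $\alpha\mapsto K_\gamma^{\alpha,1}(f)$ around the virial point $\alpha=\tfrac12$, where $K_\gamma^{1/2,1}=K_\gamma$, writing
\begin{align*}
	K_\gamma^{\alpha,1}(f)=K_\gamma(f)+\left(\alpha-\tfrac{1}{2}\right)\partial_\alpha K_\gamma^{\alpha,1}(f).
\end{align*}

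Next I would differentiate term by term in the definition to get $\partial_\alpha K_\gamma^{\alpha,1}(f)=\|f'\|_{L^2}^2-\gamma|f(0)|^2+\|f\|_{L^2}^2-\|f\|_{L^{p+1}}^{p+1}$, which is precisely the Nehari functional $I_{1,\gamma}(f)=K_{1,\gamma}^{1,0}(f)$ (consistent with $I_{1,\gamma}$ being the $\alpha$-slope of $K_\gamma^{\alpha,\beta}$ at fixed $\beta$). I would then invoke the identity from Section~\ref{sec2.4}: under the mass-energy constraint~\eqref{ME} one has $I_{1,\gamma}(f)=\tfrac{p-1}{2}\mu_\gamma(f)$. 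Substituting this into the affine expansion gives $K_\gamma^{\alpha,1}(f)=K_\gamma(f)+(\alpha-\tfrac12)\tfrac{p-1}{2}\mu_\gamma(f)$. To reach the target $K_\gamma(f)-c\,\mu_\gamma(f)$, I would solve $(\alpha-\tfrac12)\tfrac{p-1}{2}=-c$, obtaining $\alpha=\tfrac12-\tfrac{2c}{p-1}$, which is exactly the claimed exponent; this completes the proof.

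There is essentially no hard obstacle here, as the computation is routine linear algebra in $\alpha$. The one point deserving attention is that the hypothesis~\eqref{ME} enters \emph{only} through the conversion of $I_{1,\gamma}$ into $\tfrac{p-1}{2}\mu_\gamma$; without~\eqref{ME} the constant $2\|Q'\|_{L^2}^2$ built into $\mu_\gamma$ could not be reconciled with the $\|f\|_{L^2}^2$ and $\|f\|_{L^{p+1}}^{p+1}$ contributions to the Nehari functional, and the asserted identity would fail. As an alternative that bypasses citing the $I_{1,\gamma}$--$\mu_\gamma$ relation, one could instead apply Lemma~\ref{lem2.6} to both sides: plugging $\alpha=\tfrac12-\tfrac{2c}{p-1}$, $\beta=1$ into the right-hand side and comparing the coefficients of $2\|Q'\|_{L^2}^2$, $\|f'\|_{L^2}^2$, and $|\gamma||f(0)|^2$ term by term against $K_\gamma(f)-c\,\mu_\gamma(f)$ yields the same conclusion; this is the more mechanical route but requires no separate input beyond Lemma~\ref{lem2.6}.
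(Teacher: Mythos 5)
Your proposal is correct: the affine expansion $K_\gamma^{\alpha,1}(f)=K_\gamma(f)+(\alpha-\tfrac12)I_{1,\gamma}(f)$ together with the identity $I_{1,\gamma}(f)=\tfrac{p-1}{2}\mu_\gamma(f)$ under \eqref{ME} (stated at the start of Section~\ref{sec2.4}) yields exactly $\alpha=\tfrac12-\tfrac{2c}{p-1}$, and this is the same ``direct calculation via Lemma~\ref{lem2.6}'' that the paper intends, which you also carry out explicitly as your alternative route. Nothing is missing.
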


\begin{proof}
This follows from direct calculation and Lemma \ref{lem2.6}.
\end{proof}

We note that $(\frac{1}{2}-\frac{2c}{p-1},1)$ does not satisfy \eqref{eq2.1}. However, the functional $K_{\gamma}^{\frac{1}{2}-\frac{2c}{p-1},1}$ can be used as the functional of the minimizing problem by taking $c$ small depending on $p$.

\begin{lemma}
\label{lem2.9}
Let $0<c<(p-5)/4$. 
There is no function $f \in H_\even^1$ satisfying \eqref{ME} and $K_{\gamma}^{\frac{1}{2}-\frac{2c}{p-1},1}(f)=0$.
\end{lemma}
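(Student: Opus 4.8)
Throughout set $(\alpha,\beta)=\bigl(\tfrac12-\tfrac{2c}{p-1},\,1\bigr)$, so the functional in question is $K_\gamma^{\alpha,\beta}$. The plan is to reduce the claim to a non-attainment property, exactly as in Lemma~\ref{lem2.3} and the remark preceding Lemma~\ref{lem2.2.0}; the only new feature is that $(\alpha,\beta)$ fails \eqref{eq2.1}, since $2\alpha-\beta=-\tfrac{4c}{p-1}<0$. First, any $f$ satisfying \eqref{ME} has $S_{1,\gamma}(f)=E_\gamma(f)+\tfrac12 M(f)=2E(Q)+M(Q)=2S_{1,0}(Q)$. Next, a direct computation from Lemma~\ref{lem2.6} (with $(\alpha,\beta)=(\tfrac12,1)$) and the definition of $\mu_\gamma$ gives, under \eqref{ME}, the identity
\[
  K_\gamma(f)=\frac{p-5}{4}\,\mu_\gamma(f)-\frac12|\gamma|\,|f(0)|^2 ,
\]
whence, by Lemma~\ref{lem2.8},
\[
  K_\gamma^{\alpha,\beta}(f)=K_\gamma(f)-c\,\mu_\gamma(f)=\Bigl(\frac{p-5}{4}-c\Bigr)\mu_\gamma(f)-\frac12|\gamma|\,|f(0)|^2 ,
\]
where $\tfrac{p-5}{4}-c>0$ is precisely the hypothesis $0<c<(p-5)/4$.

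This identity disposes of two cases immediately. If $\mu_\gamma(f)<0$, both terms are $\le0$ and the first is $<0$, so $K_\gamma^{\alpha,\beta}(f)<0$. If $\mu_\gamma(f)=0$, then $K_\gamma^{\alpha,\beta}(f)=-\tfrac12|\gamma||f(0)|^2$, which vanishes only when $f(0)=0$; but $\mu_\gamma(f)=0$ means $I_{1,\gamma}(f)=\tfrac{p-1}{2}\mu_\gamma(f)=0$, i.e.\ $f$ lies on the Nehari manifold while satisfying \eqref{ME}, impossible by the strictness of the Gagliardo–Nirenberg inequality in Lemma~\ref{lem2.4} (equivalently, by the remark before Lemma~\ref{lem2.2.0} for the admissible pair $(1,0)$). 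It therefore remains to exclude $\mu_\gamma(f)>0$, i.e.\ $f\in\mathcal{K}_{1,\gamma}^{+}$.

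For this last case I would argue variationally. Since $S_{1,\gamma}(f)=2S_{1,0}(Q)$ and $K_\gamma^{\alpha,\beta}(f)=0$, such an $f$ would minimize
\[
  \nu:=\inf\bigl\{S_{1,\gamma}(g): g\in H^1_{\even}(\mathbb{R})\setminus\{0\},\ K_\gamma^{\alpha,\beta}(g)=0\bigr\},
\]
provided $\nu=2S_{1,0}(Q)$; so it suffices to prove $\nu=2S_{1,0}(Q)$ and that $\nu$ is \emph{not attained}. Eliminating $\|g\|_{L^{p+1}}^{p+1}$ through $K_\gamma^{\alpha,\beta}(g)=0$ writes $S_{1,\gamma}$ on the constraint as
\[
  S_{1,\gamma}(g)=\frac{1}{2D}\Bigl[((p-1)\alpha-2\beta)\|g'\|_{L^2}^2+((p-1)\alpha-\beta)|\gamma||g(0)|^2+(p-1)\alpha\|g\|_{L^2}^2\Bigr],\quad D:=(p+1)\alpha-\beta,
\]
a sum of nonnegative terms because $0<c<(p-5)/4$ makes $(p-1)\alpha-2\beta$, $(p-1)\alpha-\beta$, $(p-1)\alpha$ and $D$ all positive; this gives coercivity on the constraint. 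The upper bound $\nu\le2S_{1,0}(Q)$ comes from the even two–soliton $g_n=Q(\cdot-n)+Q(\cdot+n)$: since $K_{1,0}^{\alpha,\beta}(Q)=0$ and $g_n(0)=2Q(n)\to0$ exponentially, one has $K_\gamma^{\alpha,\beta}(g_n)\to0$ and $S_{1,\gamma}(g_n)\to2S_{1,0}(Q)$, and a vanishing rescaling places $g_n$ exactly on the constraint.

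The main obstacle is the matching lower bound together with non-attainment. Because $2\alpha-\beta<0$, the scaling $g\mapsto e^{\alpha\lambda}g(e^{\beta\lambda}\cdot)$ no longer produces a mountain-pass profile ($S_{1,\gamma}(g_\lambda)\to+\infty$ as $\lambda\to-\infty$), so Lemma~\ref{lem2.3} cannot be quoted verbatim. Instead I would run a concentration–compactness analysis on a minimizing sequence, using the positive-coefficient formula above and the sharp even Gagliardo–Nirenberg inequality of Lemma~\ref{lem2.4} (whose best constant is not attained) to rule out vanishing and to force any minimizing sequence to split into two copies of $Q$ escaping to $\pm\infty$, the repulsive term $|\gamma||g(0)|^2\ge0$ penalising mass at the origin. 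This identifies $\nu$ with $2S_{1,0}(Q)=r_{1,\gamma}$; non-attainment then follows because a genuine minimizer would solve the Euler–Lagrange equation \eqref{eleq}, i.e.\ would be an even ground state at frequency $\omega=1\le\gamma^2/4$, which does not exist by Proposition~\ref{prop1.1}(3). A putative $f$ as in the statement would be such a minimizer, the desired contradiction.
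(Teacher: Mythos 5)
Your algebraic reduction is correct and is a genuinely nice way to organize the problem: under \eqref{ME}, Lemma~\ref{lem2.6} does give
\begin{align*}
	K_{\gamma}^{\frac12-\frac{2c}{p-1},1}(f)=\Bigl(\tfrac{p-5}{4}-c\Bigr)\mu_\gamma(f)-\tfrac12|\gamma|\,|f(0)|^2,
\end{align*}
and your disposal of the cases $\mu_\gamma(f)<0$ and $\mu_\gamma(f)=0$ (the latter via the non-attainment of the even Nehari problem, i.e.\ Lemma~\ref{lem2.4} and Proposition~\ref{prop1.1}) is sound.

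The case $\mu_\gamma(f)>0$ is the actual content of the lemma, and there your argument breaks at its first step: the infimum $\nu=\inf\{S_{1,\gamma}(g):g\in H_{\even}^1(\mathbb{R})\setminus\{0\},\ K_\gamma^{\alpha,\beta}(g)=0\}$ is not $2S_{1,0}(Q)$ — it is $0$. This is exactly the price of $2\alpha-\beta=-\tfrac{4c}{p-1}<0$: the $L^2$-term of $K_\gamma^{\alpha,\beta}$ has a \emph{negative} coefficient, so the constraint manifold contains arbitrarily small, spread-out functions. Concretely, take $\phi\in C_0^\infty(\mathbb{R})$ even with $\phi(0)=0$ and set $g_a=a\,\phi(\cdot/L)$; then $K_\gamma^{\alpha,\beta}(g_a)=0$ reduces to
\begin{align*}
	\tfrac{2\alpha+\beta}{2}L^{-2}\|\phi'\|_{L^2}^2=\tfrac{2c}{p-1}\|\phi\|_{L^2}^2+\tfrac{(p+1)\alpha-\beta}{p+1}\,a^{p-1}\|\phi\|_{L^{p+1}}^{p+1},
\end{align*}
which for small $a$ is solved by some $L=L(a)$ converging to a finite positive limit, whence $S_{1,\gamma}(g_a)=O(a^2)\to0$. (For admissible pairs the hypothesis $2\alpha-\beta\ge0$ in \eqref{eq2.1} is precisely what excludes such test functions and makes Lemma~\ref{lem2.3} true; this is why the paper flags that the pair fails \eqref{eq2.1}.) Consequently a function with $S_{1,\gamma}(f)=2S_{1,0}(Q)$ on this constraint set is nowhere near a minimizer, no Euler--Lagrange equation is available, and the contradiction you aim for does not materialize; your concentration--compactness sketch, even if carried out, would be analyzing the wrong infimum. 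Any correct version of this step must retain the mass (or mass--energy) constraint inside the variational problem — e.g.\ minimize $S_{1,\gamma}$ over $\{M(g)=2M(Q),\ K_\gamma^{\alpha,\beta}(g)=0\}$, which \emph{is} coercive — and handle the resulting two Lagrange multipliers; that constrained problem is what the paper is implicitly importing from \cite[Lemma 2.7]{GuIn22p}, and it is not the problem you set up.
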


\begin{proof}
The proof is same as in \cite[Lemma 2.7]{GuIn22p}. We note that there is no ground state if $\gamma\leq -2$. 
\end{proof}

\begin{proposition}
\label{prop2.7}
We have the following statements.
\begin{enumerate}
\item 
If $K_{\gamma}(u_{0})>0$, then there exists $c=c(p,u_{0})>0$ such that $K_{\gamma}(u(t))\geq  c \mu(u(t))>0$ for $t \in \mathbb{R}$. 
\item If $K_{\gamma}(u_{0})<0$, then there exists $c=c(p,u_{0})>0$ such that $K_{\gamma}(u(t)) \leq c \mu(u(t))<0$ as far as the solution exists.
\end{enumerate}
\end{proposition}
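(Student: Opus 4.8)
The plan is to view $c\mapsto K_{\gamma}(u(t))-c\,\mu_\gamma(u(t))$ as an affine function of the single parameter $c$ and to argue that it cannot change sign as $c$ sweeps across $[0,(p-5)/4)$. By Lemma \ref{lem2.8}, for each fixed $t$ this quantity equals $K_{\gamma}^{\frac12-\frac{2c}{p-1},1}(u(t))$, and since $u(t)$ satisfies \eqref{ME} for every $t$, Lemma \ref{lem2.9} tells us it never vanishes when $c\in(0,(p-5)/4)$. Consequently its sign on that range is forced to coincide with its value at $c=0$, which is exactly $K_\gamma(u(t))$. The proposition is then just the statement that this sign persists for a fixed positive $c$, together with the fact that $\mu_\gamma(u(t))$ is itself sign-definite along the flow.

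First I would pin down the sign information supplied by the flow. Treat case (1), $K_\gamma(u_0)>0$; case (2) is symmetric. By Lemma \ref{lem2.5} the solution is global and $u(t)\in\mathcal{K}_{\omega,\gamma}^{+}$ for all $t\in\mathbb{R}$. Because $\mathcal{K}_{\omega,\gamma}^{+}$ is independent of $(\alpha,\beta)$ (Lemma \ref{lem2.2.0}), membership yields simultaneously $K_\gamma(u(t))=K_{\gamma}^{1/2,1}(u(t))>0$ and $I_{1,\gamma}(u(t))=K_{\gamma}^{1,0}(u(t))>0$, the parameter $(1,0)$ satisfying \eqref{eq2.1}. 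Since $I_{1,\gamma}(u(t))=\tfrac{p-1}{2}\mu_\gamma(u(t))$ under \eqref{ME} (Section \ref{sec2.4}), this gives $\mu_\gamma(u(t))>0$ for all $t$. Thus both functionals appearing in the claimed inequality are strictly positive along the trajectory.

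Next I would run the elementary continuity argument in $c$. Fix any $c\in(0,(p-5)/4)$, a choice that depends only on $p$. For each $t$ put $\phi_t(s):=K_\gamma(u(t))-s\,\mu_\gamma(u(t))=K_{\gamma}^{\frac12-\frac{2s}{p-1},1}(u(t))$, an affine—hence continuous—function of $s$ with $\phi_t(0)=K_\gamma(u(t))>0$. Lemma \ref{lem2.9} gives $\phi_t(s)\neq0$ for every $s\in(0,(p-5)/4)$; were $\phi_t$ negative at some such $s$, the intermediate value theorem would produce a zero in $(0,s)$, a contradiction. Hence $\phi_t>0$ on all of $[0,(p-5)/4)$, and in particular $K_\gamma(u(t))-c\,\mu_\gamma(u(t))=\phi_t(c)>0$. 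Combined with $\mu_\gamma(u(t))>0$ this is precisely $K_\gamma(u(t))\geq c\,\mu_\gamma(u(t))>0$, proving (1). For (2), starting from $K_\gamma(u_0)<0$ one gets $u(t)\in\mathcal{K}_{\omega,\gamma}^{-}$, so $K_\gamma(u(t))<0$ and $\mu_\gamma(u(t))<0$; now $\phi_t(0)<0$ and $\phi_t$ is nonvanishing on $(0,(p-5)/4)$, forcing $\phi_t(c)<0$, i.e. $K_\gamma(u(t))\leq c\,\mu_\gamma(u(t))<0$.

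The one place demanding care—and where the low-frequency threshold structure really enters—is the non-vanishing input of Lemma \ref{lem2.9}. The relevant parameter $(\tfrac12-\tfrac{2c}{p-1},1)$ violates \eqref{eq2.1} (indeed $2\alpha-\beta=-\tfrac{4c}{p-1}<0$ for $c>0$), so one cannot appeal to the minimization of Lemma \ref{lem2.3} directly; this is exactly why the admissible range must be restricted to $(0,(p-5)/4)$ and why the absence of a threshold minimizer (no ground state for $\gamma\le-2$) is used to exclude $K_{\gamma}^{\frac12-\frac{2c}{p-1},1}=0$. Granting that lemma, the remainder is the connectedness argument above, and I expect no further obstacle; note in particular that the resulting constant $c$ may be taken to depend only on $p$, which is stronger than the stated dependence $c=c(p,u_0)$.
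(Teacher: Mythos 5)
Your proof is correct and follows exactly the route the paper intends: the paper defers to \cite[Proposition 2.5]{GuIn22p}, but the scaffolding it sets up (Lemma \ref{lem2.8} identifying $K_\gamma - c\mu_\gamma$ with $K_\gamma^{\frac12-\frac{2c}{p-1},1}$, Lemma \ref{lem2.9} for non-vanishing, and the sign invariance from Lemmas \ref{lem2.2.0} and \ref{lem2.5}) is precisely what you assemble via the continuity-in-$c$ argument. Your closing observation that the constant can be taken to depend on $p$ alone is also consistent with this argument.
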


\begin{proof}[Proof of Proposition \ref{prop2.7}]
The proof is same as in \cite[Proposition 2.5]{GuIn22p}. 
\end{proof}


\subsection{Virial identity}

Let $u$ be a solution to \eqref{deltaNLS}. We define
\begin{align*}
	J(u(t))=J_\infty(u(t)):=\int_{\mathbb{R}} |x|^2|u(t,x)|^2dx.
\end{align*}
Direct calculations give the virial identity: 
\begin{align*}
	\frac{d}{dt} J(u(t))&= 4 \im \int_{\mathbb{R}} xu'(t,x)\overline{u(t,x)}dx,
	\\
	\frac{d^2}{dt^2} J(u(t))&= 8K_\gamma(u(t)).
\end{align*}
Let $\varphi$ be an even function in $C_0^{\infty}(\mathbb{R})$ satisfying
\begin{align*}
	\varphi(x):= 
	\begin{cases}
	x^2 &(|x|<1),
	\\
	0 &(|x|>2).
	\end{cases}
\end{align*}
For $R>0$, we define 
\begin{align*}
	J_R(u(t)):= \int_{\mathbb{R}} R^2\varphi\left(\frac{x}{R}\right) |u(t,x)|^2dx. 
\end{align*}
Then we have the localized virial identity:
\begin{align*}
	\frac{d}{dt}J_R(u(t))= 2R\im \int_{\mathbb{R}} \varphi'\left(\frac{x}{R}\right) u'(t,x) \overline{u(t,x)}dx
\end{align*}
and 
\begin{align*}
	\frac{d^2}{dt^2}J_R(u(t)):= 8K_\gamma(u(t)) +A_R(u(t)),
\end{align*}
where
\begin{align*}
	A_R(u(t))&:=-4\int_{|x|>R}\left\{ 2-\varphi''\left(\frac{x}{R}\right) \right\}
	\left\{|u'(t,x)|^2 - \frac{p-1}{2(p+1)}|u(t,x)|^{p+1} \right\} dx
	\\
	&\quad -\frac{1}{R^2}\int_{R<|x|<2R} \varphi^{(4)}\left(\frac{x}{R}\right)|u(t,x)|^2dx.
\end{align*}
We set 
\begin{align*}
	F_R(f):= 8K_\gamma(f) +A_R(f)
\end{align*}
for $f \in H^1(\mathbb{R})$. We set $F_\infty(f) := 8K_\gamma(f)$. 

%

\begin{lemma}
\label{lem2.10}
For any $y > 0$ and $\theta \in \mathbb{R}$, we have
\begin{align*}
	A_R(e^{i\theta}Q(|\cdot|-y))=0
\end{align*}
and so
\begin{align*}
	F_R(e^{i\theta}Q(|\cdot|-y))
	=8K_\gamma (e^{i\theta}Q(|\cdot|-y))=F_\infty(e^{i\theta}Q(|\cdot|-y)).
\end{align*}
\end{lemma}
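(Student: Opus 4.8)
The plan is to strip away everything irrelevant and then recognize the virial bracket as an exact second derivative. Since $A_R(f)$ depends on $f$ only through $|f|$ and $|f'|$, the phase $e^{i\theta}$ is harmless, so it suffices to treat the real, even function $w(x):=Q(|x|-y)$. A crucial simplification is that the two integrals defining $A_R$ are taken over $\{|x|>R\}$ with $R>0$, so the corner of $w$ at the origin never enters: on the entire domain of integration $w$ is smooth, and on $x>0$ it equals $Q(x-y)$, which solves $Q''=Q-Q^p$. Multiplying this ODE by $Q'$ and using the exponential decay of $Q$ at infinity, I would also record the first-order (Pohozaev-type) relation $(Q')^2=Q^2-\tfrac{2}{p+1}Q^{p+1}$.

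The key observation is that, thanks to these two identities, the bracket in the first integral of $A_R$ is a constant multiple of a total second derivative. Setting $g(x):=Q(|x|-y)^2=|w(x)|^2$, a direct differentiation on $\{x\neq0\}$ gives
\[
 \tfrac14 g''(x)=Q'(|x|-y)^2-\tfrac{p-1}{2(p+1)}Q(|x|-y)^{p+1}=|w'(x)|^2-\tfrac{p-1}{2(p+1)}|w(x)|^{p+1}.
\]
Consequently the first integral in $A_R(w)$ equals $-\int_{|x|>R}\psi(x)\,g''(x)\,dx$, where $\psi(x):=2-\varphi''(x/R)$.

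I would then integrate by parts twice in $x$ on each component $\{x>R\}$ and $\{x<-R\}$ separately. All endpoint contributions vanish: at infinity because $Q$ and $Q'$ decay exponentially, and at $|x|=R$ because $\varphi(x)=x^2$ near $|x|=1$ forces $\varphi''(1)=2$ and $\varphi'''(1)=0$, i.e. $\psi(\pm R)=\psi'(\pm R)=0$. Since $\psi''(x)=-R^{-2}\varphi^{(4)}(x/R)$ and $\varphi^{(4)}$ is supported in $1\le|x|\le2$, the double integration by parts produces
\[
 -\int_{|x|>R}\psi\,g''\,dx=-\int_{|x|>R}\psi''\,g\,dx=\frac{1}{R^2}\int_{R<|x|<2R}\varphi^{(4)}(x/R)\,Q(|x|-y)^2\,dx,
\]
which is exactly the negative of the second integral in the definition of $A_R$. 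Hence $A_R(e^{i\theta}Q(|\cdot|-y))=0$, and the remaining identities are then immediate from $F_R=8K_\gamma+A_R$ and $F_\infty=8K_\gamma$.

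The hard part is purely conceptual: spotting that the soliton ODE converts the virial bracket $|w'|^2-\tfrac{p-1}{2(p+1)}|w|^{p+1}$ into $\tfrac14\partial_x^2\big(Q(|\cdot|-y)^2\big)$. Once that is seen, the rest is bookkeeping. The only computation requiring genuine care is checking that the boundary terms at $|x|=R$ vanish, which is precisely where the normalization $\varphi(x)=x^2$ on $[-1,1]$ (and the resulting matching of $\varphi''$ and $\varphi'''$ at $|x|=1$) is essential; the interior corner at $x=0$ is never touched because $R>0$.
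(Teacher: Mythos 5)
Your proof is correct, but it takes a genuinely different route from the paper's. The paper argues dynamically: it observes that $v(t):=e^{it}e^{i\theta}Q(|\cdot|-y)$ is the standing--wave solution of \eqref{deltaNLS} with the modified coupling $\gamma_y=-2Q'(-y)/Q(-y)$, notes that $\tfrac{d}{dt}J_R(v(t))=2R\im\int\varphi'(x/R)Q'(|x|-y)Q(|x|-y)\,dx=0$ because the integrand is real, deduces $A_R(v)+8K_{\gamma_y}(v)=0$ from the localized virial identity for that equation, and then kills $K_{\gamma_y}(v)$ by the Pohozaev identity, using that $A_R$ does not see the delta potential. You instead verify the cancellation by hand: the ODE $Q''=Q-Q^p$ together with its first integral $(Q')^2=Q^2-\tfrac{2}{p+1}Q^{p+1}$ (your pointwise Pohozaev relation) turns the virial density $|w'|^2-\tfrac{p-1}{2(p+1)}|w|^{p+1}$ into $\tfrac14\partial_x^2\bigl(Q(|\cdot|-y)^2\bigr)$ on $\{|x|>R\}$, and two integrations by parts --- with vanishing boundary terms at $|x|=R$ thanks to $\varphi''(\pm1)=2$, $\varphi'''(\pm1)=0$, and at infinity by exponential decay --- produce exactly the negative of the $\varphi^{(4)}$ term in $A_R$. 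I checked the computation: $g''=2(Q')^2+2QQ''=4(Q')^2-\tfrac{2(p-1)}{p+1}Q^{p+1}$, so your identity is right, and the corner of $Q(|\cdot|-y)$ at the origin is indeed irrelevant since $R>0$. Your approach is more elementary and self-contained (it needs neither the localized virial identity for the $\gamma_y$-equation nor the interpretation of $Q(|\cdot|-y)$ as a ground state of a different problem); the paper's approach is shorter on the page and reuses machinery already set up in the same subsection. Both establish the first identity, and the remaining equalities follow in either case from $F_R=8K_\gamma+A_R$ and $F_\infty=8K_\gamma$.
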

\begin{proof}
Let $v(t):=e^{it} e^{i\theta}Q(|\cdot|-y)$. Then the function $v$ satisfies 
\begin{align*}
	i\partial_t v + \partial_x^2 v + \gamma_y \delta v +|v|^{p-1}v=0,
\end{align*}
where $\gamma_y:= -2Q'(-y)/Q(-y)$ (in fact $v$ is the ground state soliton of~\eqref{deltaNLS} with $\gamma = \gamma_y \in (-2,0)$). 
Thus
\begin{align*}
	\frac{d}{dt} J_R(v(t))=2R \im \int_{\mathbb{R}} \varphi'\left( \frac{x}{R}\right) Q'(|x|-y)Q(|x|-y) dx=0.
\end{align*}
This shows that 
\begin{align*}
	F_R^y(v):= A_R(v) +8K_{\gamma_y} (v)=0. 
\end{align*}
By the Pohozaev identity for $Q(|\cdot|-y)$, we see that $K_{\gamma_y} (v)=0$, which implies that $A_R(v)=0$ (noting that $A_R$ is independent of the delta potential),
and the lemma follows. 
\end{proof}



\subsection{Coercivity}
\label{sec2.6}

We define 
\begin{align*}
	\Phi(f,g):=\re \int_{\mathbb{R}} f'(x)\overline{g'(x)} + f(x)\overline{g(x)}
	-Q(x)^{p-1}(pf_1(x)g_1 (x)+f_2(x)g_2 (x) ) dx,
\end{align*}
where $f,g \in H^1(\mathbb{R})$, $f_1,g_1$ denote the real parts of $f,g$, and $f_2,g_2$ denote the imaginary parts of $f,g$, respectively. 

For $y >0$, we also define 
\begin{align*}
	B_y(f,g):= \re \int_{0}^{\infty} f'(x)\overline{g'(x)} + f(x)\overline{g(x)}
	-Q(x-y)^{p-1}(pf_1(x)g_1 (x)+f_2(x)g_2 (x) ) dx
\end{align*}
and we denote $B_y$ by $B$ for short. 

The following coercivity property for $\Phi$ is obtained by \cite{CFR22}. 

\begin{lemma}[Campos--Farah--Roudenko {\cite[Lemma 3.5]{CFR22}}]
\label{lem2.11}
There exists $c > 9$ such that 
if $f\in H^1(\mathbb{R})$ satisfies the following orthogonality:
\begin{align*}
	\im \int_{\mathbb{R}} f(x)Q(x) dx 
	=\re \int_{\mathbb{R}} f(x)Q'(x) dx 
	=\re \int_{\mathbb{R}} f(x)Q(x)^p dx 
	=0,
\end{align*}
then
\begin{align*}
	\Phi(f,f) \geq c \|f\|_{H^1}^2. 
\end{align*}
\end{lemma}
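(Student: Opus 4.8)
The plan is to diagonalize the quadratic form and reduce to the spectral theory of the two linearized Schr\"odinger operators around the ground state $Q$ of the potential-free equation \eqref{ellip}. Writing $f=f_1+if_2$, a direct computation gives
\begin{align*}
	\Phi(f,f)=\langle L_+f_1,f_1\rangle_{L^2}+\langle L_-f_2,f_2\rangle_{L^2},
\end{align*}
where $L_+:=-\partial_x^2+1-pQ^{p-1}$ and $L_-:=-\partial_x^2+1-Q^{p-1}$. Since $\im\int fQ=\int f_2Q$, $\re\int fQ'=\int f_1Q'$, and $\re\int fQ^p=\int f_1Q^p$, it suffices to prove a coercivity estimate for each of $L_\pm$ on the subspace cut out by the stated orthogonality conditions. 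For $L_-$, the equation $-Q''+Q=Q^p$ gives $L_-Q=0$, and since $Q>0$ it is the ground state, so $L_-\ge0$ with $\ker L_-=\mathrm{span}\{Q\}$ and essential spectrum $[1,\infty)$. Hence there is a spectral gap and $\langle L_-f_2,f_2\rangle_{L^2}\ge\delta_-\|f_2\|_{L^2}^2$ whenever $f_2\perp Q$, which is exactly the orthogonality $\im\int fQ=0$.

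The operator $L_+$ is the delicate one. Differentiating the ground state equation gives $L_+Q'=0$, so $\ker L_+=\mathrm{span}\{Q'\}$, while $L_+$ has exactly one negative eigenvalue (its positive ground state). The single negative direction is eliminated precisely by the orthogonality to $Q^p$: the algebraic identity $L_+Q=-(p-1)Q^p$ yields $L_+^{-1}Q^p=-\tfrac{1}{p-1}Q$ modulo the kernel, so
\begin{align*}
	\langle L_+^{-1}Q^p,Q^p\rangle_{L^2}=-\frac{1}{p-1}\int_{\mathbb{R}}Q^{p+1}\,dx<0.
\end{align*}
By the standard Weinstein-type criterion, this negativity guarantees that $\langle L_+f_1,f_1\rangle_{L^2}\ge\delta_+\|f_1\|_{L^2}^2$ for every $f_1$ orthogonal to both $Q'$ and $Q^p$, which are exactly the two real orthogonality conditions imposed. (The parity of $Q$ even and $Q'$ odd ensures $Q\perp Q'$, so $L_+^{-1}Q^p$ is well defined on $\{Q'\}^\perp$.)

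Finally I would upgrade the $L^2$ coercivity to an $H^1$ bound. Using $\langle L_\pm g,g\rangle_{L^2}=\|g\|_{H^1}^2-\int c_\pm Q^{p-1}g^2$ with $c_+=p$, $c_-=1$, together with the crude bound $\int Q^{p-1}g^2\le\|Q\|_{L^\infty}^{p-1}\|g\|_{L^2}^2$, one gets $\|g'\|_{L^2}^2\lesssim\langle L_\pm g,g\rangle_{L^2}+\|g\|_{L^2}^2$; combining with the $L^2$ gaps $\delta_\pm$ gives $\|g\|_{H^1}^2\lesssim\langle L_\pm g,g\rangle_{L^2}$ on the respective constrained subspaces. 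Summing the two contributions yields $\Phi(f,f)\ge c\|f\|_{H^1}^2$ for some $c>0$.

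The main obstacle is the $L_+$ step: verifying that orthogonality to $Q^p$ (rather than to the negative eigenfunction itself, which is not explicit) suffices to remove the negative direction. This is exactly where the identity $L_+Q=-(p-1)Q^p$ and the sign of $\langle L_+^{-1}Q^p,Q^p\rangle_{L^2}$ are indispensable, and it is the point at which the supercriticality $p>5$ (entering through the spectral picture of $L_+$) plays its role. The $L^2$-to-$H^1$ upgrade and the $L_-$ estimate are routine by comparison.
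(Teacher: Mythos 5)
Your argument is correct, and it is essentially the standard proof: the paper itself offers no proof of this lemma (it simply cites \cite[Lemma 3.5]{CFR22}), and the cited source argues exactly as you do, by splitting $\Phi(f,f)=\langle L_+f_1,f_1\rangle+\langle L_-f_2,f_2\rangle$, using $L_-Q=0$ with $Q>0$ for the imaginary part, and using $L_+Q=-(p-1)Q^p$ together with the one-negative-eigenvalue/Weinstein criterion to see that orthogonality to $Q^p$ and $Q'$ kills the negative direction of $L_+$, followed by the routine $L^2$-to-$H^1$ upgrade. Two small remarks. First, the ``$c>9$'' in the statement is surely a typo for $c>0$: since $\Phi(f,f)\le\|f\|_{H^1}^2$ for any $f$, no constant larger than $1$ is possible, and your proof correctly produces some $c>0$. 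Second, your closing claim that the supercriticality $p>5$ is what makes the $L_+$ step work is not quite right: the identity $L_+Q=-(p-1)Q^p$ and the resulting negativity $\langle L_+^{-1}Q^p,Q^p\rangle=-\frac{1}{p-1}\int Q^{p+1}<0$ hold for every $p>1$, which is precisely the advantage of orthogonalizing against $Q^p$ rather than against $Q$ (where the sign of $\langle L_+^{-1}Q,Q\rangle$ would depend on the criticality of $p$). The condition $p>5$ is needed elsewhere in the paper, not in this coercivity lemma.
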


By using this coercivity, we get the following. 

\begin{lemma}[Coercivity]
\label{lem2.12}
Let $y>2R$ and $R>1$. 
There exist $c,C>0$ such that if
$h \in H_\even^1(\mathbb{R})$ satisfies the following orthogonality:
\begin{align*}
	\im \int_{\mathbb{R}} h(x)\chi_R^+(x)\mathcal{T}_yQ(x) dx 
	&=\re \int_{\mathbb{R}} h(x) (\chi_R^+(x)\mathcal{T}_yQ(x))' dx 
	\\
	&=\re \int_{\mathbb{R}} h(x)\chi_R^+(x)\mathcal{T}_yQ(x)^p dx 
	=0,
\end{align*}
then
\begin{align*}
	\Phi(\mathcal{T}_{-y}(\chi_R h),\mathcal{T}_{-y}(\chi_R h)) \geq  c\|\chi_Rh\|_{H^1}^2-\frac{C}{R}\|h\|_{H^1}^2. 
\end{align*}
\end{lemma}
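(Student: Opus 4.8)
The plan is to deduce the estimate from the rigid coercivity of $\Phi$ in Lemma \ref{lem2.11} by translating the relevant bump of $h$ back to the origin and absorbing both the cut-off overlap and the resulting orthogonality defects into the $\tfrac{C}{R}$ error. Set $w:=\chi_R h$ and $g:=\mathcal{T}_{-y}w=\mathcal{T}_{-y}(\chi_R h)$, so that $g(x)=\chi_R(x+y)h(x+y)$. Since the gradient and $L^2$ parts of $\Phi$ are translation invariant, one has $\Phi(g,g)=\|w\|_{H^1}^2-\re\int_{\mathbb{R}}Q^{p-1}(pg_1^2+g_2^2)\,dx$ and, crucially, $\|g\|_{H^1}=\|w\|_{H^1}=\|\chi_R h\|_{H^1}$; moreover $\|\chi_R h\|_{H^1}\le C\|h\|_{H^1}$ with $C$ independent of $R>1$, since $|\chi_R'|\lesssim 1/R$. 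Thus it suffices to prove $\Phi(g,g)\ge c\|g\|_{H^1}^2-\tfrac{C}{R}\|h\|_{H^1}^2$.

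The first real step is to check that $g$ \emph{almost} satisfies the three orthogonality conditions required by Lemma \ref{lem2.11}, namely $\im\int gQ=\re\int g Q'=\re\int gQ^p=0$. After the change of variables $u=x+y$, each of these integrals becomes $\int \chi_R h\cdot(\mathcal{T}_yQ,\ \mathcal{T}_yQ',\ \text{or}\ \mathcal{T}_yQ^p)\,du$. Writing $\chi_R=\chi_R^+ +\chi_R^-$ and invoking the hypotheses on $h$, the $\chi_R^+$ parts vanish by assumption; for the $Q'$ condition one must first integrate by parts to relate $(\chi_R^+\mathcal{T}_yQ)'$ to $\chi_R^+\mathcal{T}_yQ'$ up to a term carrying $(\chi_R^+)'$, which is supported in $R/2<x<R$. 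What remains is supported either where $u<-R/2$ or, for that extra derivative term, where $R/2<u<R$. On these sets $\mathcal{T}_yQ(u)=Q(u-y)$ and its derivatives have size $e^{-(y-u)}\lesssim e^{-(y-R)}\le e^{-R}$ because $y>2R$; hence each orthogonality integral is bounded by $Ce^{-R}\|h\|_{L^2}$. This bookkeeping, and in particular the $(\chi_R^+)'$ contribution for the $Q'$ condition, is where I expect the main effort to lie.

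With the defects controlled, the conclusion is a standard projection–perturbation argument. In the real inner product $\langle f,g\rangle:=\re\int f\bar g$ the directions $iQ,Q',Q^p$ are mutually orthogonal (one checks $\langle iQ,Q'\rangle=\langle iQ,Q^p\rangle=0$ since $\int QQ'$ and $\int Q^{p+1}$ are real, and $\langle Q',Q^p\rangle=0$ since $Q'$ is odd and $Q^p$ even), so their Gram matrix is diagonal and invertible. Setting $\tilde g:=g-(a_1\,iQ+a_2\,Q'+a_3\,Q^p)$ with $a_j=\langle g,z_j\rangle/\|z_j\|_{L^2}^2$ yields a function obeying the exact orthogonality of Lemma \ref{lem2.11}, with $\|g-\tilde g\|_{H^1}\le Ce^{-R}\|h\|_{H^1}$ by the previous step. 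Lemma \ref{lem2.11} then gives $\Phi(\tilde g,\tilde g)\ge c\|\tilde g\|_{H^1}^2$. Since $\Phi$ is a bounded bilinear form, $|\Phi(f_1,f_2)|\lesssim\|f_1\|_{H^1}\|f_2\|_{H^1}$, one writes $\Phi(g,g)=\Phi(\tilde g,\tilde g)+O(\|g-\tilde g\|_{H^1}\|g\|_{H^1})$ and $\|\tilde g\|_{H^1}^2\ge\|g\|_{H^1}^2-O(\|g-\tilde g\|_{H^1}\|g\|_{H^1})$, whence $\Phi(g,g)\ge c\|g\|_{H^1}^2-Ce^{-R}\|h\|_{H^1}^2$ using $\|g\|_{H^1}\lesssim\|h\|_{H^1}$. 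As $e^{-R}\le C/R$ for $R>1$ and $\|g\|_{H^1}=\|\chi_R h\|_{H^1}$, this is precisely $\Phi(\mathcal{T}_{-y}(\chi_R h),\mathcal{T}_{-y}(\chi_R h))\ge c\|\chi_R h\|_{H^1}^2-\tfrac{C}{R}\|h\|_{H^1}^2$, as claimed.
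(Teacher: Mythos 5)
Your proof is correct and follows essentially the same route as the paper's: an orthogonal projection of the translated, truncated function onto $\spn_{\mathbb{R}}\{iQ,Q',Q^p\}$, with the projection coefficients controlled by the stated orthogonality hypotheses (the only nontrivial contribution coming from the $(\chi_R^+)'$ term in the $Q'$ condition), followed by Lemma \ref{lem2.11} and bilinearity of $\Phi$. The only cosmetic difference is that the paper works with the one-sided $\chi_R^+h$ and recovers $\|\chi_Rh\|_{H^1}^2=2\|\chi_R^+h\|_{H^1}^2$ from evenness, whereas you treat the two-sided $\chi_Rh$ directly and absorb the (exponentially small) extra orthogonality defects from the left bump into the $C/R$ error.
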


\begin{proof}
We can show this in the same way as in \cite[Lemma 24]{GuIn23}. We omit the proof. 
\end{proof}

\subsection{Strichartz estimates and linear profile decomposition}

We define some function spaces as follows. 
\begin{definition}
Let $I$ be a (possibly unbounded) time interval. We define function spaces by
\begin{align*}
	S(I):=L_{t}^{\frac{2(p^{2}-1)}{p+3}} L_{x}^{p+1}(I),
	\quad
	W(I):=L_{t}^{\frac{2(p^{2}-1)}{p^{2}-3p-2}} L_{x}^{p+1}(I),
	\quad
	X(I):=L_{t}^{p-1} L_{x}^{\infty}(I).
\end{align*}
Moreover, $W'$ denotes the dual space of $W$, that is, $W'(I)=L_{t}^{\frac{2(p^{2}-1)}{p(p+3)}} L_{x}^{\frac{p+1}{p}}(I)$.
\end{definition}

Then we have the Strichartz estimates.

\begin{lemma}[Strichartz estimates]
The following estimates are valid: 
\begin{align*}
	&\|e^{it\Delta_{\gamma}}f\|_{S(I)}+\|e^{it\Delta_{\gamma}}f\|_{X(I)} \lesssim \|f\|_{H^{1}}
	\\
	&\|\int_{t_{0}}^{t} e^{i(t-s)\Delta_{\gamma}}F(s)ds\|_{S(t_{0},t_{1})}
	+\|\int_{t_{0}}^{t} e^{i(t-s)\Delta_{\gamma}}F(s)ds\|_{X(t_{0},t_{1})}
	\lesssim \|F\|_{W'(t_{0},t_{1})},
\end{align*}
where $I$ is a (possibly unbounded) time interval and $t_{1}>t_{0}$. 
\end{lemma}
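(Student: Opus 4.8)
The plan is to deduce both estimates from the corresponding free ($\gamma=0$) Strichartz estimates, by first establishing the two standard ingredients for $e^{it\Delta_{\gamma}}$ and then invoking the abstract machinery. The two ingredients are $L^{2}$-unitarity, which is already recorded in the introduction via Stone's theorem, and an $L^{1}\to L^{\infty}$ dispersive bound
\begin{align*}
	\|e^{it\Delta_{\gamma}}f\|_{L^{\infty}} \lesssim |t|^{-1/2}\|f\|_{L^{1}}.
\end{align*}
For the one-dimensional delta potential this is known: one can read it off the explicit integral kernel of $e^{it\Delta_{\gamma}}$, and in the repulsive case $\gamma<0$ the absence of bound states removes any low-frequency obstruction (see \cite{BaVi16,Miz20}).

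Interpolating this dispersive bound against mass conservation and feeding the result into the Keel--Tao theorem produces the full family of $L^{2}$-admissible Strichartz estimates,
\begin{align*}
	\|e^{it\Delta_{\gamma}}f\|_{L^{q}_{t}L^{r}_{x}(I)} \lesssim \|f\|_{L^{2}},
	\qquad
	\Big\| \int_{t_{0}}^{t} e^{i(t-s)\Delta_{\gamma}}F(s)\,ds \Big\|_{L^{q}_{t}L^{r}_{x}}
	\lesssim \|F\|_{L^{\tilde{q}'}_{t}L^{\tilde{r}'}_{x}},
\end{align*}
for all pairs with $\tfrac{2}{q}+\tfrac{1}{r}=\tfrac12$ and $4\le q\le\infty$ (and likewise for $(\tilde{q},\tilde{r})$). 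The remaining issue is that the spaces in the statement are \emph{not} $L^{2}$-admissible: a direct computation shows that $S$ and $X$ are $\dot H^{s_{c}}$-admissible, i.e.\ $\tfrac{2}{q}+\tfrac{1}{r}=\tfrac12-s_{c}=\tfrac{2}{p-1}$, while $W$ is $\dot H^{-s_{c}}$-admissible, so that $W'$ is dual to a $\dot H^{s_{c}}$-admissible pair.

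To pass to these spaces I would insert fractional powers of $-\Delta_{\gamma}$, which commute with the propagator. Writing $f=(-\Delta_{\gamma})^{-s_{c}/2}g$ with $\|g\|_{L^{2}}=\|f\|_{\dot H_{\gamma}^{1}}$-type norm $\|f\|_{\dot H^{s_{c}}_{\gamma}}$, the homogeneous estimate in $S$ follows from the $L^{2}$-admissible estimate above in a spatial exponent $\rho$ with $\tfrac{1}{\rho}=\tfrac{1}{p+1}+s_{c}$, composed with the Sobolev embedding $(-\Delta_{\gamma})^{-s_{c}/2}\colon L^{\rho}(\mathbb{R})\to L^{p+1}(\mathbb{R})$; one checks that the time exponent $q_{S}$ is exactly $L^{2}$-admissible for this $\rho$, since $\tfrac{2}{p-1}=\tfrac12-s_{c}$ matches the admissibility deficit. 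The estimate defining $X$ is the $r=\infty$ endpoint $\dot H^{s_{c}}$-Strichartz estimate, valid away from the $L^{2}$-endpoint because $q=p-1>4$ for $p>5$, and is obtained by the same commutation procedure. Since $0<s_{c}<1/2$, the adapted homogeneous Sobolev norm is equivalent to the usual one, $\|f\|_{\dot H^{s_{c}}_{\gamma}}\approx\|f\|_{\dot H^{s_{c}}}\lesssim\|f\|_{H^{1}}$, which produces the $\|f\|_{H^{1}}$ on the right. The inhomogeneous estimate into $S\cap X$ with datum in $W'$ is handled identically, applying $(-\Delta_{\gamma})^{s_{c}/2}$ to the Duhamel term and matching $W'$ with the dual of a $\dot H^{s_{c}}$-admissible pair.

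The genuinely non-free input, and hence the main point, is the dispersive estimate together with the $L^{\rho}$-mapping properties of $(-\Delta_{\gamma})^{-s_{c}/2}$ and the equivalence of the adapted Sobolev spaces with the standard ones; all of these hold for the repulsive one-dimensional delta in the range $0\le s_{c}<1/2$ and can be cited. A slick alternative, which I would mention, bypasses the fractional-power bookkeeping entirely: the wave operators $W_{\pm}=\slim_{t\to\pm\infty}e^{-it\Delta_{\gamma}}e^{it\Delta}$ exist and are complete (no bound states), are bounded on $L^{r}(\mathbb{R})$ for $1<r<\infty$, and intertwine the two propagators, $e^{it\Delta_{\gamma}}=W_{\pm}e^{it\Delta}W_{\pm}^{*}$; the desired estimates then transfer verbatim from the free Strichartz estimates in the spaces $S$, $X$, $W'$, which are precisely the ones used in the potential-free $\dot H^{s_{c}}$-critical theory.
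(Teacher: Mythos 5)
The paper does not prove this lemma at all; it simply cites \cite[Section 3.1]{BaVi16}, and your sketch is essentially the argument that underlies that citation: $L^{2}$-unitarity plus the $|t|^{-1/2}$ dispersive bound for the one-dimensional delta propagator, Keel--Tao to get the $L^{2}$-admissible family, and then a shift by $(-\Delta_{\gamma})^{s_{c}/2}$ combined with Sobolev embedding to reach the $\dot H^{s_{c}}$-admissible spaces $S$, $X$ and the dual space $W'$. Your exponent bookkeeping checks out ($\tfrac{2}{q}+\tfrac{1}{r}=\tfrac{2}{p-1}=\tfrac12-s_{c}$ for $S$ and $X$, and $W$ is $\dot H^{-s_{c}}$-admissible), and restricting the equivalence $\|f\|_{\dot H^{s_{c}}_{\gamma}}\approx\|f\|_{\dot H^{s_{c}}}$ to $0\le s_{c}<1/2$ is exactly the right caveat, since that equivalence fails at and above $s=1/2$ where the vertex condition is felt. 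Two small points deserve care. First, for $X=L^{p-1}_{t}L^{\infty}_{x}$ the admissible spatial exponent paired with $q=p-1$ is $r=1/s_{c}$, and $W^{s_{c},1/s_{c}}\not\hookrightarrow L^{\infty}$; you should instead spend $s\in(s_{c},1]$ of the available $H^{1}$ regularity, which is harmless here precisely because the right-hand side is $\|f\|_{H^{1}}$ rather than a critical norm. Second, the ``slick alternative'' via intertwining does \emph{not} transfer the $X$-estimate verbatim, since you only assert $L^{r}$-boundedness of the wave operators for $1<r<\infty$ and $X$ lives in $L^{\infty}_{x}$; that route should be reserved for $S$ and $W'$, with $X$ handled by the dispersive/Sobolev argument. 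With those adjustments the proposal is a correct and self-contained justification of what the paper outsources to \cite{BaVi16}.
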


\begin{proof}
See \cite[Section 3.1]{BaVi16}. 
\end{proof}

\begin{lemma}
Let $u_0 \in H^1(\mathbb{R})$ and $u$ be the solution to \eqref{deltaNLS} with $u(0)=u_0$. The solution $u$ scatters in the positive time direction iff $u \in S(0,\infty)$. 
\end{lemma}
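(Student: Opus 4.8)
The plan is to prove both implications by the standard nonlinear scattering theory built on the Strichartz estimates above, together with the nonlinear estimate
\[
	\||u|^{p-1}u\|_{W'(I)} \lesssim \|u\|_{S(I)}^{p}
\]
and its $H^1$-level analogue, obtained from a fractional Leibniz rule adapted to $-\Delta_\gamma$, which bounds the $H^1$-Strichartz norm of the Duhamel term by $\|u\|_{S(I)}^{p-1}$ times the $H^1$-Strichartz norm of $u$. Throughout I use the Duhamel formula
\[
	u(t)=e^{i(t-t_0)\Delta_\gamma}u(t_0)+i\int_{t_0}^{t}e^{i(t-s)\Delta_\gamma}|u|^{p-1}u(s)\,ds.
\]

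For the implication $(\Leftarrow)$, suppose $u\in S(0,\infty)$. First I would argue that this forces $u$ to be global forward in time with finite $H^1$-level Strichartz norms: partition $[0,\infty)$ into finitely many subintervals $I_j$ on each of which $\|u\|_{S(I_j)}<\delta$ for a small absolute constant $\delta$. On each $I_j$ the inhomogeneous Strichartz estimate gives $\|u\|_{X(I_j)}\lesssim\|u(t_0^j)\|_{H^1}+\|u\|_{S(I_j)}^{p}$, while at the $H^1$ level the smallness of $\|u\|_{S(I_j)}$ lets the factor $\|u\|_{S(I_j)}^{p-1}$ be absorbed, so the $H^1$-Strichartz norm on $I_j$ is controlled by $\|u(t_0^j)\|_{H^1}$; iterating over the finitely many intervals yields global finiteness, and the blow-up alternative then rules out finite-time breakdown. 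Next I would define the asymptotic state
\[
	u_+:=u(0)+i\int_0^\infty e^{-is\Delta_\gamma}|u|^{p-1}u(s)\,ds,
\]
where the integral converges in $H^1$ because its tail is controlled by the (now finite) Strichartz norms on $(t,\infty)$, which vanish as $t\to\infty$. Finally, since
\[
	u(t)-e^{it\Delta_\gamma}u_+=-i\int_t^\infty e^{i(t-s)\Delta_\gamma}|u|^{p-1}u(s)\,ds,
\]
the inhomogeneous Strichartz estimate at the $H^1$ level bounds $\|u(t)-e^{it\Delta_\gamma}u_+\|_{H^1}$ by a tail Strichartz norm on $(t,\infty)$, which tends to $0$; hence $u$ scatters.

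For the converse $(\Rightarrow)$, suppose $u$ scatters to $u_+\in H^1$. By Strichartz, $\|e^{it\Delta_\gamma}u_+\|_{S(0,\infty)}\lesssim\|u_+\|_{H^1}<\infty$, so there is $T$ with $\|e^{it\Delta_\gamma}u_+\|_{S(T,\infty)}$ below the small-data threshold; since $u(t)\to e^{it\Delta_\gamma}u_+$ in $H^1$, I would also take $T$ large enough that $u(T)$ is close to $e^{iT\Delta_\gamma}u_+$ in $H^1$. A continuity/perturbation argument on $[T,\infty)$, comparing $u$ to the free evolution $e^{it\Delta_\gamma}u_+$ through Duhamel and the nonlinear estimate, then gives $\|u\|_{S(T,\infty)}<\infty$; combined with $\|u\|_{S(0,T)}<\infty$ from the local theory, this yields $u\in S(0,\infty)$.

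The main obstacle is the $H^1$-level nonlinear estimate in the presence of the delta potential: one must control $\sqrt{-\Delta_\gamma}(|u|^{p-1}u)$ in the dual Strichartz norm, which requires a fractional Leibniz rule compatible with $-\Delta_\gamma$, the equivalence of the $H^1$ norm with the form-domain norm $\|\cdot\|_{\dot{H}_\gamma^1}+\|\cdot\|_{L^2}$, and the one-dimensional embedding $H^1\hookrightarrow L^\infty$ underlying the $X=L_t^{p-1}L_x^\infty$ bounds. These ingredients are exactly those established in \cite{BaVi16}; once they are in hand, both implications reduce to the bootstrap and tail arguments sketched above.
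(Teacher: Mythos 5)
Your proposal is correct and is essentially the argument the paper relies on: the paper simply defers to \cite[Proposition 3.2]{BaVi16}, and the standard proof there is exactly your two-sided Duhamel/Strichartz bootstrap (small-interval partition plus tail estimate for the forward direction, smallness of $\|e^{it\Delta_\gamma}u_+\|_{S(T,\infty)}$ plus perturbation for the converse). The only cosmetic remark is that at the $H^1$ level in one dimension no genuinely fractional Leibniz rule is needed — the ordinary chain rule together with the equivalence of the $H^1$ norm and the form-domain norm of $-\Delta_\gamma$ suffices, which is how \cite{BaVi16} handles the derivative estimates.
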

 
\begin{proof}
See \cite[Proposition 3.2]{BaVi16}. 
\end{proof}


\section{Modulation argument}
\label{sec3}

\begin{lemma}
There exists $\mu_{0}>0$ and a function $\varepsilon:(0,\mu_{0}) \to (0,\infty)$ with $\varepsilon(\mu) \to 0$ as $\mu \to 0$ such that the following holds. For any $\mu < \mu_0$ and for all $f\in H_{\even}^{1}(\mathbb{R})$ satisfying $E_\gamma(f)=2E(Q)$, $M(f)=2M(Q)$ and $\mu_\gamma(f)<\mu$,  there exist $(\theta, y)\in \mathbb{R} \times [0,\infty)$ such that
\begin{align*}
	\| f-e^{i\theta}Q(|\cdot|-y)\|_{H^{1}}\leq \varepsilon(\mu).
\end{align*} 
\end{lemma}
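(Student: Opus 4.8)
The plan is to argue by contradiction and to recognize the hypotheses as saying that $f$ is a near-minimizer of the threshold $r_{1,\gamma}=2n_{1,0}$. Suppose the conclusion fails: then there are $\varepsilon_0>0$ and a sequence $(f_n)\subset H_{\even}^1(\mathbb{R})$ with $E_\gamma(f_n)=2E(Q)$, $M(f_n)=2M(Q)$ and $\mu_\gamma(f_n)\to 0$ (this is the relevant regime; it is automatic when $f_n\in\mathcal{K}_{1,\gamma}^{+}$, where $\mu_\gamma>0$ by Lemma~\ref{lem2.2.0}), yet $\inf_{(\theta,y)}\|f_n-e^{i\theta}Q(|\cdot|-y)\|_{H^1}\geq\varepsilon_0$ for all $n$. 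First I would translate this into variational data. Since $S_{1,\gamma}(f_n)=E_\gamma(f_n)+\tfrac12 M(f_n)=2E(Q)+M(Q)=2S_{1,0}(Q)=r_{1,\gamma}$ while $I_{1,\gamma}(f_n)=\tfrac{p-1}{2}\mu_\gamma(f_n)\to0$, the sequence $(f_n)$ is, after a harmless amplitude rescaling bringing $I_{1,\gamma}$ exactly to $0$ (the scaling factor tends to $1$), a minimizing sequence for $r_{1,\gamma}=\inf\{S_{1,\gamma}(f):f\in H_{\even}^1\setminus\{0\},\,I_{1,\gamma}(f)=0\}$. Using $\mu_\gamma(f_n)\to0$ and the energy identity I record the sharp limits $\|f_n\|_{\dot{H}_\gamma^1}^2\to2\|Q'\|_{L^2}^2$ and $\|f_n\|_{L^{p+1}}^{p+1}\to2\|Q\|_{L^{p+1}}^{p+1}$; in particular $(f_n)$ is bounded in $H^1(\mathbb{R})$.

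The heart of the proof is a concentration--compactness analysis of $(f_n)$. I would apply a profile decomposition in $H^1(\mathbb{R})$ adapted to the even symmetry, writing $f_n=\sum_{j}e^{i\theta_n^j}\mathcal{T}_{x_n^j}\phi^j+r_n^J$ with pairwise-diverging translation parameters and an $L^{p+1}$-small remainder; evenness forces the profiles to occur either as mirror pairs at $\pm x_n^j$ or as a single profile near the origin. Vanishing is excluded since $\|f_n\|_{L^{p+1}}$ stays bounded away from $0$. The key step, and the only place the repulsive potential enters, is to exclude a profile concentrating at the origin: such a profile would have to (near-)saturate the even Gagliardo--Nirenberg inequality of Lemma~\ref{lem2.4}, which is \emph{not} attained, so it cannot carry a nontrivial share of the $L^{p+1}$-budget; equivalently, a bump at the origin pays the strictly positive repulsive cost $|\gamma||f_n(0)|^2$, which is incompatible with the saturation of the sharp limits above. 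Because $r_{1,\gamma}=2n_{1,0}$ is exactly the borderline-additive level, the action and Nehari budgets then force precisely one surviving mirror pair, and the rigidity of the free Gagliardo--Nirenberg extremizer (uniqueness of $Q$ up to symmetry) forces each member of the pair to be a modulated translate $e^{i\theta_n}\mathcal{T}_{\pm y_n}Q$ with a common phase $\theta_n$ and common separation $y_n\to\infty$ (so that, consistently, $f_n(0)\to0$). Recombining the two halves, $f_n$ is asymptotic to $e^{i\theta_n}Q(|\cdot|-y_n)$.

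Finally I would upgrade this to strong convergence. Since the entirety of the $L^2$-, $\dot{H}_\gamma^1$- and $L^{p+1}$-budgets is accounted for by the single mirror pair, the Pythagorean expansions of the profile decomposition leave no room for the remainder, whence $\|r_n^J\|_{H^1}\to0$; together with the preceding paragraph this gives $\|f_n-e^{i\theta_n}Q(|\cdot|-y_n)\|_{H^1}\to0$, contradicting $\inf_{(\theta,y)}\|f_n-e^{i\theta}Q(|\cdot|-y)\|_{H^1}\geq\varepsilon_0$. Negating the contradiction hypothesis in the usual way then produces the modulus of continuity $\varepsilon(\mu)$ with $\varepsilon(\mu)\to0$ as $\mu\to0$.

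To organize the bookkeeping cleanly I would keep in mind the half-line picture: an even $f$ corresponds to $g=f|_{(0,\infty)}$ subject to the repulsive Robin condition $g'(0)=\tfrac{|\gamma|}{2}g(0)$, and the even problem equals twice a half-line ground-state problem whose infimum is the free value $n_{1,0}$, attained only in the limit as the bump escapes to $+\infty$; the restriction $g_n$ is then a near-minimizer that must translate off to $+\infty$ and converge to $Q$, which is exactly the two-bump picture above. The main obstacle is precisely the profile analysis of the second paragraph: handling the non-strict subadditivity $r_{1,\gamma}=2n_{1,0}$, and above all ruling out concentration at the origin through the non-attainment in Lemma~\ref{lem2.4}, i.e.\ genuinely extracting the repulsive effect of the delta interaction.
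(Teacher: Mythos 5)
Your proposal is correct and follows the same overall skeleton as the paper --- contradiction, identification of $(f_n)$ as an even minimizing sequence for $r_{1,\gamma}=2n_{1,0}$ via $S_{1,\gamma}(f_n)\to 2S_{1,0}(Q)$ and $I_{1,\gamma}(f_n)\to 0$, then compactness modulo an escaping translation --- but it differs in how the key compactness step is obtained. The paper's proof is three lines long at that point: it invokes \cite[Lemma 20]{FuJe08}, which asserts directly that such a sequence satisfies $\|f_n-e^{i\theta_n}(Q(\cdot-y_n)+Q(-\cdot-y_n))\|_{H^1}\to 0$ with $y_n\to\infty$, and then replaces $Q(\cdot-y_n)+Q(-\cdot-y_n)$ by $Q(|\cdot|-y_n)$ at the cost of an $O(e^{-y_n})$ error in $H^1$ (your ``recombining the two halves''). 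Your concentration--compactness analysis is essentially a self-contained reconstruction of that cited lemma: the exclusion of an origin profile through the non-attainment of the even level, the forcing of a single escaping mirror pair by the exactly additive action budget, and the Pythagorean elimination of the remainder are precisely the ingredients hidden in the citation. What your route buys is self-containedness; what it costs is length and one point that should be sharpened in a full write-up: a fixed nonzero profile cannot ``near-saturate'' a non-attained inequality --- the clean statement is that any nonzero even function, rescaled onto the Nehari manifold of $I_{1,\gamma}$, has action strictly greater than $r_{1,\gamma}$, which already exceeds the entire budget and so rules out any origin profile together with anything else. Note also that both your argument and the paper's implicitly read the hypothesis $\mu_\gamma(f)<\mu$ as $|\mu_\gamma(f)|<\mu$ (this two-sided bound is what the paper actually uses later, via the set $I_{\mu_0}$); the one-sided bound alone would not give $\mu_\gamma(f_n)\to 0$.
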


\begin{proof}
We use a contradiction argument. We suppose that the statement fails. 
Then there exists $\varepsilon_0>0$ such that for any $n\in \mathbb{N}$, there exist $\mu_n$ with $\mu_n \to 0$ and $f_n \in H_{\even}^{1}(\mathbb{R})$ satisfying $E_\gamma(f_n)=2E(Q)$, $M(f_n)=2M(Q)$ and $\mu(f_n)<\mu_n$ such that $ \inf_{\theta \in \mathbb{R}}\inf_{y\geq 0}\| f_n-e^{i\theta}Q(|\cdot|-y)\|_{H^{1}} > \varepsilon_0$. 
Since $\mu_\gamma(f_n) \to 0$ as $n \to \infty$ and $E_\gamma(f_n)=2E(Q)$, we have $\|f_n\|_{L^{p+1}}^{p+1} \to 2\|Q\|_{L^{p+1}}^{p+1}$ and thus
\begin{align*}
	S_\gamma(f_n)  \to 2S(Q) \text{ and } I_\gamma(f_n)\to 2I(Q)=0.
\end{align*}
By Fukuizumi and Jeanjean \cite[Lemma 20]{FuJe08}, we get
\begin{align*}
	\|f_n - e^{i\theta_n}(Q(\cdot-y_n)+Q(-\cdot-y_n)) \|_{H^{1}} \to 0,
\end{align*}
where $y_n \to \infty$. 
Now
\begin{align*}
	&\| (Q(\cdot-y_n)+Q(-\cdot-y_n)) - Q(|\cdot|-y_n) \|_{H^{1}}
	\\
	&=\| Q(-\cdot-y_n) \|_{H^{1}(0,\infty)}
	+\| Q(\cdot-y_n) \|_{H^{1}(-\infty,0)}
	\\
	& \to 0.
\end{align*}
This is a contradiction. 
\end{proof}

\begin{lemma}[Modulation]
\label{lem4.2}
Let $R>0$ be sufficiently large. 
There exist $\mu_0>0$ and a function $\varepsilon:(0,\mu_{0}) \to (0,\infty)$ with $\varepsilon(\mu) \to 0$ as $\mu \to 0$ such that the following holds. For any $\mu < \mu_0$ and for all $f\in H_{\even}^{1}(\mathbb{R})$ satisfying $E(f)=2E(Q)$, $M(f)=2M(Q)$ and $\mu(f)<\mu$,  there exist $(\tilde{\theta}, y)\in \mathbb{R} \times (2R,\infty)$ such that
\begin{align*}
	\| e^{-i\tilde{\theta}}f- Q(|\cdot|-y)\|_{H^1} < \varepsilon(\mu)
\end{align*}
and
\begin{align}
\label{eq4.2}
	\im \int_{\mathbb{R}} g(x) \chi_{R}^{+}(x)Q(x-y)dx=0,
	\quad \re \int_{\mathbb{R}} g(x) \partial_x(\chi_{R}^{+}(x) Q(x-y))dx=0,
\end{align}
where $g=e^{-i\tilde{\theta}}f-Q(|\cdot|-y)$. 
\end{lemma}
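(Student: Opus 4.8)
The plan is to derive Lemma~\ref{lem4.2} from the preceding lemma by an implicit function theorem (IFT) argument that promotes the crude approximation to one satisfying the orthogonality conditions~\eqref{eq4.2}. The preceding lemma already provides, for each admissible $f$, parameters $(\theta_0,y_0)\in\mathbb{R}\times[0,\infty)$ with $\|f-e^{i\theta_0}Q(|\cdot|-y_0)\|_{H^1}\le\varepsilon_1(\mu)$; moreover its proof shows the translation parameter diverges, $y_0\to\infty$ as $\mu\to0$, so after shrinking $\mu_0$ (depending on $R$) we may assume $y_0>4R$. For fixed $f$ I would introduce the modulation map $\Psi=(\Psi_1,\Psi_2)\colon\mathbb{R}\times(2R,\infty)\to\mathbb{R}^2$,
\begin{align*}
	\Psi_1(\theta,y)&:=\im\int_{\mathbb{R}} g_{\theta,y}\,\chi_R^+\,\mathcal{T}_yQ\,dx,
	&
	\Psi_2(\theta,y)&:=\re\int_{\mathbb{R}} g_{\theta,y}\,\partial_x(\chi_R^+\,\mathcal{T}_yQ)\,dx,
\end{align*}
where $g_{\theta,y}:=e^{-i\theta}f-Q(|\cdot|-y)$, so that~\eqref{eq4.2} is exactly $\Psi(\tilde\theta,y)=0$. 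When $f$ is an exact soliton $e^{i\theta_0}Q(|\cdot|-y_0)$ we have $g_{\theta_0,y_0}=0$ and $\Psi(\theta_0,y_0)=0$.

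The core computation is the Jacobian of $\Psi$ in $(\theta,y)$ at such an exact-soliton configuration. Using $\partial_\theta g_{\theta,y}=-ie^{-i\theta}f=-iQ(|\cdot|-y)$ and $\partial_y g_{\theta,y}=Q'(|\cdot|-y)$ (the $y$-derivatives falling on the test functions are multiplied by the vanishing $g$), and recalling that $\chi_R^+$ is supported in $x>0$, where $Q(|x|-y)=Q(x-y)$, one finds
\[
	\begin{pmatrix}\partial_\theta\Psi_1 & \partial_y\Psi_1\\ \partial_\theta\Psi_2 & \partial_y\Psi_2\end{pmatrix}
	=\diag\!\left(-\|Q\|_{L^2}^2,\ \|Q'\|_{L^2}^2\right)+E,
\]
where the off-diagonal entries vanish identically (they are the imaginary/real parts of a purely imaginary/real integral), while the diagonal entries agree with $-\|Q\|_{L^2}^2$ and $\|Q'\|_{L^2}^2$ up to $\|E\|=O(e^{-c(y-R)})$; the corrections come from $\chi_R^+\neq1$ on $(0,R)$ and from $(\chi_R^+)'$, supported in $\{R/2<x<R\}$ where $\mathcal{T}_yQ$ is exponentially small for $y>2R$. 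Hence the Jacobian determinant stays bounded away from $0$, uniformly in $y>2R$ and in $R$ large.

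Since $\Psi$ and its $(\theta,y)$-derivatives depend Lipschitz-continuously on $f\in H^1$ and on $(\theta,y)$, with bounds uniform in $y_0>2R$, the actual $f$ (being $\varepsilon_1(\mu)$-close to the soliton $e^{i\theta_0}Q(|\cdot|-y_0)$) satisfies $|\Psi(\theta_0,y_0)|\lesssim\varepsilon_1(\mu)$, while the Jacobian of $\Psi$ stays within a fixed neighborhood of the invertible matrix above. A quantitative IFT (or a contraction mapping on $(\theta,y)$) then produces a zero $(\tilde\theta,y)$ with $|\tilde\theta-\theta_0|+|y-y_0|\lesssim\varepsilon_1(\mu)$. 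Combining $\|f-e^{i\theta_0}Q(|\cdot|-y_0)\|_{H^1}\le\varepsilon_1(\mu)$ with the smooth dependence of the profile on $(\theta,y)$ yields $\|e^{-i\tilde\theta}f-Q(|\cdot|-y)\|_{H^1}\lesssim\varepsilon_1(\mu)=:\varepsilon(\mu)\to0$, and $y\ge y_0-\varepsilon_1(\mu)>2R$ for $\mu$ small, as required.

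The main obstacle is the uniformity over the non-compact range $y\in(2R,\infty)$: the zero delivered by the IFT must lie in a neighborhood of $(\theta_0,y_0)$ whose radius is \emph{independent} of $y_0$. This is precisely where $y_0>2R$, the exponential decay of $Q,Q',Q''$, and translation invariance are used, guaranteeing that the correction $E$ and all the relevant second derivatives of $\Psi$ are controlled uniformly in $y_0$. It is also where evenness of $f$ matters: it allows the entire analysis to be carried out through the one-sided test functions $\chi_R^+\mathcal{T}_yQ$, the restriction of $f$ to $x>0$ determining it completely.
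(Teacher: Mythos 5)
Your proposal is correct and follows essentially the same route as the paper: both set up the map given by the two orthogonality conditions, verify it vanishes at the exact configuration $(\theta,y,Q(|\cdot|-y))$, compute that its Jacobian in $(\tilde\theta,y)$ is $\diag(-\|Q\|_{L^2}^2,\|Q'\|_{L^2}^2)$ up to errors that are small for $R$ large (with vanishing off-diagonal entries), and conclude by the implicit function theorem applied near the approximate soliton supplied by the preceding lemma. Your additional remarks on uniformity in $y_0$ over the non-compact range and on why $y>2R$ can be arranged are consistent with, and slightly more explicit than, the paper's argument.
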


\begin{proof}
We define 
\begin{align*}
	J(\tilde{\theta},y,v)&=
	\begin{pmatrix}
	J_1(\tilde{\theta},y,v)
	\\
	J_2(\tilde{\theta},y,v)
	\end{pmatrix}
	\\
	&:=
	\begin{pmatrix}
	\im \int_{\mathbb{R}} (e^{-i\tilde{\theta}}v - Q(|\cdot|-y) ) \chi_{R}^{+}\mathcal{T}_{y}Qdx
	\\
	\re \int_{\mathbb{R}} (e^{-i\tilde{\theta}}v - Q(|\cdot|-y) ) \partial_x(\chi_{R}^{+} \mathcal{T}_{y}Q)dx
	\end{pmatrix}
\end{align*}
for $\tilde{\theta} \in \mathbb{R}$, $y> 2 R$, $v \in H_{\even}^{1}$. Then $J(0,y,Q(|\cdot|-y))=0$. We have
\begin{align*}
	\frac{\partial J_1}{\partial \tilde{\theta}}(0,y,Q(|\cdot|-y))
	&=- \int_{\mathbb{R}} Q(|x|-y)  \chi_{R}^{+}\mathcal{T}_{y}Qdx
	\\
	&=-\int_{\mathbb{R}} |\mathcal{T}_{y}Q |^2 dx+O(e^{-2y})
	\\
	&=-\|Q \|_{L^2}^2+O(e^{-4R})
\end{align*}
and 
\begin{align*}
	\frac{\partial J_2}{\partial y}(0,y,Q(|\cdot|-y))
	&
	=\int_{\mathbb{R}}  Q'(|x|-y) \partial_x(\chi_{R}^{+} \mathcal{T}_{y}Q)dx
	\\
	&=\|\partial_x Q \|_{L^2}^2 +O(R^{-1} + e^{-4R}).
\end{align*}
We also have $\frac{\partial J_1}{\partial y}(0,y,Q(|\cdot|-y))=\frac{\partial J_2}{\partial \tilde{\theta}}(0,y,Q(|\cdot|-y))=0$.
These imply that the Jacobian of $J$ near $(0,y,Q(|\cdot|-y))$ is invertible for large $R$. The statement follows from the implicit function theorem. 
\end{proof}

Let $u$ be an even solution satisfying \eqref{ME}. 
We denote $\mu_\gamma(u(t))$ by $\mu_\gamma(t)$ for short. We set $I_{\mu_0}:=\{ t \in I_{\max} :  |\mu_\gamma(t)| < \mu_0\}$, where $I_{\max}$ denotes the maximal existence time interval of the solution. 
By Lemma \ref{lem4.2}, we have $C^1$ functions $\tilde{\theta}=\tilde{\theta}(t)$ and $y=y(t)$ for $t \in I_{\mu_0}$. We set $\theta:=\tilde{\theta}-t$. We also have orthogonality conditions \eqref{eq4.2}.
We set
\begin{align}
	\label{eq4.3}
	u(t,x)
	&=e^{i\theta(t) + i t}(Q(|x|-y(t)) + g(t,x))
	\\ \notag
	&=e^{i\theta(t) + i t}(Q(|x|-y(t)) + \rho(t)\mathcal{G}_{R,y(t)}Q(x) + h(t,x)),
\end{align}
where 
\begin{align}
\label{eq4.4}
	\rho(t):=\frac{\re \int g \chi_{R}^{+} (\mathcal{T}_{y(t)}Q )^p dx}{\int (\chi_{R}^{+})^2(\mathcal{T}_{y(t)}Q )^{p+1}dx}.
\end{align}
Then it follows from \eqref{eq4.2}, \eqref{eq4.3} and \eqref{eq4.4} that
\begin{align}
\label{ortho}
	\im \int_{\mathbb{R}} h \chi_{R}^{+}\mathcal{T}_{y(t)}Qdx
	=\re \int_{\mathbb{R}} h (\chi_{R}^{+} \mathcal{T}_{y(t)}Q)'dx
	=\re \int_{\mathbb{R}} h \chi_{R}^{+} (\mathcal{T}_{y(t)}Q)^{p}dx
	=0.
\end{align}

\section{Modulation parameters}

\subsection{Properties of the ground state}
To give estimates for the modulation parameters $\rho,y$ and the remainder terms $g,h$, the function $\mathcal{Q}$ defined by
\begin{align*}
	\mathcal{Q}(y)&:=-\int_{-\infty}^{0} |Q'(x-y)|^2dx
	- \int_{-\infty}^{0} |Q(x-y)|^2dx  + \frac{|\gamma|}{2} Q(-y)^2 \\
	&\quad -\frac{1}{2|\gamma|} \{|\gamma|Q(-y)-2 Q'(-y)\}^2
\end{align*}
plays an important role. In this subsection, we compute its function large-$y$ asymptotics. 
The ground state $Q$ is given by
\begin{align}
	Q(x)= c_p \left\{2 \cosh \left(\frac{p-1}{2} x \right) \right\}^{-\frac{2}{p-1}}
	=c_p\{e^{\alpha x} + e^{-\alpha x}\}^{-\frac{1}{\alpha}},
\end{align}
where $c_p:=  \left( 2(p+1)\right)^{\frac{1}{p-1}}$ and $\alpha=(p-1)/2$. 
By Taylor expansion, we have
\begin{align}
\label{Q}
	Q(x)=c_p e^{-|x|}-\frac{c_p}{\alpha}e^{-p |x|}+o(e^{-p |x|}),
\end{align}
and
\begin{align}
\label{Q'}
	Q'(x)=-c_p\frac{x}{|x|} e^{-|x|}+ \frac{pc_p}{\alpha}\frac{x}{|x|}e^{-p |x|}+o(e^{-p |x|}).
\end{align}

\begin{comment}
Indeed, we show it as follows. For $x>0$, we have
\begin{align*}
	Q(x)
	&=c_p(e^{\alpha x} + e^{-\alpha x})^{-\frac{1}{\alpha}} \\
	&=c_p \{e^{\alpha x}(1+e^{-2\alpha x})  \}^{-\frac{1}{\alpha}} \\
	&=c_p e^{-x}(1+e^{-2\alpha x})^{-\frac{1}{\alpha}}.
\end{align*}
Letting $f(t)=(1+t)^{-\frac{1}{\alpha}}$, we have
\begin{align*}
	f'(t)=-\frac{1}{\alpha}  (1+t)^{-\frac{1}{\alpha}-1}.
\end{align*}
By the Taylor expansion, we have $f(t)=f(0)+f'(0)t+o(t)$, that is,
\begin{align*}
	(1+t)^{-\frac{1}{\alpha}}=1-\frac{1}{\alpha} t+o(t).
\end{align*}
Applying this as $t=e^{-2\alpha x}$, since $\alpha=(p-1)/2$ we get
\begin{align*}
	Q(x)&=c_p e^{-x}\left(1-\frac{1}{\alpha}e^{-2\alpha x}+o(e^{-2\alpha x})\right) \\
	&=c_p e^{-x}-\frac{c_p}{\alpha}e^{(-2\alpha-1) x}+o(e^{(-2\alpha-1) x})
	\\
	&=c_p e^{-x}-\frac{c_p}{\alpha}e^{-p x}+o(e^{-p x}).
\end{align*}
By the symmetry, we have
\begin{align*}
	Q(x)=c_p e^{-|x|}-\frac{c_p}{\alpha}e^{-p |x|}+o(e^{-p |x|})
\end{align*}
for $x\in \mathbb{R}$. By the derivative, we have
\begin{align*}
	Q'(x)=-c_p\frac{x}{|x|} e^{-|x|}+ \frac{pc_p}{\alpha}\frac{x}{|x|}e^{-p |x|}+o(e^{-p |x|}).
\end{align*}
\end{comment}

\begin{lemma}
\label{lem4.1}
We have
\begin{align*}
	 - \int_{-\infty}^{0} |Q'(x-y)|^2dx  
	 = - \frac{c_p^2}{2} e^{-2y} +\frac{2pc_p^2}{\alpha(p+1)}e^{-(p+1)y} + o(e^{-(p+1)y}). 
\end{align*}
\end{lemma}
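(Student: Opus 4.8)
The plan is to substitute the two-term asymptotic expansion \eqref{Q'} of $Q'$ directly into the integrand, square it, and integrate term by term, the only delicate point being the control of the error. First I would change variables to $z=x-y$. Since $x$ ranges over $(-\infty,0)$ and $y>0$, the argument stays in $z\in(-\infty,-y)$, so throughout the domain $z<0$, whence $\sgn(z)=-1$ and $|z|=y-x$. Inserting $z<0$ into \eqref{Q'} gives
\begin{align*}
	Q'(x-y) = c_p e^{-(y-x)} - \frac{pc_p}{\alpha}e^{-p(y-x)} + r(x-y),
	\qquad r(z)=o(e^{-p|z|}).
\end{align*}

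Next I would square this. The square of the leading term contributes $c_p^2 e^{-2(y-x)}$; the cross term between the two explicit terms contributes $-\frac{2pc_p^2}{\alpha}e^{-(p+1)(y-x)}$; the square of the second explicit term is $O(e^{-2p(y-x)})$ and hence of strictly higher order. Collecting these,
\begin{align*}
	|Q'(x-y)|^2 = c_p^2 e^{-2(y-x)} - \frac{2pc_p^2}{\alpha}e^{-(p+1)(y-x)} + \epsilon(x-y),
\end{align*}
where $\epsilon(z):=|Q'(z)|^2-c_p^2e^{-2|z|}+\frac{2pc_p^2}{\alpha}e^{-(p+1)|z|}$ gathers the remainder. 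Integrating the two explicit terms over $x\in(-\infty,0)$ using $\int_{-\infty}^0 e^{2x}\,dx=\frac12$ and $\int_{-\infty}^0 e^{(p+1)x}\,dx=\frac{1}{p+1}$ produces exactly $\frac{c_p^2}{2}e^{-2y}$ and $\frac{2pc_p^2}{\alpha(p+1)}e^{-(p+1)y}$; negating the result gives the claimed two leading terms.

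The main (if modest) obstacle is to confirm that $\int_{-\infty}^0\epsilon(x-y)\,dx=o(e^{-(p+1)y})$ rather than merely that $\epsilon$ is pointwise small. The key observation is that on the domain of integration $|z|=y-x\geq y$, so the remainder estimate becomes uniform as $y\to\infty$: since $r(z)=o(e^{-p|z|})$, the cross term $2c_pe^{-|z|}r(z)$ is $o(e^{-(p+1)|z|})$ and $r(z)^2=o(e^{-2p|z|})=o(e^{-(p+1)|z|})$ (using $2p>p+1$), so $\epsilon(z)=o(e^{-(p+1)|z|})$. Thus, given $\eta>0$, for $y$ large enough $|\epsilon(x-y)|\leq\eta e^{-(p+1)(y-x)}$ for all $x\leq0$, and integrating this bound yields $\left|\int_{-\infty}^0\epsilon(x-y)\,dx\right|\leq\frac{\eta}{p+1}e^{-(p+1)y}$. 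As $\eta$ is arbitrary, the error is indeed $o(e^{-(p+1)y})$, which completes the computation.
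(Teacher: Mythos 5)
Your proposal is correct and follows essentially the same route as the paper: substitute the two-term expansion \eqref{Q'}, square, keep the leading square and the cross term, and integrate using $\int_{-\infty}^0 e^{2x}\,dx=\tfrac12$ and $\int_{-\infty}^0 e^{(p+1)x}\,dx=\tfrac{1}{p+1}$. The only difference is that you spell out why the pointwise $o(e^{-p|z|})$ remainder integrates to $o(e^{-(p+1)y})$ (via $|z|=y-x\geq y$ on the domain), a step the paper absorbs without comment into its first ``$+\,o(e^{-(p+1)y})$''.
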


\begin{proof}
By the Taylor expansion \eqref{Q'} of $Q'$, we get
\begin{align*}
	 - \int_{-\infty}^{0} |Q'(x-y)|^2dx 
	& = - \int_{-\infty}^{0} |-c_pe^{x-y}+ \frac{pc_p}{\alpha}e^{p(x-y)}|^2dx + o(e^{-(p+1)y})
	\\
	& = - \int_{-\infty}^{0} (-c_pe^{x-y})^2 +2(-c_pe^{x-y})\frac{pc_p}{\alpha}e^{p(x-y)} dx + o(e^{-(p+1)y})
	\\
	& = -c_p^2 e^{-2y} \int_{-\infty}^{0} e^{2x}dx +2\frac{pc_p^2}{\alpha}e^{-(p+1)y} \int_{-\infty}^{0} e^{(p+1)x}dx + o(e^{-(p+1)y})
	\\
	& = - \frac{c_p^2}{2} e^{-2y} +\frac{2pc_p^2}{\alpha(p+1)}e^{-(p+1)y} + o(e^{-(p+1)y}).
\end{align*}
\end{proof}
\begin{lemma}
\label{lem4.2}
We have
\begin{align*}
	- \int_{-\infty}^{0} |Q(x-y)|^2 dx
	= -\frac{c_p^2}{2} e^{-2y}+\frac{2c_p^2}{\alpha(p+1)}e^{-(p+1)y}  + o(e^{-(p+1)y}).
\end{align*}
\end{lemma}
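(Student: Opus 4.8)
The plan is to proceed exactly as in the proof of the preceding Lemma~\ref{lem4.1}, using the expansion~\eqref{Q} of $Q$ in place of the expansion~\eqref{Q'} of $Q'$; the only structural difference is the value of the coefficient produced by the cross term. First I would observe that on the domain of integration we have $x<0<y$, so the argument $x-y$ is strictly negative, $|x-y|=y-x$, and hence $e^{-|x-y|}=e^{x-y}$. Substituting $x\mapsto x-y$ into~\eqref{Q} therefore gives
\begin{align*}
	Q(x-y)=c_p e^{x-y}-\frac{c_p}{\alpha}e^{p(x-y)}+o(e^{p(x-y)})
\end{align*}
uniformly for $x<0$ as $y\to\infty$, since $x-y\leq-y\to-\infty$ on this range.

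Next I would square this expansion and retain only the two leading exponentials, discarding the terms of order $e^{2p(x-y)}$, which are negligible because $2p>p+1$. This yields
\begin{align*}
	|Q(x-y)|^2=c_p^2 e^{2(x-y)}-\frac{2c_p^2}{\alpha}e^{(p+1)(x-y)}+o(e^{(p+1)(x-y)}).
\end{align*}
Integrating the two leading exponentials over $(-\infty,0)$ with $\int_{-\infty}^0 e^{2x}\,dx=\frac12$ and $\int_{-\infty}^0 e^{(p+1)x}\,dx=\frac{1}{p+1}$ then produces the leading coefficient $-\frac{c_p^2}{2}e^{-2y}$ and the subleading coefficient $\frac{2c_p^2}{\alpha(p+1)}e^{-(p+1)y}$, as claimed. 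Note that the cross term here carries the factor $\frac{c_p}{\alpha}$ from~\eqref{Q}, whereas in Lemma~\ref{lem4.1} the corresponding factor was $\frac{p c_p}{\alpha}$ coming from~\eqref{Q'}; this is precisely why the subleading coefficient loses the extra $p$.

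The only delicate point, and the step I would treat most carefully, is confirming that the pointwise error, once squared and integrated, contributes only $o(e^{-(p+1)y})$ and does not corrupt the subleading coefficient. The cleanest way to make this rigorous is to sharpen~\eqref{Q}: writing $s=x-y<0$ and factoring $Q(s)=c_p e^{s}(1+e^{(p-1)s})^{-1/\alpha}$ (recall $\alpha=(p-1)/2$, so $p-1=2\alpha$), a binomial expansion gives the controlled remainder $Q(s)=c_p e^{s}-\frac{c_p}{\alpha}e^{ps}+O(e^{(2p-1)s})$. After the substitution $s=x-y$ the integral becomes $\int_{-\infty}^{-y}|Q(s)|^2\,ds$, and every discarded term in $|Q(s)|^2$ is of order $e^{2ps}$, which integrates to $O(e^{-2py})=o(e^{-(p+1)y})$ since $p>1$. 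This justifies all the $o(\cdot)$ symbols above and completes the proof.
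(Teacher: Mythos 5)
Your proposal is correct and follows exactly the route the paper takes: the paper's proof of this lemma simply states that it is the same calculation as Lemma~\ref{lem4.1} with the expansion~\eqref{Q} in place of~\eqref{Q'}, which is precisely what you carry out (and your extra care with the $O(e^{(2p-1)s})$ remainder and the $e^{-2py}=o(e^{-(p+1)y})$ check only makes the error bookkeeping more explicit than the paper bothers to).
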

\begin{proof}
Using the Taylor expansion \eqref{Q} of $Q$, this is a similar calculation to Lemma \ref{lem4.1}.  
\begin{comment}
We have 
\begin{align*}
	- \int_{-\infty}^{0} |Q(x-y)|^2 dx
	&= - \int_{-\infty}^{0} |c_p e^{x-y}-\frac{c_p}{\alpha}e^{p (x-y)}|^2 dx + o(e^{-(p+1)y})
	\\
	&= - \int_{-\infty}^{0} c_p^2 e^{-2y}e^{2x}+\frac{2c_p^2}{\alpha}e^{-(p+1)y}e^{(p+1)x} dx + o(e^{-(p+1)y})
	\\
	&= -c_p^2 e^{-2y} \int_{-\infty}^{0} e^{2x} dx+\frac{2c_p^2}{\alpha}e^{-(p+1)y} \int_{-\infty}^{0}  e^{(p+1)x} dx + o(e^{-(p+1)y})
	\\
	&= -\frac{c_p^2}{2} e^{-2y}+\frac{2c_p^2}{\alpha(p+1)}e^{-(p+1)y}  + o(e^{-(p+1)y})
\end{align*}
\end{comment}

\end{proof}

\begin{lemma}
\label{lem4.3}
We have
\begin{align*}
	\frac{|\gamma|}{2} Q(-y)^2=\frac{|\gamma|}{2} c_p^2 e^{-2y}- \frac{|\gamma|c_p^2}{\alpha}e^{-(p+1)y} +o(e^{-(p+1)y}).
\end{align*}
\end{lemma}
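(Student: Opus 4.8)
The plan is to substitute the Taylor expansion \eqref{Q} of $Q$, evaluated at the point $x=-y$, directly into $\frac{|\gamma|}{2}Q(-y)^2$ and expand the square, keeping all contributions up to order $e^{-(p+1)y}$. This computation is entirely parallel to those in Lemmas \ref{lem4.1} and \ref{lem4.2}, but in fact simpler, since no integration is involved: it is a purely pointwise expansion.

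First I would record, from \eqref{Q} and the fact that $|-y|=y$ for $y>0$,
\begin{align*}
	Q(-y) = c_p e^{-y} - \frac{c_p}{\alpha} e^{-py} + o(e^{-py}).
\end{align*}
Squaring this produces a leading term $c_p^2 e^{-2y}$, a cross term
\begin{align*}
	2\cdot c_p e^{-y} \cdot \left(-\frac{c_p}{\alpha} e^{-py}\right) = -\frac{2c_p^2}{\alpha} e^{-(p+1)y},
\end{align*}
and several remaining contributions. I would then check that the latter are all of order $o(e^{-(p+1)y})$: the square of the subleading term equals $\frac{c_p^2}{\alpha^2} e^{-2py}$, and the products involving the $o(e^{-py})$ remainder are no larger, so the decisive point is the order comparison between $e^{-2py}$ and $e^{-(p+1)y}$.

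This comparison is the only step requiring a word of care, and it is immediate: $2p > p+1$ is equivalent to $p>1$, which holds a fortiori since $p>5$. Hence $e^{-2py} = o(e^{-(p+1)y})$, and we obtain $Q(-y)^2 = c_p^2 e^{-2y} - \frac{2c_p^2}{\alpha} e^{-(p+1)y} + o(e^{-(p+1)y})$. Multiplying through by $\frac{|\gamma|}{2}$ gives the stated identity. I do not anticipate any genuine obstacle; the lemma is the most elementary of the three asymptotic computations and serves only to supply the $Q(-y)^2$ contribution to the large-$y$ expansion of $\mathcal{Q}$.
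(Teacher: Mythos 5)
Your computation is correct and is exactly what the paper means when it states the lemma is ``an immediate consequence of \eqref{Q}'': square the pointwise expansion $Q(-y)=c_pe^{-y}-\frac{c_p}{\alpha}e^{-py}+o(e^{-py})$, keep the cross term, and discard the $e^{-2py}$ contributions using $2p>p+1$. No discrepancy with the paper's (omitted) argument.
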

\begin{proof}
This is an immediate consequence of \eqref{Q}.
\end{proof}

\begin{lemma}
\label{lem4.4}
We have
\begin{align*}
	& - \frac{1}{2|\gamma|} \{|\gamma|Q(-y)-2Q'(-y)\}^2
	 \\
	 &= - \frac{1}{2|\gamma|}(|\gamma|-2)^2c_p^2 e^{-2y}  - \frac{|\gamma|-2}{|\gamma|}\frac{2p-|\gamma|}{\alpha}c_p^2 e^{-(p+1)y} +o(e^{-(p+1)y}).
\end{align*}
\end{lemma}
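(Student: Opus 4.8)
The plan is to substitute the Taylor expansions \eqref{Q} and \eqref{Q'} evaluated at $x=-y$ into the bracketed combination, group the resulting terms by exponential order, square, and finally multiply by $-1/(2|\gamma|)$. Since $y>0$ gives $|-y|=y$ and $\tfrac{-y}{|-y|}=-1$, I would first record
\begin{align*}
	Q(-y)&=c_p e^{-y}-\frac{c_p}{\alpha}e^{-py}+o(e^{-py}),
	\\
	Q'(-y)&=c_p e^{-y}-\frac{pc_p}{\alpha}e^{-py}+o(e^{-py}),
\end{align*}
taking care that the sign factor $\tfrac{x}{|x|}$ in \eqref{Q'} evaluates to $-1$ at $x=-y$, so that the leading term of $Q'(-y)$ is $+c_p e^{-y}$ rather than $-c_p e^{-y}$.

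Next I would form the linear combination and collect by order:
\begin{align*}
	|\gamma|Q(-y)-2Q'(-y)
	=(|\gamma|-2)c_p e^{-y}+\frac{(2p-|\gamma|)c_p}{\alpha}e^{-py}+o(e^{-py}),
\end{align*}
where the $e^{-py}$ coefficient comes from combining $|\gamma|\cdot(-\tfrac{c_p}{\alpha})$ with $-2\cdot(-\tfrac{pc_p}{\alpha})$. Squaring this expression, the $e^{-2y}$ contribution is the square of the leading term, $(|\gamma|-2)^2 c_p^2 e^{-2y}$, and the $e^{-(p+1)y}$ contribution is twice the cross product, $\tfrac{2(|\gamma|-2)(2p-|\gamma|)c_p^2}{\alpha}e^{-(p+1)y}$; the remaining $e^{-2py}$ term is $o(e^{-(p+1)y})$ since $p>1$. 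Multiplying by $-1/(2|\gamma|)$ and simplifying the factor $2$ in the cross term against the $2|\gamma|$ in the denominator then yields exactly the coefficient $-\tfrac{|\gamma|-2}{|\gamma|}\tfrac{2p-|\gamma|}{\alpha}c_p^2$ of $e^{-(p+1)y}$ in the stated asymptotics.

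This is a direct bookkeeping computation entirely parallel to Lemmas \ref{lem4.1}--\ref{lem4.3}, so I do not expect any genuine obstacle. The only points requiring care are the sign produced by $\tfrac{x}{|x|}=-1$ at $x=-y$ (which controls whether the second-order terms reinforce or cancel) and the verification that, after squaring, only the two indicated exponential orders $e^{-2y}$ and $e^{-(p+1)y}$ survive while the $e^{-2py}$ remainder is correctly absorbed into the $o(e^{-(p+1)y})$ error. It is worth noting, for the later application to $\mathcal{Q}(y)$, that the leading $e^{-2y}$ coefficient is proportional to $(|\gamma|-2)^2$ and hence degenerates precisely at $\gamma=-2$, which is why the $e^{-(p+1)y}$ term is retained here.
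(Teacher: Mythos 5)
Your proposal is correct and follows essentially the same route as the paper: substitute the expansions \eqref{Q} and \eqref{Q'} at $x=-y$ (with the sign factor $x/|x|=-1$), collect into $(|\gamma|-2)c_p e^{-y}+\frac{2p-|\gamma|}{\alpha}c_p e^{-py}+o(e^{-py})$, square, discard the $e^{-2py}$ term as $o(e^{-(p+1)y})$, and multiply by $-1/(2|\gamma|)$. All coefficients check out, including the cancellation of the factor $2$ from the cross term against the $2|\gamma|$ in the denominator.
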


\begin{proof}
By  \eqref{Q} and  \eqref{Q'}, 
we have 
\begin{align*}
	&\{|\gamma|Q(-y)-2Q'(-y)\}^2 
	\\
	&=\left\{ |\gamma|\left( c_p e^{-y}-\frac{c_p}{\alpha}e^{-py}\right) -2\left(c_pe^{-y}- \frac{pc_p}{\alpha}e^{-py}\right) \right\}^2+o(e^{-(p+1)y})
	\\
	&=\left\{ (|\gamma|-2)c_p e^{-y} +\frac{2p-|\gamma|}{\alpha}c_pe^{-py}\right\}^2+o(e^{-(p+1)y})
	\\
	&=(|\gamma|-2)^2c_p^2 e^{-2y} +2(|\gamma|-2)\frac{2p-|\gamma|}{\alpha}c_p^2 e^{-(p+1)y} +o(e^{-(p+1)y})
\end{align*}
and ther result follows.
\end{proof}
Combining all the above estimates, we obtain the following. 
\begin{corollary}
\label{cor4.5}
We have
\begin{align*}
	\mathcal{Q}(y)
	=\left( 1 -\frac{2}{|\gamma|}\right)c_p^2 e^{-2y} 
	- \frac{4p(|\gamma|-2)}{|\gamma|(p-1)} c_p^2 e^{-(p+1)y}
	+o( e^{-(p+1)y}). 
\end{align*}
\end{corollary}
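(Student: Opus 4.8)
The plan is to read off the asymptotics of $\mathcal{Q}(y)$ by adding the four expansions from Lemmas~\ref{lem4.1}--\ref{lem4.4}, which are exactly the four summands in the definition of $\mathcal{Q}$. Each of those lemmas already supplies the $e^{-2y}$ and $e^{-(p+1)y}$ coefficients together with an $o(e^{-(p+1)y})$ error, so no further analysis of $Q$ is needed; the whole proof reduces to collecting coefficients order by order and simplifying, with the four remainders combining trivially into a single $o(e^{-(p+1)y})$.

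For the $e^{-2y}$ term I would sum the four contributions $-\tfrac12$, $-\tfrac12$, $\tfrac{|\gamma|}{2}$, $-\tfrac{(|\gamma|-2)^2}{2|\gamma|}$ (each times $c_p^2$). Expanding $\tfrac{(|\gamma|-2)^2}{2|\gamma|}=\tfrac{|\gamma|}{2}-2+\tfrac{2}{|\gamma|}$ makes the $\tfrac{|\gamma|}{2}$ terms cancel, leaving exactly $\bigl(1-\tfrac{2}{|\gamma|}\bigr)c_p^2$. For the $e^{-(p+1)y}$ term I would first merge the Lemma~\ref{lem4.1} and Lemma~\ref{lem4.2} coefficients via $\tfrac{2p}{\alpha(p+1)}+\tfrac{2}{\alpha(p+1)}=\tfrac{2}{\alpha}$, then add the $-\tfrac{|\gamma|}{\alpha}$ from Lemma~\ref{lem4.3} and the cross term $-\tfrac{|\gamma|-2}{|\gamma|}\tfrac{2p-|\gamma|}{\alpha}$ from Lemma~\ref{lem4.4}. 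Expanding $(|\gamma|-2)(2p-|\gamma|)/|\gamma|$, all of the $|\gamma|$-dependence cancels except $-2p+\tfrac{4p}{|\gamma|}=-2p\bigl(1-\tfrac{2}{|\gamma|}\bigr)$, and substituting $\alpha=(p-1)/2$ turns $\tfrac{2}{\alpha}$ into $\tfrac{4}{p-1}$, producing the stated coefficient $-\tfrac{4p(|\gamma|-2)}{|\gamma|(p-1)}c_p^2$.

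The one place demanding care is this last cancellation: one must check that the cross term from Lemma~\ref{lem4.4}, which alone depends on $|\gamma|$ in a nontrivial way, conspires with the $-\tfrac{|\gamma|}{\alpha}$ term so that the net $e^{-(p+1)y}$ coefficient again factors through $1-\tfrac{2}{|\gamma|}$. This is the conceptually meaningful outcome: writing the result as $\bigl(1-\tfrac{2}{|\gamma|}\bigr)c_p^2\bigl[e^{-2y}-\tfrac{4p}{p-1}e^{-(p+1)y}\bigr]+o(e^{-(p+1)y})$, one sees that both leading orders carry the common repulsive-strength factor $1-\tfrac{2}{|\gamma|}$. In particular, at the degenerate value $\gamma=-2$ the entire expansion to this order collapses into $o(e^{-(p+1)y})$, which is exactly the degeneracy that forces the separate, higher-order treatment of the case $\gamma=-2$ announced in the introduction.
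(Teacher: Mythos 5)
Your proof is correct and coincides with the paper's argument: Corollary \ref{cor4.5} is obtained there precisely by summing the expansions of Lemmas \ref{lem4.1}--\ref{lem4.4} and collecting the $e^{-2y}$ and $e^{-(p+1)y}$ coefficients, exactly as you do (and your coefficient arithmetic checks out, including the factorization of the $e^{-(p+1)y}$ coefficient through $1-\tfrac{2}{|\gamma|}$). Your closing observation that both orders vanish at $\gamma=-2$ matches the remark following the corollary, which is why that degenerate case is handled via Lemma \ref{lem4.6}.
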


\begin{remark}
So when $\gamma<-2$, the leading term of $\mathcal{Q}$ is $e^{-2y}$ with  positive coefficient. When $\gamma=-2$, we will simply regard $\mathcal{Q}$ as an error term with $\mathcal{Q}=o(e^{-(p+1)y})$. 
\end{remark}

For $\gamma=-2$, the following estimate provides our main term: 
\begin{lemma}
\label{lem4.6}
It holds that
\begin{align*}
	 \frac{2}{p+1} \int_{-\infty}^{0} |Q(x-y)|^{p+1} dx
	  =  \frac{4c_p^2}{(p+1)}  e^{-(p+1)y}  +o(e^{-(p+1)y}).
\end{align*}
\end{lemma}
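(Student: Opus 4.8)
The plan is to mimic the computations of Lemmas \ref{lem4.1}--\ref{lem4.4}, exploiting that the integration variable $x$ ranges over $(-\infty,0)$ while $y>0$ is large, so that $x-y<0$ throughout and hence $|x-y|=y-x$. This lets me feed the one-sided Taylor expansion \eqref{Q} of $Q$ directly into the integrand. First I would write, for $x<0$,
\begin{align*}
	Q(x-y)=c_p e^{-(y-x)}-\frac{c_p}{\alpha}e^{-p(y-x)}+o(e^{-p(y-x)})
	=c_p e^{x-y}\bigl(1+O(e^{(p-1)(x-y)})\bigr),
\end{align*}
so that raising to the power $p+1$ gives
\begin{align*}
	|Q(x-y)|^{p+1}=c_p^{p+1}e^{(p+1)(x-y)}\bigl(1+O(e^{(p-1)(x-y)})\bigr).
\end{align*}

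Next I would integrate term by term over $(-\infty,0)$. The leading contribution is
\begin{align*}
	c_p^{p+1}e^{-(p+1)y}\int_{-\infty}^{0}e^{(p+1)x}\,dx=\frac{c_p^{p+1}}{p+1}e^{-(p+1)y},
\end{align*}
while the correction term is controlled by $O\!\left(e^{-(p+1)y}\int_{-\infty}^{0}e^{(p+1)x}e^{(p-1)x}\,dx\right)=O(e^{-2py})$, which is $o(e^{-(p+1)y})$ since $2p>p+1$ for $p>1$. The one genuinely non-routine step is the algebraic simplification that converts $c_p^{p+1}$ into $c_p^2$: using the defining relation $c_p=\{2(p+1)\}^{1/(p-1)}$, i.e.\ $c_p^{p-1}=2(p+1)$, I obtain $c_p^{p+1}=c_p^{p-1}c_p^2=2(p+1)c_p^2$.

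Combining these, the factor $\tfrac{2}{p+1}$ in front yields
\begin{align*}
	\frac{2}{p+1}\int_{-\infty}^{0}|Q(x-y)|^{p+1}\,dx
	=\frac{2}{p+1}\cdot\frac{2(p+1)c_p^2}{p+1}e^{-(p+1)y}+o(e^{-(p+1)y})
	=\frac{4c_p^2}{p+1}e^{-(p+1)y}+o(e^{-(p+1)y}),
\end{align*}
which is the claimed identity. I do not anticipate any real obstacle here: the estimate is of the same elementary type as Lemmas \ref{lem4.1}--\ref{lem4.4}, and the only point requiring attention is bookkeeping of the error, namely checking that the next-order term decays faster than $e^{-(p+1)y}$, together with remembering to invoke $c_p^{p-1}=2(p+1)$ so that the power of $c_p$ collapses to $c_p^2$ as stated.
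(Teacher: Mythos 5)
Your proof is correct and follows essentially the same route as the paper: substitute the leading term $c_p e^{x-y}$ of the expansion \eqref{Q} into the integral, compute $\int_{-\infty}^{0}e^{(p+1)x}dx=\tfrac{1}{p+1}$, and use $c_p^{p-1}=2(p+1)$ to collapse $c_p^{p+1}$ to $2(p+1)c_p^2$. (One minor slip: in your displayed error bound the factor $e^{-(p-1)y}$ coming from $e^{(p-1)(x-y)}=e^{(p-1)x}e^{-(p-1)y}$ is omitted from the intermediate expression, but the stated conclusion $O(e^{-2py})=o(e^{-(p+1)y})$ is the correct one.)
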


\begin{proof}
We have
\begin{align*}
	 \frac{2}{p+1} \int_{-\infty}^{0} |Q(x-y)|^{p+1} dx 
	 &=  \frac{2}{p+1} \int_{-\infty}^{0} |c_pe^{x-y}|^{p+1} dx +o(e^{-(p+1)y})
	 \\
	  &=  \frac{2c_p^{p+1}}{p+1} e^{-(p+1)y} \int_{-\infty}^{0} e^{(p+1)x} dx +o(e^{-(p+1)y})
	  \\
	  &=  \frac{2c_p^{p+1}}{(p+1)^2} e^{-(p+1)y}  +o(e^{-(p+1)y})
\end{align*}
Since $c_p=\{2(p+1)\}^{\frac{1}{p-1}}$ we have
\begin{align*}
	c_p^{p+1} =c_p^{2}c_p^{p-1} = 2(p+1)c_p^{2}.
\end{align*}
Thus, we get
\begin{align*}
	 \frac{2c_p^{p+1}}{(p+1)^2} 
	 =  \frac{4(p+1)c_p^2}{(p+1)^2} 
	 =  \frac{4c_p^2}{(p+1)}.
\end{align*}
\end{proof}


\subsection{Estimates of the modulation parameters}

We set 
\begin{align*}
	e_\gamma(y):=
	\begin{cases}
	e^{-2y} & (\gamma<-2),
	\\
	e^{-(p+1)y}& (\gamma=-2). 
	\end{cases}
\end{align*}
Let $u$ be an even solution satisfying \eqref{ME}, and  modulated as in~\eqref{eq4.3}. 
\begin{lemma}
\label{lem4.7}
It holds that 
\begin{align*}
	&|\rho| \lesssim \|g\|_{L^2},\\
	& \|g\|_{H^1} \lesssim |\rho| + \|h\|_{H^1},\\
	&\|h\|_{H^1} \lesssim |\rho| +\|g\|_{H^1}  \lesssim \|g\|_{H^1}.
\end{align*}
\end{lemma}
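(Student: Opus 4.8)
The plan is to derive all three inequalities directly from the decomposition
\begin{align*}
	g = \rho\,\mathcal{G}_{R,y}Q + h,
\end{align*}
recorded in \eqref{eq4.3}, together with two bounds on the translated profile $\mathcal{T}_{y}Q$ that are \emph{uniform} in the modulation parameter $y$ (which satisfies $y>2R$). No use of the orthogonality \eqref{ortho} is needed; these are purely algebraic estimates. For the first inequality $|\rho|\lesssim\|g\|_{L^{2}}$, I would begin from the defining formula \eqref{eq4.4}. Cauchy--Schwarz bounds the numerator by $\|g\|_{L^{2}}\,\|\chi_{R}^{+}(\mathcal{T}_{y}Q)^{p}\|_{L^{2}}\le\|g\|_{L^{2}}\,\|Q^{p}\|_{L^{2}}$, i.e.\ by $\|g\|_{L^{2}}$ times a fixed constant. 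For the denominator, since $y>2R$ the bump $\mathcal{T}_{y}Q$ is concentrated in $\{x>R\}$, where $\chi_{R}^{+}\equiv 1$; changing variables $z=x-y$ gives
\begin{align*}
	\int (\chi_{R}^{+})^{2}(\mathcal{T}_{y}Q)^{p+1}\,dx
	\geq \int_{R}^{\infty} Q(x-y)^{p+1}\,dx
	= \int_{R-y}^{\infty} Q(z)^{p+1}\,dz
	\geq \int_{-R}^{\infty} Q(z)^{p+1}\,dz > 0,
\end{align*}
a positive constant independent of $y$. Dividing the numerator bound by this yields $|\rho|\lesssim\|g\|_{L^{2}}$.

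The second and third inequalities then follow from the triangle inequality applied to $g=\rho\,\mathcal{G}_{R,y}Q+h$ and to $h=g-\rho\,\mathcal{G}_{R,y}Q$, once one has the uniform upper bound $\|\mathcal{G}_{R,y}Q\|_{H^{1}}\lesssim 1$. This last bound holds because $\chi_{R}^{+}$ and $\chi_{R}^{-}$ are genuine smooth cut-offs (each supported away from the origin, so multiplication by them creates no jump), whence $\|\chi_{R}^{\pm}\mathcal{T}_{\pm y}Q\|_{H^{1}}\lesssim\|Q\|_{H^{1}}$ uniformly in $y$, and the two pieces of $\mathcal{G}_{R,y}Q$ have disjoint supports. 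Therefore $\|g\|_{H^{1}}\le |\rho|\,\|\mathcal{G}_{R,y}Q\|_{H^{1}}+\|h\|_{H^{1}}\lesssim |\rho|+\|h\|_{H^{1}}$, and symmetrically $\|h\|_{H^{1}}\lesssim |\rho|+\|g\|_{H^{1}}$. Finally, feeding in the first inequality $|\rho|\lesssim\|g\|_{L^{2}}\le\|g\|_{H^{1}}$ absorbs the $\rho$-term and gives $\|h\|_{H^{1}}\lesssim\|g\|_{H^{1}}$, completing the chain.

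None of these steps is genuinely difficult; the only point demanding care is the uniformity in $y$ of both the lower bound on the denominator and the upper bound $\|\mathcal{G}_{R,y}Q\|_{H^{1}}\lesssim 1$. This is precisely what the constraint $y>2R$ guarantees: it keeps the translated profile $\mathcal{T}_{y}Q$ inside the flat region $\{\chi_{R}^{+}=1\}$ and away from the transition layer of the cut-off, so that neither constant degenerates as $y\to\infty$. I would therefore state the bounds with implicit constants depending only on $p$, $R$, and $\|Q\|_{H^{1}}$, and emphasize this uniformity when invoking the estimates later.
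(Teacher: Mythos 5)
Your proposal is correct and follows exactly the route the paper intends: the paper's proof is the one-line remark that the estimates follow from the definition \eqref{eq4.4} of $\rho$ and the decomposition $g=\rho\,\mathcal{G}_{R,y}Q+h$, and your Cauchy--Schwarz bound on the numerator, the $y$-uniform lower bound on the denominator (valid since $y>2R$ keeps $\mathcal{T}_yQ$ in the region where $\chi_R^+\equiv 1$), and the triangle inequality with $\|\mathcal{G}_{R,y}Q\|_{H^1}\lesssim 1$ are precisely the omitted details. Your emphasis on the uniformity in $y$ of the constants is the right point to flag.
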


\begin{proof}
These estimates follow from the definition of $\rho$ and $g=\chi_R Q(|\cdot|-y)+h$. 
\end{proof}

By using the mass condition, we have the following:
\begin{lemma}
\label{lem4.8}
We have
\begin{align*}
	4 \re \int_{0}^{\infty}  Q(x-y)h(x) dx
	= 2 \int_{-\infty}^{0} |Q(x-y)|^2dx - 4\rho \re \int_{0}^{\infty} \chi_R(x) Q(x-y)^2 -\|g\|_{L^2}^2.
\end{align*}
\end{lemma}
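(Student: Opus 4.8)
The plan is to derive this identity purely from the mass condition $M(u)=2M(Q)$ together with the modulation decomposition~\eqref{eq4.3}, with no appeal to the orthogonality conditions~\eqref{ortho}. First I would expand the mass. Writing $u=e^{i(\theta+t)}(Q(|\cdot|-y)+g)$, the phase factor has modulus one, so $\|u\|_{L^2}^2=\|Q(|\cdot|-y)+g\|_{L^2}^2$; expanding the square and using that $Q$ is real (so that $\re\int Q\overline{g}=\re\int Qg$) together with $M(u)=2M(Q)=2\|Q\|_{L^2}^2$ gives
\begin{align*}
	2\|Q\|_{L^2}^2=\|Q(|\cdot|-y)\|_{L^2}^2+2\re\int_{\mathbb{R}}Q(|x|-y)g(x)\,dx+\|g\|_{L^2}^2.
\end{align*}

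The next step is to evaluate the norm of the profile. By the evenness of $Q(|\cdot|-y)$ and a translation, $\|Q(|\cdot|-y)\|_{L^2}^2=2\int_0^\infty Q(x-y)^2\,dx=2\|Q\|_{L^2}^2-2\int_{-\infty}^0 Q(x-y)^2\,dx$. Substituting this and cancelling the $2\|Q\|_{L^2}^2$ terms yields
\begin{align*}
	2\re\int_{\mathbb{R}}Q(|x|-y)g(x)\,dx=2\int_{-\infty}^0 Q(x-y)^2\,dx-\|g\|_{L^2}^2.
\end{align*}

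To convert the cross term into the desired half-line integral of $h$, I would use that $g$ is even (inherited from the evenness of $u$ and of $Q(|\cdot|-y)$) to fold the integral over $\mathbb{R}$ into twice the integral over $(0,\infty)$, so that $\re\int_{\mathbb{R}}Q(|x|-y)g=2\re\int_0^\infty Q(x-y)g$. Finally I would insert the decomposition $g=\rho\,\mathcal{G}_{R,y}Q+h$ and observe that on $(0,\infty)$ one has $\mathcal{G}_{R,y}Q=\chi_R\,Q(\cdot-y)$, since the $\chi_R^-$ piece vanishes there. This gives
\begin{align*}
	\re\int_0^\infty Q(x-y)g\,dx=\rho\int_0^\infty\chi_R(x)Q(x-y)^2\,dx+\re\int_0^\infty Q(x-y)h\,dx,
\end{align*}
and combining the three displayed identities and rearranging produces the claimed formula.

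The computation is entirely routine and I anticipate no genuine obstacle; the only points requiring care are bookkeeping ones, namely correctly identifying $\mathcal{G}_{R,y}Q$ with $\chi_R Q(\cdot-y)$ on the positive half-line and tracking the factors of $2$ coming from the even-symmetry folding and from the half-line representation of $\|Q(|\cdot|-y)\|_{L^2}^2$. It is worth emphasizing that, in contrast with the companion modulation estimates, this identity relies solely on mass conservation and not on the orthogonality conditions.
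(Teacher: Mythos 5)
Your proof is correct and follows essentially the same route as the paper's: expand $M(u)=2M(Q)$ using the decomposition $u=e^{i(\theta+t)}(Q(|\cdot|-y)+g)$, fold to $(0,\infty)$ by evenness, and substitute $g=\rho\,\mathcal{G}_{R,y}Q+h$ with $\mathcal{G}_{R,y}Q=\chi_R Q(\cdot-y)$ on the positive half-line. Your observation that only mass conservation (and not the orthogonality conditions) is used is also consistent with the paper.
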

\begin{proof}
This follows from direct calculation using $M(u)=2M(Q)$ and~\eqref{eq4.3}. 
\begin{comment}
We have
\begin{align*}
	0=M(u)-2M(Q)&=M(Q(|\cdot|-y) +g)-2M(Q)
	\\
	&=- 2 \int_{-\infty}^{0} |Q(x-y)|^2dx +2 \re \int_{\mathbb{R}} Q(|x|-y)g(x) dx +\|g\|_{L^2}^2 \\
	&=- 2 \int_{-\infty}^{0} |Q(x-y)|^2dx +4 \re \int_{0}^{\infty} Q(x-y)g(x) dx +\|g\|_{L^2}^2 \\
\end{align*}
By $g=\rho \chi_R Q(|\cdot|-y) + h$, it is valid that 
\begin{align*}
	\re \int_{0}^{\infty} Q(x-y)g(x) dx 
	&= \re \int_{0}^{\infty} Q(x-y)(\rho \chi_R(x) Q(x-y) + h(x)) dx \\
	&=\rho \re \int_{0}^{\infty} \chi_R(x) Q(x-y)^2 
	+ \re \int_{0}^{\infty}  Q(x-y)h(x) dx.
\end{align*}
Therefore, combining these equalities, we get the statement. 
\end{comment}
\end{proof}

Using the functional $\mu_\gamma$, we have the following: 
\begin{lemma}
\label{lem4.9}
We have
\begin{align*}
	|\rho| \lesssim |\mu_\gamma(u)|
	+ \|g\|_{H^1}^2
	+e_\gamma(y) 
	+o(e^{-(p+1)y})
	+ O(e^{-pR}\|h\|_{H^1}).
\end{align*}
\end{lemma}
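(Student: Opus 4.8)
The plan is to compute $\mu_\gamma(u)$ directly from the modulation ansatz \eqref{eq4.3}, $u=e^{i(\theta+t)}(Q(|\cdot|-y)+g)$, and then solve the resulting identity for $\rho$. Writing $P:=Q(|\cdot|-y)$ and expanding $\|u\|_{\dot H_\gamma^1}^2=\|u'\|_{L^2}^2+|\gamma||u(0)|^2$, the functional $\mu_\gamma(u)=2\|Q'\|_{L^2}^2-\|u\|_{\dot H_\gamma^1}^2$ splits into a soliton-only part, a part linear in $g$, and a quadratic part. The quadratic part is $O(\|g\|_{H^1}^2)$ (using $|g(0)|\lesssim\|g\|_{H^1}$), and the soliton-only part $2\|Q'\|_{L^2}^2-\|P'\|_{L^2}^2-|\gamma|Q(-y)^2$ reduces, via $\|P'\|_{L^2}^2=2\|Q'\|_{L^2}^2-2\int_{-\infty}^0|Q'(x-y)|^2dx$ (which follows from $Q'$ being odd), to $2\int_{-\infty}^0|Q'(x-y)|^2dx-|\gamma|Q(-y)^2$. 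Evenness lets me reduce all cross terms to integrals over $(0,\infty)$.

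The heart of the argument is the linear-in-$g$ part $-2\re\int_\mathbb{R}P'\overline{g'}-2|\gamma|Q(-y)\re g(0)$. I would integrate by parts on $(0,\infty)$, where $Q(\cdot-y)$ solves $Q''=Q-Q^p$; this produces the boundary term at the corner $x=0$ together with the bulk term $4\re\int_0^\infty(Q(\cdot-y)-Q(\cdot-y)^p)\overline{g}$, and the boundary and $|\gamma|$-terms combine into $b\,\re g(0)$ with $b=-2(|\gamma|Q(-y)-2Q'(-y))$. I then substitute $g=\rho\,\mathcal{G}_{R,y}Q+h$ (so that on $(0,\infty)$ one has $\mathcal{G}_{R,y}Q=\chi_R Q(\cdot-y)$), apply the mass identity of Lemma~\ref{lem4.8} to the factor $\re\int_0^\infty Q(\cdot-y)h$, and use the orthogonality \eqref{ortho} for $\re\int_0^\infty Q(\cdot-y)^p h$. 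The latter vanishes up to the cut-off tail $\re\int_0^\infty\chi_R^c Q(\cdot-y)^p h$, which on $\{0<x<R\}$ with $y>2R$ is $O(e^{-pR}\|h\|_{H^1})$ by Cauchy--Schwarz.

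The key algebraic fact, which I expect to be the main obstacle to get right, is that the two contributions proportional to $\rho\int_0^\infty\chi_R Q(\cdot-y)^2$ — one from pairing $Q(\cdot-y)-Q(\cdot-y)^p$ against $\rho\chi_R Q(\cdot-y)$, the other coming out of Lemma~\ref{lem4.8} — cancel exactly, leaving $\rho$ multiplied only by $-4\int_0^\infty\chi_R Q(\cdot-y)^{p+1}dx$, which is bounded below by $2\|Q\|_{L^{p+1}}^{p+1}>0$ for $y>2R$. The remaining $y$-dependent soliton terms assemble, on recalling the definition of $\mathcal{Q}$, into $-2\mathcal{Q}(y)-\tfrac{1}{|\gamma|}\{|\gamma|Q(-y)-2Q'(-y)\}^2$, and the square combines with $-|\gamma||g(0)|^2$ and $b\,\re g(0)$ into $-|\gamma|\bigl|g(0)-\tfrac{b}{2|\gamma|}\bigr|^2$. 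Thus one arrives at an identity of the form
\begin{align*}
	4\Bigl(\int_0^\infty\chi_R Q(x-y)^{p+1}\,dx\Bigr)\rho
	&= -\mu_\gamma(u)-2\mathcal{Q}(y)-|\gamma|\Bigl|g(0)-\tfrac{b}{2|\gamma|}\Bigr|^2 \\
	&\quad -\|g\|_{H^1}^2 + O(e^{-pR}\|h\|_{H^1}).
\end{align*}

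Finally I would take absolute values and estimate each term: $|\mathcal{Q}(y)|\lesssim e_\gamma(y)$, and $=o(e^{-(p+1)y})$ when $\gamma=-2$, by Corollary~\ref{cor4.5}; $\bigl|b/(2|\gamma|)\bigr|^2\lesssim\{|\gamma|Q(-y)-2Q'(-y)\}^2\lesssim e_\gamma(y)$; and $|g(0)|^2,\ \|g\|_{H^1}^2\lesssim\|g\|_{H^1}^2$. Dividing by the positive constant then yields the claim. The two subtle points are the exact $\rho$-cancellation noted above and the degenerate case $\gamma=-2$: there the $e^{-2y}$ coefficients of both $\mathcal{Q}$ and $\{|\gamma|Q(-y)-2Q'(-y)\}^2$ vanish, so one must use the next-order decay \eqref{Q}--\eqref{Q'} of $Q$, which gives $\{|\gamma|Q(-y)-2Q'(-y)\}^2\lesssim e^{-2py}\lesssim e^{-(p+1)y}=e_\gamma(y)$; everything else is routine bookkeeping.
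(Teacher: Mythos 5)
Your proposal reproduces the paper's proof essentially step for step: the same expansion of $\mu_\gamma(u)$, the same integration by parts on $(0,\infty)$ using $Q''=Q-Q^p$, the same use of Lemma~\ref{lem4.8} and the orthogonality \eqref{ortho} (with the identical $O(e^{-pR}\|h\|_{H^1})$ tail), the same cancellation leaving $-4\rho\int_0^\infty\chi_R Q(\cdot-y)^{p+1}$, and the same treatment of the nonnegative error term (your completed square $|\gamma|\bigl|g(0)+\{|\gamma|Q(-y)-2Q'(-y)\}/|\gamma|\bigr|^2$ is exactly the paper's $2\,\mathrm{Err}$) together with Corollary~\ref{cor4.5} for $\mathcal{Q}(y)$ and the next-order decay of $Q$ in the degenerate case $\gamma=-2$. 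The only blemish is the claimed lower bound $2\|Q\|_{L^{p+1}}^{p+1}$ for $\int_0^\infty\chi_R Q(x-y)^{p+1}dx$, which cannot exceed $\|Q\|_{L^{p+1}}^{p+1}$; but all that is needed (and all the paper uses) is a uniform positive lower bound, which holds, so this is immaterial.
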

\begin{proof}
We have
\begin{align*}
	\mu_\gamma(u)
	&=2 \|Q'\|_{L^2}^2 -\|Q(|\cdot|-y)+g\|_{\dot{H}_\gamma^1}^2
	\\
	&=2 \|Q'\|_{L^2}^2 - 2\int_{0}^{\infty} |Q'(x-y)|^2dx 
	- 4 \re \int_{0}^{\infty} Q'(x-y)g'(x) dx 
	- \int_{\mathbb{R}} |g'(x)|^2 dx
	\\
	&\quad - |\gamma| |Q(-y)+g(0)|^2
\end{align*}
Now, 
\begin{align*}
	2 \|Q'\|_{L^2}^2 - 2\int_{0}^{\infty} |Q'(x-y)|^2dx
	= 2\int_{-\infty}^{0} |Q'(x-y)|^2dx.
\end{align*}
By equation~\ref{ellip} for $Q$ and integration by parts, we get
\begin{align*}
	\re \int_{0}^{\infty} Q'(x-y)g'(x) dx
	&= \re [Q'(x-y)g(x)]_{x=0}^{x=\infty} - \re \int_{0}^{\infty} Q''(x-y)g(x) dx
	\\
	&= -Q'(-y)\re g(0) - \re \int_{0}^{\infty} (Q(x-y)-Q(x-y)^p)g(x) dx.
\end{align*}
Since $g=\rho\chi_RQ(\cdot-y)+h$ on $(0,\infty)$, we get
\begin{align*}
	&\re \int_{0}^{\infty} (Q(x-y)-Q(x-y)^p)g(x) dx \\
	&=\rho \left( \int_{0}^{\infty} \chi_R(x)Q(x-y)^2dx- \int_{0}^{\infty} \chi_R(x)Q(x-y)^{p+1} dx\right) \\
	&\quad +\re \int_{0}^{\infty} Q(x-y)h(x) dx -\re \int_{0}^{\infty} Q(x-y)^ph(x) dx
\end{align*}
By the above equations and Lemma \ref{lem4.8}, 
we obtain 
\begin{align*}
	\mu_\gamma(u)
	&=2\int_{-\infty}^{0} |Q'(x-y)|^2dx+2 \int_{-\infty}^{0} |Q(x-y)|^2dx +4 Q'(-y)\re g(0) - |\gamma| |Q(-y)+g(0)|^2 \\
	&\quad -4 \rho \int_{0}^{\infty} \chi_R(x)Q(x-y)^{p+1} dx
	-\|g\|_{H^1}^2 - 4\re \int_{0}^{\infty} Q(x-y)^ph(x) dx
	\\
	&=-2\mathcal{Q}(y) -2\mathrm{Err} 
	-4 \rho \int_{0}^{\infty} \chi_R(x)Q(x-y)^{p+1} dx
	-\|g\|_{H^1}^2 - 4\re \int_{0}^{\infty} Q(x-y)^ph(x) dx,
\end{align*}
where we set
\begin{align*}
	\mathrm{Err}:=\frac{1}{2|\gamma|} \{|\gamma|Q(-y)-2 Q'(-y)\}^2
	+\{|\gamma|Q(-y)-2 Q'(-y)\}\re g(0) + \frac{|\gamma|}{2}|g(0)|^2.
\end{align*}
Now, we have
\begin{align*}
	\re \int_{0}^{\infty} Q(x-y)^ph(x) dx = O(e^{-pR}\|h\|_{H^1}). 
\end{align*}
Indeed, by \eqref{ortho} and $y>2R$, we 
\begin{align*}
	\left| \re \int_{0}^{\infty} \mathcal{T}_yQ^p h dx\right|
	&=\left|  \re \int_{0}^{\infty} \chi_R^+ \mathcal{T}_yQ^p h dx 
	+ \re \int_{0}^{\infty} \chi_R^c \mathcal{T}_yQ^p h dx \right|
	\\
	&=\left| \re \int_{0}^{\infty} \chi_R^c \mathcal{T}_yQ^p h dx\right|
	\\
	&\leq \left( \int_{0}^{R} \mathcal{T}_yQ^{2p} dx \right)^{\frac{1}{2}} \|h\|_{L^2}
	\\
	&\lesssim  e^{-py+pR}\|h\|_{L^2} 
	\\
	&\lesssim  e^{-pR}\|h\|_{H^1}.
\end{align*}
Therefore, we obtain
\begin{align*}
	 -4 \rho \int_{0}^{\infty} \chi_R(x)Q(x-y)^{p+1} dx
	 = \mu_\gamma(u) 
	+ \|g\|_{H^1}^2
	+2\mathcal{Q}(y) +2\mathrm{Err} 
	+ O(e^{-pR}\|h\|_{H^1}). 
\end{align*}
By the Young inequality, \eqref{Q}, and \eqref{Q'}, we have
\begin{align*}
	0 \leq \mathrm{Err} \lesssim e_\gamma(y) + \|g\|_{H^1}^2.
\end{align*}
We also have $|\int_{0}^{\infty} \chi_R(x)Q(x-y)^{p+1} dx| \geq C$. 
Thus, by Corollary \ref{cor4.5}, we get
\begin{align*}
	|\rho| \lesssim |\mu_\gamma(u)|
	+ \|g\|_{H^1}^2
	+e_\gamma(y) 
	+o(e^{-(p+1)y})
	+ O(e^{-pR}\|h\|_{H^1}).
\end{align*}
The desired estimate is obtained. 
\end{proof}


We next give an estimate using the action. 
By the mass-energy condition, we have
\begin{align*}
	0&=S_\gamma(u) -2S(Q)\\
	&=S(u)- S(Q(|\cdot|-y))+S(Q(|\cdot|-y)) - 2S(Q) + \frac{|\gamma|}{2} |u(0)|^2.
\end{align*}

We set 
\begin{align*}
	S_{(0,\infty)}(f):= \frac{1}{2}\|f'\|_{L^2(0,\infty)}^2 + \frac{1}{2}\|f\|_{L^2(0,\infty)}^2 -\frac{1}{p+1} \|f\|_{L^{p+1}(0,\infty)}^{p+1}
\end{align*}

\begin{lemma}
\label{lem4.10}
We have
\begin{align*}
	S(u) - S(Q(|\cdot|-y))
	= -2Q'(-y)\re g(0) + B(g,g) +o(\|g\|_{L^2}^2),
\end{align*}
where $B=B_y$ is defined in Section \ref{sec2.6}. 
\end{lemma}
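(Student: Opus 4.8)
The plan is to exploit phase invariance to reduce the claim to a second-order Taylor expansion of the potential-free action $S=S_{1,0}$ about the profile $P:=Q(|\cdot|-y)$, and then to simplify the linear and quadratic terms using the elliptic equation \eqref{ellip} and the evenness of $g$. Since $S$ depends on its argument only through $|u|$, $|u'|$ and $\|u\|_{L^{p+1}}$, and the modulation phase $e^{i\theta(t)+it}$ is spatially constant, we have $S(u)=S(P+g)$; thus it is enough to expand $S(P+g)-S(P)$. Writing $g=g_1+ig_2$ with $g_1=\re g$, $g_2=\im g$, and using that $P$ is real and nonnegative, a direct computation of the first and second variations of $S$ gives
\begin{align*}
	S(P+g)-S(P)
	&=\re\int_{\mathbb{R}} P'\overline{g'}+P\overline{g}-P^{p}\overline{g}\,dx
	\\
	&\quad +\frac12\int_{\mathbb{R}} |g'|^2+|g|^2-P^{p-1}(pg_1^2+g_2^2)\,dx
	+\mathrm{Rem},
\end{align*}
where $\mathrm{Rem}$ collects the cubic-and-higher remainder coming from the nonlinear term.

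I would then simplify the linear term. As $g$ is even, each of $P'g_1'$, $Pg_1$, $P^pg_1$ is even, so every integral equals twice its restriction to $(0,\infty)$, on which $P(x)=Q(x-y)$ is smooth. Integrating $\int_0^\infty Q'(x-y)g_1'\,dx$ by parts produces the boundary contribution $-Q'(-y)g_1(0)$ together with $-\int_0^\infty Q''(x-y)g_1\,dx$; substituting $Q''=Q-Q^p$ from \eqref{ellip} shows that the remaining bulk integrals cancel exactly against $\int_{\mathbb R} Pg_1-\int_{\mathbb R} P^pg_1$. Hence the full first-order term collapses to $-2Q'(-y)g_1(0)=-2Q'(-y)\re g(0)$, the asserted linear contribution.

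For the quadratic term I would again use evenness together with $Q(|x|-y)=Q(x-y)$ on $(0,\infty)$ to rewrite $\frac12\int_{\mathbb{R}}$ as $\int_0^\infty$, which is exactly $B_y(g,g)=B(g,g)$ by the definition in Section~\ref{sec2.6}. Finally, for the remainder I would invoke the one-dimensional embedding $H^1(\mathbb{R})\hookrightarrow L^\infty(\mathbb{R})$: Taylor's theorem applied to the $C^2$ map $z\mapsto|z|^{p+1}$ (recall $p>5$) bounds $\mathrm{Rem}$ pointwise by $(P^{p-2}+|g|^{p-2})|g|^3$, whence
\begin{align*}
	|\mathrm{Rem}|\lesssim \|g\|_{L^\infty}\|g\|_{L^2}^2+\|g\|_{L^\infty}^{p-1}\|g\|_{L^2}^2\lesssim \left(\|g\|_{H^1}+\|g\|_{H^1}^{p-1}\right)\|g\|_{L^2}^2=o(\|g\|_{L^2}^2)
\end{align*}
as $\|g\|_{H^1}\to0$, which is the regime of the modulation. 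Combining the three contributions yields the identity.

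The main obstacle is the linear term, because $P=Q(|\cdot|-y)$ is only Lipschitz (not $C^1$) at the origin: the integration by parts must be carried out on the half-line $(0,\infty)$, and the boundary term at $0$ must be tracked carefully. It is precisely this kink, doubled by the evenness of $g$, that generates the coefficient $-2Q'(-y)$. Once the symmetry bookkeeping is arranged, the exact cancellation of the bulk integrals via \eqref{ellip}, the identification of the quadratic form with $B$, and the $L^\infty$-based remainder estimate are all routine.
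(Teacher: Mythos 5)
Your proof is correct and follows essentially the same route as the paper's: a second-order Taylor expansion of the potential-free action about $Q(|\cdot|-y)$, with the linear term collapsing to the boundary contribution $-2Q'(-y)\re g(0)$ via the elliptic equation \eqref{ellip} and the quadratic term identified with $B(g,g)$. The only (cosmetic) difference is that the paper uses evenness first to reduce to the half-line functional $S_{(0,\infty)}$ and then Taylor expands, whereas you expand on the whole line and then fold in the symmetry; your explicit remainder estimate makes precise the $o(\|g\|_{L^2}^2)$ that the paper leaves implicit.
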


\begin{proof}
By the symmetry and $u=e^{i\theta+it}(Q(|\cdot|-y) + g)$, we have
\begin{align*}
	S(u) - S(Q(|\cdot|-y))
	&=S(Q(|\cdot|-y) + g) - S(Q(|\cdot|-y))
	\\
	&=2\{S_{(0,\infty)}(Q(\cdot-y) + g) - S_{(0,\infty)}(Q(\cdot-y)) \}.
\end{align*}
By Taylor expansion, we get
\begin{align*}
	&S_{(0,\infty)}(Q(\cdot-y) + g) - S_{(0,\infty)}(Q(\cdot-y))  \\ 
	&=\langle S_{(0,\infty)}'(Q(\cdot-y), g \rangle +\frac{1}{2}\langle  S_{(0,\infty)}''(Q(\cdot-y))g,g \rangle +o(\|g\|_{L^2}^2).
\end{align*}
Now, by \eqref{ellip}, we have
\begin{align*}
	\langle S_{(0,\infty)}'(Q(\cdot-y)), g \rangle
	& = \re \int_{0}^{\infty} (-Q''+Q-Q^p)(x-y) \overline{g(x)}dx
	-\re \{Q'(-y) \overline{g(0)}\} \\
	&= -Q'(-y)\re g(0).
\end{align*}
Moreover, for $\varphi=\varphi_1+i \varphi_2 \in H_\even^1(\mathbb{R})$, $\psi=\psi_1+i \psi_2\in H_\even^1(\mathbb{R})$, we also have
\begin{align*}
	\langle  S_{(0,\infty)}''(Q(\cdot-y))\varphi, \psi \rangle
	&=\re \int_{0}^{\infty} \varphi'(x)\overline{\psi'(x)} + \varphi(x)\overline{\psi(x)} dx  \\
	&\quad -\int_{0}^{\infty} Q(x-y)^{p-1} (p\varphi_1(x)\psi_1(x) + \varphi_2(x)\psi_2(x)) dx
	\\
	&=B(\varphi,\psi).
\end{align*}
This gives the statement. 
\end{proof}

\begin{lemma}
\label{lem4.11}
We have
\begin{align*}
	&S(Q(|\cdot|-y)) - 2S(Q)
	\\
	&=- \int_{-\infty}^{0} |Q'(x-y)|^2 +|Q(x-y)|^2 dx + \frac{2}{p+1} \int_{-\infty}^{0} |Q(x-y)|^{p+1} dx.
\end{align*}
\end{lemma}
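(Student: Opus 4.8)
The plan is to exploit the evenness of $Q(|\cdot|-y)$ to reduce the full-line action to twice a half-line action, and then to recognize the resulting half-line integrals as the full-line integrals defining $2S(Q)$ minus their tails over $(-\infty,0)$. Recall that here $S=S_{1,0}$, the action at $\omega=1$ with no potential, so that
\begin{align*}
	S(f) = \frac{1}{2}\|f'\|_{L^2}^2 + \frac{1}{2}\|f\|_{L^2}^2 - \frac{1}{p+1}\|f\|_{L^{p+1}}^{p+1}.
\end{align*}

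First I would note that $Q(|\cdot|-y)\in H^1(\mathbb{R})$: the function is continuous, and its weak derivative is $\sgn(x)\,Q'(|x|-y)$, which lies in $L^2$ (the corner at $x=0$ produces only a jump in the derivative, not a singular part, since $Q(|\cdot|-y)$ is continuous there). Consequently the three densities $|\sgn(x)Q'(|x|-y)|^2=|Q'(|x|-y)|^2$, $|Q(|x|-y)|^2$, and $|Q(|x|-y)|^{p+1}$ are all even in $x$, so each of the three integrals defining $S(Q(|\cdot|-y))$ equals twice its value on $(0,\infty)$. Hence
\begin{align*}
	S(Q(|\cdot|-y)) = 2\,S_{(0,\infty)}(Q(\cdot-y)),
\end{align*}
with $S_{(0,\infty)}$ as defined just before Lemma~\ref{lem4.10}.

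Next I would change variables $z=x-y$ in each half-line integral, turning $\int_0^\infty$ into $\int_{-y}^\infty$, and then split $\int_{-y}^\infty=\int_{\mathbb{R}}-\int_{-\infty}^{-y}$. The three full-line pieces $\int_{\mathbb{R}}|Q'|^2$, $\int_{\mathbb{R}}|Q|^2$, and $-\tfrac{2}{p+1}\int_{\mathbb{R}}|Q|^{p+1}$ assemble exactly into $2S(Q)$, leaving
\begin{align*}
	S(Q(|\cdot|-y)) - 2S(Q)
	= -\int_{-\infty}^{-y} |Q'(z)|^2 + |Q(z)|^2 \, dz + \frac{2}{p+1}\int_{-\infty}^{-y}|Q(z)|^{p+1}\,dz.
\end{align*}
Changing variables back, $z=x-y$ with $x\in(-\infty,0)$, converts these tails into the integrals $\int_{-\infty}^{0}\cdots dx$ appearing in the statement, which completes the argument.

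There is no genuine analytic obstacle here; the lemma is essentially a bookkeeping identity. The one point deserving care is the junction at $x=0$, where $Q(|\cdot|-y)$ has a corner: one must confirm that no boundary or delta contribution enters the kinetic term. It does not, because the function is continuous and its derivative has at worst a jump, so the $L^2$ norm of the weak derivative is computed pointwise and the evenness splitting into $2\int_0^\infty$ is legitimate. Everything else reduces to the two changes of variables described above.
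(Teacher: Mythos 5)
Your proof is correct and follows essentially the same route as the paper: evenness reduces $S(Q(|\cdot|-y))$ to $2S_{(0,\infty)}(Q(\cdot-y))$, and each half-line integral is rewritten as the corresponding full-line integral minus its tail over $(-\infty,0)$, which is exactly the identity $\int_{0}^{\infty}|f(x-y)|^{\alpha}dx=\|f\|_{L^{\alpha}}^{\alpha}-\int_{-\infty}^{0}|f(x-y)|^{\alpha}dx$ used in the paper. Your additional remark that the corner of $Q(|\cdot|-y)$ at $x=0$ contributes no singular part to the kinetic term is a valid (if implicit in the paper) justification of the symmetry step.
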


\begin{proof}
By the symmetry, we have
\begin{align*}
	S(Q(|\cdot|-y)) - 2S(Q)
	=2\{ S_{(0,\infty)}(Q(\cdot-y)) - S(Q)\}.
\end{align*}
For a function $f$ and $\alpha>0$, we have
\begin{align*}
	\int_{0}^{\infty} |f(x-y)|^\alpha dx = \|f\|_{L^\alpha}^\alpha - \int_{-\infty}^{0} |f(x-y)|^\alpha dx.
\end{align*}
Thus we get
\begin{align*}
	 &S_{(0,\infty)}(Q(\cdot-y)) - S(Q) \\
	 &=- \frac{1}{2}\int_{-\infty}^{0} |Q'(x-y)|^2 +|Q(x-y)|^2 dx + \frac{1}{p+1} \int_{-\infty}^{0} |Q(x-y)|^{p+1} dx.
\end{align*}
This gives us the statement. 
\end{proof}

Combining Lemmas \ref{lem4.10} and \ref{lem4.11}, we get the following. 

\begin{corollary}
\label{cor4.12}
We have
\begin{align*}
	S_\gamma(u) -2S(Q)
	\geq \mathcal{Q}(y) + \frac{2}{p+1} \int_{-\infty}^{0} |Q(x-y)|^{p+1} dx
	+ B(g,g) +o(\|g\|_{L^2}^2).
\end{align*}
\end{corollary}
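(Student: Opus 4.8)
The plan is to assemble the three ingredients already prepared---the decomposition of the action displayed immediately before the corollary, Lemma~\ref{lem4.10}, and Lemma~\ref{lem4.11}---and then to observe that the leftover boundary contribution is precisely the nonnegative quantity $\mathrm{Err}$ appearing in the proof of Lemma~\ref{lem4.9}. Concretely, I would begin from
\begin{align*}
	S_\gamma(u) -2S(Q)
	&=\{S(u)- S(Q(|\cdot|-y))\}
	+\{S(Q(|\cdot|-y)) - 2S(Q)\}
	\\
	&\quad + \frac{|\gamma|}{2} |u(0)|^2,
\end{align*}
which holds because $S_\gamma=S+\frac{|\gamma|}{2}|\cdot(0)|^2$. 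I would insert Lemma~\ref{lem4.10} into the first brace and Lemma~\ref{lem4.11} into the second. Since $u=e^{i\theta+it}(Q(|\cdot|-y)+g)$ gives $|u(0)|^2=|Q(-y)+g(0)|^2$, and $Q(-y)$ is real, the potential term and the cross term $-2Q'(-y)\re g(0)$ produced by Lemma~\ref{lem4.10} combine into
\begin{align*}
	\frac{|\gamma|}{2}Q(-y)^2
	+\{|\gamma|Q(-y)-2Q'(-y)\}\,\re g(0)
	+\frac{|\gamma|}{2}|g(0)|^2.
\end{align*}

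Next I would complete the square. Setting $a:=|\gamma|Q(-y)-2Q'(-y)$, the previous display equals $\frac{|\gamma|}{2}Q(-y)^2-\frac{1}{2|\gamma|}a^2+\mathrm{Err}$, where
\begin{align*}
	\mathrm{Err}
	&=\frac{1}{2|\gamma|}a^2+a\,\re g(0)+\frac{|\gamma|}{2}|g(0)|^2
	\\
	&=\frac{|\gamma|}{2}\Bigl(\re g(0)+\tfrac{a}{|\gamma|}\Bigr)^2
	+\frac{|\gamma|}{2}|\im g(0)|^2\geq 0.
\end{align*}
The remaining pieces $\frac{|\gamma|}{2}Q(-y)^2-\frac{1}{2|\gamma|}a^2$, together with the two negative integrals $-\int_{-\infty}^0|Q'(x-y)|^2dx$ and $-\int_{-\infty}^0|Q(x-y)|^2dx$ furnished by Lemma~\ref{lem4.11}, reconstitute exactly the four-term definition of $\mathcal{Q}(y)$ from the start of this subsection. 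Collecting everything yields the identity
\begin{align*}
	S_\gamma(u) -2S(Q)
	&=\mathcal{Q}(y) + \frac{2}{p+1} \int_{-\infty}^{0} |Q(x-y)|^{p+1} dx
	\\
	&\quad + B(g,g) +\mathrm{Err}+o(\|g\|_{L^2}^2),
\end{align*}
and discarding $\mathrm{Err}\geq 0$ gives the stated inequality.

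There is no real analytic difficulty here, since the core of the argument is an exact identity relaxed to an inequality by dropping a single nonnegative term. The only step demanding care is the bookkeeping: one must match the four boundary/interior contributions against the precise definition of $\mathcal{Q}(y)$, track that the $L^{p+1}$-integral over $(-\infty,0)$ survives as the second term, and ensure the Taylor remainder stays uniformly $o(\|g\|_{L^2}^2)$, which is exactly what Lemma~\ref{lem4.10} provides. (The $y$-dependence hidden in $B=B_y$ is harmless, as $B(g,g)$ is simply retained as a single term.)
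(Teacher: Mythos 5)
Your proof is correct and follows essentially the same route as the paper: decompose $S_\gamma(u)-2S(Q)$ into the three pieces, insert Lemmas~\ref{lem4.10} and~\ref{lem4.11}, recognize the boundary terms as $\mathcal{Q}(y)+\mathrm{Err}$ with $\mathrm{Err}$ as in the proof of Lemma~\ref{lem4.9}, and drop $\mathrm{Err}\geq 0$. Your explicit completion of the square is just a sharper rendering of the paper's appeal to the Young inequality for $\mathrm{Err}\geq 0$.
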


\begin{proof}
By Lemmas  \ref{lem4.10} and \ref{lem4.11}, we have
\begin{align*}
	S_\gamma(u) -2S(Q)
	&= S(u)- S(Q(|\cdot|-y))+S(Q(|\cdot|-y)) - 2S(Q) + \frac{|\gamma|}{2} |u(0)|^2
	\\
	&=
	- \int_{-\infty}^{0} |Q'(x-y)|^2 +|Q(x-y)|^2 dx
	-2Q'(-y)\re g(0) + \frac{|\gamma|}{2} |u(0)|^2
	\\
	&\quad + \frac{2}{p+1} \int_{-\infty}^{0} |Q(x-y)|^{p+1} dx+ B(g,g) +o(\|g\|_{L^2}^2) 
	\\
	&=\mathcal{Q}(y) + \mathrm{Err}  + \frac{2}{p+1} \int_{-\infty}^{0} |Q(x-y)|^{p+1} dx+ B(g,g) +o(\|g\|_{L^2}^2), 
\end{align*}
where $\mathrm{Err}$ is defined in the proof of Lemma \ref{lem4.9}. As shown in that proof,  $\mathrm{Err}\geq 0$ by the Young inequality. Thus, the desired estimate is obtained. 
\end{proof}

\begin{lemma}
\label{lem4.13}
There exist $c,C>0$ such that 
we have
\begin{align*}
	B(g,g) \geq c\|h\|_{H^1}^2 - C|\rho|^2
\end{align*}
for sufficiently large $R>0$.
\end{lemma}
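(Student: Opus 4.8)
The plan is to substitute the modulation decomposition \eqref{eq4.3}, restricted to the half-line $(0,\infty)$ on which $B=B_y$ is defined, and expand the quadratic form. On $(0,\infty)$ one has $\mathcal{G}_{R,y}Q=\chi_R^{+}\mathcal{T}_yQ=\chi_R\mathcal{T}_yQ$, so $g=\rho\,\chi_R\mathcal{T}_yQ+h$ there, and since $B$ is a real symmetric bilinear form,
\[
 B(g,g)=B(h,h)+2\rho\,B(\chi_R\mathcal{T}_yQ,h)+\rho^2\,B(\chi_R\mathcal{T}_yQ,\chi_R\mathcal{T}_yQ).
\]
The two $\rho$-terms are harmless: because $\|\chi_R\mathcal{T}_yQ\|_{H^1}\lesssim\|Q\|_{H^1}$ uniformly in $R,y$, Cauchy--Schwarz gives $|B(\chi_R\mathcal{T}_yQ,h)|\lesssim\|h\|_{H^1}$ and $|B(\chi_R\mathcal{T}_yQ,\chi_R\mathcal{T}_yQ)|\lesssim 1$, so Young's inequality bounds them below by $-\tfrac12 c\|h\|_{H^1}^2-C\rho^2$. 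Thus everything reduces to proving the coercivity $B(h,h)\geq c\|h\|_{H^1}^2$, which is the heart of the matter.

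To establish this I would transfer $B(h,h)$ to the full-line form $\Phi$ by translation and even symmetry. Writing $B(h,h)=\tfrac12\|h\|_{H^1}^2-N(h)$ with $N(h):=\int_0^\infty Q(x-y)^{p-1}(p\,h_1^2+h_2^2)\,dx$ (using that $h$ is even, so $\int_0^\infty|h'|^2+|h|^2=\tfrac12\|h\|_{H^1}^2$), the only nontrivial object is $N(h)$. Since $y>2R$, the weight $Q(\cdot-y)^{p-1}$ is concentrated near $x=y$, where $\chi_R\equiv 1$; hence replacing $h$ by $\chi_R h$ in $N$ costs only $O(e^{-(p-1)R}\|h\|_{L^2}^2)$, and extending the integral of $Q(\cdot-y)^{p-1}(p(\chi_R h_1)^2+(\chi_R h_2)^2)$ from $(0,\infty)$ to $\mathbb{R}$ costs only $O(e^{-(p-1)y}\|h\|_{L^2}^2)$. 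Undoing the translation $\mathcal{T}_{-y}$ identifies the resulting full-line integral with $\|\chi_R h\|_{H^1}^2-\Phi(\mathcal{T}_{-y}(\chi_R h),\mathcal{T}_{-y}(\chi_R h))$, so collecting terms gives
\[
 B(h,h)=\tfrac12\|h\|_{H^1}^2-\|\chi_R h\|_{H^1}^2+\Phi(\mathcal{T}_{-y}(\chi_R h),\mathcal{T}_{-y}(\chi_R h))+O(e^{-(p-1)R}\|h\|_{H^1}^2).
\]

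The orthogonality relations \eqref{ortho} are exactly those required by the coercivity Lemma \ref{lem2.12}, whose constant $c_0$ is inherited from Lemma \ref{lem2.11}. Applying it yields $\Phi(\mathcal{T}_{-y}(\chi_R h),\mathcal{T}_{-y}(\chi_R h))\geq c_0\|\chi_R h\|_{H^1}^2-\tfrac{C}{R}\|h\|_{H^1}^2$, and substituting gives
\[
 B(h,h)\geq\tfrac12\|h\|_{H^1}^2+(c_0-1)\|\chi_R h\|_{H^1}^2-\tfrac{C}{R}\|h\|_{H^1}^2+O(e^{-(p-1)R}\|h\|_{H^1}^2).
\]
Controlling the (possibly negative) middle term with the elementary bound $\|\chi_R h\|_{H^1}^2\leq(1+\tfrac{C}{R})\|h\|_{H^1}^2$, I obtain $B(h,h)\geq(c_0-\tfrac12-\tfrac{C}{R})\|h\|_{H^1}^2$, which is a genuine positive multiple of $\|h\|_{H^1}^2$ once $R$ is large, precisely because the coercivity constant $c_0$ exceeds $\tfrac12$ (Lemma \ref{lem2.11}). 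Combining with the first paragraph and choosing the fraction of $c$ absorbed by the $\rho$-terms small enough then proves $B(g,g)\geq c\|h\|_{H^1}^2-C\rho^2$.

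The main obstacle is this last point: reducing to the half-line produces the factor $\tfrac12$ in front of $\|h\|_{H^1}^2$, so mere positivity of $\Phi$ on the orthogonal complement is \emph{not} enough; one needs the coercivity constant to beat $\tfrac12$, which is exactly why the sharp lower bound in Lemma \ref{lem2.11} is required rather than just nondegeneracy. The rest is bookkeeping: checking that the localization errors (from replacing $h$ by $\chi_R h$ inside the nonlinear weight, and from the tail of $Q(\cdot-y)^{p-1}$ on $(-\infty,0)$) are exponentially small, that the cut-off derivative contributions are absorbed into the $O(R^{-1})$ comparison of $\|\chi_R h\|_{H^1}$ and $\|h\|_{H^1}$, and that $R$ is taken large enough (with $\mu_0$ small, ensuring $y>2R$) to make all error constants subordinate to $c_0-\tfrac12$.
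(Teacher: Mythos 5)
Your strategy coincides with the paper's up to the last step: expand $B(g,g)$ using $g=\rho\chi_R\mathcal{T}_yQ+h$ on $(0,\infty)$, absorb the $\rho$-terms by Cauchy--Schwarz and Young, and reduce the coercivity of $B(h,h)$ to that of $\Phi$ via the orthogonality \eqref{ortho} and Lemma \ref{lem2.12}. Your intermediate identity
\begin{align*}
 B(h,h)=\tfrac12\|h\|_{H^1}^2-\|\chi_R h\|_{H^1}^2+\Phi(\mathcal{T}_{-y}(\chi_R h),\mathcal{T}_{-y}(\chi_R h))+O(e^{-(p-1)R}\|h\|_{H^1}^2)
\end{align*}
is correct. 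The gap is that you close the argument only under the assumption that the coercivity constant $c_0$ of Lemma \ref{lem2.11} exceeds $\tfrac12$, and that is not available. Since $\Phi(f,f)=\|f\|_{H^1}^2-\int_{\mathbb{R}}Q^{p-1}(pf_1^2+f_2^2)\,dx\leq\|f\|_{H^1}^2$, the constant in Lemma \ref{lem2.11} is necessarily at most $1$ (the ``$c>9$'' printed there is evidently a typo for $c>0$), and the spectral gap of the linearized operator restricted to the orthogonal complement of only three directions gives no reason for $c_0>\tfrac12$; the cited result of Campos--Farah--Roudenko is purely qualitative. With only $c_0>0$, your final inequality $B(h,h)\geq(c_0-\tfrac12-\tfrac{C}{R})\|h\|_{H^1}^2$ is vacuous.

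The missing half of $\|h\|_{H^1}^2$ must be recovered from structure rather than from the size of $c_0$: since $\chi_Rh$ is even and $y>2R$, only its positive-axis part $\chi_R^+h$ interacts with the weight $Q(\cdot-y)^{p-1}$, the negative-axis part being translated to a region where $Q^{p-1}=O(e^{-(p-1)y})$. Hence
\begin{align*}
 \Phi(\mathcal{T}_{-y}(\chi_R h),\mathcal{T}_{-y}(\chi_R h))
 =\tfrac12\|\chi_R h\|_{H^1}^2+\Phi(\mathcal{T}_{-y}(\chi_R^+ h),\mathcal{T}_{-y}(\chi_R^+ h))+O(e^{-(p-1)y}\|h\|_{H^1}^2)
 \geq\left(\tfrac12+c\right)\|\chi_R h\|_{H^1}^2-\tfrac{C}{R}\|h\|_{H^1}^2
\end{align*}
for some $c>0$, and inserting this into your identity gives $B(h,h)\geq\tfrac12\|h\|_{H^1}^2-(\tfrac12-c)\|\chi_Rh\|_{H^1}^2-\tfrac{C}{R}\|h\|_{H^1}^2\geq \tfrac{c}{2}\|h\|_{H^1}^2$ for $R$ large, using $\|\chi_Rh\|_{H^1}^2\leq(1+\tfrac{C}{R})\|h\|_{H^1}^2$. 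This is precisely what the paper's proof achieves by splitting $h=\chi_Rh+\chi_R^ch$ and observing that wherever $\chi_R^c\neq0$ the potential term is $O(e^{-(p-1)R})$, so that region contributes its full positive $H^1$ density and the (possibly small) constant from Lemma \ref{lem2.12} is needed only where $\chi_R=1$. With this correction your argument becomes equivalent to the paper's.
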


\begin{proof}
Since we have $g= \rho \chi_R Q(|\cdot|-y) +h$, we get
\begin{align*}
	B(g,g) 
	=  |\rho|^2 B( \chi_R Q(|\cdot|-y), \chi_R Q(|\cdot|-y))
	+ 2\rho B(\chi_R Q(|\cdot|-y),h)
	+ B(h,h).
\end{align*}
We have
\begin{align}
\label{eq4.3'}
	|\rho|^2|B( \chi_R Q(|\cdot|-y), \chi_R Q(|\cdot|-y))| \leq C|\rho|^2, 
\end{align}
where $C$ is independent of $R$. 
\begin{comment}
Indeed, we have
\begin{align*}
	&|B( \chi_R Q(|\cdot|-y), \chi_R Q(|\cdot|-y))|\\
	&\leq \int_{0}^{\infty}  |\{\chi_R(x) Q(x-y)\}' |^2 + |\chi_R(x) Q(x-y)|^2dx \\
	&\quad + p \int_{0}^{\infty} Q(x-y)^p |\chi_R(x) Q(x-y)|^2  dx\\
	&\leq \int_{\mathbb{R}}  \frac{1}{R^2}\left|\chi'\left(\frac{x}{R}\right) Q(x-y) \right|^2 + |Q'(x-y)|^2 + |Q(x-y)|^2dx \\
	&\quad + p \int_{\mathbb{R}} Q(x-y)^{p+1}  dx\\
	&\leq C.
\end{align*}
\end{comment}
By the Young inequality, we also have
\begin{align}
\label{eq4.4'}
	|2\rho B(\chi_R Q(|\cdot|-y),h)|
	\leq C|\rho|\|h\|_{H^1}
	\leq \varepsilon\|h\|_{H^1}^2 +C_\varepsilon |\rho|^2,
\end{align}
where $\varepsilon>0$ is a sufficiently small number.

We will estimate $B(h,h)$. 
\begin{align*}
	B(h,h)&=B(\chi_R h + \chi_R^c h, \chi_R h + \chi_R^c h)\\
	&=B(\chi_R h, \chi_R h ) 
	+2B(\chi_R h, \chi_R^c h)
	+B(\chi_R^c h, \chi_R^c h)
\end{align*}
First, we consider $B(\chi_R h, \chi_R h )$. By a direct calculation, we have
\begin{align*}
	B(\chi_R h, \chi_R h ) 
	&= \re \int_{\mathbb{R}} |(\chi_R^+ h)'(x)|^2 + \chi_R^+(x)^2 |h(x)|^2 dx  \\
	&\quad -\int_{\mathbb{R}} Q(x-y)^{p-1} (p\chi_R^+(x)^2 h_1(x)^2+ \chi_R^+(x)^2 h_2(x)^2) dx\\
	&=\Phi( \mathcal{T}_{-y}(\chi_R^+ h)). 
\end{align*}
Since we have $\Phi( \mathcal{T}_{-y}(\chi_R^+ h)) \geq c \| \chi_R h\|_{H^1}^2 - CR^{-1}\|h\|_{H^1}^2$ by the coercivity Lemma \ref{lem2.12}, we have
\begin{align*}
	B(\chi_R h, \chi_R h )  \geq c \| \chi_R h\|_{H^1}^2 - \frac{C}{R}\|h\|_{H^1}^2. 
\end{align*}
Next we consider $B(\chi_R h, \chi_R^c h)$. 
Now we have
\begin{align*}
	\int_{0}^{\infty} (\chi_R h)'(x)\overline{(\chi_R^c h)'(x)}dx 
	= \int_{0}^{\infty} \chi_R(x) \chi_R^c(x)  |h'(x)|^2dx + O(R^{-1}\|h\|_{H^1}^2).
\end{align*}
and 
\begin{align*}
	\left|\int_{0}^{\infty} \mathcal{T}_yQ^{p-1} (p\chi_R\chi_R^c |h_1|^2 + \chi_R\chi_R^c |h_2|^2) dx\right|
	&\leq C \left| \int_{0}^{R}  \mathcal{T}_yQ^{p-1} dx\right| \|h\|_{L^\infty}^2
	\\
	&\leq C e^{(p-1)R} e^{-(p-1)y} \|h\|_{H^1}^2
	\\
	&\leq C e^{-(p-1)R} \|h\|_{H^1}^2
\end{align*}
Thus we get
\begin{align*}
	B(\chi_R h, \chi_R^c h)
	&= \re \int_{0}^{\infty} (\chi_R h)'(x)\overline{(\chi_R^c h)'(x)} + \chi_R(x) h(x)\overline{\chi_R^c(x) h(x)} dx  \\
	&\quad -\int_{0}^{\infty} Q(x-y)^{p-1} (p\chi_R(x)\chi_R^c(x) |h_1(x)|^2 + \chi_R(x)\chi_R^c |h_2(x)|^2) dx \\
	&=\int_{0}^{\infty} \chi_R(x) \chi_R^c(x) ( |h'(x)|^2+|h(x)|^2) dx +O(R^{-1}\|h\|_{H^1}^2)+O( e^{-(p-1)R} \|h\|_{H^1}^2)
	\\
	&=\frac{1}{2}\int_{\mathbb{R}} \chi_R(x) \chi_R^c(x) ( |h'(x)|^2+|h(x)|^2) dx +O(R^{-1}\|h\|_{H^1}^2)+O( e^{-(p-1)R} \|h\|_{H^1}^2)
\end{align*}
In a similar way, we have
\begin{align*}
	B(\chi_R^c h, \chi_R^c h) 
	=\frac{1}{2} \int_{\mathbb{R}} \chi_R^c(x) \chi_R^c(x)   ( |h'(x)|^2+|h(x)|^2) dx +O(R^{-1}\|h\|_{H^1}^2)+O( e^{-(p-1)R} \|h\|_{H^1}^2).
\end{align*}
Therefore, we obtain
\begin{align*}
	B(h,h) &\geq  c \| \chi_R h\|_{H^1}^2 - \frac{C}{R}\|h\|_{H^1}^2 \\
	&+ \int_{\mathbb{R}}\chi_R(x) \chi_R^c(x) ( |h'(x)|^2+|h(x)|^2) dx \\
	&+\frac{1}{2}\int_{\mathbb{R}} \chi_R^c(x) \chi_R^c(x)   ( |h'(x)|^2+|h(x)|^2) dx  \\
	&+O(R^{-1}\|h\|_{H^1}^2)+O( e^{-(p-1)R} \|h\|_{H^1}^2).
\end{align*}
Here, it holds that
\begin{align*}
	 c \| \chi_R h\|_{H^1}^2 + \int_{\mathbb{R}}\{ \chi_R \chi_R^c+\frac{1}{2}(\chi_R^c)^2\} ( |h'|^2+|h|^2) dx
	 \geq c\|h\|_{H^1}^2 -\frac{C}{R}\|h\|_{H^1}^2.
\end{align*}
\begin{comment}
Indeed, we have
\begin{align*}
	\| \chi_R h\|_{H^1}^2
	&= \int_{\mathbb{R}} |(\chi_Rh)'(x)|^2 + |(\chi_Rh(x))|^2dx \\
	&= \int_{\mathbb{R}} \chi_R(x)^2 (|h'(x) |^2+ |h(x)|^2)dx + O(R^{-1}\|h\|_{H^1}^2)
\end{align*}
and thus
\begin{align*}
	&c \| \chi_R h\|_{H^1}^2 + \int_{\mathbb{R}}\{ \chi_R \chi_R^c+\frac{1}{2}(\chi_R^c)^2\} ( |h'|^2+|h|^2) dx
	\\ 
	&\geq c \left\{ \int_{\mathbb{R}} \chi_R^2 (|h' |^2+ |h|^2)dx + \int_{\mathbb{R}}( 2\chi_R \chi_R^c +(\chi_R^c)^2) ( |h'|^2+|h|^2) dx \right\} 
	+ O(R^{-1}\|h\|_{H^1}^2)\\ 
	&\geq c \left\{ \int_{\mathbb{R}} (\chi_R+\chi_R^c)^2 (|h' |^2+ |h|^2) dx \right\} -\frac{C}{R}\|h\|_{H^1}^2 \\ 
	&\geq c \|h\|_{H^1}^2
	-\frac{C}{R}\|h\|_{H^1}^2 \\ 
\end{align*}
\end{comment}
This implies 
\begin{align}
\label{eq4.5}
	B(h,h) &\geq c\|h\|_{H^1}^2 -\frac{C}{R}\|h\|_{H^1}^2  +O(R^{-1}\|h\|_{H^1}^2)+O( e^{-(p-1)R} \|h\|_{H^1}^2)
	\\
	&\geq c\|h\|_{H^1}^2 -\frac{C}{R}\|h\|_{H^1}^2.
\end{align}
By combining \eqref{eq4.3}--\eqref{eq4.5}, we get
\begin{align*}
	B(g,g) \geq c\|h\|_{H^1}^2 -\frac{C}{R}\|h\|_{H^1}^2 -C|\rho|^2 -\varepsilon\|h\|_{H^1}^2 -C_\varepsilon |\rho|^2.
\end{align*}
Taking $\varepsilon>0$ sufficiently small and $R>0$ sufficiently large, we obtain
\begin{align*}
	B(g,g) \geq c\|h\|_{H^1}^2 -C|\rho|^2. 
\end{align*}
This completes the proof. 
\end{proof}
\begin{lemma}
\label{lem4.14}
We have
\begin{align*}
	e_\gamma(y) + \|h\|_{H^1}^2 \lesssim |\rho|^2.
\end{align*}
\end{lemma}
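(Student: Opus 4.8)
The plan is to exploit the fact that the mass-energy condition \eqref{ME} forces the left-hand side of Corollary \ref{cor4.12} to vanish. Indeed, since $S_\gamma(u)=E_\gamma(u)+\tfrac12 M(u)=2E(Q)+M(Q)=2S(Q)$ under \eqref{ME}, Corollary \ref{cor4.12} reads
\begin{align*}
	0 \geq \mathcal{Q}(y) + \frac{2}{p+1}\int_{-\infty}^{0}|Q(x-y)|^{p+1}dx + B(g,g) + o(\|g\|_{L^2}^2).
\end{align*}
Reading this as an upper bound for the ground-state terms against $B(g,g)$ is the whole game: I would first extract a clean lower bound on the ground-state part, then feed in the coercivity of $B$ and absorb the remaining errors.

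The first main step is to show that there exists $c>0$ such that
\begin{align*}
	\mathcal{Q}(y) + \frac{2}{p+1}\int_{-\infty}^{0}|Q(x-y)|^{p+1}dx \geq c\, e_\gamma(y)
\end{align*}
for all $y>2R$ with $R$ large. This is exactly where the repulsive effect enters, and I would split into the two cases encoded by $e_\gamma$. When $\gamma<-2$, Corollary \ref{cor4.5} gives $\mathcal{Q}(y)=(1-2/|\gamma|)c_p^2 e^{-2y}+o(e^{-2y})$ with strictly positive leading coefficient, so the term $e^{-2y}=e_\gamma(y)$ dominates for large $y$ and the claim is immediate. When $\gamma=-2$ the coefficient of $e^{-2y}$ in $\mathcal{Q}$ vanishes and, as noted in the remark after Corollary \ref{cor4.5}, so does its $e^{-(p+1)y}$ coefficient, so that $\mathcal{Q}(y)=o(e^{-(p+1)y})$; the positivity must then come entirely from Lemma \ref{lem4.6}, which supplies $\frac{2}{p+1}\int_{-\infty}^{0}|Q(x-y)|^{p+1}dx=\frac{4c_p^2}{p+1}e^{-(p+1)y}+o(e^{-(p+1)y})$, giving the bound with $e_\gamma(y)=e^{-(p+1)y}$.

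The second step combines this with the coercivity Lemma \ref{lem4.13}, namely $B(g,g)\geq c\|h\|_{H^1}^2-C|\rho|^2$, to obtain
\begin{align*}
	0 \geq c\,e_\gamma(y) + c\|h\|_{H^1}^2 - C|\rho|^2 + o(\|g\|_{L^2}^2).
\end{align*}
It remains to dispose of the error. Since we work in the modulation regime $\mu_\gamma(u)<\mu_0$, the remainder $\|g\|_{H^1}$ is small, and Lemma \ref{lem4.7} gives $\|g\|_{L^2}^2\lesssim |\rho|^2+\|h\|_{H^1}^2$; hence $o(\|g\|_{L^2}^2)$ is bounded by $\varepsilon(|\rho|^2+\|h\|_{H^1}^2)$ with $\varepsilon$ as small as we like. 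Absorbing the $\varepsilon\|h\|_{H^1}^2$ into the coercive term and moving the $|\rho|^2$ contributions to the other side yields $e_\gamma(y)+\|h\|_{H^1}^2\lesssim |\rho|^2$, as claimed.

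The hard part is the degenerate case $\gamma=-2$: the $O(e^{-2y})$ repulsive gain that drives the argument for $\gamma<-2$ is completely absent, and one must verify that a positive contribution genuinely survives at the next order $e^{-(p+1)y}$ after the delicate cancellations in $\mathcal{Q}$. This is precisely what the fine asymptotics of Corollary \ref{cor4.5} and Lemma \ref{lem4.6}, together with the two-case definition of $e_\gamma$, are designed to handle. A secondary technical point is ensuring the $o(\|g\|_{L^2}^2)$ and $|\rho|^2$ errors are truly subordinate, which is guaranteed by the smallness of $\|g\|$ in the modulation regime combined with Lemmas \ref{lem4.7} and \ref{lem4.13}.
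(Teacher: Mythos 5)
Your proof is correct and follows essentially the same route as the paper, which likewise deduces the estimate from Corollary \ref{cor4.12} with vanishing left-hand side under \eqref{ME}, the asymptotics of Corollary \ref{cor4.5} and Lemma \ref{lem4.6} (splitting the cases $\gamma<-2$ and $\gamma=-2$ exactly as you do), and the coercivity of Lemma \ref{lem4.13}. Your handling of the error terms via Lemma \ref{lem4.7} and the smallness of $\|g\|_{H^1}$ in the modulation regime is also the intended absorption argument.
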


\begin{proof}
By Corollaries \ref{cor4.5} and \ref{cor4.12}  and Lemmas \ref{lem4.6} and \ref{lem4.13}, we get the desired estimate.
\end{proof}

\begin{corollary}
\label{cor4.15}
It holds that 
\begin{align*}
	|\rho| \lesssim |\mu_\gamma(u)|. 
\end{align*}
In particular, we have
\begin{align*}
	e_\gamma(y) + \|h\|_{H^1}^2 \lesssim |\mu_\gamma(u)|^2. 
\end{align*}
\end{corollary}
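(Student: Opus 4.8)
The plan is to combine the upper bound on $|\rho|$ from Lemma~\ref{lem4.9} with the reverse bound of Lemma~\ref{lem4.14}, and then absorb all of the error terms into the left-hand side. Recall from Lemma~\ref{lem4.9} that
\begin{align*}
	|\rho| \lesssim |\mu_\gamma(u)| + \|g\|_{H^1}^2 + e_\gamma(y) + o(e^{-(p+1)y}) + O(e^{-pR}\|h\|_{H^1}),
\end{align*}
and I will argue that every term on the right except $|\mu_\gamma(u)|$ is either quadratic in $|\rho|$ or carries a small prefactor, so that it can be moved to the left.

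First I would control the geometric terms. By Lemma~\ref{lem4.14} one has $e_\gamma(y) \lesssim |\rho|^2$. Moreover $e^{-(p+1)y} \le e_\gamma(y)$ in both cases, since $e^{-(p+1)y} = e_\gamma(y)$ when $\gamma=-2$, while $e^{-(p+1)y} \le e^{-2y} = e_\gamma(y)$ when $\gamma<-2$ because $p+1>2$; hence, taking $R$ (and thus the lower bound $y>2R$) large enough, $o(e^{-(p+1)y}) \le e_\gamma(y) \lesssim |\rho|^2$. For the Sobolev terms, Lemma~\ref{lem4.14} gives $\|h\|_{H^1} \lesssim |\rho|$, and then Lemma~\ref{lem4.7} (which asserts $\|g\|_{H^1} \lesssim |\rho| + \|h\|_{H^1}$) yields $\|g\|_{H^1} \lesssim |\rho|$; in particular $\|g\|_{H^1}^2 \lesssim |\rho|^2$ and $O(e^{-pR}\|h\|_{H^1}) = O(e^{-pR}|\rho|)$. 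Substituting these bounds into the displayed inequality gives
\begin{align*}
	|\rho| \lesssim |\mu_\gamma(u)| + C|\rho|^2 + C e^{-pR}|\rho|
\end{align*}
for constants $C$ uniform in $R$ for $R$ large.

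It remains to perform the two absorptions. Since $R$ is a fixed large constant, $C e^{-pR} < \tfrac12$, so the last term is absorbed into the left, leaving $|\rho| \lesssim |\mu_\gamma(u)| + C|\rho|^2$. Finally, throughout the modulation regime $|\mu_\gamma(u)| < \mu_0$ the remainder is small, so by $|\rho| \lesssim \|g\|_{L^2} \le \|g\|_{H^1}$ (Lemma~\ref{lem4.7}) the quantity $|\rho|$ is itself small; shrinking $\mu_0$ if necessary so that $C|\rho| < \tfrac12$, the quadratic term is absorbed and $|\rho| \lesssim |\mu_\gamma(u)|$ follows. The ``in particular'' assertion is then immediate, since combining this with Lemma~\ref{lem4.14} gives $e_\gamma(y) + \|h\|_{H^1}^2 \lesssim |\rho|^2 \lesssim |\mu_\gamma(u)|^2$.

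The computation is essentially bookkeeping once Lemmas~\ref{lem4.9} and~\ref{lem4.14} are available; the only point demanding care, and the main thing I would want to get right, is that these two estimates must not be used circularly. This is legitimate because Lemma~\ref{lem4.14} is established independently, through the action expansion of Corollary~\ref{cor4.12} together with the coercivity of Lemma~\ref{lem4.13}, so its bound $e_\gamma(y) + \|h\|_{H^1}^2 \lesssim |\rho|^2$ may be freely inserted into Lemma~\ref{lem4.9}. The resulting self-improvement $|\rho| \lesssim |\mu_\gamma(u)| + C|\rho|^2 \Rightarrow |\rho| \lesssim |\mu_\gamma(u)|$ then depends only on the smallness of $|\rho|$, which is guaranteed by the modulation construction.
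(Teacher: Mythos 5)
Your proposal is correct and follows essentially the same route as the paper: combining Lemma~\ref{lem4.9} with Lemma~\ref{lem4.14} and Lemma~\ref{lem4.7} to obtain $|\rho|\lesssim|\mu_\gamma(u)|+|\rho|^2+e^{-pR}|\rho|$, then absorbing the last two terms by taking $R$ large and $|\rho|$ small, with the second assertion following from Lemma~\ref{lem4.14}. The extra care you take with the $o(e^{-(p+1)y})$ term and the non-circularity of invoking Lemma~\ref{lem4.14} is sound and merely makes explicit what the paper leaves implicit.
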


\begin{proof}
By Lemmas \ref{lem4.7}, \ref{lem4.9}, and \ref{lem4.14}, we have
\begin{align*}
	|\rho| \lesssim |\mu_\gamma(u)|
	+ \|g\|_{H^1}^2
	+e_\gamma(y) 
	+e^{-R}\|h\|_{H^1}
	\lesssim |\mu_\gamma(u)|
	+|\rho|^2 
	+e^{-R}|\rho|. 
\end{align*}
This shows that 
\begin{align*}
	|\rho| \lesssim |\mu_\gamma(u)|
\end{align*}
by taking $R$ large and $|\rho|$ small. The second estimate follows from this and Lemma \ref{lem4.14}. 
\end{proof}

\subsection{Estimates for time derivatives of modulation parameters}

We have estimates for the time derivatives of the modulation parameters as follows. 
\begin{lemma}
\label{lem4.16}
We have
\begin{align*}
	|\dot{y}| + |\dot{\rho}| + |\dot{\theta}|
	\lesssim |\mu_\gamma(u)|. 
\end{align*}
\end{lemma}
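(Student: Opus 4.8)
The plan is to derive the ODE system satisfied by the modulation parameters $(\dot\theta,\dot y,\dot\rho)$ by differentiating the three orthogonality conditions \eqref{ortho} in time and using the equation \eqref{deltaNLS} for $u$. First I would substitute the ansatz \eqref{eq4.3} into \eqref{deltaNLS}; writing $u = e^{i(\theta+t)}(Q(|\cdot|-y)+g)$, the time derivative produces the terms $i\dot\theta\,(Q(|\cdot|-y)+g)$, $-\dot y\,\partial_y Q(|\cdot|-y)$, and $\partial_t g$, while the spatial and nonlinear terms are governed by the elliptic equation \eqref{ellip} that $Q$ satisfies on $(0,\infty)$. After cancelling the leading soliton contribution via \eqref{ellip} (so that $Q(|\cdot|-y)$ solves the stationary profile equation away from the origin, up to the delta term at $x=0$ which produces the $\gamma$-dependent jump), one obtains a linear evolution equation for $g$ of the schematic form
\begin{align*}
	\partial_t g = \mathcal{L}_y g - i\dot\theta\,(Q(|\cdot|-y)+g) + \dot y\,\partial_y Q(|\cdot|-y) + \dot\rho\,\mathcal{G}_{R,y}Q + (\text{nonlinear remainder}),
\end{align*}
where $\mathcal{L}_y$ is the linearized operator around the translated profile and the nonlinear remainder is $O(\|g\|_{H^1}^2)$ by Taylor expansion of the power nonlinearity.

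Next I would take the time derivative of each of the three orthogonality relations in \eqref{ortho}. Differentiating, say, $\im\int h\,\chi_R^+\mathcal{T}_{y}Q\,dx = 0$ produces a term $\im\int \partial_t h\,\chi_R^+\mathcal{T}_y Q\,dx$ plus a term proportional to $\dot y$ coming from $\partial_y(\chi_R^+\mathcal{T}_y Q)$. Substituting the evolution equation for $g$ (hence for $h$, via the relation between $g$, $\rho$, and $h$ in Lemma \ref{lem4.7}) and collecting the coefficients of $\dot\theta$, $\dot y$, $\dot\rho$, one arrives at a linear algebraic system
\begin{align*}
	M(y)\begin{pmatrix}\dot\theta\\\dot y\\\dot\rho\end{pmatrix} = \vec{b},
\end{align*}
whose matrix $M(y)$ is, to leading order in the smallness parameter, a diagonal matrix with entries $\|Q\|_{L^2}^2$, $\|Q'\|_{L^2}^2$, and a nonvanishing $Q^p$-pairing, as computed already in the proof of Lemma \ref{lem4.2}; hence $M(y)$ is invertible with uniformly bounded inverse for $R$ large and $y>2R$. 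The forcing vector $\vec{b}$ consists of pairings of the linearized operator output and the nonlinear remainder against the fixed test functions $\chi_R^+\mathcal{T}_yQ$, $(\chi_R^+\mathcal{T}_yQ)'$, and $\chi_R^+(\mathcal{T}_yQ)^p$.

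The crux is bounding $\vec{b}$ by $|\mu_\gamma(u)|$. Each entry of $\vec{b}$ is estimated by splitting into a piece coming from the linear operator $\mathcal{L}_y$ acting on $g$ and a quadratic piece $O(\|g\|_{H^1}^2)$. Using the orthogonality \eqref{ortho} together with integration by parts, many terms in the linear piece cancel against the profile equation, leaving only contributions controlled by $\|h\|_{H^1}$ and $|\rho|$, plus exponentially small tail terms from the cutoff (of order $e^{-pR}\|h\|_{H^1}$ or $e^{-(p-1)R}$, exactly as in the proof of Lemma \ref{lem4.9} and Lemma \ref{lem4.13}). Invoking Corollary \ref{cor4.15}, which gives $|\rho| \lesssim |\mu_\gamma(u)|$ and $\|h\|_{H^1}^2 \lesssim |\mu_\gamma(u)|^2$, every term in $\vec{b}$ is bounded by $|\mu_\gamma(u)|$ (the quadratic pieces being even smaller). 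Multiplying by $M(y)^{-1}$ then yields $|\dot\theta|+|\dot y|+|\dot\rho|\lesssim |\mu_\gamma(u)|$, as claimed.

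\textbf{The main obstacle} I anticipate is the careful bookkeeping of the delta-interaction term at $x=0$: because the ansatz uses $Q(|\cdot|-y)$ rather than a smooth profile, the derivative $\partial_x$ has a jump at the origin, and one must verify that the boundary contributions at $x=0$ arising from integration by parts are of the right order (in fact governed by $Q'(-y)$ and hence exponentially small, consistent with the decay estimates in Lemmas \ref{lem4.1}--\ref{lem4.4}). The secondary subtlety is ensuring that the $\dot\theta$-coefficient entering through the term $-i\dot\theta(Q(|\cdot|-y)+g)$ does not produce an uncontrolled contribution; this is handled by noting that $\dot\theta$ pairs against the imaginary test direction $\chi_R^+\mathcal{T}_yQ$, for which the leading coefficient $\|Q\|_{L^2}^2$ is nondegenerate, so the system decouples cleanly to leading order. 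Once the invertibility of $M(y)$ and the $|\mu_\gamma(u)|$-bound on $\vec{b}$ are in hand, the conclusion is immediate.
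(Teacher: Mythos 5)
Your proposal is correct and follows essentially the same route as the paper: the paper derives the equation for $\dot h$ from the ansatz and the elliptic equation for $Q(|\cdot|-y)$, then pairs it against $i\chi_R^+\mathcal{T}_yQ$, $i\chi_R^+(\mathcal{T}_yQ)^p$, and $i\chi_R^+\mathcal{T}_yQ'$ using the orthogonality conditions \eqref{ortho}, which is the same linear system you describe (with the small off-diagonal couplings absorbed by taking $R$ large and $\|h\|_{H^1}$ small rather than phrased as inverting $M(y)$), and closes the estimate via Lemma \ref{lem4.7} and Corollary \ref{cor4.15}. The delta-term bookkeeping you flag as the main obstacle is in fact immediate here, since the test functions contain $\chi_R^+$ and hence vanish near the origin, so the delta interactions drop out of all the pairings.
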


\begin{proof}
By a direct calculation, we have 
\begin{align*}
	&i\dot{h}+h'' +\gamma \delta h
	\\
	&=\dot{\theta}Q(|\cdot|-y)
	+\dot{\theta}\rho \chi_R Q(|\cdot|-y)
	+(\dot{\theta}+1)h
	+\gamma_y \delta Q(|\cdot|-y) \\
	&\quad +|Q(|\cdot|-y)|^{p-1}Q(|\cdot|-y)-|Q(|\cdot|-y)+g|^{p-1}(Q(|\cdot|-y)+g) \\
	&\quad -i \dot{\rho} \chi_R Q(|\cdot|-y) +i \dot{y}(1+\rho \chi_R)Q'(|\cdot|-y) \\ 
	&\quad  - \rho \chi_R'' Q(|\cdot|-y)
	-2\rho \frac{x}{|x|}\chi_R' Q'(|\cdot|-y)
	+\rho \chi_R Q(|\cdot|-y)^p
	- \gamma \delta Q(|\cdot|-y), 
\end{align*}
where $\gamma_y:= -2Q'(-y)/Q(-y)$ and we used the fact that $Q(|\cdot|-y)$ satisfies the equation 
\begin{align*}
	-\{Q(|\cdot|-y)\}'' + Q(|\cdot|-y)
	= \gamma_y \delta Q(|\cdot|-y) +|Q(|\cdot|-y)|^{p-1}Q(|\cdot|-y). 
\end{align*}
We have delta interactions in the equation of $h$. However, we take couplings with the equation and functions including $\chi_R^+$. Thus they  do not appear in the following calculations. 
By testing the equation of $h$ with $\chi_R^+iQ(\cdot-y)$, the orthogonality condition implies that
\begin{align}\label{eq44}
	|\dot{\theta}| 
	\lesssim |\rho|+\|h\|_{H^1}+\|g\|_{H^1} + \|h\|_{H^1}|\dot{y}|
	\lesssim |\mu_\gamma(u)| + \|h\|_{H^1}|\dot{y}|,
\end{align}
where we also use Lemma \ref{lem4.7} and Corollary \ref{cor4.15}. 
Testing the equation with $i\chi_R^+ Q(\cdot -y)^{p}$ and using the orthogonality, we get
\begin{align}
	|\dot{\rho}| 
	&\lesssim (|\dot{\theta}|+1)\|h\|_{H^1} + \|g\|_{H^1} + (1+|\rho|)e^{-R}|\dot{y}|+|\dot{y}| \|h\|_{H^1} 
	\\
	&\lesssim |\mu_\gamma(u)| 
	+(e^{-R}+ \|h\|_{H^1})|\dot{y}|. \label{eq4.6}
\end{align}
Testing the equation with $i\chi_R^+ Q'(\cdot-y)$ and using the orthogonality, we obtain
\begin{align}
\label{eq4.7}
	|\dot{y}| \lesssim |\rho| + \|h\|_{H^1} + \|g\|_{H^1} + e^{-R}|\dot{\rho}|
	+ \|h\|_{H^1}|\dot{y}|
	\lesssim |\mu_\gamma(u)| + e^{-R}|\dot{\rho}|
	+ \|h\|_{H^1}|\dot{y}|.
\end{align}
These estimates \eqref{eq44}\noeqref{eq4.6}--\eqref{eq4.7} 
imply that
\begin{align*}
	|\dot{\theta}| +|\dot{y}| + |\dot{\rho}|
	\lesssim |\mu_\gamma(u)| + (e^{-R}+ \|h\|_{H^1})|\dot{y}| + e^{-R}|\dot{\rho}|
\end{align*}
By taking $R$ large and $\|h\|_{H^1}$ small, we get
\begin{align*}
	|\dot{\theta}| + |\dot{y}| + |\dot{\rho}|
	\lesssim |\mu_\gamma(u)|. 
\end{align*}
The proof is complete. 
\end{proof}


\section{Proof of scattering}

The proof is very similar to \cite{GuIn22}, so we only give an outline here. 

Suppose that statement (1) of Theorem \ref{thm2.5} does not hold. Then there exists a global solution $u \in C(\mathbb{R}: H_\even^1(\mathbb{R}))$ with 
\begin{align*}
	E_\gamma(u)=2E(Q),~M(u)=2M(Q),~K_\gamma(u(t)) >0,
\end{align*}
and 
\begin{align*}
	\|u\|_{S(\mathbb{R})}=\infty.
\end{align*}
We may assume that $\|u\|_{L_t^aL_x^r((0,\infty)\times \mathbb{R})}=\infty$. We call this solution a (forward) critical element. 

\begin{lemma}[Compactness of a critical element]
Let $u \in C(\mathbb{R}: H_\even^1(\mathbb{R}))$ be a solution with
\begin{align*}
	E_\gamma(u)=2E(Q),~M(u)=2M(Q),~K_\gamma(u(t)) >0,
\end{align*}
and
\begin{align*}
	\|u\|_{S(0,\infty)}=\infty.
\end{align*}
Then there exists a function $x:[0,\infty) \to [0,\infty)$ such that for any $\varepsilon>0$ there exists $R=R(\varepsilon)>0$ such that 
\begin{align*}
	\int_{\{|x-x(t)|>R\} \cap \{|x+x(t)|>R\}} |u'(t,x)|^2 + |u(t,x)|^2 dx <\varepsilon
\end{align*}
for any $t \in [0,\infty)$. 
\end{lemma}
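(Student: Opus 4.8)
The plan is to follow the now-standard concentration--compactness rigidity scheme of Kenig--Merle \cite{KeMe06}, in the form adapted to the delta potential in \cite{BaVi16,DLR22,GuIn22}. Since we assume $K_\gamma(u(t))>0$ together with the threshold identities $E_\gamma(u)=2E(Q)$ and $M(u)=2M(Q)$, Lemma \ref{lem2.5} gives a uniform bound $\sup_{t\geq 0}\|u(t)\|_{H^1}<\infty$, so every sequence $\{u(t_n)\}_{n}$ with $t_n\in[0,\infty)$ is bounded in $H^1(\mathbb{R})$. The statement to be proved is equivalent to precompactness of the orbit $\{u(t):t\geq 0\}$ in $H^1(\mathbb{R})$ modulo the even-preserving translations that move mass to $\pm x(t)$; thus it suffices to show that for every sequence $t_n\to\infty$ there is a subsequence and parameters $x_n=x(t_n)\geq 0$ along which $u(t_n)$ converges in $H^1$ once the $\pm x_n$ translations are folded out. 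First I would dispose of the routine reduction from this sequential precompactness to the uniform $\varepsilon$--$R$ tail bound by the usual contradiction argument, and then concentrate on the sequential statement.

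The core is the single-profile argument. I would apply the linear profile decomposition to the bounded even sequence $\{u(t_n)\}$, working in the even class so that the elementary profiles are the symmetric objects: either concentrating near the origin (where the delta interaction is active) or escaping as a reflected pair centered at $\pm x_n^{j}$ with $x_n^{j}\to\infty$ (where each half becomes asymptotically free). The scattering criterion ``$u$ scatters $\iff u\in S(0,\infty)$'' together with the small-data theory and the long-time perturbation lemma furnished by the Strichartz estimates then forces a dichotomy: either $u$ scatters---which is excluded, since $u$ is a forward critical element with $\|u\|_{S(0,\infty)}=\infty$---or exactly one profile carries the full mass $2M(Q)$ and energy $2E(Q)$ while the dispersive remainder tends to zero in $H^1$. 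Additivity of the mass and of the energy across profiles, combined with $K_\gamma>0$ and the variational identity $r_{\omega,\gamma}=2n_{\omega,0}$ from Proposition \ref{prop1.1}, rules out any splitting of the mass--energy among several nontrivial profiles.

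Finally I would extract the center $x(t_n)$ from the surviving profile, which is by construction a symmetric object. If its translation parameter $x_n$ stays bounded, the concentration is near the origin and one may take $x(t_n)=0$; if $x_n\to\infty$, the profile is a reflected pair concentrating near $\{x_n,-x_n\}$, and one sets $x(t_n):=|x_n|$. In either case the vanishing of the remainder yields $H^1$-convergence away from $\{|x-x(t_n)|\leq R\}\cup\{|x+x(t_n)|\leq R\}$, which is exactly the required tail estimate. I expect the main obstacle to be the interplay between the even symmetry and the profile decomposition near the potential: one must arrange the decomposition in the even class so that a profile escaping to $+\infty$ is automatically accompanied by its mirror image at $-\infty$ as a \emph{single} symmetric elementary profile, and one must check that the potential-localized alternative is consistent with the threshold constraint (so that no profile can carry the ground-state-level action $n_{\omega,0}$ by itself). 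Reconciling ``one profile'' with ``a symmetric $\pm$ pair'' is the delicate point, and it is precisely what produces the two centers $\pm x(t)$ appearing in the statement.
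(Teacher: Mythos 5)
Your proposal follows exactly the route the paper takes: the paper's proof is omitted and simply defers to the linear profile decomposition and long-time perturbation machinery of \cite{BaVi16,IkIn17}, carried out as in \cite[Proposition 35]{GuIn22}, which is precisely the even-adapted Kenig--Merle concentration--compactness scheme you describe (symmetric $\pm x_n$ profile pairs, exclusion of mass--energy splitting via the sub-threshold scattering result, and extraction of the center from the single surviving profile). The outline is correct and matches the paper's intended argument.
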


\begin{proof}
The proof is based on linear profile decomposition and long time perturbation. See  \cite{BaVi16,IkIn17} for these. 
The statement can be shown in a similar way to \cite[Proposition 35]{GuIn22}. We omit the proof. 
\end{proof}

The compactness also holds for 
\begin{align}
\label{eq5.1}
	X(t):=
	\begin{cases}
	x(t) &(t \in [0,\infty)\setminus I_{\mu_0}),
	\\
	y(t) & (t \in I_{\mu_0}).
	\end{cases}
\end{align}
See \cite[Lemma 38]{GuIn22}. That is, for any $\varepsilon>0$ there exists $R=R(\varepsilon)>0$ such that 
\begin{align*}
	\int_{\{|x-X(t)|>R\} \cap \{|x+X(t)|>R\}} |u'(t,x)|^2 + |u(t,x)|^2 dx <\varepsilon
\end{align*}
for all $t \in [0,\infty)$. 

\begin{lemma}
Let $\{t_n\}$ be an arbitrary time sequence in $[0,\infty)$. We have the following.
\begin{enumerate}[(1)]
\item If $|X(t_n)|$ is unbounded, then by taking a subsequence of $\{t_n\}$, still denoted by $\{t_n\}$, there is $\psi \in H^1(\mathbb{R})$ such that $u(t_n) - (\psi(\cdot-X(t_n))+\psi(-\cdot-X(t_n))) \to 0$ in $H^1(\mathbb{R})$. 
\item If $|X(t_n)|$ is bounded, then by taking a subsequence of $\{t_n\}$, still denoted by $\{t_n\}$, there is $\psi \in H^1(\mathbb{R})$ such that $u(t_n) \to \psi$ in $H^1(\mathbb{R})$.
\end{enumerate}
\end{lemma}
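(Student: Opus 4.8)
The plan is to show that the symmetrized trajectory $\{u(t)\}$ is precompact in $H^1(\mathbb{R})$ modulo the even two-bump symmetry, and then read off the limiting profile according to whether the centers $X(t_n)$ stay bounded. First I note that $\{u(t_n)\}$ is bounded in $H^1$: since $K_\gamma(u(t))>0$ and $u$ satisfies \eqref{ME}, Lemma \ref{lem2.5} gives a uniform $H^1$ bound, and each $u(t_n)$ is even. Passing to a subsequence, either $X(t_n)\to X_\infty\in[0,\infty)$ (statement (2)) or $X(t_n)\to\infty$ (statement (1)). In both cases I would extract a weak $H^1$ limit of a suitable translate, use the tightness property already recorded for $X(t)$ to upgrade weak convergence to strong $L^2$ and $L^{p+1}$ convergence (tightness rules out escape of mass to spatial infinity, and the Rellich--Kondrachov theorem handles the bounded region), and then promote this to strong $H^1$ convergence via the conservation laws.

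For the bounded case, tightness for $X(t)$ together with $X(t_n)\to X_\infty$ shows that $\int_{|x|>R'}(|u'(t_n)|^2+|u(t_n)|^2)\,dx<\varepsilon$ for $R'$ large and all large $n$, so $\{u(t_n)\}$ is tight. Extracting $u(t_n)\rightharpoonup\psi$ weakly in $H^1$, tightness yields $u(t_n)\to\psi$ strongly in $L^2$ and (by interpolation with the $H^1$ bound) in $L^{p+1}$; thus $\psi$ is even with $M(\psi)=2M(Q)$, and the embedding $H^1\hookrightarrow C$ gives $u(t_n,0)\to\psi(0)$. It then remains to prove the convergence is strong in $H^1$, i.e. $\|u'(t_n)\|_{L^2}\to\|\psi'\|_{L^2}$.

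For the unbounded case I would set $v_n:=u(t_n,\cdot+X(t_n))$. Since $u(t_n)$ is even and $X(t_n)\to\infty$, the bump near $-X(t_n)$ is the mirror image of the bump near $+X(t_n)$ and escapes to $-\infty$ after this translation, so $v_n\rightharpoonup\psi$ in $H^1$ with $v_n\to\psi$ in $L^2_{\mathrm{loc}}$ and $L^{p+1}_{\mathrm{loc}}$ by tightness of the near bump. Evenness identifies the far bump with $\psi(-\cdot)$, producing the symmetric ansatz $u(t_n)\approx\psi(\cdot-X(t_n))+\psi(-\cdot-X(t_n))$; moreover $X(t_n)\to\infty$ forces the two bumps to decouple (their $H^1$ overlap vanishes) and makes the $\delta$-interaction negligible, as $u(t_n,0)\to0$ since the origin lies in the far region. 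In the limit the mass and the potential-free energy split, giving $M(\psi)=M(Q)$ and $E(\psi)=E(Q)$, and again the only remaining point is strong $H^1$ convergence of each bump, i.e. no loss of $\dot{H}^1$-norm.

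The main obstacle in both cases is precisely this upgrade from weak to strong $H^1$ convergence, equivalently the exclusion of a kinetic-energy defect. Writing $a:=\lim_n\|(u(t_n)-\psi)'\|_{L^2}^2\ge0$ in the bounded case (and analogously for each bump in the unbounded case), conservation of mass and energy together with the strong $L^2$ and $L^{p+1}$ convergence already established give $E_\gamma(\psi)=2E(Q)-\tfrac12 a$ (respectively $E(\psi)=E(Q)-\tfrac12 a$). If $a>0$, then $\psi$—which is nonzero because $M(\psi)>0$—lies strictly below the relevant threshold while keeping the threshold mass; by the below-threshold dichotomy (Theorem \ref{belowresulteven}) the solution emanating from $\psi$ either scatters or blows up or grows up, and the long-time perturbation theory used to construct the critical element propagates this behavior to $u$ on $[t_n,\infty)$, contradicting $\|u\|_{S(0,\infty)}=\infty$ (and the globalness of the forward critical element). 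Hence $a=0$, the convergence is strong in $H^1$, and the two cases yield exactly the asserted limits. I expect this minimality/rigidity step—ruling out the energy defect—to be the crux, while the tightness-based extraction of the weak limit and the identification of the two-bump structure via evenness are comparatively routine.
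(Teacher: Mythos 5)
Your overall skeleton (uniform $H^1$ bound from Lemma \ref{lem2.5}, dichotomy on $X(t_n)$, weak limit plus tightness giving strong $L^2$ and $L^{p+1}$ convergence, evenness identifying the two-bump structure, and then exclusion of a kinetic-energy defect $a$) is reasonable, and the computation $E_\gamma(\psi)=2E(Q)-\tfrac12 a$, $S_{1,\gamma}(\psi)<r_{1,\gamma}$ is correct. The paper itself simply cites \cite[Lemma 39]{GuIn22}, whose proof runs the linear profile decomposition on $u(t_n)$ and uses criticality to kill all but one profile and the remainder; your route tries to shortcut this with a single weak limit plus the below-threshold dichotomy, which is a legitimate idea but is not carried out correctly at the decisive step.

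The gap is in how you rule out $a>0$. First, in the sub-case $K_\gamma(\psi)\ge 0$ you invoke long-time perturbation to transfer the scattering of the solution with data $\psi$ to $u$ on $[t_n,\infty)$, but the hypothesis of that perturbation lemma is precisely what you are trying to prove: $r_n:=u(t_n)-\psi$ does \emph{not} tend to $0$ in $H^1$ when $a>0$. The step can be rescued, but only by observing explicitly that $r_n\to 0$ in $L^2$ together with boundedness in $H^1$ gives $\|r_n\|_{H^{s_c}}\to 0$ by interpolation, hence $\|e^{it\Delta_\gamma}r_n\|_{S(\mathbb{R})}\to 0$, which is the quantity the perturbation lemma actually needs; without this remark the argument is circular. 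Second, and more seriously, the sub-case $K_\gamma(\psi)<0$ cannot be handled by perturbation at all: blow-up is not stable under perturbations that are only small in a Strichartz sense, and grow-up produces no contradiction on any finite time interval, so ``propagating this behavior to $u$'' does not contradict the globalness of the forward critical element. This case has to be excluded by a different mechanism, e.g.\ variationally: since $K_\gamma(u(t_n))>0$ under \eqref{ME} one has $\|u(t_n)\|_{\dot H^1_\gamma}^2<2\|Q'\|_{L^2}^2$, so weak lower semicontinuity gives $\|\psi\|_{\dot H^1_\gamma}^2\le 2\|Q'\|_{L^2}^2-a$, whereas $K_\gamma(\psi)<0$ together with $S_{1,\gamma}(\psi)<r_{1,\gamma}$ and $M(\psi)=2M(Q)$ forces $\|\psi\|_{\dot H^1_\gamma}^2>2\|Q'\|_{L^2}^2$ by rescaling onto the Nehari manifold and Lemma \ref{lem2.3} --- a contradiction. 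The analogous points (free-NLS threshold for the escaping profile, and the corresponding perturbation with moving center) are also left unaddressed in your unbounded case. As written, therefore, the crux of the proof is missing.
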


\begin{proof}
See Lemma 39 in \cite{GuIn22}. 
\end{proof}

\subsection{Elimination of the critical element}

\subsubsection{$X(t)$ is bounded}

\begin{lemma}
\label{lem5.3}
Let $u \in C(\mathbb{R}: H_\even^1(\mathbb{R}))$ be a solution with
\begin{align*}
	E_\gamma(u)=2E(Q),~M(u)=2M(Q),~K_\gamma(u(t)) >0. 
\end{align*}
Assume that there exists a function $x:[0,\infty) \to [0,\infty)$ such that for any $\varepsilon>0$ there exists $R=R(\varepsilon)>0$ such that 
\begin{align*}
	\int_{\{|x-X(t)|>R\} \cap \{|x+X(t)|>R\}} |u'(t,x)|^2 + |u(t,x)|^2 dx <\varepsilon
\end{align*}
for any $t \in [0,\infty)$, where $X$ is defined in \eqref{eq5.1}. 
Then $X$ is bounded. 
\end{lemma}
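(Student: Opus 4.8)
The plan is to argue by contradiction, so suppose $X$ is unbounded and pick $t_n\to\infty$ with $X(t_n)\to\infty$. The first point to record is that, since $u$ is even, the total momentum $\im\int_{\mathbb R}u'\overline{u}\,dx$ vanishes identically and the ordinary center of mass is zero; hence $X(t)$ is not a center of mass but the common location $\pm X(t)$ of a symmetric two-bump configuration. I would therefore track $X(t)$ through an even-compatible truncated first moment, namely $z_\rho(t):=\int_{\mathbb R}\varphi(x/\rho)\,|x|\,|u(t,x)|^2\,dx=2\int_0^\infty\varphi(x/\rho)\,x\,|u(t,x)|^2\,dx$. By the compactness hypothesis of Lemma \ref{lem5.3}, for any $\varepsilon$ there is $R(\varepsilon)$ such that the mass lying at distance more than $R(\varepsilon)$ from $\pm X(t)$ is below $\varepsilon$; consequently, once $\rho\gg X(t)$, one has $z_\rho(t)=X(t)\,M(u)+O(\varepsilon\rho)$, so $z_\rho$ genuinely measures $X(t)$ up to controlled errors.

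Next I would differentiate: $\dot z_\rho(t)=2\im\int_{\mathbb R}\big(\operatorname{sgn}(x)\varphi(x/\rho)+\tfrac{|x|}{\rho}\varphi'(x/\rho)\big)\,u'\overline{u}\,dx$, which is a truncated (half-line) momentum. The uniform $H^1$ bound from Lemma \ref{lem2.5}, valid under \eqref{ME} and $K_\gamma>0$, gives $|\dot z_\rho(t)|\le C$ independent of $\rho$, whence $X(t)\lesssim t$. The core of the argument is to upgrade this to boundedness: using that the truncated momentum is, by compactness, carried near $\pm X(t)$ and that the total momentum is zero by evenness, one shows the effective bump velocity tends to zero (a second differentiation produces a virial quantity controlled by $8K_\gamma(u(t))+A_R(u(t))$, with $A_R$ made negligible after centering the cutoff at the moving bump). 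This is exactly the structure of the bounded-center lemma in the concentration–compactness rigidity scheme, and I would adapt the corresponding statement in \cite{GuIn22} (see also \cite{DLR22,DuRo10}), where the positivity and coercivity $K_\gamma(u(t))\ge c\,\mu_\gamma(u(t))>0$ from Proposition \ref{prop2.7} play the role they do in the standard theory.

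The main obstacle is precisely the failure of compactness of the even Sobolev embedding in one dimension: unlike in the radial higher-dimensional setting, evenness does not pin the concentration to the origin, and the mass may a priori drift to $\pm\infty$ as a symmetric pair of bumps, which is the very scenario $X(t_n)\to\infty$ that must be excluded. The delicate step is to pass from the crude bound $X(t)=O(t)$ to boundedness; this requires synchronizing the truncation radius $\rho$ with the (moving) location $X(t)$ so that the virial error $A_R$ really is negligible along the escaping sequence, and then extracting a contradiction with the uniform localization and the conservation of mass and energy. I expect this reconciliation—controlling the bump velocity via the zero-momentum constraint while the center is allowed to move—to be the technical heart, exactly as in \cite[the corresponding boundedness lemma]{GuIn22}.
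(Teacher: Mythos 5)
Your proposal does not close the argument, and the mechanism you place at its core would not work. The enemy in this lemma is a \emph{symmetric separation} of two bumps located at $\pm X(t)$, and such a configuration has identically zero total momentum for every separation speed; evenness therefore gives no constraint on the ``effective bump velocity,'' and the step where you claim that zero total momentum plus localization of the truncated momentum near $\pm X(t)$ forces that velocity to vanish is vacuous. (Indeed, even two-soliton solutions with separating bumps do exist at nearby mass--energy levels, as recalled in the remarks after Theorem \ref{thm}; what excludes them exactly at this threshold in the low-frequency regime is the repulsive delta potential, which your argument never uses quantitatively.) The truncated first moment $z_\rho$ and the resulting crude bound $X(t)\lesssim t$ do not feed into any contradiction, and you explicitly defer ``the technical heart'' to the cited reference --- but that deferred step is precisely the content of the lemma.

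What the paper actually combines are three estimates coming from the modulation analysis, none of which appear in your sketch: (i) the localized virial identity, normalized via Lemma \ref{lem2.10} so that $F_R$ vanishes on $e^{i\theta}Q(|\cdot|-y)$, yields $\int_{t_1}^{t_2}\mu_\gamma(t)\,dt\lesssim \bigl(1+\sup_{[t_1,t_2]}|X|\bigr)\bigl(\mu_\gamma(t_1)+\mu_\gamma(t_2)\bigr)$; (ii) the modulation estimate $|\dot y|\lesssim|\mu_\gamma(u)|$ of Lemma \ref{lem4.16} yields $|X(t_1)-X(t_2)|\lesssim\int_{t_1}^{t_2}\mu_\gamma(t)\,dt$ for $t_1+1\le t_2$; and (iii) the repulsive-potential estimate $e_\gamma(y)\lesssim|\mu_\gamma(u)|^2$ of Corollary \ref{cor4.15} yields Lemma \ref{lem5.5}, i.e.\ $X(t_n)\to\infty$ if and only if $\mu_\gamma(t_n)\to 0$. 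If $X$ were unbounded, (iii) forces $\mu_\gamma(t_n)\to 0$ along an escaping sequence, and then (i) and (ii) bound the displacement of $X$ between such times by $o\bigl(\sup|X|\bigr)$, contradicting unboundedness. Item (iii) is the genuinely new input tied to the delta potential --- for $\gamma=0$ the lemma is false, since solutions converging to a translated ground state exist --- and it is exactly the ingredient missing from your proposal; Proposition \ref{prop2.7}, which you do cite, does not substitute for it.
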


To show this, we prepare the following lemmas. 

\begin{lemma}
There exists $C_\varepsilon>0$ such that 
\begin{align*}
	\int_{t_1}^{t_2} \mu_\gamma(t) dt \leq C_\varepsilon (1+\sup_{t \in [t_1,t_2]}|X(t)|)(\mu_\gamma(t_1)+\mu_\gamma(t_2))
\end{align*}
for any $t_2>t_1 >0$. 
\end{lemma}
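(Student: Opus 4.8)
The plan is to bound the time integral of $\mu_\gamma$ by its boundary values through the localized virial identity, using $1+\sup_{[t_1,t_2]}|X(t)|$ as the natural spatial scale. First I would fix the truncation radius $R=\Lambda\,(1+\sup_{t\in[t_1,t_2]}|X(t)|)$ with $\Lambda$ large, large enough that the compactness property for the extended center \eqref{eq5.1} forces $\int_{|x|>R}(|u'|^2+|u|^2)\,dx$ to be as small as we like, uniformly in $t\in[t_1,t_2]$. Since $K_\gamma(u(t))\ge c\,\mu_\gamma(t)>0$ by Proposition \ref{prop2.7}, it suffices to control $\int_{t_1}^{t_2}K_\gamma(u(t))\,dt$, and integrating the localized virial identity $\frac{d^2}{dt^2}J_R(u(t))=8K_\gamma(u(t))+A_R(u(t))$ over $[t_1,t_2]$ gives
\[
8\int_{t_1}^{t_2}K_\gamma(u(t))\,dt
= \Bigl[\tfrac{d}{dt}J_R(u(t))\Bigr]_{t_1}^{t_2} - \int_{t_1}^{t_2}A_R(u(t))\,dt .
\]
Everything then reduces to estimating the two boundary terms and absorbing the $A_R$-integral.

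The key to both is that $\frac{d}{dt}J_R$ and $A_R$ vanish on the exact even profile and are therefore controlled by the remainder. Regarding $\frac{d}{dt}J_R$ as the instantaneous functional $f\mapsto 2R\,\im\int\varphi'(x/R)f'\bar f\,dx$, it vanishes at $f=e^{i\theta}Q(|\cdot|-y)$ because $Q(|\cdot|-y)$ is real (this is exactly the computation in the proof of Lemma \ref{lem2.10}). For $t\in I_{\mu_0}$, writing $u=e^{i\theta}(Q(|\cdot|-y)+g)$ as in \eqref{eq4.3} and subtracting, the functional reduces to $2R\,\im\int\varphi'(x/R)\bigl(Q'(|\cdot|-y)\bar g+g'Q(|\cdot|-y)+g'\bar g\bigr)\,dx$; since $Q(|\cdot|-y)$ and $Q'(|\cdot|-y)$ are localized near $|x|=y\le\sup|X|$ and $R\varphi'(x/R)$ is comparable to $|x|$ there, each term is bounded by $\lesssim(1+\sup|X|)\|g\|_{H^1}$, and Lemma \ref{lem4.7} together with Corollary \ref{cor4.15} gives $\|g\|_{H^1}\lesssim\mu_\gamma(t)$. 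For $t\notin I_{\mu_0}$ one has $\mu_\gamma(t)\ge\mu_0$, so the trivial bound $|\frac{d}{dt}J_R(u(t))|\lesssim R\lesssim(1+\sup|X|)\le(1+\sup|X|)\mu_\gamma(t)/\mu_0$ already suffices. Hence $|\frac{d}{dt}J_R(u(t_i))|\lesssim(1+\sup|X|)\mu_\gamma(t_i)$ in all cases.

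For the error term I would argue analogously, using Lemma \ref{lem2.10}, which gives $A_R(e^{i\theta}Q(|\cdot|-y))=0$ exactly. For $t\in I_{\mu_0}$ the difference $A_R(u(t))=A_R(u(t))-A_R(e^{i\theta}Q(|\cdot|-y))$ consists of cross terms, whose coefficients are $Q(|\cdot|-y)$ restricted to $|x|>R$ and hence exponentially small since $R-y\ge\Lambda$, plus quadratic-and-higher terms in $g$ of size $\lesssim\|g\|_{H^1}^2$; thus $|A_R(u(t))|\lesssim\mu_\gamma(t)$ with a constant made small by taking $\Lambda$ large and $\mu_0$ small. For $t\notin I_{\mu_0}$ the raw compactness bound $|A_R(u(t))|\lesssim\varepsilon\le(\varepsilon/\mu_0)\mu_\gamma(t)$ gives the same with small constant. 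Feeding these into the displayed identity and using $|A_R|\lesssim\mu_\gamma\le c^{-1}K_\gamma$ once more, the $A_R$-integral is absorbed into $\int_{t_1}^{t_2}K_\gamma$ (keeping the absorption constant below $8$), leaving $\int_{t_1}^{t_2}\mu_\gamma\,dt\le c^{-1}\int_{t_1}^{t_2}K_\gamma\,dt\lesssim(1+\sup|X|)(\mu_\gamma(t_1)+\mu_\gamma(t_2))$, which is the claim with $C_\varepsilon$ absorbing the constants.

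The main obstacle is precisely that the compactness property only produces $|A_R|,\ |\frac{d}{dt}J_R|\lesssim\varepsilon$ or $\lesssim R$ as \emph{fixed} quantities, which are useless over a long interval because the time integral would then scale like $t_2-t_1$; the whole argument hinges on upgrading these to $O(\mu_\gamma(t))$, which is possible only because Lemma \ref{lem2.10} makes both functionals vanish identically on the exact even soliton and Corollary \ref{cor4.15} controls the remainder $g$ by $\mu_\gamma$. The remaining care is bookkeeping: keeping the absorption constants below the coercivity constant $c$ of Proposition \ref{prop2.7} by choosing $\Lambda$ large and $\mu_0,\varepsilon$ small, and checking that the weight $R\varphi'(x/R)\sim|x|$ on the soliton support produces only the allowed factor $1+\sup|X|$ and no higher power.
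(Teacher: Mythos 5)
Your proposal is correct and follows essentially the same route as the paper, which omits the details by citing \cite[Lemma 41]{GuIn22} but explicitly flags Lemma \ref{lem2.10} (vanishing of $A_R$ and of $\frac{d}{dt}J_R$ on the exact profile $e^{i\theta}Q(|\cdot|-y)$) as the key ingredient — precisely the mechanism you use, combined with Proposition \ref{prop2.7} and Corollary \ref{cor4.15} to upgrade the error and boundary terms to $O(\mu_\gamma(t))$ near the soliton and the trivial bound $\mu_\gamma\geq\mu_0$ away from it.
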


\begin{proof}
We use Lemma \ref{lem2.10} in this proof. 
See \cite[Lemma 41]{GuIn22} 
\end{proof}

\begin{lemma}
\label{lem5.5}
Let $\{t_n\}$ be a sequence such that $t_n \to \infty$ as $n \to \infty$. Then $X(t_n) \to  \infty$ if and only if $\mu_\gamma(t_n) \to 0$. 
\end{lemma}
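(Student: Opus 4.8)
The plan is to prove the two implications separately. The implication $\mu_\gamma(t_n)\to 0 \Rightarrow X(t_n)\to\infty$ follows immediately from the modulation estimates already in hand, while the reverse implication $X(t_n)\to\infty \Rightarrow \mu_\gamma(t_n)\to 0$ requires the convergence lemma preceding this one together with the variational characterization of the free threshold. The delicate point lives entirely in the reverse implication.

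First I would treat $\mu_\gamma(t_n)\to 0 \Rightarrow X(t_n)\to\infty$. Since $\mu_0>0$ and the critical element is forward-global, for all large $n$ we have $|\mu_\gamma(t_n)|<\mu_0$, so $t_n\in I_{\mu_0}$ and hence $X(t_n)=y(t_n)$ by definition \eqref{eq5.1}. Corollary \ref{cor4.15} then gives $e_\gamma(y(t_n))\lesssim |\mu_\gamma(t_n)|^2\to 0$. Since $e_\gamma$ is strictly decreasing with $e_\gamma(y)\to 0$ as $y\to\infty$ (it is $e^{-2y}$ for $\gamma<-2$ and $e^{-(p+1)y}$ for $\gamma=-2$), this forces $y(t_n)=X(t_n)\to\infty$.

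For $X(t_n)\to\infty \Rightarrow \mu_\gamma(t_n)\to 0$ I would argue by contradiction: suppose $X(t_n)\to\infty$ but $\mu_\gamma(t_n)\geq\delta>0$ along a subsequence. As $|X(t_n)|$ is unbounded, the convergence lemma produces (along a further subsequence) a profile $\psi\in H^1(\mathbb{R})$ with
\[
	u(t_n)-\bigl(\psi(\cdot-X(t_n))+\psi(-\cdot-X(t_n))\bigr)\to 0 \quad\text{in } H^1(\mathbb{R}).
\]
Because $X(t_n)\to\infty$, the two translates have asymptotically disjoint supports, so the cross terms vanish and $\|u'(t_n)\|_{L^2}^2\to 2\|\psi'\|_{L^2}^2$, $\|u(t_n)\|_{L^{p+1}}^{p+1}\to 2\|\psi\|_{L^{p+1}}^{p+1}$, $M(u(t_n))\to 2M(\psi)$; moreover $H^1\hookrightarrow L^\infty$ and $\psi(\pm X(t_n))\to 0$ give $u(t_n,0)\to 0$, so the delta term drops out. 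Inserting these into the conserved identities $M(u)=2M(Q)$ and $E_\gamma(u)=2E(Q)$ yields $M(\psi)=M(Q)$ and $E_0(\psi)=E(Q)$, so $\psi$ sits exactly at the potential-free mass--energy threshold. Passing to the limit in $K_\gamma(u(t_n))>0$ gives $K_0(\psi)\geq 0$, and a direct computation using $E_0(\psi)=E(Q)$ and $K_0(Q)=0$ shows $K_0(\psi)=\tfrac{p-5}{4}(\|Q'\|_{L^2}^2-\|\psi'\|_{L^2}^2)$, so that $\mu_\gamma(t_n)\to 2(\|Q'\|_{L^2}^2-\|\psi'\|_{L^2}^2)\geq\delta>0$ forces $K_0(\psi)>0$ strictly.

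The main obstacle is to rule out this strict positivity. The profile $\psi$ lies at the free threshold on the \emph{scattering side} $K_0(\psi)>0$, and here I would invoke the threshold classification for the equation without potential (Campos--Farah--Roudenko \cite{CFR22}, Duyckaerts--Roudenko \cite{DuRo10}): such a $\psi$ generates a free solution that either scatters or is a time translate of the special threshold solution. In the scattering case, since the two bumps are far apart and the delta potential is asymptotically negligible near the times $t_n$, a long-time perturbation argument (the same machinery used to prove the compactness lemma, cf.\ \cite{GuIn22,BaVi16}) propagates scattering to $u$, contradicting $\|u\|_{S(0,\infty)}=\infty$. The special-solution case is incompatible with $\mu_\gamma(t_n)\to L>0$, because two separating copies of a solution relaxing to $Q$ force $\mu_\gamma\to 0$; this borderline is handled by the rigidity of the special solutions established in \cite{GuIn22}. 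Hence $K_0(\psi)=0$, and the variational characterization $S(Q)=\inf\{S(f):f\neq 0,\ K_0(f)=0\}$ (attained only by $Q$ up to symmetry) gives $\psi=Q$ up to translation and phase, so $\|\psi'\|_{L^2}=\|Q'\|_{L^2}$ and $\mu_\gamma(t_n)\to 0$, the desired contradiction.
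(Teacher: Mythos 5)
Your forward direction ($\mu_\gamma(t_n)\to 0 \Rightarrow X(t_n)\to\infty$) is exactly the paper's argument: for large $n$ one has $|\mu_\gamma(t_n)|<\mu_0$, so $t_n\in I_{\mu_0}$ and $X(t_n)=y(t_n)$, and Corollary~\ref{cor4.15} gives $e_\gamma(y(t_n))\lesssim|\mu_\gamma(t_n)|^2\to 0$, forcing $y(t_n)\to\infty$. (The paper itself only writes ``see \cite[Lemma 42]{GuIn22}; here we use Corollary~\ref{cor4.15}'', so you have reconstructed the intended proof of this half.) For the reverse direction, your identification of the limiting profile $\psi$ with $M(\psi)=M(Q)$, $E_0(\psi)=E(Q)$ and $\|\psi'\|_{L^2}^2\le\|Q'\|_{L^2}^2-\delta/2$, and the appeal to the free threshold classification of \cite{DuRo10,CFR22} together with long-time perturbation, are the right ingredients.

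The genuine gap is your elimination of the special-solution alternative. If the free flow $v$ of $\psi$ is $Q^-$ up to symmetry, your argument --- ``two separating copies of a solution relaxing to $Q$ force $\mu_\gamma\to 0$'' --- concerns the \emph{forward} behaviour of $v$, and even granting the (only locally-in-time valid) approximation $u(t_n+s)\approx v(s,\cdot-X(t_n))+v(s,-\cdot-X(t_n))$, it would at best produce \emph{other} times $\tau_n=t_n+s_n$ at which $\mu_\gamma$ is small; it does not contradict $\mu_\gamma(t_n)\ge\delta$ along the original sequence, and the appeal to ``rigidity of the special solutions'' is not a proof. The way this case is actually closed in the \cite{DLR22}/\cite{GuIn22} scheme is to observe that \emph{both} alternatives permitted by the classification when $\|\psi'\|_{L^2}<\|Q'\|_{L^2}$ --- a scattering solution and $Q^-$ --- scatter in \emph{negative} time. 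One therefore runs the long-time perturbation backward from $t_n$ over $[0,t_n]$ (where the two bumps sit near $\pm X(t_n)\to\infty$ and the delta interaction and cross terms are negligible), obtaining $\|u\|_{S(0,t_n)}\le C$ uniformly in $n$ and hence $\|u\|_{S(0,\infty)}<\infty$, contradicting the definition of the critical element; this forces $\|\psi'\|_{L^2}=\|Q'\|_{L^2}$, i.e.\ $\mu_\gamma(t_n)\to 0$. (Your concluding step ``hence $K_0(\psi)=0$, so $\psi=Q$'' is then redundant: the contradiction is already reached once $K_0(\psi)>0$ is excluded.)
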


\begin{proof}
See \cite[Lemma 42]{GuIn22}. Here, we use Corollary~\ref{cor4.15}. 
\end{proof}

\begin{lemma}
There exists $C>0$ such that
\begin{align*}
	|X(t)-X(s)| \leq C
\end{align*}
for all $t,s\geq 0$ satisfying $|t-s|\leq 1$. 
\end{lemma}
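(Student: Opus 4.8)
The plan is to argue by contradiction, using only the compactness of the critical element (in the version for $X$ stated above) together with the uniform $H^1$ bound from Lemma~\ref{lem2.5}, via an approximate finite-speed-of-propagation estimate for the mass. Suppose the conclusion fails: then there are sequences $s_n,t_n\ge 0$ with $s_n\le t_n$, $t_n-s_n\le 1$, and $|X(t_n)-X(s_n)|\to\infty$. Since $X\ge 0$, at least one of the two sequences is unbounded, but I will not need to split into cases.

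First I would fix the localization scale. Put $\varepsilon:=M(Q)/8$ and let $R=R(\varepsilon)$ be the compactness radius, so that for every $\tau\ge 0$ the mass of $u(\tau)$ outside $B_R(X(\tau))\cup B_R(-X(\tau))$ is $<\varepsilon$. Because $u$ is even and $M(u)=2M(Q)$, the two balls carry equal mass, so $u(s_n)$ has mass $\ge M(Q)-\varepsilon$ in $B_R(X(s_n))$ (when the balls overlap the same lower bound holds a fortiori). Let $\zeta\in C_0^\infty(\mathbb{R})$ with $0\le\zeta\le 1$, $\zeta\equiv1$ on $[-1,1]$, $\supp\zeta\subset[-2,2]$, and set $\zeta_A(x):=\zeta((x-X(s_n))/A)$ for a scale $A>R$ to be chosen; then $\int \zeta_A|u(s_n)|^2\ge M(Q)-\varepsilon$.

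Next I would invoke local mass conservation. A direct computation gives $\partial_t|u|^2+\partial_x j=0$ with current $j:=2\im(\bar u\,u')$; the delta interaction enters $\partial_t|u|^2$ only through a real multiple of $\delta_0$ (namely via $\im(i\gamma|u(0)|^2\delta_0)=0$ and the imaginary part of the jump term $-\gamma\zeta_A(0)|u(0)|^2$), so it does not affect $j$ and the identity is unspoiled by the potential. Hence $\frac{d}{dt}\int\zeta_A|u|^2=\int\zeta_A' j$, and by the uniform $H^1$ bound $|\frac{d}{dt}\int\zeta_A|u|^2|\le \|\zeta'\|_\infty A^{-1}\|j\|_{L^1}\le C_1/A$ with $C_1$ independent of $n$. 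Choosing the fixed constant $A>\max\{R,\,8C_1/M(Q)\}$ and integrating over $[s_n,t_n]$ (length $\le1$) yields $\int\zeta_A|u(t_n)|^2\ge M(Q)-\varepsilon-C_1/A\ge 3M(Q)/4$. Thus $u(t_n)$ carries mass $\ge 3M(Q)/4$ in $\{|x-X(s_n)|\le 2A\}$, while its compactness forces all but $<\varepsilon$ of that mass into $B_R(X(t_n))\cup B_R(-X(t_n))$. Therefore $\{|x-X(s_n)|\le 2A\}$ meets $B_R(\pm X(t_n))$, giving either $|X(t_n)-X(s_n)|\le 2A+R$ or $X(t_n)+X(s_n)\le 2A+R$; since $X\ge0$, both alternatives bound $|X(t_n)-X(s_n)|$ by $2A+R$, contradicting $|X(t_n)-X(s_n)|\to\infty$. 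This proves the lemma with $C=2A+R$.

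The heart of the argument is soft, so I expect the difficulties to be bookkeeping rather than genuine analysis. The one point requiring care is the verification that the delta potential leaves the mass-current identity intact at the origin, which is what legitimizes treating the dynamics as having finite propagation speed in the averaged sense; this is clean once one checks that the potential contributes only a real multiple of $\delta_0$ to $\partial_t|u|^2$. The other thing to keep honest is the even symmetry, used both to guarantee that the single ball $B_R(X(s_n))$ captures a definite fraction of the mass and to record the second alternative $X(t_n)+X(s_n)=O(1)$, which equally forces $|X(t_n)-X(s_n)|=O(1)$.
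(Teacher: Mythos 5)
Your argument is correct and complete. Note that the paper itself gives no proof of this lemma, deferring entirely to \cite[Lemma~43]{GuIn22}, so what you have produced is a self-contained substitute based on approximate finite speed of propagation for the mass: local mass conservation $\partial_t|u|^2+\partial_x j=0$ with $j=2\im(\bar u\,u')$, the uniform $H^1$ bound from the invariance of $\mathcal{K}^+$ (Lemma~\ref{lem2.5}), and the concentration property of the critical element. The two delicate points are both handled properly: the delta potential contributes $2\re(i\gamma|u(0)|^2\delta_0)=0$ to $\partial_t|u|^2$, and the jump condition $u'(0+)-u'(0-)=-\gamma u(0)$ makes the current $j$ continuous at the origin (the boundary term is $\zeta_A(0)\cdot 2\im(\gamma|u(0)|^2)=0$), so the localized identity $\frac{d}{dt}\int\zeta_A|u|^2=\int\zeta_A'\,j$ survives the potential — your parenthetical explanation of this is slightly garbled but the computation behind it is right. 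The even symmetry is also used correctly, both to guarantee that the single bubble $B_R(X(s_n))$ carries mass at least $M(Q)-\varepsilon/2$ (half of the union plus the overlap), and to convert the alternative $X(t_n)+X(s_n)\le 2A+R$ into a bound on $|X(t_n)-X(s_n)|$ via $X\ge 0$. This route is arguably more elementary than compactness-plus-continuity-of-the-flow arguments sometimes used for such statements, since it needs nothing about the nonlinear flow beyond the a priori $H^1$ bound; the price is only the bookkeeping of the cutoff scales, which you have done consistently.
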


\begin{proof}
See \cite[Lemma 43]{GuIn22}. 
\end{proof}

\begin{lemma}
There exists a constant $C>0$ such that
\begin{align*}
	|X(t_1)-X(t_2)| \leq C \int_{t_1}^{t_2} \mu_\gamma (t)dt
\end{align*}
for all $t_1,t_2\geq 0$ satisfying $t_1+1\leq t_2$.
\end{lemma}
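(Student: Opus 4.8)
The plan is to bound $|X(t_1)-X(t_2)|$ by telescoping over a partition of $[t_1,t_2]$ into subintervals of unit order length, trading the crude bound $|X(t)-X(s)|\leq C$ for $|t-s|\leq 1$ (the preceding lemma) against the integral of $\mu_\gamma$ on each piece. Since $t_2-t_1\geq 1$, fix a partition $t_1=\tau_0<\tau_1<\cdots<\tau_m=t_2$ with $\tau_{k+1}-\tau_k\in[1,2)$. By the triangle inequality it suffices to prove the per-interval estimate $|X(\tau_{k+1})-X(\tau_k)|\lesssim \int_{\tau_k}^{\tau_{k+1}}\mu_\gamma(t)\,dt$ and sum. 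Note $\mu_\gamma$ is continuous (as $u\in C(\mathbb{R}:H^1)$) and, in the scattering case $K_\gamma>0$, strictly positive by Proposition~\ref{prop2.7}, so $\mu_\gamma=|\mu_\gamma|$.

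First I would dichotomize the subintervals. Call $[\tau_k,\tau_{k+1}]$ \emph{good} if $\mu_\gamma(t)<\mu_0$ throughout, so that $[\tau_k,\tau_{k+1}]\subset I_{\mu_0}$ and $X=y$ is $C^1$ there; Lemma~\ref{lem4.16} then gives $|\dot y|\lesssim\mu_\gamma$, and integrating yields $|X(\tau_{k+1})-X(\tau_k)|=|y(\tau_{k+1})-y(\tau_k)|\leq\int_{\tau_k}^{\tau_{k+1}}|\dot y|\,dt\lesssim\int_{\tau_k}^{\tau_{k+1}}\mu_\gamma\,dt$. Call the interval \emph{bad} otherwise, i.e.\ there is $t_*\in[\tau_k,\tau_{k+1}]$ with $\mu_\gamma(t_*)\geq\mu_0$. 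On a bad interval the preceding lemma gives $|X(\tau_{k+1})-X(\tau_k)|\leq C$, so it remains to bound $\int_{\tau_k}^{\tau_{k+1}}\mu_\gamma\,dt$ from below by a positive constant.

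The key (and main obstacle) is this lower bound, for which I need uniform Lipschitz control $|\dot\mu_\gamma|\leq L$ in time. I would obtain it from conservation of energy: since $E_\gamma$ is conserved, $\|u\|_{\dot H_\gamma^1}^2=2E_\gamma(u)+\tfrac{2}{p+1}\|u\|_{L^{p+1}}^{p+1}$, whence $\dot\mu_\gamma=-\tfrac{2}{p+1}\frac{d}{dt}\|u\|_{L^{p+1}}^{p+1}$. Differentiating along the flow, the terms from $i|u|^{p-1}u$ and from the delta interaction are purely imaginary and drop out, and integrating the remaining $\re\,i\int|u|^{p-1}\bar u\,\partial_x^2u$ by parts leaves a quantity controlled by $\|u\|_{H^1}^{p+1}$, which is uniformly bounded by the global $H^1$ bound of Lemma~\ref{lem2.5}. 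Given $|\dot\mu_\gamma|\leq L$, on a bad interval $\mu_\gamma(t)\geq\mu_0/2$ whenever $|t-t_*|\leq\mu_0/(2L)$; intersecting with $[\tau_k,\tau_{k+1}]$ (of length at least $1$) produces a set of measure at least $\delta_0:=\min(\tfrac12,\tfrac{\mu_0}{2L})>0$, so $\int_{\tau_k}^{\tau_{k+1}}\mu_\gamma\,dt\geq\tfrac{\mu_0}{2}\delta_0$. Hence on bad intervals $|X(\tau_{k+1})-X(\tau_k)|\leq C\leq\tfrac{2C}{\mu_0\delta_0}\int_{\tau_k}^{\tau_{k+1}}\mu_\gamma\,dt$ as well.

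Combining the good and bad cases gives a single constant $C'$ with $|X(\tau_{k+1})-X(\tau_k)|\leq C'\int_{\tau_k}^{\tau_{k+1}}\mu_\gamma\,dt$ for every $k$; summing over $k$ and using additivity of the integral yields $|X(t_1)-X(t_2)|\leq C'\int_{t_1}^{t_2}\mu_\gamma\,dt$. The only delicate point is the Lipschitz estimate for $\mu_\gamma$: one must verify that the boundary contribution at $x=0$ from the integration by parts is harmless, which follows from $u\in H^1$ together with the jump condition defining $\mathcal D(-\Delta_\gamma)$.
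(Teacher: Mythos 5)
Your proof is correct and takes essentially the same route as the paper's (which defers to \cite[Lemma 44]{GuIn22} and flags the key ingredient): on subintervals lying in $I_{\mu_0}$ you integrate $|\dot y|\lesssim\mu_\gamma$ from Lemma~\ref{lem4.16}, and on the remaining subintervals you combine the unit-time bound $|X(t)-X(s)|\leq C$ with a positive lower bound for $\int\mu_\gamma$ forced by $\mu_\gamma\geq\mu_0$ somewhere and a uniform time-Lipschitz bound on $\mu_\gamma$. The only point requiring care is that Lipschitz bound, which your energy-conservation computation (reducing $\dot\mu_\gamma$ to $\frac{d}{dt}\|u\|_{L^{p+1}}^{p+1}$ and using the global $H^1$ bound) correctly supplies.
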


\begin{proof}
We use the estimate of $\dot{y}$ in Lemma \ref{lem4.16} to show this statement. 
See \cite[Lemma 44]{GuIn22}. 
\end{proof}

Combining these lemmas, we get Lemma \ref{lem5.3}. See the proof of Proposition 40 in \cite{GuIn22}.


\subsubsection{Contradiction if $X(t)$ is bounded}

Since $X$ is bounded, it holds that for any $\varepsilon>0$ there exists $R=R(\varepsilon)>0$ such that 
\begin{align*}
	\int_{|x|>R} |u'(t,x)|^2 + |u(t,x)|^2 dx <\varepsilon
\end{align*}
for all $t \in [0,\infty)$. 

\begin{lemma}
We have
\begin{align*}
	\lim_{T \to \infty} \frac{1}{T} \int_{0}^{T} \mu_\gamma(t)dt =0.
\end{align*}
\end{lemma}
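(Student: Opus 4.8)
The plan is to run the standard localized-virial averaging argument, now exploiting that the spatial concentration is confined to a fixed ball. Recall the localized virial identity $\frac{d^2}{dt^2} J_R(u(t)) = F_R(u(t)) = 8K_\gamma(u(t)) + A_R(u(t))$. Integrating over $[0,T]$ gives
\[
 8\int_0^T K_\gamma(u(t))\,dt = \frac{d}{dt}J_R(u(T)) - \frac{d}{dt}J_R(u(0)) - \int_0^T A_R(u(t))\,dt,
\]
so the matter reduces to controlling the two boundary terms and the error integral, dividing by $T$, and letting $T\to\infty$.

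For the boundary terms I would use $\frac{d}{dt}J_R(u(t)) = 2R\,\im\int \varphi'(x/R)u'(t,x)\overline{u(t,x)}\,dx$; since $\varphi'$ is bounded and supported in $\{|x|<2R\}$, Cauchy--Schwarz together with the uniform $H^1$ bound from Lemma \ref{lem2.5}(1) yields $|\frac{d}{dt}J_R(u(t))| \leq C R$ with $C$ independent of $t$ and $T$. For the error term, I would estimate, directly from the definition of $A_R$, $|A_R(u(t))| \lesssim \int_{|x|>R}(|u'(t)|^2 + |u(t)|^{p+1})\,dx + R^{-2}\int_{|x|>R}|u(t)|^2\,dx$. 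The $|u'|^2$ and $|u|^2$ pieces are controlled by the restated compactness property (valid precisely because $X$ is bounded), and the supercritical piece is absorbed via $\int_{|x|>R}|u|^{p+1} \leq \|u\|_{L^\infty}^{p-1}\int_{|x|>R}|u|^2$ together with the uniform $H^1$ (hence $L^\infty$) bound. Thus, given $\varepsilon>0$, one can fix $R = R(\varepsilon)$ so that $\sup_t |A_R(u(t))| < \varepsilon$.

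Combining these with Proposition \ref{prop2.7}(1), which gives $0 < \mu_\gamma(u(t)) \leq c^{-1}K_\gamma(u(t))$, and using $K_\gamma(u(t))>0$, I obtain
\[
 \frac{1}{T}\int_0^T \mu_\gamma(t)\,dt \leq \frac{1}{8c}\,\frac{1}{T}\bigl(2CR + \varepsilon T\bigr) = \frac{CR}{4cT} + \frac{\varepsilon}{8c}.
\]
Letting $T\to\infty$ and then $\varepsilon\to 0$ yields the claim. The only genuinely delicate point is the uniform-in-$t$ smallness of $A_R$: it relies essentially on the boundedness of $X$ (so that the compactness holds in a fixed ball, not a moving one) and on the Gagliardo--Nirenberg-type control folding the supercritical $L^{p+1}$ tail into the $L^2$ tail; everything else is bookkeeping with the uniform $H^1$ bound.
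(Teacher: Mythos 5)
Your argument is correct and is essentially the paper's own route: the paper simply defers to \cite[Lemma 45]{GuIn22}, which is exactly this localized-virial averaging — integrate $\frac{d^2}{dt^2}J_R = 8K_\gamma + A_R$, bound the boundary terms by $O(R)$ via the uniform $H^1$ bound, make $A_R$ uniformly small using the compactness in a fixed ball (available because $X$ is bounded) with the $L^{p+1}$ tail folded into the $L^2$ tail, and pass from $K_\gamma$ to $\mu_\gamma$ via Proposition \ref{prop2.7}(1). No gaps.
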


\begin{proof}
See \cite[Lemma 45]{GuIn22}. 
\end{proof}

\begin{corollary}
There exists a time sequence $\{t_n\}$ such that $\lim_{n \to \infty} \mu_\gamma(t_n)=0$. 
\end{corollary}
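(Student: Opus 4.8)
The plan is to derive the corollary directly from the preceding lemma, which asserts that the Cesàro time-average of $\mu_\gamma$ vanishes, combined with the positivity of $\mu_\gamma$ along the flow. First I would recall that we are in the scattering scenario of statement (1) of Theorem~\ref{thm2.5} with $K_\gamma(u_0)>0$; hence Proposition~\ref{prop2.7} guarantees $K_\gamma(u(t)) \geq c\,\mu_\gamma(u(t)) > 0$ for all $t$, and in particular $\mu_\gamma(t) > 0$ for every $t \geq 0$. This sign information is the crucial ingredient that upgrades a statement about averages into one about a sequence.

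Next I would argue by contradiction. Suppose no sequence $\{t_n\}$ with $\mu_\gamma(t_n)\to 0$ exists. Then $\liminf_{t\to\infty}\mu_\gamma(t)=:\delta>0$, so there is $T_0>0$ with $\mu_\gamma(t)\geq \delta/2$ for all $t\geq T_0$. For $T>T_0$ this forces
\begin{align*}
	\frac{1}{T}\int_0^T \mu_\gamma(t)\,dt
	\geq \frac{1}{T}\int_{T_0}^T \mu_\gamma(t)\,dt
	\geq \frac{\delta}{2}\cdot\frac{T-T_0}{T},
\end{align*}
where I have used $\mu_\gamma\geq 0$ to discard the contribution on $[0,T_0]$. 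Letting $T\to\infty$ yields $\liminf_{T\to\infty}\frac{1}{T}\int_0^T\mu_\gamma\,dt\geq \delta/2>0$, which contradicts the preceding lemma. Therefore $\liminf_{t\to\infty}\mu_\gamma(t)=0$, and any sequence $t_n\to\infty$ along which $\mu_\gamma$ approaches this lower limit furnishes the desired sequence.

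I do not anticipate any serious obstacle, since essentially all of the analytic content has been absorbed into the preceding lemma; what remains is the elementary principle that a nonnegative function whose running average tends to zero must admit a subsequence tending to zero. The only point demanding care is to invoke the correct sign of $\mu_\gamma$ from the variational structure in Proposition~\ref{prop2.7}: without nonnegativity, the passage from the vanishing of the time-average to the existence of a null sequence would break down, as cancellation could keep the average small while $\mu_\gamma$ stays bounded away from zero.
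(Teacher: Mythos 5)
Your argument is correct and is essentially the argument the paper intends: the paper simply cites \cite[Corollary 46]{GuIn22}, where the same elementary deduction is made — the positivity of $\mu_\gamma$ along the flow (from Proposition \ref{prop2.7}) combined with the vanishing of the Cesàro average forces $\liminf_{t\to\infty}\mu_\gamma(t)=0$. Your care in flagging that nonnegativity is the indispensable ingredient is exactly right.
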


\begin{proof}
See \cite[Corollary 46]{GuIn22}. 
\end{proof}

Let $\{t_n\}$ be a time sequence such that $\lim_{n \to \infty} \mu_\gamma(t_n)=0$. Then $X(t_n)$ must diverge by Lemma \ref{lem5.5}. This is a contradiction to the boundedness of $X$. The conclusion is Theorem \ref{thm2.5}. 


\section{Proof of blow-up}

The proof of Statement (2) of Theorem \ref{thm2.5}is similar to \cite{GuIn22}, and so we omit details.

\begin{lemma}
\label{lem6.1}
Let $\varphi \in C_\even^1(\mathbb{R})$ be a real-valued function with $\varphi(0)=0$ and $f \in H_\even^1(\mathbb{R})$. We assume that they satisfy $\int_{\mathbb{R}}|\varphi'|^2|f|^2dx < \infty$, $M(f)=2M(Q)$, and $E_\gamma(f)=2E(Q)$. Then it holds that 
\begin{align*}
	\left| \im \int_{\mathbb{R}} \varphi '(x) \partial_x f(x)\overline{f(x)}dx \right| \lesssim \mu_\gamma(f)^2 \int_{\mathbb{R}} |\varphi '(x)|^2|f(x)|^2dx.
\end{align*}
\end{lemma}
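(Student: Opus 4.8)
The plan is to split the estimate into a Cauchy--Schwarz reduction and a static variational inequality that controls the ``phase energy'' of $f$ quadratically by $\mu_\gamma(f)$; the latter is exactly where the Gagliardo--Nirenberg inequality of Lemma~\ref{lem2.4} enters, as announced in the introduction.

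First I would dispose of the Cauchy--Schwarz step. Writing $f=|f|e^{i\phi}$ on $\{f\neq0\}$, one has $\im(f'\overline f)=|f|^2\phi'$ and the pointwise polar identity $|f'|^2=\bigl||f|'\bigr|^2+|\im(f'\overline f)|^2/|f|^2$ (with $\im(f'\overline f)=0$ a.e.\ on $\{f=0\}$, where also $f'=|f|'=0$ a.e.), so that
\begin{align*}
	\int_{\mathbb{R}}\frac{|\im(f'\overline f)|^2}{|f|^2}\,dx=\|f'\|_{L^2}^2-\bigl\||f|'\bigr\|_{L^2}^2=:D\ge0.
\end{align*}
Applying Cauchy--Schwarz to $\im\int\varphi'f'\overline f=\int(\varphi'|f|)\cdot(\im(f'\overline f)/|f|)\,dx$ then gives
\begin{align*}
	\Bigl|\im\int_{\mathbb{R}}\varphi'f'\overline f\,dx\Bigr|^2\le\Bigl(\int_{\mathbb{R}}|\varphi'|^2|f|^2\,dx\Bigr)\,D.
\end{align*}
It therefore remains to prove the key estimate $D\lesssim\mu_\gamma(f)^2$, after which the asserted inequality follows (in the natural squared form).

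For the key estimate I would pass to $v:=|f|\in H^1_\even(\mathbb{R})$, which satisfies $M(v)=2M(Q)$, $\|v\|_{L^{p+1}}=\|f\|_{L^{p+1}}$, $v(0)=|f(0)|$, and $\|v'\|_{L^2}^2=\|f'\|_{L^2}^2-D$; hence $E_\gamma(v)=2E(Q)-\tfrac12 D$ and $\mu_\gamma(v)=\mu_\gamma(f)+D$, together with the a priori bound $\mu_\gamma(v)\le 2\|Q'\|_{L^2}^2$. Lemma~\ref{lem2.4} (with $\omega=1$) gives $\|v\|_{L^{p+1}}^2\le 2^{-\frac{p-1}{p+1}}C_{1,0}\,\|v\|_{H^1_{1,\gamma}}^2=\kappa\,(C_0-\mu_\gamma(v))$ with $\kappa:=2^{-\frac{p-1}{p+1}}C_{1,0}$ and $C_0:=2\|Q'\|_{L^2}^2+2M(Q)=2\|Q\|_{L^{p+1}}^{p+1}$. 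Raising to the power $(p+1)/2$ and inserting into $E_\gamma(v)=\|Q'\|_{L^2}^2-\tfrac12\mu_\gamma(v)-\tfrac1{p+1}\|v\|_{L^{p+1}}^{p+1}$ produces a lower bound $E_\gamma(v)\ge\Lambda(\mu_\gamma(v))$ for an explicit smooth $\Lambda$; comparing it with the identity $E_\gamma(v)=2E(Q)-\tfrac12(\mu_\gamma(v)-\mu_\gamma(f))$ collapses everything to
\begin{align*}
	\tfrac12\mu_\gamma(f)\ge G(\mu_\gamma(v)),\qquad G(\mu):=\tfrac1{p+1}\Bigl(\kappa^{\frac{p+1}2}C_0^{\frac{p+1}2}-\kappa^{\frac{p+1}2}(C_0-\mu)^{\frac{p+1}2}\Bigr).
\end{align*}

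The crux, and the step I expect to be the main obstacle, is the analysis of $G$. A direct computation with $C_0=2\|Q\|_{L^{p+1}}^{p+1}$ and $C_{1,0}=\|Q\|_{L^{p+1}}^{-(p-1)}$ gives the normalization $\kappa^{(p+1)/2}C_0^{(p-1)/2}=1$, whence $G(0)=0$ and, crucially, $G'(0)=\tfrac12$; this vanishing of the linear part (equivalently $\Lambda'(0)=0$) is precisely what forces the estimate to be quadratic, and it reflects the criticality/degeneracy of the threshold two-bump configuration. Since $G$ is strictly increasing with $G(\mu)=\tfrac12\mu-c\mu^2+O(\mu^3)$ for some $c>0$, the inequality $G(\mu_\gamma(v))\le\tfrac12\mu_\gamma(f)$ inverts to $\mu_\gamma(v)\le G^{-1}(\tfrac12\mu_\gamma(f))=\mu_\gamma(f)+O(\mu_\gamma(f)^2)$, so $D=\mu_\gamma(v)-\mu_\gamma(f)\lesssim\mu_\gamma(f)^2$ for $|\mu_\gamma(f)|$ small; here the a priori bound $\mu_\gamma(v)\le 2\|Q'\|_{L^2}^2<C_0$ is exactly what excludes the spurious large-defect branch of the underlying quadratic relation. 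For $|\mu_\gamma(f)|$ bounded away from $0$ the crude bound $D\le\|f\|_{\dot H_\gamma^1}^2\le 2\|Q'\|_{L^2}^2+|\mu_\gamma(f)|$ already yields $D\lesssim\mu_\gamma(f)^2$, and combining the regimes gives a uniform constant. The delicate points are thus the sharp computation $G'(0)=\tfrac12$, the inversion of $G$ via monotonicity together with the a priori bound (rather than the quadratic-in-$D$ inequality, which alone admits an inadmissible second branch), and the uniformity of the constant across regimes; once these are secured the Cauchy--Schwarz step finishes the proof.
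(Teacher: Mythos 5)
Your argument is correct, and it shares the overall skeleton of the paper's proof (a Cauchy--Schwarz reduction followed by a variational bound that is quadratic in $\mu_\gamma(f)$ thanks to the sharp constant in Lemma~\ref{lem2.4}), but both halves are executed by a genuinely different route. The paper applies Lemma~\ref{lem2.4} to the phase-modulated function $e^{i\lambda\varphi}f$ (using $\varphi(0)=0$ so the delta term is unaffected), observes that the resulting expression is a nonnegative quadratic in $\lambda\in\mathbb{R}$, and reads off from its discriminant
\begin{align*}
\Bigl(\im\int_{\mathbb{R}}\varphi'(x) f'(x)\overline{f(x)}\,dx\Bigr)^2\le\|\varphi'f\|_{L^2}^2\Bigl(\|f\|_{H^1_{1,\gamma}}^2-2^{\frac{p-1}{p+1}}C_{1,0}^{-1}\|f\|_{L^{p+1}}^2\Bigr);
\end{align*}
it then shows the Gagliardo--Nirenberg defect on the right is $O(\mu_\gamma(f)^2)$ by solving for $\|f\|_{L^{p+1}}^{p+1}$ from the energy identity and Taylor-expanding $(\cdot)^{2/(p+1)}$, the constant and linear terms cancelling exactly because the constant is sharp. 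You instead split off the phase by the exact polar identity, reduce to the quantity $D=\|f'\|_{L^2}^2-\||f|'\|_{L^2}^2$, and apply Lemma~\ref{lem2.4} only to the modulus $v=|f|$, turning the problem into inverting the concave increasing function $G$ with $G'(0)=\tfrac12$. The two degeneracies are the same computation in disguise: your normalization $\kappa^{(p+1)/2}C_0^{(p-1)/2}=1$ is precisely the cancellation in the paper's Taylor expansion. What your route buys is a slightly sharper Cauchy--Schwarz step, since $D$ differs from the paper's defect by the (nonnegative) defect of $|f|$; what it costs is the extra inversion and the splitting into small/large regimes of $|\mu_\gamma(f)|$, which the paper's direct expansion avoids. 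One further point in your favor: you correctly note that both arguments actually deliver the inequality with the left-hand side squared, which is what is used downstream; the unsquared form printed in the statement appears to be a typo inherited from the analogous Lemma 47 of \cite{GuIn22}, where the square is present.
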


\begin{proof}
This can be shown in a similar way to \cite[Lemma 47]{GuIn22} by using the Gagliardo--Nirenberg inequality, Lemma \ref{lem2.4}. See also \cite[Lemma 4.13]{GuIn22p} We omit the proof. 
\end{proof}

\begin{corollary}
Let $u_0 \in H_\even^1(\mathbb{R})$ satisfy $K_\gamma(u_0)<0$, $xu_0 \in L^2(\mathbb{R})$, $M(u_0)=2M(Q)$, and $E_\gamma(u_0)=2E(Q)$. Then we have
\begin{align*}
	\left| \im \int_{\mathbb{R}} x u'(t,x)\overline{u(t,x)}dx \right| \lesssim |K_\gamma(u(t))|^2 \int_{\mathbb{R}} |x|^2|u(t,x)|^2dx.
\end{align*}
\end{corollary}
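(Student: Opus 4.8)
The plan is to deduce the Corollary directly from Lemma~\ref{lem6.1} by choosing the weight $\varphi$ to match the virial integrand, and then to convert the resulting $\mu_\gamma$ bound into a $K_\gamma$ bound using Proposition~\ref{prop2.7}.

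First I would set $\varphi(x):=\tfrac{1}{2}x^2$. This function is even, smooth, and satisfies $\varphi(0)=0$ and $\varphi'(x)=x$, so it meets the hypotheses of Lemma~\ref{lem6.1}, provided the quantity $\int_{\mathbb{R}}|\varphi'(x)|^2|u(t,x)|^2\,dx=\int_{\mathbb{R}}|x|^2|u(t,x)|^2\,dx$ is finite. This is exactly the finiteness of the virial weight $J(u(t))$, which holds for every $t$ in the existence interval because $xu_0\in L^2(\mathbb{R})$ propagates in time by the standard persistence of the weight (via the local theory together with the virial identity $\frac{d}{dt}J(u(t))=4\im\int_{\mathbb{R}}xu'(t,x)\overline{u(t,x)}\,dx$). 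Since the mass $M(u(t))=2M(Q)$ and energy $E_\gamma(u(t))=2E(Q)$ are conserved, the function $f=u(t)$ satisfies all the assumptions of Lemma~\ref{lem6.1}.

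Applying Lemma~\ref{lem6.1} with this $\varphi$ and $f=u(t)$ then yields
\begin{align*}
	\left|\im\int_{\mathbb{R}}xu'(t,x)\overline{u(t,x)}\,dx\right|
	\lesssim \mu_\gamma(u(t))^2\int_{\mathbb{R}}|x|^2|u(t,x)|^2\,dx.
\end{align*}
To replace $\mu_\gamma(u(t))^2$ by $|K_\gamma(u(t))|^2$, I would invoke Proposition~\ref{prop2.7}(2): since $K_\gamma(u_0)<0$, there is $c>0$ with $K_\gamma(u(t))\leq c\,\mu_\gamma(u(t))<0$ as long as the solution exists. As both quantities are negative, this gives $|\mu_\gamma(u(t))|\leq c^{-1}|K_\gamma(u(t))|$, hence $\mu_\gamma(u(t))^2\lesssim|K_\gamma(u(t))|^2$; substituting this into the displayed inequality completes the argument.

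The deduction is entirely routine given the earlier results, so there is no genuine obstacle. The only point that requires care is ensuring the finiteness and time-persistence of $\int_{\mathbb{R}}|x|^2|u(t,x)|^2\,dx$ before applying Lemma~\ref{lem6.1} with the unbounded weight $\varphi(x)=\tfrac12 x^2$; this is precisely what the hypothesis $xu_0\in L^2(\mathbb{R})$ guarantees, via the standard propagation of the virial weight.
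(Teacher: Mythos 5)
Your proposal is correct and follows essentially the same route as the paper: apply Lemma~\ref{lem6.1} with the quadratic weight (the paper takes $\varphi(x)=x^2$ rather than $\tfrac12 x^2$, an immaterial constant) and then convert $\mu_\gamma$ to $K_\gamma$ via Proposition~\ref{prop2.7}(2). The additional remark on the time-persistence of the weighted norm $\int |x|^2|u(t,x)|^2\,dx$ is a reasonable point of care that the paper leaves implicit.
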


\begin{proof}
This follows from Lemma \ref{lem6.1} with $\varphi(x)=x^2$ and $\mu_\gamma(u) \lesssim |K_\gamma(u(t))|$, which is shown in Proposition \ref{prop2.7}. 
\end{proof}

\begin{lemma}\label{lem6.3}
Let $u$ satisfy the conditions in Theorem \ref{thm2.5} (2). Assume that $u$ is global in positive time. Then it holds that
\begin{align*}
	\im \int_{\mathbb{R}} x u'(t,x) \overline{u(t,x)}dx >0
\end{align*}
for any $t$. Furthermore, there exists $c>0$ such that 
\begin{align*}
	\int_{t}^{\infty} |\mu_\gamma(s)| ds \lesssim e^{-ct}
\end{align*}
for all $t>0$. 
\end{lemma}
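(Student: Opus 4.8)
The plan is to run the Duyckaerts--Roudenko convexity/contradiction scheme, using the virial identity together with the Cauchy--Schwarz type bound established in the corollary to Lemma~\ref{lem6.1}. Throughout, write $J(t):=J(u(t))=\int_{\mathbb{R}}|x|^2|u(t,x)|^2\,dx$, which is finite for every $t\ge 0$ because $xu_0\in L^2$ and this weighted bound propagates for global $H^1$ solutions. Recall the virial identities $\tfrac{d}{dt}J(t)=4\im\int_{\mathbb{R}}xu'\overline{u}\,dx$ and $\tfrac{d^2}{dt^2}J(t)=8K_\gamma(u(t))$, and that $K_\gamma(u(t))\le c\,\mu_\gamma(u(t))<0$ for all $t$ by Proposition~\ref{prop2.7}, so that $|\mu_\gamma(u(t))|\lesssim|K_\gamma(u(t))|$.

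First I would prove positivity. Since $\tfrac{d^2}{dt^2}J=8K_\gamma<0$, the function $J$ is strictly concave on $[0,\infty)$, hence $\tfrac{d}{dt}J$ is strictly decreasing. If $\tfrac{d}{dt}J(t_0)\le 0$ at some $t_0\ge 0$, then $\tfrac{d}{dt}J(t)<0$ for all $t>t_0$, forcing $J(t)\to-\infty$ and contradicting $J\ge 0$; this is exactly where global existence in positive time is used. Therefore $\tfrac{d}{dt}J(t)>0$, i.e. $\im\int_{\mathbb{R}}xu'\overline{u}\,dx=\tfrac14\tfrac{d}{dt}J(t)>0$ for every $t$, which is the first assertion.

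For the decay, set $V(t):=\im\int_{\mathbb{R}}xu'\overline{u}\,dx=\tfrac14\tfrac{d}{dt}J(t)>0$, so $\dot{V}=2K_\gamma(u(t))<0$; thus $V$ is positive and decreasing, which gives $\int_0^\infty|K_\gamma(u(s))|\,ds\le\tfrac12 V(0)<\infty$. The Cauchy--Schwarz type inequality (with weight $x^2$) yields $V(t)\lesssim|K_\gamma(u(t))|\,J(t)^{1/2}$. The crucial point is then to show that $J$ is \emph{bounded}: integrating $\dot{J}=4V$ and inserting this bound gives, for each $T>0$, $J(T)=J(0)+4\int_0^T V\lesssim J(0)+\bigl(\sup_{[0,T]}J\bigr)^{1/2}\int_0^T|K_\gamma|\le J(0)+C\,J(T)^{1/2}$, where $C\lesssim\int_0^\infty|K_\gamma|$ is independent of $T$ and I used that $J$ is increasing so $\sup_{[0,T]}J=J(T)$. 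This self-improving inequality $J(T)\lesssim J(0)+C\,J(T)^{1/2}$ bounds $J(T)$ by a constant $M_0$ uniformly in $T$. With $J\le M_0$ in hand, the Cauchy--Schwarz bound becomes $-\dot{V}=2|K_\gamma|\gtrsim V/J^{1/2}\gtrsim V$, so Gr\"onwall's inequality gives $V(t)\lesssim e^{-ct}$; consequently $\int_t^\infty|\mu_\gamma(u(s))|\,ds\lesssim\int_t^\infty|K_\gamma(u(s))|\,ds=\tfrac12 V(t)\lesssim e^{-ct}$, as claimed.

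The hard part will be the boundedness of $J$. A priori $J$ is only concave and increasing, hence could grow linearly, in which case the differential inequality $-\dot V\gtrsim V/J^{1/2}$ would yield only the stretched-exponential rate $e^{-c\sqrt{t}}$ rather than the required $e^{-ct}$. The self-consistency estimate above closes this gap, and it hinges on two ingredients: the integrability $\int_0^\infty|K_\gamma|<\infty$, coming from monotonicity of $V$ via the virial identity, and the gain encoded in the Cauchy--Schwarz inequality of Lemma~\ref{lem6.1} (the $\mu_\gamma^2$ factor, equivalently the $J^{1/2}$-weight rather than $J$), which is precisely what makes $\int_0^T V$ controllable by $J(T)^{1/2}$ times a $T$-independent constant.
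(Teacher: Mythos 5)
Your proposal is correct and follows essentially the same Duyckaerts--Roudenko scheme the paper invokes (via \cite[Proposition 49]{GuIn22}): the virial identity plus strict concavity of $J$ forces $\frac{d}{dt}J>0$, monotonicity of $V=\frac14\frac{d}{dt}J$ gives $\int_0^\infty|K_\gamma|<\infty$, the Cauchy--Schwarz type bound closes the self-improving estimate for $J$, and Gr\"onwall yields the exponential decay. One remark: you use the Cauchy--Schwarz inequality in the form $\bigl|\im\int_{\mathbb{R}} xu'\overline{u}\,dx\bigr|\lesssim|\mu_\gamma(u)|\,J(u)^{1/2}$, which is what the proof of Lemma \ref{lem6.1} actually establishes (the displayed statement of that lemma appears to omit a square on its left-hand side), so your reading is the right one and is exactly the gain needed to bound $J$ and obtain the rate $e^{-ct}$ rather than $e^{-c\sqrt{t}}$.
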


\begin{proof}
See \cite[Proposition 49]{GuIn22}. 
\end{proof}

\begin{corollary}
Under the same assumption in Theorem \ref{thm2.5} (2), $u$ blows up in negative time. 
\end{corollary}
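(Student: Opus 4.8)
The plan is to play Lemma~\ref{lem6.3} against the concavity of the variance
\begin{align*}
	J(u(t))=\int_{\mathbb{R}}|x|^2|u(t,x)|^2\,dx,
\end{align*}
in the spirit of the Duyckaerts--Roudenko convexity argument. Since $xu_0\in L^2(\mathbb{R})$, finite variance is propagated, so $J(u(t))$ is a nonnegative $C^2$ function on the existence interval, and the (non-localized) virial identity gives $\frac{d^2}{dt^2}J(u(t))=8K_\gamma(u(t))$. The sign $K_\gamma(u_0)<0$ is preserved by Lemma~\ref{lem2.5}(2) (indeed $K_\gamma(u(t))\le c\,\mu_\gamma(u(t))<0$ by Proposition~\ref{prop2.7}(2)), so $t\mapsto J(u(t))$ is strictly concave. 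The only case to treat is $u$ global in positive time, since otherwise $u$ already blows up in finite positive time.

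So assume $u$ is global forward. Then Lemma~\ref{lem6.3} applies and yields $\frac{d}{dt}J(u(t))=4\im\int_{\mathbb{R}}xu'(t,x)\overline{u(t,x)}\,dx>0$ for every $t$, in particular at $t=0$. Concavity makes $\frac{d}{dt}J(u(t))$ nonincreasing, so $\frac{d}{dt}J(u(t))\ge\frac{d}{dt}J(u(0))>0$ for all $t\le 0$; integrating backward gives
\begin{align*}
	J(u(t))\le J(u(0))-\Big(\tfrac{d}{dt}J(u(0))\Big)|t|\qquad(t\le 0).
\end{align*}
The right-hand side is negative for $|t|$ large, which is incompatible with $J(u(t))\ge 0$. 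Hence the solution cannot persist on all of $(-\infty,0]$; it blows up in finite negative time, which is the assertion of the corollary.

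The genuine content is packaged in Lemma~\ref{lem6.3}, and that is where I expect the real obstacle: producing the strict sign of $\im\int xu'\overline{u}$ and, more delicately, the exponential smallness $\int_t^\infty|\mu_\gamma(s)|\,ds\lesssim e^{-ct}$, which rests on the Cauchy--Schwarz-type bound of Lemma~\ref{lem6.1} (hence on the delta-potential Gagliardo--Nirenberg inequality, Lemma~\ref{lem2.4}) and on the modulation estimates of Section~\ref{sec3}. Note that this same exponential estimate in fact excludes the global-forward scenario altogether: together with $e_\gamma(y)\lesssim|\mu_\gamma(u)|^2$ (Corollary~\ref{cor4.15}) it forces $y(t)\to\infty$, while $|\dot y|\lesssim|\mu_\gamma(u)|$ (Lemma~\ref{lem4.16}) and $\int^\infty|\mu_\gamma|<\infty$ force $y(t)$ to a finite limit, a contradiction. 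Either route (the concavity above or this $y$-divergence) closes the proof of Theorem~\ref{thm2.5}(2); granting Lemma~\ref{lem6.3}, the corollary itself is only the short concavity computation, the one subtlety being the standard propagation of finite variance that keeps $J$ twice differentiable.
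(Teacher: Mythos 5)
Your concavity argument is exactly the paper's route (the paper defers to \cite[Corollary 50]{GuIn22}, which is this same Duyckaerts--Roudenko convexity computation): under the forward-global hypothesis of Lemma~\ref{lem6.3}, finite variance propagates, $\frac{d}{dt}J(u(0))>0$, and $\frac{d^2}{dt^2}J(u(t))=8K_\gamma(u(t))<0$ force $J$ negative in finite backward time, so the backward existence time is finite. One caution: the corollary is to be read with the standing hypothesis of Lemma~\ref{lem6.3} that $u$ is global in positive time, so your aside that the forward blow-up case ``is already treated'' is both unnecessary and, on its own, insufficient to give negative-time blow-up --- in the paper the negative-time blow-up asserted in Theorem~\ref{thm2.5}~(2) is ultimately obtained by time-reversal symmetry once the forward-global scenario has been excluded by the modulation contradiction.
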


\begin{proof}
See \cite[Corollary 50]{GuIn22}. 
\end{proof}

As a consequence, we get Theorem \ref{thm2.5} (see the proof of Proposition 14 (2) in \cite{GuIn22}). Here we remark that we use the estimates of $\dot{y}$ and $\dot{\rho}$ in Lemma \ref{lem4.16}, which imply $y$ converges to a constant by combining with Lemma \ref{lem6.3}, and thus the convergence contradicts the estimate of $e_\gamma(y)$ in Corollary \ref{cor4.15}.

\section*{Declarations}

The authors declare no conflicts of interest. Both authors have contributed equally to this paper. There is no available data related to this work. 

\section*{Acknowledgement}

The first author is partially supported by an NSERC Discovery Grant. 
The second author is partially supported by JSPS KAKENHI Grant-in-Aid for Early-Career Scientists No. JP18K13444. He also expresses appreciation toward JSPS Overseas Research Fellowship and the University of British Columbia for stay by the fellowship. 


\appendix

\section{Sign condition}
\label{secB}

We show the following in this appendix. 

\begin{proposition}
\label{propB.1}
If $f \in H_\even^1(\mathbb{R})$ satisfies $E_\gamma(f)M(f)^{\frac{1-s_c}{s_c}} \leq 2^{\frac{1}{s_c}}E(Q)M(Q)^{\frac{1-s_c}{s_c}}$ and $M(f) \geq 2M(Q_{\gamma^2/4,0})$, then the following are equivalent. 
\begin{enumerate}[(1)]
\item $K_\gamma(f)>0$.
\item $\sqrt{2}\|Q\|_{L^2}^{1-s_c}\|Q\|_{\dot{H}^1}^{s_c}>\|f\|_{L^2}^{1-s_c}\|f\|_{\dot{H}_\gamma^1}^{s_c}$. 
\end{enumerate}
\end{proposition}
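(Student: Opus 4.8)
The plan is to reduce to a normalized configuration, rewrite condition (2) as a sign condition on $\mu_\gamma$, and then establish $K_\gamma(f)>0\iff\mu_\gamma(f)>0$ by routing through the Nehari functional. First I would apply the combined scaling $f\mapsto f_\mu$, $f_\mu(x)=\mu^{-1/(p-1)}f(\mu^{-1/2}x)$, $\gamma\mapsto\mu^{-1/2}\gamma$ from Section~\ref{sec2.4}. A direct computation (as in the proof of Theorem~\ref{thm} from Theorem~\ref{thm2.5}) shows that $E_\gamma(f)M(f)^{\frac{1-s_c}{s_c}}$, the quantity $\|f\|_{L^2}^{1-s_c}\|f\|_{\dot{H}_\gamma^1}^{s_c}$, and the sign of $K_\gamma(f)$ are all invariant under this transformation. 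Choosing $\mu$ so that $M(f_\mu)=2M(Q)$, the hypothesis $M(f)\ge 2M(Q_{\gamma^2/4,0})$ forces $\mu\le\gamma^2/4$, hence $\mu^{-1/2}\gamma\le-2$, and the mass--energy hypothesis becomes $E_\gamma(f)\le 2E(Q)$. So I may assume $M(f)=2M(Q)$, $\gamma\le-2$, $E_\gamma(f)\le 2E(Q)$, and $f\neq 0$ (the mass bound excludes $f=0$). With $M(f)=2M(Q)=2\|Q\|_{L^2}^2$ fixed, squaring (2) and dividing out the common powers of $\|Q\|_{L^2}^2$ collapses it, since $t\mapsto t^{s_c}$ is increasing, to $\|f\|_{\dot{H}_\gamma^1}^2<2\|Q'\|_{L^2}^2$, that is to $\mu_\gamma(f)>0$. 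Thus the proposition becomes: under these normalizations, $K_\gamma(f)>0\iff\mu_\gamma(f)>0$.

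The next step is two algebraic identities. Eliminating $\|f\|_{L^{p+1}}^{p+1}$ through $E_\gamma$ and using the Pohozaev identities for $Q$ (which give $E(Q)=s_c\|Q'\|_{L^2}^2$ and $\|Q\|_{L^{p+1}}^{p+1}=\tfrac{2(p+1)}{p-1}\|Q'\|_{L^2}^2$) yields the identity
\begin{align*}
	K_\gamma(f)=\frac{p-1}{2}\big(E_\gamma(f)-2E(Q)\big)+\frac{p-5}{4}\mu_\gamma(f)-\frac{|\gamma|}{2}|f(0)|^2 ,
\end{align*}
valid for all $f$, and, using in addition $M(f)=2M(Q)$, the identity
\begin{align*}
	I_{1,\gamma}(f)=\frac{p-1}{2}\mu_\gamma(f)+(p+1)\big(E_\gamma(f)-2E(Q)\big).
\end{align*}
Since $p>5$ and $E_\gamma(f)\le 2E(Q)$, the first gives $K_\gamma(f)\le\frac{p-5}{4}\mu_\gamma(f)$, hence $K_\gamma(f)>0\Rightarrow\mu_\gamma(f)>0$; the second gives $I_{1,\gamma}(f)\le\frac{p-1}{2}\mu_\gamma(f)$, hence $I_{1,\gamma}(f)>0\Rightarrow\mu_\gamma(f)>0$. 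For the converse $\mu_\gamma(f)>0\Rightarrow I_{1,\gamma}(f)>0$ I would invoke the Gagliardo--Nirenberg inequality of Lemma~\ref{lem2.4}: writing $N:=\|f\|_{H_{1,\gamma}^1}^2$ and $P:=\|f\|_{L^{p+1}}^{p+1}$, one has $I_{1,\gamma}(f)=N-P\ge N-\big(2^{-\frac{p-1}{p+1}}C_{1,0}N\big)^{\frac{p+1}{2}}=:\phi(N)$, and a short computation (using $C_{1,0}^{-1}=\|Q\|_{L^{p+1}}^{p-1}$ and the Nehari relation $\|Q'\|_{L^2}^2+\|Q\|_{L^2}^2=\|Q\|_{L^{p+1}}^{p+1}$) shows $\phi(N)>0$ precisely for $N<2\|Q\|_{L^{p+1}}^{p+1}$, which is exactly the range $\mu_\gamma(f)>0$. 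This gives $\mu_\gamma(f)>0\iff I_{1,\gamma}(f)>0$.

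It remains to transfer the sign from $I_{1,\gamma}=K_\gamma^{1,0}$ to $K_\gamma=K_\gamma^{1/2,1}$, and this is the heart of the matter: the different scaling of the $\delta$-interaction produces the term $-\frac{|\gamma|}{2}|f(0)|^2$ in the first identity, which blocks a direct Gagliardo--Nirenberg bound for $K_\gamma$ itself. I would argue by continuity on the admissible cone. Both $(1,0)$ and $(1/2,1)$ lie in the convex, hence connected, set $\{\alpha>0,\ |\beta|\le 2\alpha\}$ of pairs satisfying \eqref{eq2.1}. If $E_\gamma(f)<2E(Q)$ strictly, then for every admissible $(\alpha,\beta)$ the equation $K_\gamma^{\alpha,\beta}(f)=0$ would force, by Lemma~\ref{lem2.3}, $S_{1,\gamma}(f)\ge 2S(Q)$, contradicting $S_{1,\gamma}(f)=E_\gamma(f)+M(Q)<2S(Q)$; thus $(\alpha,\beta)\mapsto K_\gamma^{\alpha,\beta}(f)$ is continuous and nowhere vanishing on a connected set, so it has constant sign, giving $\sign K_\gamma(f)=\sign I_{1,\gamma}(f)$. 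In the boundary case $E_\gamma(f)=2E(Q)$ I would instead quote Proposition~\ref{prop2.7} together with $I_{1,\gamma}(f)=\frac{p-1}{2}\mu_\gamma(f)$ to reach the same conclusion, which also sidesteps the question of non-attainment at the exact threshold. Chaining everything gives $K_\gamma(f)>0\iff I_{1,\gamma}(f)>0\iff\mu_\gamma(f)>0\iff$ (2). I expect the delicate point to be precisely this final transfer: establishing that $K_\gamma^{\alpha,\beta}(f)$ cannot vanish for any admissible $(\alpha,\beta)$ below the threshold, which rests on the common variational value in Lemma~\ref{lem2.3}, with the degenerate case $E_\gamma(f)=2E(Q)$ handled separately through Proposition~\ref{prop2.7}.
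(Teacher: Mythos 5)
Your proposal is correct and follows essentially the same route as the paper's proof: reduce to a single frequency via the mass condition, transfer the sign of $K_\gamma$ to the Nehari functional through the common variational value of Lemma~\ref{lem2.3} (the paper cites \cite{IkIn17} and Lemma~\ref{lem2.2.0} for this step), relate the Nehari functional to $\mu_\gamma$ using $E_\gamma(f)\le 2E(Q_{\omega,0})$ in one direction and the Gagliardo--Nirenberg inequality of Lemma~\ref{lem2.4} in the other, and dispose of the equality case by non-attainment. The differences --- your normalization to $\omega=1$, the explicit identities, and the self-contained connectedness argument for the sign transfer --- are cosmetic rather than structural.
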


\begin{lemma}
\label{lemB.2}
If $f \in H_\even^1(\mathbb{R})$ satisfies $E_\gamma(f)M(f)^{\frac{1-s_c}{s_c}} \leq 2^{\frac{1}{s_c}}E(Q)M(Q)^{\frac{1-s_c}{s_c}}$ and $M(f) \geq 2M(Q_{\gamma^2/4,0})$, then there exists $\omega \in (0,\gamma^2/4]$ such that $M(f)=2M(Q_{\omega,0})$ and $E_{\gamma}(f) \leq 2 E(Q_{\omega,0})$. 
\end{lemma}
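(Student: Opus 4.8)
The plan is to exploit the exact scaling structure of $Q_{\omega,0}$ so as to reduce the lemma to two elementary ingredients: a monotonicity/intermediate-value argument that extracts the frequency $\omega$ from the mass, and a scale-invariance identity that converts the mass-energy hypothesis into the desired energy bound.

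First I would record the scaling relation $Q_{\omega,0}(x)=\omega^{\frac{1}{p-1}}Q(\sqrt{\omega}\,x)$ (Remark \ref{rmk1.2} specialized to $\gamma=0$, using that $Q$ is even), from which a direct change of variables gives
\begin{align*}
	M(Q_{\omega,0})=\omega^{\frac{5-p}{2(p-1)}}M(Q).
\end{align*}
Since $p>5$, the exponent $\frac{5-p}{2(p-1)}$ is negative, so $\omega\mapsto 2M(Q_{\omega,0})$ is continuous, strictly decreasing, and maps $(0,\infty)$ onto $(0,\infty)$. At $\omega=\gamma^2/4$ its value is $2M(Q_{\gamma^2/4,0})$, hence its image over the interval $(0,\gamma^2/4]$ is exactly $[2M(Q_{\gamma^2/4,0}),\infty)$. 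The hypothesis $M(f)\geq 2M(Q_{\gamma^2/4,0})$ therefore places $M(f)$ in this range, and the intermediate value theorem yields a (unique) $\omega\in(0,\gamma^2/4]$ with $M(f)=2M(Q_{\omega,0})$. This settles the mass equality and pins down the frequency.

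For the energy, I would use that the quantity $E(\cdot)\,M(\cdot)^{\frac{1-s_c}{s_c}}$ is invariant under the scaling taking $Q$ to $Q_{\omega,0}$; equivalently $E(Q_{\omega,0})\,M(Q_{\omega,0})^{\frac{1-s_c}{s_c}}=E(Q)\,M(Q)^{\frac{1-s_c}{s_c}}$ for every $\omega>0$, which is precisely the identity underlying the $\omega$-independent reformulation of the mass-energy condition. Combining this with the hypothesis gives
\begin{align*}
	E_\gamma(f)\,M(f)^{\frac{1-s_c}{s_c}}
	\leq 2^{\frac{1}{s_c}}E(Q_{\omega,0})\,M(Q_{\omega,0})^{\frac{1-s_c}{s_c}}.
\end{align*}
Substituting $M(f)=2M(Q_{\omega,0})$ and cancelling the common positive factor $M(Q_{\omega,0})^{\frac{1-s_c}{s_c}}$ reduces this to $E_\gamma(f)\,2^{\frac{1-s_c}{s_c}}\leq 2^{\frac{1}{s_c}}E(Q_{\omega,0})$, and since $\frac{1}{s_c}-\frac{1-s_c}{s_c}=1$, this is exactly $E_\gamma(f)\leq 2E(Q_{\omega,0})$, as required.

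The computations are routine, and the only step demanding real care is the monotonicity argument. It is essential that $p>5$ (the $L^2$-supercritical regime): this is what makes $M(Q_{\omega,0})$ strictly decreasing in $\omega$, and thereby forces the mass lower bound $M(f)\geq 2M(Q_{\gamma^2/4,0})$ to translate into the frequency constraint $\omega\leq\gamma^2/4$ rather than the reverse. One should also confirm that the scale-invariant exponent $\frac{1-s_c}{s_c}$ is positive, which holds because $0<s_c<1$ when $p>5$, so that the powers of the mass are well defined and the final cancellation preserves the direction of the inequality.
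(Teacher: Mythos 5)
Your proof is correct and follows essentially the same route as the paper: the paper also obtains $\omega$ from the strict monotonicity of $\omega\mapsto 2M(Q_{\omega,0})$ and then deduces the energy bound from the mass-energy condition via the scale invariance of $E(\cdot)M(\cdot)^{\frac{1-s_c}{s_c}}$. You have merely written out explicitly the scaling computations that the paper leaves implicit.
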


\begin{proof}
By the monotonicity of $2M(Q_{\omega,0})$ for $\omega$, there exists a unique $\omega \in (0,\gamma^2/4]$ such that $M(f)=2M(Q_{\omega,0})$. Then the mass-energy condition implies $E_{\gamma}(f) \leq 2 E(Q_{\omega,0})$.
\end{proof}

\begin{proof}[Proof of Proposition \ref{propB.1}]
By Lemma \ref{lemB.2}, we have $\omega \in (0,\gamma^2/4]$ such that $M(f)=2M(Q_{\omega,0})$ and $E_{\gamma}(f) \leq 2 E(Q_{\omega,0})$. 
First we show $\sqrt{2}\|Q\|_{L^2}^{1-s_c}\|Q\|_{\dot{H}^1}^{s_c}>\|f\|_{L^2}^{1-s_c}\|f\|_{\dot{H}_\gamma^1}^{s_c}$ if $K_\gamma(f)>0$. Then, by Proposition 2.15 in \cite{IkIn17} and Lemma \ref{lem2.2.0}, we see that $I_{\omega,\gamma}(f)>0$. By the similar calculation to Lemma \ref{lem2.6}, we get
\begin{align*}
	0< I_{\omega,\gamma}(f) \leq \mu_{\omega,\gamma}(f) :=2\|Q_{\omega,0}'\|_{L^2}^2 - \|f\|_{\dot{H}_\gamma^1}^2  
\end{align*}
since we have $M(f)=2M(Q_{\omega,0})$, $E_{\gamma}(f) \leq 2 E(Q_{\omega,0})$, and $I_{\omega,0}(Q_{\omega,0})=0$. This and $M(f)=2M(Q_{\omega,0})$ gives us that 
\begin{align*}
	\|f\|_{L^2}^{1-s_c}\|f\|_{\dot{H}_\gamma^1}^{s_c}< 
	\sqrt{2}\|Q_{\omega,0}\|_{L^2}^{1-s_c}\|Q_{\omega,0}\|_{\dot{H}^1}^{s_c} 
	=\sqrt{2}\|Q\|_{L^2}^{1-s_c}\|Q\|_{\dot{H}^1}^{s_c}
\end{align*}
by the scaling structure of $Q_{\omega,0}$. Next, we prove that $\sqrt{2}\|Q\|_{L^2}^{1-s_c}\|Q\|_{\dot{H}^1}^{s_c}>\|f\|_{L^2}^{1-s_c}\|f\|_{\dot{H}_\gamma^1}^{s_c}$ if $K_\gamma(f)<0$. Then we see that $I_{\omega,\gamma}(f)<0$ by Proposition 2.15 in \cite{IkIn17} and Lemma \ref{lem2.2.0}. By the Gagliardo--Nirenberg type inequality, Lemma \ref{lem2.4}, we get 
\begin{align*}
	0 > I_{\omega,\gamma}(f) 
	\geq \|f\|_{\dot{H}_{\omega,\gamma}^1}^2 - (2^{-\frac{p-1}{p+1}}C_{\omega,0})^{\frac{p+1}{2}}\|f\|_{\dot{H}_{\omega,\gamma}^1}^{p+1}.
\end{align*}
Thus we have
\begin{align*}
	\|f\|_{\dot{H}_{\omega,\gamma}^1}^{p-1} > 2^{\frac{p-1}{2}} C_{\omega,0}^{-\frac{p+1}{2}} = 2^{\frac{p-1}{2}} \left\{ \frac{2(p+1)}{p-1}S_{\omega,0}(Q_{\omega,0}) \right\}^{\frac{p-1}{2}}
\end{align*}
and thus 
\begin{align*}
	\|f\|_{\dot{H}_{\omega,\gamma}^1}^{2} 
	>  2 \left\{ \frac{2(p+1)}{p-1}S_{\omega,0}(Q_{\omega,0}) \right\}
	= 2 \omega^{1-s_c} \frac{2(p+1)}{p-1}S_{1,0}(Q). 
\end{align*}
Substituting $\omega=(M(Q)/M(f))^{1/s_c}$, which comes from $M(f)=M(Q_{\omega,0})=\omega^{-s_c}M(Q)$ into this, we get
\begin{align*}
	\|f\|_{\dot{H}_{\gamma}^1}^2 + \left(  \frac{M(Q)}{M(f)}\right)^{\frac{1}{s_c}} M(f) > 2 \left(  \frac{M(Q)}{M(f)}\right)^{\frac{1-s_c}{s_c}}\frac{2(p+1)}{p-1}S_{1,0}(Q). 
\end{align*}
By a direct calculation and the Pohozaev identities for $Q$, we obtain 
\begin{align*}
	\|f\|_{\dot{H}_{\gamma}^1}^2 M(f)^{\frac{1-s_c}{s_c}}
	>2 M(Q)^{\frac{1-s_c}{s_c}} \|Q'\|_{L^2}^2.
\end{align*}
This implies $\sqrt{2}\|Q\|_{L^2}^{1-s_c}\|Q\|_{\dot{H}^1}^{s_c}<\|f\|_{L^2}^{1-s_c}\|f\|_{\dot{H}_\gamma^1}^{s_c}$. At last, we note that $\sqrt{2}\|Q\|_{L^2}^{1-s_c}\|Q\|_{\dot{H}^1}^{s_c}=\|f\|_{L^2}^{1-s_c}\|f\|_{\dot{H}_\gamma^1}^{s_c}$ does not occur under the assumption. This follows from the above argument and the fact that $I_{\omega,\gamma}(f)=0$ does not occur. 
\end{proof}

\begin{remark}
By the similar argument to the above, we can show that $K_\gamma(f)>0$ is equivalent to $\|Q_{\omega,\gamma}\|_{L^2}^{1-s_c}\|Q_{\omega,\gamma}\|_{\dot{H}_\gamma^1}^{s_c}>\|f\|_{L^2}^{1-s_c}\|f\|_{\dot{H}_\gamma^1}^{s_c}$ under the assumptions $S_{\omega,\gamma}(f) \leq r_{\omega,\gamma}$ and $M(f)=M(Q_{\omega,\gamma})$ for some $\omega>\gamma^2/4$. However, it is difficult to remove $\omega$ from the condition unlike the low frequency case. In this sense, $K_\gamma(f)$ is more useful. 
\end{remark}


\section{Concluding remark}
\label{secA}

\begin{figure}[htb]
\centering
\begin{tikzpicture}[scale=2,samples=200]
\draw[->,>=stealth,semithick](-0.5,0)--(4,0) node[below]{$M$};
\draw[->,>=stealth,semithick](0,-0.5)--(0,3)node[left]{$E_\gamma$};
\draw(0,0)node[below left]{0};
\draw[thick,dashed,domain=0.7:4] plot(\x, {2/(\x)});
\draw[thick,dotted,domain=0.35:4] plot(\x, {1/(\x)});
\draw[thick,domain=0.4:2] plot(\x, {(-1/4)*(\x)^2+(3/4)*(\x)+(\x)^(-1)});
\draw (0.7,20/7) node[above]{\tiny$\omega \to \infty$};
\draw (4,1/2) node[below]{\tiny$\omega \to 0$};
\draw (2,1) node[above right]{\tiny$\omega = \frac{\gamma^2}{4}$};
\draw[dotted] (2,1)--(2,0) node[below]{{\tiny$2M(Q_{\gamma^2/4,0})$}};
\draw[dotted] (2,1)--(0,1) node[left]{{\tiny$2E(Q_{\gamma^2/4,0})$}};
\draw[dotted] (1,3/2)--(1,0) node[below]{{\tiny$M(Q_{\omega,\gamma})$}};
\draw[dotted] (1,3/2)--(0,3/2) node[left]{{\tiny$E_\gamma(Q_{\omega,\gamma})$}};
\draw[dotted] (3.2,2/3.2)--(3.2,0) node[below]{{\tiny$2M(Q_{\omega,0})$}};
\draw[dotted] (3.2,2/3.2)--(0,2/3.2) node[left]{{\tiny$2E(Q_{\omega,0})$}};
\draw (1,2) node[right]{\tiny$\exists$$\log$-soliton};
\draw (1,3/2) node[above]{\tiny$\exists$GS};
\draw (3,2/3) node[above right]{S or B};
\draw (0.8,0.4) node[right]{S or B};
\filldraw[fill=white,draw=black](2,1)circle (2pt);
\end{tikzpicture}
\caption{The figure of the global dynamics result on $(M,E_\gamma)$-coordinate.}
\label{fig2}
\end{figure}
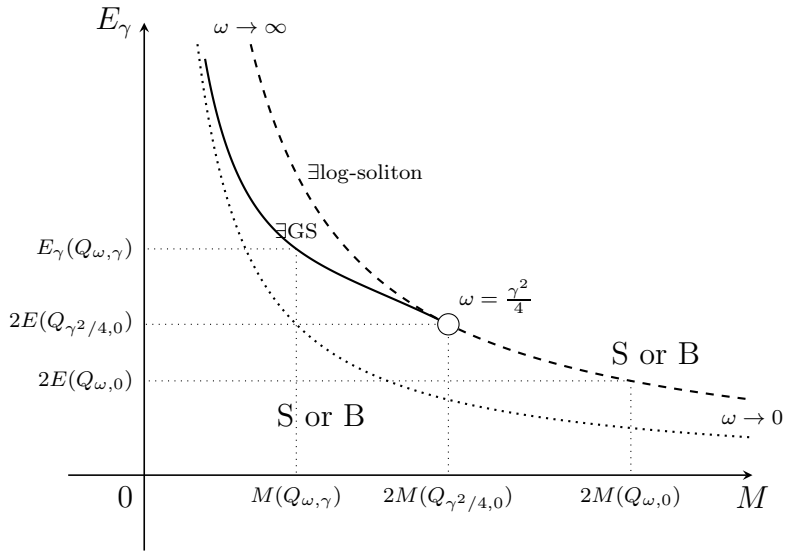

We summarize known global dynamics results for \eqref{deltaNLS} in Figure \ref{fig2}. 
The dashed curve is $E_\gamma M^{\frac{1-s_c}{s_c}}= 2^{1+\frac{1-s_c}{s_c}}E(Q) M(Q)^{\frac{1-s_c}{s_c}}$ and the dotted curve is $E_\gamma M^{\frac{1-s_c}{s_c}}= E(Q) M(Q)^{\frac{1-s_c}{s_c}}$. The explicit formula of the black curve is not known. However, it is connected at $M=2M(Q_{\gamma^2/4,0})$ with the dashed line, below the dashed line and above the dotted line on the mass interval $(0,2M(Q_{\gamma^2/4,0}))$, and approaches to the dotted line as $M \to 0$\footnote{We give a precise meaning to this approach. Let $\varepsilon>0$ be an arbitrary number and denote by $l_\varepsilon$ the line $E_\gamma = -\omega M +S_{\omega,0}(Q_{\omega,0})+\omega^{1-s_c}\varepsilon$. Now, we note that $S_{\omega,0}(Q_{\omega,0})=\omega^{1-s_c} \mathcal{S}_1$, where $\mathcal{S}_1=E(Q)+M(Q)$, and we find that $l_0$ is $E_\gamma = -\omega M +S_{\omega,0}(Q_{\omega,0})$ and $l_{\mathcal{S}_1}$ is $E_\gamma = -\omega M +2S_{\omega,0}(Q_{\omega,0})$. Let $l$ denote the line $E_\gamma = -\omega M +S_{\omega,\gamma}(Q_{\omega,\gamma})$. Then we have $l_0 \leq l \leq l_\varepsilon$ for sufficiently large $\omega$ ($\omega$ depends on $\varepsilon$). The envelope of the lines $\{l_\varepsilon\}_{\omega \gg 1}$ is given by $E_\gamma=\left(\frac{\mathcal{S}_1+\varepsilon}{\mathcal{S}_1}\right)^{\frac{1}{s_c}} E(Q)M(Q)^{\frac{1-s_c}{s_c}} M^{-\frac{1-s_c}{s_c}}$. The envelop of the lines $\{l\}_{\omega \gg 1}$ is between the curves $E_\gamma=E(Q)M(Q)^{\frac{1-s_c}{s_c}} M^{-\frac{1-s_c}{s_c}}$ and $E_\gamma=\left(\frac{\mathcal{S}_1+\varepsilon}{\mathcal{S}_1}\right)^{\frac{1}{s_c}} E(Q)M(Q)^{\frac{1-s_c}{s_c}} M^{-\frac{1-s_c}{s_c}}$. Since $\varepsilon$ is arbitrary, the curve generated by  $\{l\}_{\omega \gg 1}$ approaches $E_\gamma=E(Q)M(Q)^{\frac{1-s_c}{s_c}} M^{-\frac{1-s_c}{s_c}}$ by taking mass small, i.e., $\omega$ large.}. 
We remark that $E_\gamma = -\omega M +S_{\omega,0}(Q_{\omega,0})$ is the tangent line of the dotted curve, whose tangent point is $(M(Q_{\omega,0}),E(Q_{\omega,0}))$, and $E_\gamma = -\omega M +S_{\omega,\gamma}(Q_{\omega,\gamma})$ is the tangent line of the connected curve with the black curve and the dashed curve at $M=2M(Q_{\gamma^2/4,0})$, whose tangent point is $(M(Q_{\omega,\gamma}),E(Q_{\omega,\gamma}))$ if $\omega>\gamma^2/4$ and $(2M(Q_{\omega,0}),2E(Q_{\omega,0}))$ if $0 < \omega \leq \gamma^2/4$. 
Moreover, $E_\gamma = -\omega M +2S_{\omega,0}(Q_{\omega,0})$ is the tangent line of the dashed curve, whose tangent point is $(2M(Q_{\omega,0}),2E(Q_{\omega,0}))$. These curves are envelops of their tangent lines. See \cite[Section A]{GuIn22p} for the formula of the curves. 
\\ 
In the general case, the scattering and blow-up dichotomy result (S or B) holds below and on the dotted curve (see \cite{IkIn17,ArIn22,Inu23p}). Above the dotted curve, we have one-solitons, whose center moves away from the origin and which are non-scattering global solutions (see \cite{GIS23p}). This means that the dotted curve is the threshold for the dichotomy result in the general setting. 
\\
Under the odd assumption, we do not have influence from the Dirac delta potential at the origin since the value of odd functions at the origin is zero. In this case, the dashed curve is the threshold of the dichotomy result. That is, the scattering and blow-up dichotomy result holds below and on the dashed curve (see \cite{Inu17} and \cite{GuIn22}). This is optimal in the sense that we have two-solitons above the dashed curve. 
\\ 
Under the even assumption, the connected curve with the black curve and the dashed curve at $M=2M(Q_{\gamma^2/4,0})$ is a threshold of the dichotomy result. That is, the  dichotomy result is valid below the curve (see \cite{IkIn17}). On the black curve, there is no dichotomy result since the ground state $Q_{\omega,\gamma}$ exists on the black curve. We also have two solutions (up to symmetires) converging exponentially to the ground state (see \cite{GuIn23p} for detail). On the other hand, we have the dichotomy result on the dashed curve on the mass interval $[2M(Q_{\gamma^2/4,0}),\infty)$. We note that the endpoint $M=2M(Q_{\gamma^2/4,0})$ is included. This is shown by the paper. Above the dashed curve, we have two-solitons, which are non-scattering global solutions (see \cite{GIS23p}). We also have a logarithmic two-solitons on the dashed curve for smaller mass $M$ (see \cite{GuIn23p}).








\end{document}